\numberwithin{equation}{section}
\theoremstyle{plain}
\newtheorem{theorem}{Theorem}[section]
\newtheorem{lemma}[theorem]{Lemma}
\newtheorem{proposition}[theorem]{Proposition}
\newtheorem{corollary}[theorem]{Corollary}
\theoremstyle{definition}
\newtheorem{definition}[theorem]{Definition}
\newtheorem{remark}[theorem]{Remark}
\newtheorem{example}[theorem]{Example}
\begin{document}

\begin{frontmatter}
\title{Abstract stochastic evolution  equations in M-type 2 Banach spaces}
\runtitle{T\d{a} Vi$\hat{\d{e}}$t T\^{o}n and Atsushi Yagi }

\begin{aug}
\author{T\d{a} Vi$\hat{\d{e}}$t T\^{o}n\thanksref{t2}\ead[label=e1]{taviet.ton@ist.osaka-u.ac.jp}}
and 
\author{ Atsushi Yagi\thanksref{t3}\ead[label=e2]{yagi@ist.osaka-u.ac.jp}}

\thankstext{t2}{This work is supported by the Japan Society for the Promotion of Science.}
\thankstext{t3}{This work is supported by Grant-in-Aid for Scientific Research (No. 20340035) of the Japan Society for the Promotion of Science.}
\runauthor{T\d{a} Vi$\hat{\d{e}}$t T$\hat{\rm o}$n and Atsushi Yagi}

\affiliation{Osaka University}

\address{Department of Information and Physical Science\\
 Graduate School of Information Science and Technology\\
  Osaka University\\
Suita Osaka 565-0871, Japan\\
\printead{e1}\\
\phantom{E-mail:\ }\printead*{e2}}

\end{aug}

\begin{abstract}
This paper devotes to studying abstract stochastic evolution  equations in M-type 2 Banach spaces. First, we handle  nonlinear evolution equations with multiplicative noise. The existence and uniqueness of local and global mild solutions under  linear growth and Lipschitz conditions on coefficients are presented.  The regular dependence of solutions on initial data is also studied. Second, we investigate   linear evolution equations with additive noise. The existence and uniqueness of strict  and mild solutions and  their regularity are shown.  Finally, we explore semilinear evolution equations  with additive noise. We concentrate on the existence, uniqueness and regular dependence of solutions on initial data.
\end{abstract}

\begin{keyword}[class=MSC]
\kwd[Primary ]{60H15}
\kwd{35R60}
\kwd[; secondary ]{47D06}
\end{keyword}

\begin{keyword}
\kwd{stochastic evolution equations}
\kwd{analytic semigroups}
\kwd{M-type 2 Banach spaces}
\end{keyword}
\end{frontmatter}

\section {Introduction}
The theory of stochastic partial differential equations in Hilbert spaces has been studied from 1970s. The basic theoretical problem on existence and uniqueness of solutions and the problem on regularity and regular dependence on initial data of solutions  are still of great interest today. 
Two main approaches are known to the abstract stochastic  evolution equations, namely, the variational methods and the semigroup methods. 
Some early work in the first approach are due to Bensoussan and Temam \cite{BensoussanTemam1,BensoussanTemam2}.
The fundamental work on monotone stochastic evolution equations is also due to Pardoux \cite{Pardoux1,Pardoux2}. There are many other important contributions in this approach, for examples Krylov-Rosovskii \cite{Krylov}, Pr\'{e}v\^{o}t and R\"{o}ckner \cite{PrevotRockner}, Viot \cite{Viot1,Viot2}, Gawarecki  and Mandrekar \cite {GawareckiMandrekar}  and references therein. 

Let us introduce the second approach. The semigroup methods, which were  initiated by the invention of the analytic semigroups in the middle of the last century, are characterized  by precise formulas representing the solutions of the Cauchy problem for deterministic evolution equations (see Hille \cite{Hille} and Yosida \cite{Yosida}). 
The analytical semigroup $S(t)=e^{-tA}$ generated by a linear operator $-A$ provides directly a fundamental solution to the Cauchy problem for an autonomous linear evolution equation
\begin{equation*}
\begin{cases}
\frac{dX}{dt}+AX=F(t), \quad\quad 0<t\leq T,\\
X(0)=X_0,
\end{cases}
\end{equation*}
and the solution is given by the formula $X(t)=S(t)X_0+\int_0^t S(t-s)F(s)ds.$ Similarly, a solution to the Cauchy problem for an autonomous nonlinear evolution equation
\begin{equation*}
\begin{cases}
\frac{dX}{dt}+AX=F(t,X), \quad\quad 0<t\leq T,\\
X(0)=X_0,
\end{cases}
\end{equation*}
can be obtained as a solution of an integral equation $X(t)=S(t)X_0+\int_0^t S(t-s)F(s,X(s))ds.$ For these problems, the solution formulas provide us important information on solutions such as uniqueness, regularity, smoothing effect and so forth. Especially, for nonlinear problems, one can derive Lipschitz continuity of solutions with respect to the initial values, even their Fr\'{e}chet differentiability. This powerful approach has been used in the study of stochastic evolution equations in Hilbert spaces.
Some early work  was proposed by Dawson  \cite{Dawson0,Dawson}, Curtain and Falb \cite{CurtainFalb}. More recent important contributions are due to Da Prato and his collaborations, see for examples \cite{prato-1,pratoFlandoliPriolaRockner,prato0,prato},  and references therein.

In this paper, we study abstract stochastic evolution equations in M-type 2 Banach spaces by using the semigroup methods. 
Let  $E$ be an M-type 2 real separable Banach space and $\mathcal B(E)$ be the Borel $\sigma$\,{-}\,field on  $E$.   Let $\{w_t, t\geq 0\}$ be a one-dimensional Brownian motion on a  complete probability space  $(\Omega, \mathcal F,\mathbb P)$ with a filtration $\{\mathcal F_t\}_{t\geq 0}$ satisfying the usual conditions  (see for example Arnold \cite{Arnold}, Friedman \cite{Fried}, Karatzas-Shreve \cite{IS}). 
We proceed to study abstract stochastic evolution  equations of the form 
\begin{equation} \label{E2}
\begin{cases}
dX+AXdt=F(t,X)dt+ G(t,X)dw_t,\\
X(0)=\xi,
\end{cases}
\end{equation}
where $A\,{:}\,\mathcal D(A)\subset E\to E$ is a densely defined, closed linear operator in $E$. The functions  $F$ and $ G$   are  $E$-valued random variables. The initial value $ \xi$ is an $\mathcal  F_0$-measurable  random variable.

A linear form of \eqref{E2} in an M-type 2 and UMD Banach space, i.e.  $F(t,x)=F(t)$ is depending only on $t$ and $G(t,x)=Bx$, where $B$ is a linear operator from the space to itself, is investigated in  Brze\'{z}niak \cite{Brzezniak}. The author showed a sufficient condition on the operator $A$ and the  linear operator $B$, which is assumed to be bounded as an operator from $\mathcal D(A)$ into $\mathcal D_A(\frac{1}{2},2)$, under which  \eqref{E2} has a strict solution. 

Our objective is to study  the existence and uniqueness, regularity and dependence on initial data of solutions of \eqref{E2}. Our contribution is threefold. First, we handle  nonlinear evolution equations with multiplicative noise. Second, we investigate linear evolution equations with additive noise. Finally, we concentrate on semilinear evolution equations with additive noise. 
The work on linear  and semilinear evolution equations with multiplicative noise is in preparation \cite{Ton1,Ton2}.

Let us describe the content of the present paper. In Section \ref{section2}, we introduce the stochastic integral in  the M-type 2 Banach space $E$, concepts of solutions of \eqref{E2}, function spaces with values in $E$ and an integral inequality of Volterra type.

In Section \ref{section3}, we show existence of solutions of \eqref{E2} as well as regular dependence of solutions on initial data under some conditions on the coefficients $F$ and $G$. This section contains three subsections. In the first subsection, we assume that $F$ and $G$ satisfy the linear growth  and  Lipschitz conditions.
Theorem \ref{linear growth and Lipschitz conditions} gives the existence of global mild solutions of \eqref{E2}. The proof of the theorem is similar to that in Da Prato-Zabczyk \cite{prato}. 
In the second subsection, we assume that $F$ and $G$ satisfy the local  linear growth  and  local Lipschitz conditions.
 Theorem \ref{local existence theorem} shows the existence of maximal local mild solutions of \eqref{E2}. The proof is based on Lemma \ref{lem1}, Lemma \ref{lem2} and Theorem \ref{linear growth and Lipschitz conditions}. Noting that these lemmas come from  earlier results in the theory of ordinary differential equations (see for example Friedman \cite{Fried}). 
 Theorem \ref{dependence of local mild solution on the initial data} gives the dependence of the maximal local mild solution on the initial data.
 In the last subsection, we assume that $F$ and $G$ satisfy the  linear growth  and   local Lipschitz conditions.
Theorem \ref{thm1} demonstrates the existence of global mild solutions of \eqref{E2}. Theorem \ref{dependence of global mild solution on the initial data} presents  the dependence of the global mild solutions on the initial data. 

In Section \ref{section4}, we concentrate on a class of equations of \eqref{E2}, namely,  linear evolution equations  with additive noise.  We assume that $F(t,x)=F(t)$ and $G(t,x)=G(t)$ are depending only on $t$. These  functions  are considered in the spaces $\mathcal F^{\beta,\sigma}((0,T];E) $ and $\mathcal B((0,T];E),$ which will be defined in Section \ref{section2}. Theorem \ref{StrictSolutions} gives the existence of  strict solutions.   
Theorem \ref{regularity theorem autonomous linear evolution equation} explores the regularity of mild solutions without  the assumption  
$|AS(t)|\leq c_\delta t^{-\delta}, t\in [0,T], \delta\in (0,\beta)$ of Theorem  \ref{StrictSolutions}.

In Section \ref{section5}, we set $F(t,x)=F_1(x)+F_2(t)$ and $G(t,x)=G(t),$ where $F_1, F_2$ and $G$  are depending only on $x$ and  $t$, respectively.   The  corresponding  equation \eqref{semilinear evolution equation} is then  the form of   semilinear evolution equations  with additive noise. We suppose  that the function $F_1$ is defined only on a subset of the space $E$, namely $\mathcal D(F_1)= \mathcal D(A^\eta),$ and that $F_1$ satisfies a Lipschitz condition on its domain (see \eqref{AbetaLipschitzcondition}). To treat  \eqref{semilinear evolution equation}  we require that the initial condition  takes values in a smaller space, say $\mathcal D(A^\beta)$.
Theorem \ref{semilinear evolution equationTheorem1} proves the existence of mild solutions in the function space $ \mathcal C((0,T_{F,G,\xi}];\mathcal D(A^\eta))\cap \mathcal C([0,T_{F,G,\xi}];\mathcal D(A^\beta))$. Theorem  \ref{semilinear evolution equationMoreRegular}  gives a more stronger regularity under more regular initial values. Theorem \ref{continuityofsolutionsininitialdata}   presents some results on regular  dependence of solutions on initial data. Theorem \ref{semilinear evolution equationTheorem2} shows the existence of solutions for a critical case of the Lipschitz condition on $F_1$. Finally, Theorem \ref{continuityofsolutionsininitialdata2} explores the regular dependence of solutions on initial data for this critical case.
\section{Preliminary}  \label{section2}
\subsection{Stochastic integrals in M-type 2 Banach spaces}
This subsection reviews the construction and some properties of the stochastic integral in M-type 2 real separable space $E$. All details in this subsection one can find in the work of  Dettweiler \cite{Dettweiler1,Dettweiler,Dettweiler3},  Pisier \cite{Pisier0,Pisier} and Brze\'{z}niak \cite{Brzezniak}.
\begin{definition}[Pisier \cite{Pisier}]  \label{MType2BanachSpace}
A Banach sapce $E$ is of $M$-type 2 (or martingale type 2) if there is a constant $c(E)$ such that for all $E$-valued martingale $\{M_n\}_n$ the  inequality 
$$\sup_n \mathbb E|M_n|^2 \leq c(E) \sum_{n\geq 0}\mathbb E |M_n-M_{n-1}|^2$$
 holds with the convention $M_{-1}=0.$
\end{definition}
\begin{example}
Every $L^p$ space with $p\in[2,\infty)$ is of $M$-type 2.
\end{example}
Let us first define the stochastic integral for step functions. Let $f:[0,T]\times \Omega\to E$ be an adapted random step function, i.e. there exist  sequences $\{t_i\}_0^n: 0=t_0<\cdots<t_n=T$  and $\{f_i\}_0^{n-1}: f_i\in L^2(\Omega,\mathcal F_{t_i}, \mathbb P; E)$ such that $f(s)=f_i$ a.e. for $s\in[t_i,t_{i+1})$. Then the stochastic integral of $f$ on $[0,T]$ with respect to $w_t$ is defined by
$$I_T(f):=\int_0^T f(t)dw_t=\sum_0^{n-1} (w_{t_{i+1}}-w_{t_i})f_i.$$
It is obvious  that $I_T(f)$ is $\mathcal F_T$-measurable. In addition, by putting 
$$M_k=\sum_{i=0}^k (w_{t_{i+1}}-w_{t_i})f_i,$$  
then $I_T(f)=M_{n-1}$ and $\{M_k\}_{k=0}^{n-1}$ is a martingale. In view of Definition \ref{MType2BanachSpace}, we have
\begin{align} 
\mathbb E|I_T(f)|^2\leq& \sup_k \mathbb E|M_k|^2
\leq  c(E) \sum_{k=0}^{n-1}\mathbb E| (w_{t_{k+1}}-w_{t_k})f_i |^2  \label{Eq-1}\\
= & c(E) \sum_{k=0}^{n-1}\mathbb E|w_{t_{k+1}}-w_{t_k}|^2 \mathbb E|f_i |^2  \notag\\
=& c(E) \sum_{k=0}^{n-1}(t_{k+1}-t_k) \mathbb E|f_i |^2 \notag\\
=& c(E) \int_0^T \mathbb E|f(t)|^2dt. \notag
\end{align}

Denote by $\mathcal P_\infty$ the predictable $\sigma$\,{-}\,field on $\Omega_{\infty}=[0,\infty)\times \Omega$  generated by sets of the form 
$$(s,t]\times K_1, \,\, 0\leq s<t<\infty, K_1\in \mathcal F_s  \text{ and } \{0\}\times K_2,\,\, K_2\in \mathcal F_0,$$
and denote by $\mathcal P_T$ the restriction of $\mathcal P_\infty$ to $\Omega_T=[0,T]\times \Omega.$ 
A process $\phi$ is called predictable if it is measurable from  $(\Omega_T, \mathcal P_T)$ into $(E,\mathcal B(E))$.
We then denote by $\mathcal N^2(0,T)$  the set of all $E$\,{-}\,valued predictable  processes $\phi$ such that $\mathbb E\int_0^T |\phi(t)|^2 dt<\infty$. Thanks to the inequality \eqref{Eq-1}, one can define the stochastic integral for  functions in $\mathcal N^2(0,T)$ (a set  $\mathcal N^2(a,b), a,b\in \mathbb R,$ and the integral for functions in  $\mathcal N^2(a,b)$   are defined similarly). Indeed, due to  \eqref{Eq-1}, it is not hard to show that the limit in the following definition exists and does not depend on the actual choice of step functions.
\begin{definition}[stochastic integrals]
Let $f\in \mathcal N^2(0,T)$. The stochastic integral of $f$ on $[0,t], 0\leq t\leq T$ is defined by
$$I_t(f):=\int_0^t f(s) dw_s=\lim_{n\to\infty} \int_0^t \phi_n(s)dw_s \hspace{1cm} (\text{limit in } L^2(\mathbb P))
$$
where $\{\phi_n\}_n$ is a sequence of step functions such that
$$\lim_{n\to\infty} \mathbb E \int_0^t|f(s)-\phi_n(s)|^2 ds=0.$$
\end{definition}
\begin{proposition}\label{IntegralInequality}
Let $f\in \mathcal N^2(0,\infty)$. Then
\begin{itemize}
  \item [{\rm (i)}] $\mathbb E|I_t(f)|^2\leq c(E) \int_0^t \mathbb E|f(s)|^2ds$, \quad $t\geq 0$.
  \item [{\rm (ii)}] $\{I(f)\}_t$ is an $E$-valued continuous martingale and $I(f)\in \mathcal N^2(0,\infty).$
  \item [{\rm (iii)}] For any $p>1, T>0$ 
$$\mathbb E \sup_{[0,T]} \Big|\int_0^t \phi(s) dw_s\Big|^p \leq \Big(\frac{p}{p-1}\Big)^p c_p(E) \mathbb E\Big[\int_0^T |\phi(s)|^2 ds\Big]^{\frac{p}{2}},$$
where $c_p(E)$ is some constant depending only on $p, E$.
\end{itemize}
\end{proposition}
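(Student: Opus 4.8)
The plan is to prove all three assertions by the standard route: establish each estimate first for an adapted step function, where $I_t$ is the explicit finite sum of martingale increments appearing in the construction, and then pass to the limit along an approximating sequence of step functions $\{\phi_n\}$ with $\mathbb E\int_0^t|f(s)-\phi_n(s)|^2\,ds\to 0$.

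For (i), I would start from inequality \eqref{Eq-1}, which already yields $\mathbb E|I_t(\phi_n)|^2\leq c(E)\int_0^t\mathbb E|\phi_n(s)|^2\,ds$ for every step function $\phi_n$. By the very definition of the integral, $I_t(\phi_n)\to I_t(f)$ in $L^2(\mathbb P)$, so the left-hand sides converge to $\mathbb E|I_t(f)|^2$; simultaneously the $L^2$-convergence of $\phi_n$ to $f$ in $\mathcal N^2$ forces $\int_0^t\mathbb E|\phi_n(s)|^2\,ds\to\int_0^t\mathbb E|f(s)|^2\,ds$. Passing to the limit gives (i).

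For the martingale property in (ii), each $I_t(\phi_n)$ is a martingale, being a partial sum of increments $(w_{t_{k+1}}-w_{t_k})f_k$ with $f_k$ adapted to $\mathcal F_{t_k}$; since conditional expectation is an $L^2$-contraction, the identity $\mathbb E[I_t(\phi_n)\mid\mathcal F_s]=I_s(\phi_n)$ passes to the $L^2$-limit to give $\mathbb E[I_t(f)\mid\mathcal F_s]=I_s(f)$. Continuity is the delicate point. I would first upgrade \eqref{Eq-1} to a \emph{maximal} inequality for step functions by combining it with Doob's $L^2$-inequality, obtaining $\mathbb E\sup_{[0,T]}|I_t(\phi)|^2\leq C(E)\,\mathbb E\int_0^T|\phi(s)|^2\,ds$; note each step-function integral is pathwise continuous in $t$ because $w_t$ is. Applying this to the differences $\phi_n-\phi_m$, I can extract a subsequence along which $\sup_{[0,T]}|I_t(\phi_{n_{k+1}})-I_t(\phi_{n_k})|$ is almost surely summable, so the continuous processes $I_\bullet(\phi_{n_k})$ converge uniformly on $[0,T]$ almost surely; the limit is therefore continuous and coincides with $I_\bullet(f)$. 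Finally $I(f)\in\mathcal N^2(0,\infty)$, since a continuous adapted process is predictable and, by (i), $\mathbb E\int_0^T|I_t(f)|^2\,dt\leq c(E)\int_0^T\!\int_0^t\mathbb E|f(s)|^2\,ds\,dt<\infty$ for each finite $T$.

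For (iii), I would once more reduce to step functions by approximation and factor the estimate into two pieces. Since $I(\phi)$ is a continuous martingale, Doob's $L^p$-maximal inequality for $p>1$ gives $\mathbb E\sup_{[0,T]}|I_t(\phi)|^p\leq\big(\tfrac{p}{p-1}\big)^p\,\mathbb E|I_T(\phi)|^p$, so it remains only to bound the single-time moment by $\mathbb E|I_T(\phi)|^p\leq c_p(E)\,\mathbb E\big[\int_0^T|\phi(s)|^2\,ds\big]^{p/2}$. \textbf{The hard part is exactly this last inequality for general $p$}: the case $p=2$ is essentially Definition \ref{MType2BanachSpace}, but for arbitrary $p>1$ it is the Banach-space Burkholder--Davis--Gundy inequality, which is not elementary and rests on the characterisation of M-type 2 (equivalently $2$-uniformly smooth) spaces due to Pisier together with the martingale moment estimates of Dettweiler. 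Rather than re-derive these, I would invoke their results \cite{Pisier,Dettweiler}; combining the two factors and letting the approximating step functions converge then completes (iii).
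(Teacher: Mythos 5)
Your proposal is correct, but note that the paper itself offers no proof of this proposition: it is stated as a review of known results, with all details referred to Dettweiler, Pisier and Brze\'{z}niak. Your sketch is exactly the standard argument found in those references --- inequality \eqref{Eq-1} plus $L^2$-limits for (i), Doob's $L^2$-maximal inequality and almost-sure uniform convergence along a subsequence for the continuity in (ii), and for (iii) the factorisation into Doob's $L^p$-maximal inequality (which accounts for the factor $(\frac{p}{p-1})^p$ in the stated constant) times the single-time moment bound $\mathbb E|I_T(\phi)|^p\leq c_p(E)\,\mathbb E[\int_0^T|\phi(s)|^2ds]^{p/2}$, the latter being the genuinely nontrivial M-type 2 Burkholder--Davis--Gundy estimate that you, like the authors, appropriately outsource to Pisier and Dettweiler. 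Your reading of $I(f)\in\mathcal N^2(0,\infty)$ as square-integrability over each finite horizon is the only sensible one and is the right way to close (ii). No gaps.
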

\subsection{Concept of solutions}
 Throughout this paper,  if not specified, we always assume that $F$ and $ B$ are   measurable  from  $ (\Omega_T \times E, \mathcal P_T\times \mathcal B(E))$ into $(E,\mathcal B(E))$.  In addition, we assume that  $(-A)$ generates  a strongly continuous semigroup $S(t)=e^{-tA}, t\geq 0$ on $E.$ We then set  $M_t=\sup_{0\leq s\leq t} |S(s)|.$

Following the definition of  local solutions to stochastic differential equations (e.g. Arnold \cite{Arnold}, Friedman \cite{Fried}, Mao \cite{Mao}), we present a definition of local mild solutions for \eqref{E2}.
\begin{definition} \label{def0}
Let $\tau$ be a stopping time such that $\tau\leq T$ a.s. 
A predictable $E$-valued continuous process  $\{X(t) , t\in [0,\tau)\}$   is called a local mild solution  of \eqref{E2}  if  the followings are satisfied. \begin{itemize}
  \item  [\rm (i)] There exists a sequence $\{\tau_k\}_{k=1}^\infty$ of  stopping times such that $0\leq \tau_k\leq \tau_{k+1}, k\geq 1 $   and $\lim_{k\to\infty} \tau_k=\tau$ \, a.s.
  \item  [\rm (ii)] For  $ t\in[ 0,T] $ and $k\geq 1$, $S(t-\cdot)G(\cdot,X(\cdot))\in \mathcal N^2(0,t\wedge \tau_k)$, i.e.
            $$\mathbb E\int_0^{t\wedge \tau_k} |S(t-s)G(s,X(s))|^2 ds<\infty. $$
  \item  [\rm (iii)] For  $ t\in[ 0,T] $ and $k\geq 1$ 
$$\int_0^{t\wedge \tau_k} |S(t-s)F(s,X(s))| ds<\infty \hspace{2cm} \text{ a.s.  } $$
  \item   [\rm (iv)]  For  $ t\in[ 0,T] $ and $k\geq 1$ 
\begin{equation*}
\begin{aligned}
X(t\wedge \tau_k)=&S(t\wedge\tau_k)\xi +\int_0^{t\wedge\tau_k}S(t-s) F(s, X(s))ds\\
& +\int_0^{t\wedge\tau_k} S(t-s) G(s,X(s))dw_s \hspace{2cm}\text{a.s. } \end{aligned}
\end{equation*}
\end{itemize}
 If in addition  $\lim_{t\uparrow \tau} |X(t)|=\infty$ on $\{\tau< T\},$ then  $\{X(t) , t\in [0,\tau)\}$ is called a maximal local mild solution.  A maximal local mild solution $\{X(t),0\leq t<\tau\}$ is said to be unique if any other maximal local  mild solution $ \{\bar X(t),0\leq t<\bar \tau\}$  is indistinguishable from it, means that $\mathbb P\{\tau=\bar\tau\}=\mathbb P\{X(t)=\bar X(t) \text { for every } t\in [0,\tau)\}=1$.
\end{definition}
 The following definition of  global mild solutions is presented in Da Prato-Zabczyk \cite{prato} and Brze\'{z}niak \cite{Brzezniak}.
\begin{definition}\label{Def1}
A predictable $E$-valued continuous process $X(t), t\in [0,T]$ is called a global mild solution (briefly, a mild solution) of \eqref{E2} if $X\in \mathcal N(0,T)$ 
and for every $t\in[0,T]$
\begin{align}
 \label{DefinitionGlobalMildSolutions}X(t)=&S(t)\xi +\int_0^tS(t-s) F(s, X(s))ds\\
&+ \int_0^t S(t-s) G(s,X(s))dw_s \hspace{2cm} \text{a.s.}\notag
\end{align}
\end{definition}  
It is obvious that a maximal local mild solution $\{X(t),t\in [0,\tau)\}$ of \eqref{E2} is a  mild solution on $[0,T]$ if  $\tau=T$ a.s. and $X$ satisfies \eqref{DefinitionGlobalMildSolutions}  and is continuous at $t=T$.
\begin{definition}\label{Def2}
A predictable $E$-valued continuous process $X(t) , t\in [0,T]$ is called a strict solution of \eqref{E2} if
\begin{itemize}
  \item [\rm(i)] $G(\cdot,X(\cdot))\in \mathcal N^2(0,T)$,
  \item [\rm(ii)] $|\int_0^t F(s,X(s))ds| <\infty, $ \hspace{1cm} $t\in[0,T]$,
  \item [\rm(iii)] $X(t)\in D(A) $  and $\int_0^t \mathbb E|X(s)|_{D(A)}^2 ds<\infty,$ \hspace{1cm}$t\in (0,T],$
  \item [\rm(iv)]  for every $t\in(0,T]$
$$X(t)=\xi -\int_0^t AX(s)ds+\int_0^t F(s, X(s))ds+ \int_0^t  G(s,X(s))dw_s \hspace{1cm} \text{a.s.}$$
\end{itemize}
 A strict (mild) solution $\{X(t),0\leq t\leq T\}$ is said to be unique if any other strict (mild) solution $ \{\bar X(t),0\leq t\leq T\}$  is indistinguishable from it, means that $\mathbb P\{X(t)=\bar X(t) \text { for every } t\in [0,T]\}=1$.
\end{definition}
\subsection{Function spaces with values in a Banach space}
Let $I$ be an interval of the real line. By $\mathcal B(I;E)$, we denote the space of uniformly bounded $E$-valued functions on $I$. The space is a Banach space with the supremum norm
$$|G|_{\mathcal B(I;E)}=\sup_{t\in I} |G(t)|, \quad\quad G\in \mathcal B(I;E).$$
Denotes by $\mathcal C(I;E)$ the space of $E$-valued continuous functions. The following well-known result is used very often.
\begin{theorem}
Let $A$ be a closed linear operator of $E$ and $a,b\in \mathbb R, a\leq b.$
\begin{itemize}
\item [{\rm (i)}] If $G\in \mathcal C([a,b];E)$ and $AG\in \mathcal C([a,b];E)$ then
$$A\int_a^b G(t)dt=\int_a^b AG(t)dt.$$
\item [{\rm (ii)}]  If $G\in \mathcal N^2(a,b) $ and $AG\in  \mathcal N^2(a,b)$ then
$$A\int_a^b G(t)dw_t=\int_a^b AG(t)dw_t.$$
\end{itemize}
\end{theorem}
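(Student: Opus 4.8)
The plan is to prove both parts by the same principle: since $A$ is closed, it may be pulled outside a finite linear combination and then commuted with the limit that defines the integral. The only genuine work is arranging, in each case, an approximating sequence along which both the integrand and its image under $A$ converge.

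For (i) I would work with Riemann sums. Fix a sequence of partitions $a=t_0^n<\cdots<t_{k_n}^n=b$ of $[a,b]$ whose mesh tends to $0$ and set $S_n=\sum_i G(t_i^n)(t_{i+1}^n-t_i^n)$ and $T_n=\sum_i AG(t_i^n)(t_{i+1}^n-t_i^n)$; here each $G(t_i^n)$ lies in $\mathcal D(A)$, since $AG$ is defined on all of $[a,b]$. Because $G$ and $AG$ are continuous on the compact interval $[a,b]$ they are uniformly continuous, so $S_n\to\int_a^b G(t)\,dt$ and $T_n\to\int_a^b AG(t)\,dt$ in $E$. By linearity $AS_n=T_n$ for every $n$, and the closedness of $A$ then forces $\int_a^b G(t)\,dt\in\mathcal D(A)$ with $A\int_a^b G(t)\,dt=\int_a^b AG(t)\,dt$. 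This part is routine.

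For (ii) the natural device is to lift the integrand to the graph space. Equip $Y=\mathcal D(A)$ with the norm $|x|_Y=(|x|^2+|Ax|^2)^{1/2}$; since $A$ is closed, $Y$ is a Banach space and $x\mapsto(x,Ax)$ identifies $Y$ isometrically with a closed subspace of $E\times E$ (taken with the $\ell^2$ product norm). As $E$ is separable and of M-type $2$, so is $E\times E$, and hence so is the closed subspace $Y$. The hypotheses $G,AG\in\mathcal N^2(a,b)$ say exactly that $G$ is a $Y$-valued predictable process with $\mathbb E\int_a^b|G(t)|_Y^2\,dt<\infty$, so $G\in\mathcal N^2(a,b;Y)$ and the integral $\int_a^b G(t)\,dw_t$ is well defined as a $Y$-valued, in particular $\mathcal D(A)$-valued, random variable by the construction of Section \ref{section2}.

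It then remains to observe that any bounded linear map $L$ out of $Y$ commutes with the stochastic integral. For a step function $\phi=\sum_i f_i\mathbf 1_{[t_i,t_{i+1})}$ this is immediate, $L\int_a^b\phi\,dw_t=\sum_i(w_{t_{i+1}}-w_{t_i})Lf_i=\int_a^b L\phi\,dw_t$, and it passes to the limit since, by Proposition \ref{IntegralInequality}(i), $\int\phi_n\,dw_t\to\int G\,dw_t$ in $L^2$ whenever $\phi_n\to G$ in $\mathcal N^2(a,b;Y)$, while boundedness of $L$ gives $L\phi_n\to LG$ in $\mathcal N^2(a,b;E)$. Taking $L$ to be the inclusion $Y\hookrightarrow E$ shows the $Y$-valued integral coincides with the $E$-valued integral $\int_a^b G\,dw_t$, and taking $L=A\colon Y\to E$ yields $A\int_a^b G(t)\,dw_t=\int_a^b AG(t)\,dw_t$. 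The one point that is more than bookkeeping is the claim that the graph space $Y$ is again of M-type $2$ (and separable), so that the stochastic integral of the lifted process is actually available; this I expect to be the crux, and it follows directly from Definition \ref{MType2BanachSpace} together with the stability of M-type $2$ under finite products and passage to closed subspaces.
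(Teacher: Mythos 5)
The paper states this theorem without proof, citing it as a well-known fact, so there is no in-paper argument to compare against; judged on its own, your proposal is correct and complete. Part (i) is the routine Riemann-sum-plus-closedness argument. For part (ii) you have correctly identified the crux: one must know that $\int_a^b G\,dw_t$ lands in $\mathcal D(A)$ before one can even write $A\int_a^b G\,dw_t$, and the graph-space lifting is exactly the device that delivers this. Your verification that $Y=\mathcal D(A)$ with the graph norm is a separable M-type 2 space is sound: each component of an $E\times E$-valued martingale is an $E$-valued martingale, so Definition \ref{MType2BanachSpace} passes to the $\ell^2$ product with the same constant, and it trivially restricts to closed subspaces; separability is inherited as well. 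The commutation of a bounded operator $L\colon Y\to E$ with the integral, checked on step functions and passed to the limit via Proposition \ref{IntegralInequality}(i), is also correct, and applying it with $L$ the inclusion and with $L=A$ closes the argument. The only point you pass over silently is why $G$ is predictable \emph{as a $Y$-valued process} given only that $G$ and $AG$ are separately predictable as $E$-valued processes; this is where separability is genuinely used a second time, since for separable $E$ one has $\mathcal B(E\times E)=\mathcal B(E)\times\mathcal B(E)$, so joint measurability of $(G,AG)$ follows from componentwise measurability and restricts to the Borel structure of the closed subspace $Y$. With that one sentence added, the proof is airtight, and it is arguably cleaner than the alternative of hand-constructing $\mathcal D(A)$-valued step approximations $\phi_n$ with $\phi_n\to G$ and $A\phi_n\to AG$ simultaneously, which amounts to the same graph-norm approximation in disguise.
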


For an exponent $\sigma>0$, $\mathcal C^\sigma([a,b];E), a\leq b$ denotes the space of functions which are H\"{o}lder continuous on $[a,b]$ with exponent $\sigma$. The space is equipped with  norm
$$|G|_{\mathcal C^\sigma([a,b];E)}=\sup_{a\leq s<t\leq b} \frac{|G(t)-G(s)|}{(t-s)^\sigma}.$$
The Kolmogorov test gives a sufficient condition for a stochastic process to be H\"{o}lder continuous.
\begin{theorem}[Kolmogorov test, see e.g. Da Prato-Zabczyk \cite{prato} ]  \label{Kolmogorov test}
Let $\zeta(t), t\in [0,T]$ be  an $E$-valued stochastic process such that  for some constants $c>0, \epsilon_i>0, i=1,2$ and all $t,s\in [0,T]$
\begin{equation} \label{Kolmogorov test condition}
\mathbb E|\zeta(t)-\zeta(s)|^{\epsilon_1}\leq c |t-s|^{1+\epsilon_2}.
\end{equation}
Then $\zeta$ has  an version whose $\mathbb P$-almost all trajectories are H\"{o}lder continous functions with an  arbitrary exponent smaller than $\frac{\epsilon_2}{\epsilon_1}$.
\end{theorem}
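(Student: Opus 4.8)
The plan is to prove this by the classical dyadic--chaining argument. Since the construction uses only the norm $|\cdot|$ of $E$ and the triangle inequality, the Banach (in particular, M-type 2) structure plays no role; the result is a purely pathwise regularization once the moment bound \eqref{Kolmogorov test condition} is available. Fix an arbitrary exponent $\alpha\in(0,\epsilon_2/\epsilon_1)$; I will produce a version of $\zeta$ whose trajectories are $\alpha$-H\"older on $[0,T]$, and the conclusion follows since $\alpha$ is arbitrarily close to $\epsilon_2/\epsilon_1$.

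First I would reduce to the dyadic rationals. Let $D_n=\{k2^{-n}T:0\le k\le 2^n\}$ and $D=\bigcup_{n\ge0}D_n$, a countable dense subset of $[0,T]$. Applying Chebyshev's inequality to \eqref{Kolmogorov test condition} for two consecutive points of $D_n$ gives
$$\mathbb P\big(|\zeta((k+1)2^{-n}T)-\zeta(k2^{-n}T)|\ge 2^{-n\alpha}\big)\le c\,T^{1+\epsilon_2}\,2^{-n(1+\epsilon_2-\alpha\epsilon_1)}.$$
Summing over the $2^n$ adjacent pairs at level $n$ and then over $n$, the choice $\alpha<\epsilon_2/\epsilon_1$ makes the net exponent $\epsilon_2-\alpha\epsilon_1>0$, so the double series converges. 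By the Borel--Cantelli lemma there is a set $\Omega_0$ with $\mathbb P(\Omega_0)=1$ such that, for each $\omega\in\Omega_0$, there is an integer $N(\omega)$ with
$$\max_{0\le k<2^n}|\zeta((k+1)2^{-n}T)(\omega)-\zeta(k2^{-n}T)(\omega)|<2^{-n\alpha}\qquad\text{for all }n\ge N(\omega).$$

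The chaining step is the main technical point, and the place where the exponents must be tracked carefully. Fix $\omega\in\Omega_0$ and take $s,t\in D$ with $0<t-s<2^{-N(\omega)}T$. Choosing $m$ so that $2^{-(m+1)}T<t-s\le 2^{-m}T$ and telescoping a dyadic path from $s$ to $t$ through the grids $D_n$ with $n\ge m$, each increment is controlled by the bound above; summing the geometric series $\sum_{n\ge m}2^{-n\alpha}$ yields $|\zeta(t)(\omega)-\zeta(s)(\omega)|\le C_\alpha\,(t-s)^{\alpha}$ with $C_\alpha$ independent of $s,t$. Thus on $\Omega_0$ the trajectory $s\mapsto\zeta(s)(\omega)$ is $\alpha$-H\"older, hence uniformly continuous, on the dense set $D$.

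Finally I would construct and identify the version. For $\omega\in\Omega_0$ set $\tilde\zeta(t)(\omega)=\lim_{D\ni s\to t}\zeta(s)(\omega)$ (and $\tilde\zeta\equiv0$ off $\Omega_0$); uniform continuity on $D$ guarantees the limit exists and inherits the same $\alpha$-H\"older constant, so the trajectories of $\tilde\zeta$ are $\alpha$-H\"older on $[0,T]$. To see that $\tilde\zeta$ is a version of $\zeta$, fix $t$ and let $D\ni s\to t$: by \eqref{Kolmogorov test condition} we have $\mathbb E|\zeta(s)-\zeta(t)|^{\epsilon_1}\to0$, so $\zeta(s)\to\zeta(t)$ in probability, while $\zeta(s)\to\tilde\zeta(t)$ almost surely on $\Omega_0$; uniqueness of limits forces $\tilde\zeta(t)=\zeta(t)$ a.s. As $\alpha<\epsilon_2/\epsilon_1$ was arbitrary, $\tilde\zeta$ is the desired version.
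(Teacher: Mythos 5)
Your argument is correct and is exactly the classical Kolmogorov--Chentsov dyadic chaining proof; the paper itself states this theorem without proof, citing Da Prato--Zabczyk, and the proof in that reference is the same Chebyshev--Borel--Cantelli--chaining argument you give (valid verbatim in a separable Banach space, since only the norm and the triangle inequality are used). The only step you leave implicit is passing from the H\"older bound for pairs with $t-s<2^{-N(\omega)}T$ to the bound on all of $[0,T]$, which follows by splitting a general pair into at most $\lceil (t-s)2^{N(\omega)}/T\rceil$ short subintervals at the cost of an $\omega$-dependent constant.
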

When the process $\zeta(t)$ in Theorem \ref{Kolmogorov test} is a Gaussian process, one can weaken the condition \eqref{Kolmogorov test condition}.
\begin{theorem}  \label{Kolmogorov testGaussian}
Let $\zeta(t), t\in [0,T]$ be an $E$-valued   Gaussian process such that $\mathbb E \zeta(t)=0, t\geq 0$, and that for some constants $c>0, \epsilon\in (0,1]$ and all $t,s\in [0,T]$
$$
\mathbb E|\zeta(t)-\zeta(s)|^2\leq c |t-s|^\epsilon.
$$
Then there exists a modification of $\zeta$ with $\mathbb P$-almost all trajectories being H\"{o}lder continous functions with an  arbitrary exponent smaller than $\frac{\epsilon}{2}$.
\end{theorem}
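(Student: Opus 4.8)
The plan is to upgrade the $L^2$ bound in the hypothesis to an $L^p$ bound for arbitrarily large $p$ by exploiting the Gaussian structure, and then to feed the resulting estimate into the ordinary Kolmogorov test, Theorem \ref{Kolmogorov test}. The crucial observation is that for fixed $s,t$ the increment $\zeta(t)-\zeta(s)$ is a centered $E$-valued Gaussian random variable, so all of its moments are controlled by its second moment.

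More precisely, I would invoke the equivalence of moments for Gaussian random variables (a consequence of Fernique's integrability theorem, or of the Kahane--Khintchine inequalities): for every $p\geq 2$ there is a constant $C_p$, depending only on $p$, such that every centered $E$-valued Gaussian random variable $Y$ satisfies
\begin{equation*}
\left(\mathbb E|Y|^p\right)^{1/p}\leq C_p\left(\mathbb E|Y|^2\right)^{1/2}.
\end{equation*}
Applying this to $Y=\zeta(t)-\zeta(s)$ and combining with the hypothesis $\mathbb E|\zeta(t)-\zeta(s)|^2\leq c|t-s|^\epsilon$ yields
\begin{equation*}
\mathbb E|\zeta(t)-\zeta(s)|^p\leq C_p^{\,p}\,c^{\,p/2}\,|t-s|^{p\epsilon/2}, \qquad s,t\in[0,T].
\end{equation*}

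Now I would apply Theorem \ref{Kolmogorov test} with $\epsilon_1=p$ and $1+\epsilon_2=p\epsilon/2$, that is $\epsilon_2=p\epsilon/2-1$. This choice is admissible provided $\epsilon_2>0$, i.e. $p>2/\epsilon$, which is why $p$ must be taken large. The Kolmogorov test then furnishes a continuous modification of $\zeta$ whose trajectories are $\mathbb P$-almost surely H\"{o}lder continuous with any exponent strictly smaller than
\begin{equation*}
\frac{\epsilon_2}{\epsilon_1}=\frac{p\epsilon/2-1}{p}=\frac{\epsilon}{2}-\frac1p.
\end{equation*}
Letting $p$ run through a sequence tending to $\infty$, the exponent $\epsilon/2-1/p$ increases to $\epsilon/2$; since any two continuous modifications of the same process are indistinguishable (they agree almost surely at each fixed time, hence on a countable dense set, and then everywhere by continuity), all these modifications coincide. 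A countable intersection of the corresponding full-measure events then shows that a single modification of $\zeta$ has trajectories that are almost surely H\"{o}lder continuous with an arbitrary exponent smaller than $\epsilon/2$, as claimed.

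The only genuinely delicate point is the Gaussian moment comparison, which is precisely where the Gaussian hypothesis---and, in the infinite-dimensional Banach setting, Fernique's theorem guaranteeing finiteness of all moments---is used; for scalar increments it reduces to the elementary identity $\mathbb E|N|^p=c_p(\mathbb E|N|^2)^{p/2}$ for $N$ centered Gaussian. Everything else is a routine application of the already-established Kolmogorov test together with the standard fact that continuous modifications are unique up to indistinguishability.
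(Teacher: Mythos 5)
Your argument is correct and is the canonical one: the equivalence of moments for centered $E$-valued Gaussian vectors (via Fernique's theorem) upgrades the second-moment bound to $\mathbb E|\zeta(t)-\zeta(s)|^p\leq C_p^p c^{p/2}|t-s|^{p\epsilon/2}$, and feeding this into Theorem \ref{Kolmogorov test} with $\epsilon_1=p$, $\epsilon_2=p\epsilon/2-1$ and letting $p\to\infty$ gives every exponent below $\frac{\epsilon}{2}$, with the countable intersection and indistinguishability of continuous modifications handled as you describe. The paper states this theorem without any proof (it is the standard Gaussian refinement of the Kolmogorov test found in Da Prato--Zabczyk), so there is no alternative route to compare against; your write-up correctly supplies the standard argument that the authors leave to the literature.
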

For two exponents $0<\sigma<\beta\leq 1$ we define a function space  $\mathcal F^{\beta, \sigma}((0,T]; E)$ as follows, see Yagi \cite{yagi} ($\mathcal F^{\beta, \sigma}((a,b]; E), a<b$ is defined similarly). The space  $\mathcal F^{\beta, \sigma}((0,T]; E)$ consists of all continuous function $f(t)$ on $(0,T]$ (resp. $[0,T]$) when $0<\beta<1$ (resp. $\beta=1$)  with the following three properties:
\begin{enumerate}
  \item When $\beta<1$, $t^{1-\beta} f(t) $ has a limit as $t\to 0$.
  \item The function $f$ is H\"{o}lder continuous with  exponent $\sigma$ and with the weight $s^{1-\beta+\sigma}$, i.e.
$$\sup_{0\leq s<t\leq T} \frac{s^{1-\beta+\sigma}|f(t)-f(s)|}{(t-s)^\sigma}=\sup_{0\leq t\leq T}\sup_{0\leq s<t}\frac{s^{1-\beta+\sigma}|f(t)-f(s)|}{(t-s)^\sigma}<\infty.$$
  \item 
  \begin{equation} \label{Fbetasigma3}
  \lim_{t\to 0} \sup_{0\leq s\leq t}\frac{s^{1-\beta+\sigma}|f(t)-f(s)|}{(t-s)^\sigma}=0.
  \end{equation}
\end{enumerate}  
Then  $\mathcal F^{\beta, \sigma}((0,T]; E)$ becomes a Banach space with  norm
$$|f|_{\mathcal F^{\beta, \sigma}}=\sup_{0\leq t\leq T} t^{1-\beta} |f(t)|+ \sup_{0\leq s<t\leq T} \frac{s^{1-\beta+\sigma}|f(t)-f(s)|}{(t-s)^\sigma}.$$
 
The following useful inequality follows the definition directly. For every $ f\in \mathcal F^{\beta, \sigma}((0,T]; E), 0<s<t\leq T$ we have
\begin{equation} \label{FbetasigmaSpaceProperty}
\begin{cases}
|f(t)|\leq |f|_{\mathcal F^{\beta, \sigma}} t^{\beta-1}, \\
 |f(t)-f(s)| \leq |f|_{\mathcal F^{\beta, \sigma}} (t-s)^{\sigma} s^{\beta-\sigma-1}.
\end{cases}
\end{equation}
In addition, it is not hard to show that
\begin{equation} \label{FbetaFgammasigmaSpaceProperty}
\mathcal F^{\gamma,\sigma} ((0,T];E)\subset \mathcal F^{\beta,\sigma} ((0,T];E), \hspace{2cm} 0<\sigma<\beta<\gamma\leq 1.
\end{equation}

The space $\mathcal F^{\beta, \sigma}((0,T]; E)$ is not  a trivial space. Indeed, we have
\begin{remark}[Yagi \cite{yagi}]
When  $0<\sigma<\beta<1$,  $f(t) =t^{\beta-1} g(t) \in \mathcal F^{\beta, \sigma}((0,T]; E),$ where  $g(t)$ is any $E$-valued  function on $[0,T]$ such that
 $g\in \mathcal C^\sigma([0,T];E)$
  and $g(0)=0.$
When  $0<\sigma<\beta=1$,  the space $ \mathcal F^{1, \sigma}((0,T]; E)$ includes the space of  H\"{o}lder continuous functions with  exponent $\sigma.$ 
\end{remark}
\subsection{Integral inequality of Volterra type}
Let us introduce an useful inequality of Volterra type that will be  used in this paper. The proof of the inequality can be found, for example, in Yagi \cite{yagi}.
\begin{lemma} \label{Integral inequality of Volterra type}
Let $a\geq 0, b>0,  \mu_1>0 $ and $\mu_2>0$ be constants. 
\begin{itemize}
\item [{\rm (i)}]
Let $\Gamma$ be the gamma function. Then  the function defined by the series 
 $$E_{\mu,\nu}(t)=\sum_{n=0}^\infty \frac{t^{n\nu}}{\Gamma(\mu_1+n\mu_2)}, \hspace{1cm} 0\leq t<\infty$$
 satisfies an estimate
 $$E_{\mu,\nu}(t)\leq \frac{2}{\Gamma_0 \nu_2} (1+t)^{2-\mu_1} e^{t+1}, \hspace{1cm} 0\leq t<\infty,$$
 where $\Gamma_0=\min_{0<s<\infty} \Gamma(s)$. 
\item [{\rm (ii)}]
Let $\varphi(t,s)$ be a nonnegative  continuous  function defined for $0\leq s<t\leq T.$ If $\varphi(t,s)$ satisfies the integral inequality 
$$\varphi(t,s) \leq a (t-s)^{\mu_1-1}+b \int_s^t (t-r)^{\mu_2-1}\varphi(r,s)dr, \quad\quad 0\leq s<t\leq T,$$
then
$$\varphi(t,s)\leq a \Gamma(\mu_1) (t-s)^{\mu_1-1} E_{\mu_1,\mu_2} ([b\Gamma(\mu_2)]^{\frac{1}{\mu_2}}(t-s)),\quad\quad 0\leq s<t\leq T.$$\end{itemize}
\end{lemma}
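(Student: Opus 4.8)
The plan is to treat the two parts separately: part~(i) is a self-contained analytic estimate on the generalized Mittag--Leffler function $E_{\mu_1,\mu_2}$, and part~(ii) is a generalized Gronwall (Henry-type) inequality which I would prove by iterating the integral inequality, using part~(i) both to sum the resulting series and to control the remainder. I would therefore prove (i) first, since its explicit bound is precisely what guarantees convergence in (ii).

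For part~(i), the two crude bounds---$\Gamma(\mu_1+n\mu_2)\ge\Gamma_0$ and a naive term-by-term comparison---are individually insufficient (the former loses the factorial decay, the latter the polynomial correction), so I would combine them. First I would record the elementary bound $\Gamma(x)\ge\Gamma_0$ for all $x>0$, together with the asymptotic-type inequality $\Gamma(\mu_1+y)\ge c\,(1+y)^{\mu_1-1}\Gamma(1+y)$ valid for all $y\ge 0$, which follows from the functional equation $\Gamma(x+1)=x\Gamma(x)$ and the convexity/monotonicity of $\Gamma$ on $[s_0,\infty)$, where $s_0$ is the minimiser with $\Gamma(s_0)=\Gamma_0$. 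Substituting the reciprocal bound into the series gives
\[
E_{\mu_1,\mu_2}(t)\le \frac{1}{c}\sum_{n=0}^{\infty}(1+n\mu_2)^{1-\mu_1}\,\frac{t^{\,n\mu_2}}{\Gamma(1+n\mu_2)}.
\]
Then I would compare this grid sum (spacing $\mu_2$) with the exponential series $e^{t}=\sum_{m\ge 0}t^{m}/m!$: grouping the grid points $n\mu_2$ that fall in each unit interval $[m,m+1)$ contributes the factor $\mu_2^{-1}$, the weight $(1+n\mu_2)^{1-\mu_1}$ and the fractional-power slack $t^{\{n\mu_2\}}$ get absorbed into $(1+t)^{2-\mu_1}$, and the remaining constants (including $\Gamma_0^{-1}$ and a single factor $e$) account for $\tfrac{2}{\Gamma_0\mu_2}$ and the shift $e^{t+1}$. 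The main obstacle here is purely bookkeeping: matching the precise exponent $2-\mu_1$ and the constant $\tfrac{2}{\Gamma_0}$ while keeping the grouping uniform across the full range $\mu_1,\mu_2>0$.

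For part~(ii), I would introduce the Volterra operator $(B\psi)(t,s)=b\int_s^t(t-r)^{\mu_2-1}\psi(r,s)\,dr$ and rewrite the hypothesis as $\varphi\le g+B\varphi$ with $g(t,s)=a(t-s)^{\mu_1-1}$. Iterating $n$ times yields $\varphi\le\sum_{k=0}^{n-1}B^{k}g+B^{n}\varphi$. The engine of the proof is the Beta-function identity
\[
\int_s^t(t-r)^{\mu_2-1}(r-s)^{\alpha-1}\,dr=\frac{\Gamma(\mu_2)\Gamma(\alpha)}{\Gamma(\mu_2+\alpha)}\,(t-s)^{\mu_2+\alpha-1},
\]
obtained by the substitution $r=s+(t-s)u$; applied inductively it gives the closed form $B^{k}g(t,s)=a\,\Gamma(\mu_1)\dfrac{[b\Gamma(\mu_2)]^{k}}{\Gamma(\mu_1+k\mu_2)}(t-s)^{\mu_1+k\mu_2-1}$. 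Summing over $k$ and setting $z=[b\Gamma(\mu_2)]^{1/\mu_2}(t-s)$ collapses the series to exactly $a\,\Gamma(\mu_1)(t-s)^{\mu_1-1}E_{\mu_1,\mu_2}(z)$, which is the asserted bound.

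It remains to justify passing to the limit, i.e.\ that the remainder $B^{n}\varphi$ vanishes; this is the one genuinely delicate point, since $\varphi$ is only assumed continuous on $\{0\le s<t\le T\}$ and may be singular at the diagonal. By the same Beta computation the iterated operator satisfies $B^{n}\psi(t,s)=\dfrac{[b\Gamma(\mu_2)]^{n}}{\Gamma(n\mu_2)}\int_s^t(t-r)^{n\mu_2-1}\psi(r,s)\,dr$ (the fractional-integral semigroup property), and since $\mu_1>0$ the function $\varphi(\cdot,s)$ has at worst an integrable $(r-s)^{\mu_1-1}$ singularity, already visible from the first iterate, so $\int_s^t\varphi(r,s)\,dr<\infty$. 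Crudely bounding $(t-r)^{n\mu_2-1}\le(t-s)^{n\mu_2-1}$ for $n$ large then gives $B^{n}\varphi(t,s)\le\frac{[b\Gamma(\mu_2)]^{n}}{\Gamma(n\mu_2)}(t-s)^{n\mu_2-1}\int_s^t\varphi(r,s)\,dr\to0$, because the super-exponential growth of $\Gamma(n\mu_2)$ dominates. This both legitimises the infinite iteration and forces the series to converge, the finiteness of the limit being quantified by part~(i), which completes the proof.
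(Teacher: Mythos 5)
The paper itself gives no proof of this lemma --- it only points the reader to Yagi's monograph \cite{yagi} --- so there is no in-paper argument to compare against; what follows judges your proposal on its own terms. Your part (ii) is the standard and correct argument: the monotone iteration $\varphi\le\sum_{k=0}^{n-1}B^kg+B^n\varphi$, the Beta-function identity giving $B^kg=a\Gamma(\mu_1)[b\Gamma(\mu_2)]^k(t-s)^{\mu_1+k\mu_2-1}/\Gamma(\mu_1+k\mu_2)$, and the fractional-integral semigroup property $B^n=[b\Gamma(\mu_2)]^nI^{n\mu_2}$ to dispose of the remainder are exactly the right ingredients, and the series collapses to the asserted Mittag--Leffler bound. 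The one step to tighten is the finiteness of $\int_s^t\varphi(r,s)\,dr$: your justification (``already visible from the first iterate'') is circular as written, because integrating the hypothesis reproduces the unknown integral on the right-hand side. The clean repair is to note that the hypothesis is vacuous unless $\int_s^t(t-r)^{\mu_2-1}\varphi(r,s)\,dr<\infty$, and that this finiteness, combined with continuity of $\varphi$ away from the diagonal, already yields $\varphi(\cdot,s)\in L^1(s,t)$ (for $\mu_2\le1$ the weight is bounded below on $(s,t)$; for $\mu_2>1$ it is bounded below near the only possible singularity $r=s$).

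Part (i) is where the proposal stops short of a proof. You reduce everything to the inequality $\Gamma(\mu_1+y)\ge c\,(1+y)^{\mu_1-1}\Gamma(1+y)$ and then defer the rest to ``bookkeeping'', but that bookkeeping is precisely where the content of the estimate lies: the constant $c$ depends on $\mu_1$ (Gautschi's inequality gives $c=1$ only for $0<\mu_1<1$, and for $\mu_1\ge1$ the infimum over $y\ge0$ is a genuinely $\mu_1$-dependent quantity), and you never show how it recombines with the grid-grouping factor to produce the specific constant $\tfrac{2}{\Gamma_0\mu_2}$ and exponent $2-\mu_1$. The route that actually delivers these constants is more elementary: split the series according to whether $\mu_1+n\mu_2<2$ or $\ge2$, bound $\Gamma(\mu_1+n\mu_2)\ge\Gamma_0$ and $t^{n\mu_2}\le(1+t)^{2-\mu_1}$ on the finite first block (whose cardinality is controlled by $1/\mu_2$), and on the second block use monotonicity of $\Gamma$ on $[2,\infty)$ to get $\Gamma(\mu_1+n\mu_2)\ge(m-1)!$ with $m=[\mu_1+n\mu_2]$ and $t^{n\mu_2}\le(1+t)^{2-\mu_1}(1+t)^{m-1}$, then compare with $e^{1+t}=\sum_m(1+t)^{m}/m!$. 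That said, for every application in this paper only the qualitative conclusion $E_{\mu_1,\mu_2}(t)\le C(1+t)^{N}e^{t}$ is used, and your scheme does deliver a bound of that form; the gap concerns the advertised explicit constants, not the usability of the lemma.
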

\section{Nonlinear evolution equations with multiplicative noise}   \label{section3}
\subsection{Existence of global mild solutions under linear growth and Lipschitz conditions}
In this subsection, we shall show existence and uniqueness of global mild solutions of \eqref{E2}  
under  linear growth and Lipschitz conditions  on $F$ and $G$. The proof is similar to that in Da Prato-Zabczyk \cite{prato}.
\begin{theorem}[global existence]\label{linear growth and Lipschitz conditions}
Assume that $F$ and $G$ satisfy  two conditions:
\begin{itemize}
  \item  [{\rm (i)}] The linear growth condition
\begin{equation}  \label{The linear growth condition}
|F(t,x)|+|G(t,x)|\leq c_1(1+|x|),  \quad\quad x\in E, t\in[0,T]. 
\end{equation}
  \item [{\rm (ii)}]  The Lipschitz condition
\begin{equation} \label{The Lipschitz condition}
|F(t,x)-F(t,y)|+|G(t,x)-G(t,y)| \leq c_2 |x-y|, \quad \quad x, y \in E, t\in[0,T], 
\end{equation} 
\end{itemize}
where $c_i>0 \,(i=1,2) $ are some positive constants. 
 Suppose further that $\mathbb E|\xi|^p< \infty$ for some $p\geq 2$. Then there exists a unique  mild solution $X(t)$ to \eqref{E2} on $[0,T]$.  Furthermore, it satisfies the estimate
\begin{equation}  \label{EEE}
\sup_{0\leq t\leq T}\mathbb E |X(t)|^{p}] \leq \alpha(1+\mathbb E|\xi|^{p}),
\end{equation}
where $\alpha=\alpha(c_1,p,M_T,T)>0$ is  some constant  depending only on $c_1, p, M_T$ and $T.$
\end{theorem}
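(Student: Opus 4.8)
The plan is to realize the mild solution as the unique fixed point of the integral operator
\[
(\mathcal{K}X)(t) = S(t)\xi + \int_0^t S(t-s)F(s,X(s))\,ds + \int_0^t S(t-s)G(s,X(s))\,dw_s
\]
on the Banach space $\mathcal{H}_p$ of (equivalence classes of) predictable $E$-valued processes $X$ on $[0,T]$, normed by $\|X\|_{\mathcal{H}_p} = \big(\sup_{0\le t\le T}\mathbb{E}|X(t)|^p\big)^{1/p}$. I would deliberately use $\sup_t\mathbb{E}|\cdot|^p$ rather than $\mathbb{E}\sup_t|\cdot|^p$, because the stochastic convolution $t\mapsto \int_0^t S(t-s)G(s,X(s))\,dw_s$ is \emph{not} a martingale (the integrand carries the upper limit $t$ inside the factor $S(t-s)$), so a genuine maximal inequality along the whole path is unavailable; instead, for each fixed $t$ the process $r\mapsto\int_0^r S(t-s)G(s,X(s))\,dw_s$ \emph{is} a martingale, and Proposition \ref{IntegralInequality}(iii) may be applied with that frozen $t$.

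First I would verify that $\mathcal{K}$ maps $\mathcal{H}_p$ into itself. Splitting $|(\mathcal{K}X)(t)|^p$ into its three summands, I would bound $\mathbb{E}|S(t)\xi|^p \le M_T^p\,\mathbb{E}|\xi|^p$, apply H\"older's inequality to the Bochner integral (producing a factor $t^{p-1}$), and apply Proposition \ref{IntegralInequality}(iii) frozen at $t$ followed by H\"older to the stochastic term (producing a factor $t^{p/2-1}$ via $\big[\int_0^t|G|^2ds\big]^{p/2}\le t^{p/2-1}\int_0^t|G|^p ds$). The linear growth condition \eqref{The linear growth condition} then bounds the whole by $C\big(1+\|X\|_{\mathcal{H}_p}^p\big)$ with $C=C(c_1,p,M_T,T,c_p(E))$, which is finite since $\mathbb{E}|\xi|^p<\infty$.

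Next is the contraction estimate, where the same three ingredients but with the Lipschitz condition \eqref{The Lipschitz condition} replacing linear growth give the pointwise bound
\[
\mathbb{E}|(\mathcal{K}X)(t)-(\mathcal{K}Y)(t)|^p \le C_1\int_0^t \mathbb{E}|X(s)-Y(s)|^p\,ds, \qquad C_1=C_1(c_2,p,M_T,T,c_p(E)).
\]
Iterating this Volterra-type inequality yields $\mathbb{E}|(\mathcal{K}^nX)(t)-(\mathcal{K}^nY)(t)|^p \le \tfrac{(C_1 t)^n}{n!}\,\|X-Y\|_{\mathcal{H}_p}^p$, so that $\|\mathcal{K}^n X-\mathcal{K}^n Y\|_{\mathcal{H}_p}^p\le \tfrac{(C_1 T)^n}{n!}\|X-Y\|_{\mathcal{H}_p}^p$, and for $n$ large $\mathcal{K}^n$ is a strict contraction on $\mathcal{H}_p$. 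The Banach fixed point theorem applied to $\mathcal{K}^n$ then furnishes a unique $X\in\mathcal{H}_p$ with $\mathcal{K}X=X$, i.e.\ a process satisfying \eqref{DefinitionGlobalMildSolutions}.

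Finally I would check the remaining requirements of Definition \ref{Def1} and prove \eqref{EEE}. The moment estimate follows by inserting the fixed point back into \eqref{DefinitionGlobalMildSolutions} and repeating the self-map computation using linear growth alone, which produces $\mathbb{E}|X(t)|^p \le \alpha_0(1+\mathbb{E}|\xi|^p) + C_2\int_0^t\mathbb{E}|X(s)|^p\,ds$; Lemma \ref{Integral inequality of Volterra type} with $\mu_1=\mu_2=1$ (i.e.\ Gronwall's inequality) then gives \eqref{EEE} with $\alpha$ depending only on $c_1,p,M_T,T$. The main obstacle I anticipate is the path continuity of the fixed point: since the stochastic convolution is not a martingale and $S(t)$ is only strongly continuous, continuity of the two convolution terms must be established separately, which I would attempt through the Kolmogorov criterion (Theorem \ref{Kolmogorov test}) applied to their increments, the requisite moment bounds on increments being the genuinely delicate estimate in the non-analytic setting.
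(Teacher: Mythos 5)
Your proposal is correct and follows essentially the same route as the paper: a fixed-point argument for the same integral operator on the space of predictable processes normed by $\sup_{0\le t\le T}\mathbb{E}|X(t)|^p$, with H\"older's inequality on the Bochner term, Proposition \ref{IntegralInequality}(iii) applied at frozen $t$ on the stochastic convolution, and the Gronwall lemma for the estimate \eqref{EEE}. The only differences are cosmetic: you make $\mathcal{K}^n$ a contraction on all of $[0,T]$ where the paper contracts on a small interval $[0,\bar T]$ and then extends stepwise, and you are rather more careful than the paper about path continuity of the stochastic convolution (the paper simply cites the continuity of the stochastic integral, even though the convolution is not itself a martingale).
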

\begin{proof} We shall use the fixed point theorem  and the Gronwall  lemma. The proof is divided into three steps.

{\bf Step 1.} Let us show  existence of a mild solution.
  Put
\begin{align} 
\mathcal Q_1(Y)(t)&=\int_0^t S(t-s) F(s,Y(s)) ds,  \label{Q1Definition} \\
 \mathcal Q_2(Y)(t)&=\int_0^t S(t-s) G(s,Y(s))dw_s, \label{Q2Definition}\\
\mathcal Q(Y)(t)&=S(t) \xi + \mathcal Q_1(Y)(t)+\mathcal Q_2(Y)(t)  \notag
\end{align}
and let $\mathcal E_p(0,\bar T) (\bar T\leq T)$ be the set of all $E$-valued predictable process $Y(t)$  on $[0,\bar T]$ such that
$\sup_{[0,\bar T]} \mathbb E |Y(t)|^p<\infty. $ Then up to indistinguishability,  $\mathcal E_p(0,\bar T)$ is a Banach space with  norm
$$||Y||_{p,\bar T}=[\sup_{t\in[0,\bar T]} \mathbb E |Y(t)|^p]^\frac{1}{p}.$$

Let us  show that $\mathcal Q(\mathcal E_p(0,\bar T))\subset \mathcal E_p(0,\bar T)$. Indeed, by using H\"{o}lder inequality, we have
\begin{align}
||\mathcal Q_1(Y)||_{p,\bar T}^p &\leq \sup_{t\in[0,\bar T]} \Big[\int_0^t |S(t-s) F(s,Y(s))| ds\Big]^p\notag\\
&\leq M_{\bar T}^p \mathbb E \Big[\int_0^{\bar T} |F(s,Y(s)| ds\Big]^p\notag\\
&\leq (c_1M_{\bar T})^p \mathbb E \Big[\int_0^{\bar T} [1+|Y(s)| ]ds\Big]^p\notag\\
&\leq (c_1M_{\bar T})^p {\bar T}^{p-1} \mathbb E \int_0^{\bar T} [1+|Y(s)| ]^pds\notag\\
&\leq (c_1M_{\bar T})^p (2{\bar T})^{p-1} \mathbb E \int_0^{\bar T} [1+|Y(s)|^p ]ds  \label{Eq30}\\
&\leq (c_1{\bar T}M_{\bar T})^p 2^{p-1}  [1+||Y||_{p,\bar T}^p ]<\infty, \quad  \quad\quad Y\in \mathcal E_p(0,{\bar T}),\notag
\end{align}
here we used the inequality $(a+b)^p \leq 2^{p-1} (a^p+b^p), a>0, b>0.$
Thus, $\mathcal Q_1(\mathcal E_p(0,{\bar T}))\subset \mathcal E_p(0,{\bar T})$. Furthermore, due to  Proposition \ref{IntegralInequality}, we have
\begin{align}
||\mathcal Q_2(Y)||_{p,\bar T}^p&=\sup_{t\in[0,{\bar T}]} \mathbb E \Big|\int_0^t S(t-s) G(s,Y(s))dw_s\Big|^p\notag\\
&\leq \Big(\frac{p}{p-1}\Big)^p c_p(E) \mathbb E \Big[\int_0^t |S(t-s) G(s,Y(s))|^2 ds\Big]^{\frac{p}{2}}\notag\\
&\leq \Big(\frac{pM_{\bar T}}{p-1}\Big)^p c_p(E) \mathbb E \Big[\int_0^{\bar T} |G(s,Y(s))|^2 ds\Big]^{\frac{p}{2}}\notag\\
&\leq \Big(\frac{c_1pM_{\bar T}}{p-1}\Big)^p c_p(E) \mathbb E \Big[\int_0^{\bar T} [1+|Y(s)|]^2 ds\Big]^{\frac{p}{2}}\notag\\
&\leq \Big(\frac{c_1pM_{\bar T}}{p-1}\Big)^p c_p(E)  {\bar T}^{\frac{p-2}{2}} \mathbb E \int_0^{\bar T} [1+|Y(s)|]^p ds\notag\\
&\leq \Big(\frac{c_1pM_{\bar T}}{p-1}\Big)^p c_p(E)  {\bar T}^{\frac{p-2}{2}} 2^{p-1} \mathbb E \int_0^{\bar T} [1+|Y(s)|^p] ds\notag\\
&= \Big(\frac{c_1pM_{\bar T}}{p-1}\Big)^p c_p(E)  {\bar T}^{\frac{p-2}{2}} 2^{p-1}  \int_0^{\bar T} [1+\mathbb E|Y(s)|^p] ds\label{Eq31}\\
&\leq \Big(\frac{c_1pM_{\bar T}}{p-1}\Big)^p c_p(E)  {\bar T}^{\frac{p}{2}} 2^{p-1} [1+||Y||_{p,\bar T}^p]<\infty, \quad  \quad  Y\in \mathcal E_p(0,{\bar T}).\notag
\end{align}
Therefore, $\mathcal Q_2(\mathcal E_p(0,{\bar T}))\subset \mathcal E_p(0,{\bar T})$. We thus have shown that  $\mathcal Q(\mathcal E_p(0,{\bar T}))\subset \mathcal E_p(0,{\bar T})$. 

Let us next verify that   $\mathcal Q$ is a contraction mapping of $\mathcal E_p(0,{\bar T})$, provided $\bar T>0$ is sufficiently small. For any $Y_1, Y_2 \in \mathcal E_p(0,{\bar T})$  we have
\begin{align*}
||\mathcal Q_1(Y_1)- \mathcal Q_1(Y_2)||_{p,\bar T}^p&=\sup_{t\in[0,{\bar T}]} \mathbb E \Big|\int_0^t S(t-s) (F(s,Y_1(s))-F(s,Y_2(s))) ds\Big|^p\\
&\leq M_{\bar T}^p \sup_{t\in[0,{\bar T}]} \mathbb E\Big[\int_0^t  |F(s,Y_1(s))-F(s,Y_2(s))| ds\Big]^p\\
&\leq (c_2M_{\bar T})^p  \mathbb E \Big[\int_0^{\bar T}  |Y_1(s)-Y_2(s)| ds\Big]^p\\
&\leq (c_2M_{\bar T})^p {\bar T}^{p-1} \mathbb E \int_0^{\bar T}  |Y_1(s)-Y_2(s)|^p ds\\
&\leq (c_2{\bar T}M_{\bar T})^p  ||Y_1-Y_2||_{p,\bar T}^p 
\end{align*}
and
\begin{align*}
|&|\mathcal Q_2(Y_1)- \mathcal Q_2(Y_2)||_{p,\bar T}^p\\
&=\sup_{t\in[0,{\bar T}]} \mathbb E \Big|\int_0^t S(t-s) [G(s,Y_1(s))-G(s,Y_2(s))] dw_s\Big|^p\\
&\leq \Big(\frac{p}{p-1}\Big)^p c_p(E)\sup_{t\in[0,{\bar T}]} \mathbb E \Big[\int_0^t |S(t-s) [G(s,Y_1(s))-G(s,Y_2(s))]|^2 ds\Big]^{\frac{p}{2}}\\
&\leq \Big(\frac{pM_{\bar T}}{p-1}\Big)^p c_p(E)\sup_{t\in[0,{\bar T}]} \mathbb E \Big[\int_0^t |G(s,Y_1(s))-G(s,Y_2(s))|^2 ds\Big]^{\frac{p}{2}}\\
&\leq \Big(\frac{c_2pM_{\bar T}}{p-1}\Big)^p c_p(E) \mathbb E \Big[\int_0^{\bar T} |Y_1(s)-Y_2(s)|^2 ds\Big]^{\frac{p}{2}}\\
&\leq \Big(\frac{c_2pM_{\bar T}}{p-1}\Big)^p c_p(E)  {\bar T}^{\frac{p-2}{2}}\mathbb E \int_0^{\bar T} |Y_1(s)-Y_2(s)|^p ds\\
&\leq \Big(\frac{c_2pM_{\bar T}}{p-1}\Big)^p c_p(E)  {\bar T}^{\frac{p}{2}}||Y_1-Y_2||_{p,\bar T}^p.
\end{align*}
Hence,
\begin{align*}
||\mathcal Q(Y_1)- \mathcal Q(Y_2)||_{p,\bar T}&\leq ||\mathcal Q_1(Y_1)- \mathcal Q_1(Y_2)||_{p,\bar T}+||\mathcal Q_2(Y_1)- \mathcal Q_2(Y_2)||_{p,\bar T}\\
&\leq c_2M_{\bar T} \sqrt{{\bar T}} \Big[ \sqrt{{\bar T}}+ \frac{pC_p^{\frac{1}{p}}(E)}{p-1} \Big] ||Y_1-Y_2||_{p,\bar T}.
\end{align*}
Therefore, if 
\begin{equation}  \label{BarTsmallCondition}
c_2M_{\bar T} \sqrt{{\bar T}} \Big[ \sqrt{{\bar T}}+ \frac{pC_p^{\frac{1}{p}}(E)}{p-1} \Big]<1,
\end{equation}
  then $\mathcal Q$ is contraction in $\mathcal E_p(0,{\bar T})$.

Since  $\mathcal Q$ maps   $\mathcal E_p(0,{\bar T})$ into itself and is contraction with respect to the norm of $\mathcal E_p(0,{\bar T})$, $\mathcal Q$   has a unique fixed point $X\in \mathcal E_p(0,{\bar T}).$ This shows that $X(t)$ is a mild solution  to \eqref{E2} on $[0,{\bar T}]$. In view of \eqref{BarTsmallCondition}, this solution can be extended on $[0,T]$ by considering the equation on intervals $[0,\bar T], [\bar T, 2\bar T],\dots.$ Furthermore, the continuity of $X$ on $[0,T]$ follows from the continuity of the stochastic integral (see Proposition \ref{IntegralInequality}).

  {\bf Step 2.} Let us verify  uniqueness of the mild solution.
   Let $X_1$ and $X_2$ be  mild solutions of \eqref{E2}. Then for every $t\in[0,T]$ we have
\begin{align*}
&\mathbb E|X_1(t)-X_2(t)|^2\\
=&\mathbb E \Big|\int_0^t S(t-s)[F(s,X_1(s))-F(s,X_2(s))]ds \\
&+ \int_0^t S(t-s)[G(s,X_1(s))-G(s,X_2(s))]dw_s\Big|^2\\
\leq& 2 \mathbb E \Big|\int_0^t S(t-s)[F(s,X_1(s))-F(s,X_2(s))]ds\Big|^2\\
&+2 \mathbb E \Big|\int_0^t S(t-s)[G(s,X_1(s))-G(s,X_2(s))]dw_s\Big|^2\\
\leq& 2 M_T^2\mathbb E \Big[\int_0^t |F(s,X_1(s))-F(s,X_2(s))|ds\Big]^2\\
&+2c(E) \mathbb E \int_0^t |S(t-s)[G(s,X_1(s))-G(s,X_2(s))]|^2ds\\
\leq& 2 c_2^2M_T^2\mathbb E \Big[\int_0^t |X_1(s)-X_2(s)|ds\Big]^2\\
&+2c(E) M_T^2\mathbb E \int_0^t |G(s,X_1(s))-G(s,X_2(s))|^2ds\\
\leq &2c_2^2[t+c(E)] M_T^2\mathbb E \int_0^t |X_1(s)-X_2(s)|^2ds\\
\leq &2c_2^2[T+c(E)]M_T^2 \int_0^t \mathbb E|X_1(s)-X_2(s)|^2ds.
\end{align*}
The Gronwall  lemma then derives that 
$\mathbb E|X_1(t)-X_2(t)|^2=0 $ for every  $ t\in [0,T].$ Since $X_1$ and $X_2$ are continuous, they are indistinguishable.

  {\bf Step 3.} Let us finally verify the estimate \eqref{EEE}.
In view of \eqref{Eq30} and \eqref{Eq31}, we have 
\begin{align*}
&\sup_{t\in[0,t]} \mathbb E |X(s)|^p\\
=&||X||_{p,t}^p=||\mathcal Q(X)||_{p,t}^p\\
\leq& [||S(\cdot) \xi ||_{p,t}+ ||\mathcal Q_1(X) ||_{p,t}
+||\mathcal Q_2(X) ||_{p,t}]^p\\
\leq& 3^p[||S(\cdot) \xi ||_{p,t}^p+ ||\mathcal Q_1(X) ||_{p,t}^p
+||\mathcal Q_2(X) ||_{p,t}^p]\\
\leq& 3^p\Big[M_T^p\mathbb E|\xi |^p+  (c_1M_t)^p (2t)^{p-1} \mathbb E \int_0^{t} [1+|Y(s)|^p ]ds\\
&+ \Big(\frac{c_1pM_t}{p-1}\Big)^p c_p(E)  t^{\frac{p-2}{2}} 2^{p-1}  \int_0^t [1+\mathbb E|Y(s)|^p\Big] ds\\
\leq&3^pM_T^p\mathbb E|\xi |^p+ \Big [(3c_1M_t)^p (2t)^{p-1}+\Big(\frac{3c_1pM_t}{p-1}\Big)^p c_p(E)  t^{\frac{p-2}{2}} 2^{p-1} \Big] \\
& \times \int_0^{t} [1+\sup_{[0,s]}\mathbb E|Y(r)|^p ]ds\\
\leq&3^pM_T^p\mathbb E|\xi |^p+ \Big [(3c_1M_T)^p (2T)^{p-1}+\Big(\frac{3c_1pM_T}{p-1}\Big)^p c_p(E)  T^{\frac{p-2}{2}} 2^{p-1} \Big]  \\
& \times \Big[T+\int_0^{t} \sup_{[0,s]}\mathbb E|Y(r)|^p ds\Big], \hspace{2cm} t\in [0,T].
\end{align*}
Then  the Gronwall  lemma again provides \eqref{EEE}.
\end{proof}
\subsection{Existence and regular dependence on initial data of local mild solutions under local linear growth and local Lipschitz   conditions}
Let us  first explore existence and uniqueness of local mild solutions of \eqref{E2} under local linear growth and local Lipschitz   conditions on $F$ and $G$. Following ideas in \cite{Fried}, we shall construct two lemmas.
\begin{lemma} \label{lem1}
Let  $(\alpha_1,\alpha_2)\subset \mathbb R, \Omega_0\subset \Omega$ and $\Phi_i\in \mathcal N^2(\alpha_1,\alpha_2)$. 
If 
$$ {\bf1}_{\Omega_0}\Phi_1(t)={\bf1}_{\Omega_0}\Phi_2(t) \hspace{1cm} \text{ for all } t\in (\alpha_1,\alpha_2),$$
 then
$${\bf1}_{\Omega_0}\int_{\alpha_1}^{\alpha_2}\Phi_1(t)dw_t={\bf1}_{\Omega_0}\int_{\alpha_1}^{\alpha_2}\Phi_2(t)dw_t \quad\quad\quad\text{a.s.}$$
\end{lemma}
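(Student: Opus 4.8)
The plan is to exploit the \emph{local property} of the stochastic integral: on the event $\Omega_0$ the integral depends only on the values of the integrand on $\Omega_0$. I first observe that for the conclusion to be meaningful the set $\Omega_0$ must be $\mathcal F_{\alpha_1}$-measurable; this is precisely the situation in which the lemma is applied, following the localization technique for SDEs in \cite{Fried}, and I shall assume it throughout. By linearity of the stochastic integral I set $\Psi=\Phi_1-\Phi_2\in\mathcal N^2(\alpha_1,\alpha_2)$, so that the hypothesis reads ${\bf 1}_{\Omega_0}\Psi(t)=0$ for all $t\in(\alpha_1,\alpha_2)$, and the goal reduces to showing ${\bf 1}_{\Omega_0}\int_{\alpha_1}^{\alpha_2}\Psi(s)\,dw_s=0$ a.s.

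Next I treat step functions. Suppose $\Psi(s)=\sum_{i=0}^{n-1}\psi_i{\bf 1}_{[t_i,t_{i+1})}(s)$ with $\alpha_1=t_0<\cdots<t_n=\alpha_2$ and $\psi_i\in L^2(\Omega,\mathcal F_{t_i},\mathbb P;E)$. Since $t_i\geq\alpha_1$ we have $\Omega_0\in\mathcal F_{\alpha_1}\subseteq\mathcal F_{t_i}$, so ${\bf 1}_{\Omega_0}\psi_i$ again lies in $L^2(\Omega,\mathcal F_{t_i},\mathbb P;E)$; hence ${\bf 1}_{\Omega_0}\Psi$ is itself an admissible adapted step function, and
\begin{align*}
{\bf 1}_{\Omega_0}\int_{\alpha_1}^{\alpha_2}\Psi(s)\,dw_s
&={\bf 1}_{\Omega_0}\sum_{i=0}^{n-1}(w_{t_{i+1}}-w_{t_i})\psi_i
=\sum_{i=0}^{n-1}(w_{t_{i+1}}-w_{t_i})({\bf 1}_{\Omega_0}\psi_i)\\
&=\int_{\alpha_1}^{\alpha_2}{\bf 1}_{\Omega_0}\Psi(s)\,dw_s.
\end{align*}
This \emph{factoring identity}, ${\bf 1}_{\Omega_0}I(\Psi)=I({\bf 1}_{\Omega_0}\Psi)$ for step functions, is the engine of the proof.

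For a general $\Psi\in\mathcal N^2(\alpha_1,\alpha_2)$ I approximate by step functions $\Psi_k\to\Psi$ in $\mathcal N^2(\alpha_1,\alpha_2)$, which exist by the very construction of the integral. Applying the factoring identity to each $\Psi_k$ gives ${\bf 1}_{\Omega_0}I(\Psi_k)=I({\bf 1}_{\Omega_0}\Psi_k)$. Now ${\bf 1}_{\Omega_0}\Psi_k\to{\bf 1}_{\Omega_0}\Psi=0$ in $\mathcal N^2$, since $\mathbb E\int_{\alpha_1}^{\alpha_2}|{\bf 1}_{\Omega_0}(\Psi_k-\Psi)|^2\,dt\le\mathbb E\int_{\alpha_1}^{\alpha_2}|\Psi_k-\Psi|^2\,dt\to 0$; hence by the estimate in Proposition \ref{IntegralInequality}(i) the right-hand side $I({\bf 1}_{\Omega_0}\Psi_k)\to 0$ in $L^2(\mathbb P)$. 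Likewise $I(\Psi_k)\to I(\Psi)$ in $L^2$, so the left-hand side ${\bf 1}_{\Omega_0}I(\Psi_k)\to{\bf 1}_{\Omega_0}I(\Psi)$ in $L^2$. Equating the two limits yields ${\bf 1}_{\Omega_0}I(\Psi)=0$ in $L^2(\mathbb P)$, hence almost surely, which is the claim.

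The step-function algebra and the two $L^2$-convergences are routine. The genuine subtlety — the point I would be most careful about — is the measurability bookkeeping: the argument collapses unless ${\bf 1}_{\Omega_0}$ can be carried inside the step-function integral, that is, unless $\Omega_0$ is measurable with respect to $\mathcal F_{\alpha_1}$ (equivalently, with respect to every $\mathcal F_{t_i}$). This is exactly what guarantees that ${\bf 1}_{\Omega_0}\Psi_k$ remains an admissible integrand in $\mathcal N^2$, and it is the implicit hypothesis under which the lemma is invoked.
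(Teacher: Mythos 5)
Your overall strategy --- reduce to $\Psi=\Phi_1-\Phi_2$, prove an identity for step functions, pass to the limit via Proposition \ref{IntegralInequality}(i) --- is reasonable, but the measurability hypothesis you insert, namely $\Omega_0\in\mathcal F_{\alpha_1}$, is not part of the lemma and is violated in the places the paper actually invokes it. In the proof of Lemma \ref{lem2} the lemma is applied with $\Omega_0=\{\phi(t)=1\}=\{|X_1(s)|\le n \text{ for all } s\in[0,t]\}$ and with $(\alpha_1,\alpha_2)=(0,t)$: this event is $\mathcal F_t$-measurable (it depends on the whole path up to the terminal time of integration) and is certainly not $\mathcal F_0$-measurable; the same happens in the stopped identity \eqref{XtMinTaunEquation}, where the relevant events involve the stopping times $\tau_n$. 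Since your argument hinges on the factoring identity ${\bf 1}_{\Omega_0}I(\Psi)=I({\bf 1}_{\Omega_0}\Psi)$, which requires each ${\bf 1}_{\Omega_0}\psi_i$ to be $\mathcal F_{t_i}$-measurable, what you have proved is a strictly weaker statement that does not cover the intended applications. The restriction is not cosmetic: if $\Omega_0$ is only $\mathcal F_{\alpha_2}$-measurable, then ${\bf 1}_{\Omega_0}\Psi$ is in general not predictable, hence not in $\mathcal N^2(\alpha_1,\alpha_2)$, so $I({\bf 1}_{\Omega_0}\Psi_k)$ is not a well-defined stochastic integral and the $L^2$ estimate you use to pass to the limit is unavailable.

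The argument the paper points to (Friedman, Lemma 2.11) avoids this entirely and needs no measurability of $\Omega_0$ at all, because the indicator is never moved inside the integral. One approximates $\Psi$ by step functions $\Psi_k$ constructed pointwise from $\Psi$ (by sampling or time-averaging), so that $\Psi_k(t,\omega)=0$ for every $t$ whenever $\omega\in\Omega_0$. For such a step function the elementary integral $\sum_i(w_{t_{i+1}}-w_{t_i})\psi_i(\omega)$ vanishes for each fixed $\omega\in\Omega_0$ simply because every $\psi_i(\omega)=0$ --- a pointwise statement, not an $L^2$ one. Extracting a subsequence along which $I(\Psi_k)\to I(\Psi)$ almost surely then gives $I(\Psi)=0$ a.e.\ on $\Omega_0$. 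If you replace your adaptedness assumption on ${\bf 1}_{\Omega_0}$ by the requirement that the approximating sequence inherit the vanishing on $\Omega_0$, and run the limit step pathwise on $\Omega_0$ rather than through the isometry, your proof becomes a correct proof of the lemma as stated.
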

The proof of Lemma \ref{lem1} for the case $E=\mathbb R$ can be found in \cite[Lemma 2.11]{Fried}. In fact, the arguments are available for any $M$-type 2 separable Banach space. So we omit it.
\begin{lemma} \label{lem2}
Consider two equations of the form \eqref{E2}: 
\begin{equation} \label{E3}
\begin{cases}
dX_i+AX_idt=F_i(t,X_i)dt+ G_i(t,X_i)dw_t,\\
X_i(0)=\xi_i, \hspace{2cm} i=1,2.
\end{cases}
\end{equation}
Assume that there exists a constant $c>0$ such that
$$|F_i(t,x)-F_i(t,y)|+|G_i(t,x)-G_i(t,y)|\leq c|x-y|$$
and
$$|F_i(t,x)|^2+|G_i(t,x)|^2 \leq c^2 (1+|x|^2) $$
for $i=1,2, x,y\in E$ and $ t\in [0,T].$ 
Suppose further that 
$F_1(t,x)=F_2(t,x),$ $ G_1(t,x)=G_2(t,x)$  for $|x|\leq n, 0\leq t\leq T$  with some $n>0$,
 and that  $ \xi_1=\xi_2$ for a.s. $\omega$ for which either $\xi_1(\omega)<n$ or $\xi_2(\omega)<n$. Denote $\tau_i=\inf\{t: |X_i(t)|>n\}$ with the convention $\inf \emptyset=T.$ Then
$$\mathbb P(\tau_1=\tau_2)=1,$$
$$\mathbb P\{\sup_{0< t\leq \tau_1}|X_1(t)-X_2(t)|=0\}=1.$$
\end{lemma}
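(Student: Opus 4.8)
The plan is to first invoke Theorem \ref{linear growth and Lipschitz conditions}: since each pair $(F_i,G_i)$ satisfies the global linear growth and Lipschitz conditions, each equation in \eqref{E3} admits a unique global mild solution $X_i$. I would then reduce everything to the set $\Omega_0=\{|\xi_1|<n\}\cup\{|\xi_2|<n\}$, on which $\xi_1=\xi_2$ by hypothesis; off $\Omega_0$ both initial data have norm at least $n$, so (away from the measure-zero boundary $\{|\xi_i|=n\}$) both exit times are zero and the two assertions hold trivially, the supremum being over an empty interval. On $\Omega_0$ the whole argument reduces to the single coincidence statement
$$ X_1(t)=X_2(t)\quad\text{for all } t\in[0,\tau_1\vee\tau_2],\ \text{a.s.},$$
from which both $\tau_1=\tau_2$ and $\sup_{0<t\le\tau_1}|X_1(t)-X_2(t)|=0$ will follow.

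The central step, and the one where the hypotheses are used most sharply, is to prove coincidence on the \emph{larger} interval $[0,\tau_1]$ (and, symmetrically, on $[0,\tau_2]$), not merely on $[0,\tau_1\wedge\tau_2]$. The key observation is that for $s\le\tau_1$ one has $|X_1(s)|\le n$, so the coefficients agree along $X_1$, i.e. $F_1(s,X_1(s))=F_2(s,X_1(s))$ and $G_1(s,X_1(s))=G_2(s,X_1(s))$. Hence, on the event $\{t\le\tau_1\}$, the mild-solution identity for $X_1$ becomes exactly the defining integral equation of the second problem, and I can compare it with the global mild solution $X_2$. Setting $u(t)=\mathbb{E}\big[\mathbf{1}_{\{t\le\tau_1\}}|X_1(t)-X_2(t)|^2\big]$, I would multiply the difference of the two integral equations by $\mathbf{1}_{\{t\le\tau_1\}}$ and, using the inclusion $\{t\le\tau_1\}\subset\{s\le\tau_1\}$ valid for $s\le t$, insert the factor $\mathbf{1}_{\{s\le\tau_1\}}$ into both integrands; for the stochastic integral this insertion is legitimized precisely by Lemma \ref{lem1}. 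The crucial point is that even where $X_2$ has already left the ball (when $\tau_2<\tau_1$), the \emph{global} Lipschitz bound still gives $|F_2(s,X_1(s))-F_2(s,X_2(s))|\le c|X_1(s)-X_2(s)|$ and likewise for $G_2$. Applying Proposition \ref{IntegralInequality}(i) to the diffusion term and the Cauchy--Schwarz inequality to the drift term yields
$$ u(t)\le C\int_0^t u(s)\,ds,\qquad t\in[0,T],$$
with $C=C(c,M_T,T,c(E))$, so the Gronwall lemma forces $u\equiv0$. By continuity of $X_1,X_2$ this gives $X_1=X_2$ on $[0,\tau_1]$ a.s.; exchanging the indices gives coincidence on $[0,\tau_2]$ as well, hence on $[0,\tau_1\vee\tau_2]$.

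With coincidence on $[0,\tau_1\vee\tau_2]$ in hand, the equality of the exit times is short. If $\tau_1<\tau_2$ on a set of positive probability, then by the definition of $\tau_1$ there are times $s_k\downarrow\tau_1$ with $|X_1(s_k)|>n$; for large $k$ these satisfy $s_k<\tau_2$, so $s_k\in[0,\tau_1\vee\tau_2]$ and the coincidence gives $|X_2(s_k)|=|X_1(s_k)|>n$, contradicting $s_k<\tau_2$. The symmetric argument rules out $\tau_2<\tau_1$, whence $\tau_1=\tau_2$ a.s., and the assertion $\sup_{0<t\le\tau_1}|X_1(t)-X_2(t)|=0$ is immediate from coincidence on $[0,\tau_1]$. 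The main obstacle I anticipate is technical rather than conceptual: carrying out the Gronwall estimate rigorously for \emph{mild} solutions, where the semigroup $S(t-s)$ couples the running endpoint $t$ with the integration variable $s$, so that one cannot simply stop the integral equation at $\tau_1$; it is exactly to handle the indicators $\mathbf{1}_{\{s\le\tau_1\}}$ inside the stochastic integral that Lemma \ref{lem1} is invoked. A secondary nuisance is the measure-zero boundary behaviour on $\{|\xi_i|=n\}$, which I would absorb into the reduction to $\Omega_0$.
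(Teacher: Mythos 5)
Your proposal is correct and follows essentially the same route as the paper: your indicator $\mathbf{1}_{\{t\le\tau_1\}}$ is exactly the paper's cut-off function $\phi(t)$, you use Lemma \ref{lem1} for the same two purposes (to kill the $G_1-G_2$ term along $X_1$ inside the ball and to push the indicator inside the stochastic integral via $\{t\le\tau_1\}\subset\{s\le\tau_1\}$), and you close with the same global-Lipschitz-plus-Gronwall estimate and symmetry argument for $\tau_1=\tau_2$. The only cosmetic difference is your explicit preliminary reduction to $\Omega_0$, which the paper absorbs into the identity $\phi(t)(\xi_1-\xi_2)=0$.
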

\begin{proof}
On the account of  Theorem \ref{linear growth and Lipschitz conditions}, there exists   a  mild solution $X_i(t)$ of \eqref{E3} on $[0,T]$.  Consider a function $\phi: [0,T] \to \mathbb R$  defined by  
\begin{equation*}
\phi(t)=
\begin{cases}
1 \text{   if    } |X_1(s)| \leq n \text{   for all  }  0\leq s\leq t,\\
0  \text{   in all other cases}.
\end{cases}
\end{equation*}
Then for $t\in [0,T]$ we have
\begin{equation*}
\begin{aligned}
\begin{cases}
\phi(t) (\xi_1-\xi_2)=0  &\quad\quad\text{ a.s., }\\
\phi(t)S(t)(\xi_1-\xi_2)=0 &\quad\quad\text{ a.s., }\\
\phi(t)={\bf 1}_{\{\phi(t)=1\}}\phi(t), &\\
 \phi(t)=\phi(t)^2. &
 \end{cases}
 \end{aligned}
\end{equation*}
Therefore,
\begin{equation*}
\begin{aligned}
\phi(t)[X_1(t)-X_2(t)]=&\phi(t)\int_0^t S(t-s) [F_1(s, X_1(s))-F_2(s,X_1(s))] ds \\
&+\phi(t)\int_0^t S(t-s) [F_2(s, X_1(s))-F_2(s,X_2(s))] ds \\
&+\phi(t)\int_0^t S(t-s) [G_1(s, X_1(s))-G_2(s,X_1(s))] dw_s\\
&+\phi(t)\int_0^t S(t-s) [G_2(s, X_1(s))-G_2(s,X_2(s))]dw_s\\
=&J_1+J_2+J_3+J_4.
\end{aligned}
\end{equation*}
When $\phi(t)=1$,  $F_1(s, X_1(s))=F_2(s,X_1(s))$ for every $s\in[0,t]$. Hence, $J_1=0$. In addition, we have $G_1(s, X_1(s))=G_2(s,X_1(s))$ on $\{\phi(t)=1\}$ for all $s\in[0,t]$. Lemma \ref{lem1}  then provides that 
\begin{align*}
J_3&={\bf1}_{\{\phi(t)=1\}}\phi(t)\int_0^t S(t-s) [G_1(s, X_1(s))-G_2(s,X_1(s))] dw_s\\
&={\bf1}_{\{\phi(t)=1\}}\int_0^t S(t-s) [G_1(s, X_1(s))-G_2(s,X_1(s))] dw_s\\
&={\bf1}_{\{\phi(t)=1\}}\int_0^t 0 dw_s=0.
\end{align*}
Hence,
\begin{align}
&\mathbb E\phi(t)|X_1(t)-X_2(t)|^2=\mathbb E|\phi(t)[X_1(t)-X_2(t)]|^2=\mathbb E|J_2+J_4|^2 \notag\\
&\leq 2\mathbb E|J_2|^2+2\mathbb E|J_4|^2. \label{Eq32}
\end{align}
Let us  estimate  $\mathbb E|J_2|^2$ and $\mathbb E|J_4|^2.$
Since $\phi(t)$  decreases in $t$, we have
\begin{equation*}
\begin{aligned}
2|J_2|^2&\leq 2\Big|\int_0^t \phi(s)S(t-s) [F_2(s, X_1(s))-F_2(s,X_2(s))] ds\Big|^2\\
&\leq 2t\int_0^t \phi(s)|S(t-s)[F_2(s, X_1(s))-F_2(s,X_2(s))]|^2 ds\\
&\leq 2tM_t^2\int_0^t \phi(s)| F_2(s, X_1(s))-F_2(s,X_2(s))|^2 ds\\
&\leq 2tc^2M_t^2\int_0^t \phi(s)|  X_1(s)-X_2(s)|^2 ds, \hspace{2cm} t\in [0,T].
\end{aligned}
\end{equation*}
Thus,
\begin{equation} \label{Eq33}
\mathbb E|J_2|^2\leq 2Tc^2 M_T^2 \int_0^t \mathbb E\phi(s)|  X_1(s)-X_2(s)|^2 ds, \hspace{2cm} t\in [0,T].
\end{equation}

On the other hand, we have $\phi(s)=1$ on $\{\phi(t)=1\} $ for every $s\in [0,t]$.  Lemma \ref{lem1} again provides that 
\begin{equation*}
\begin{aligned}
2|J_4|^2&=2\Big|\phi(t){\bf1}_{\{\phi(t)=1\}}\int_0^t S(t-s) [G_2(s, X_1(s))-G_2(s,X_2(s))] dw_s\Big|^2\\
&=2\Big|{\bf1}_{\{\phi(t)=1\}}\int_0^t S(t-s) [G_2(s, X_1(s))-G_2(s,X_2(s))] dw_s\Big|^2\\
&=2\Big|{\bf1}_{\{\phi(t)=1\}}\int_0^t \phi(s)S(t-s) [G_2(s, X_1(s))-G_2(s,X_2(s))] dw_s\Big|^2\\
&\leq 2\Big|\int_0^t \phi(s)S(t-s) [G_2(s, X_1(s))-G_2(s,X_2(s))] dw_s\Big|^2.
\end{aligned}
\end{equation*}
Using Proposition \ref{IntegralInequality}, we then obtain that
\begin{align} 
2\mathbb E|J_4|^2\leq &2c(E)\mathbb E \int_0^t |\phi(s)S(t-s) [G_2(s, X_1(s))-G_2(s,X_2(s))]|^2 ds\notag\\
\leq &2c(E)M_t^2\mathbb E \int_0^t \phi(s)|G_2(s, X_1(s))-G_2(s,X_2(s))|^2 ds\notag\\
\leq &2c^2 c(E) M_T^2 \int_0^t \mathbb E\phi(s)|  X_1(s)-X_2(s)|^2 ds,
\hspace{2cm} t\in [0,T]. \label{Eq34}
\end{align}
Substituting \eqref{Eq33} and \eqref{Eq34} into \eqref{Eq32}, we observe  that
\begin{align*}
\mathbb E\phi(t)&|X_1(t)-X_2(t)|^2  \\
\leq & 2c^2[T+c(E)]M_T^2\int_0^t \mathbb E\phi(s)|  X_1(s)-X_2(s)|^2 ds, \hspace{1cm} t\in [0,T].
\end{align*}
Thanks to the Gronwall  lemma, we verify that  $\mathbb E\phi(t)|X_1(t)-X_2(t)|^2=0$ for every $t\in[0,T].$ As a consequence, 
$$\phi(t)|X_1(t)-X_2(t)|^2=0 \quad\quad\quad \text{ a.s.} $$
From the definition of $\phi$, it is then clear that $X_1(t)=X_2(t)$ a.s. $t\in (0,\tau_1]$ and  $\mathbb P(\tau_2\geq \tau_1)=1.$ Similarly, we have $X_1(t)=X_2(t)$ a.s. $t\in (0,\tau_2]$ and  $\mathbb P(\tau_1\geq \tau_2)=1.$ We thus have shown that 
$$\mathbb P\{X_1(t)=X_2(t)\} =\mathbb P(\tau_1= \tau_2)=1, \hspace{2cm} t\in (0,\tau_1]. $$
In addition, by  the continuity of $X_1(t)$ and $X_2(t)$,  we  conclude that $$\sup_{0< t\leq \tau_1} |X_1(t)-X_2(t)|^2=0 \hspace{2cm} \text{ a.s. }$$
It completes the proof.
\end{proof}
\begin{theorem}[local existence] \label{local existence theorem}
Suppose that for any $n>0$ there exist $c_n>0$ and $\bar c_n>0$ such that whenever $ |x|\leq n, |y|\leq n$ and $ t\in [0,T],$ the following two conditions hold:
\begin{itemize}
\item [{\rm (i)}]  The local growth condition
$$
|F(t,x)|+|G(t,x)| \leq \bar c_n (1+|x|).
$$
\item [{\rm (ii)}]  The local Lipschitz condition
\begin{equation}  \label{The local Lipschitz condition}
 |F(t,x)-F(t,y)|+|G(t,x)-G(t,y)| \leq c_n |x-y|.
\end{equation}
\end{itemize}
 Then there exists a unique maximal local mild solution $\{X(t),t\in[0,\tau)\}$ to \eqref{E2}. Furthermore, there exists a constant $\alpha=\alpha(\bar c_n, M_T, T)>0$ depending only on $\bar c_n, M_T$ and $T$ such that 
\begin{equation} \label{XtMinTaunEstimate}
\mathbb E |X(t\wedge \tau_n)|^2\leq \alpha(1+\mathbb E|\xi|^{2}), \hspace{1cm} t\geq 0, n=0,1,\dots,
\end{equation}
where $\{\tau_n\}_{n=0}^\infty$ is a sequence of stopping times defined by
$$
\tau_n=\inf\{t\in[0,T]: |X(t)|>n\}
$$
 with the convention $\inf\emptyset=T$ and $\tau_0=0$ a.s. 
\end{theorem}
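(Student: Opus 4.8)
The plan is to reduce the local problem to the global one of Theorem \ref{linear growth and Lipschitz conditions} by a truncation (localization) argument, and then to glue the truncated solutions together using the comparison Lemma \ref{lem2}. Concretely, for each $n\geq 1$ let $\pi_n:E\to E$ be the \emph{radial retraction} onto the closed ball of radius $n$, that is $\pi_n(x)=x$ when $|x|\leq n$ and $\pi_n(x)=nx/|x|$ otherwise; this map satisfies $|\pi_n(x)|\leq |x|$ and is globally Lipschitz. Setting $F_n(t,x)=F(t,\pi_n(x))$ and $G_n(t,x)=G(t,\pi_n(x))$, the local hypotheses (i)--(ii) turn into the global linear growth and global Lipschitz conditions \eqref{The linear growth condition}--\eqref{The Lipschitz condition}, with the growth constant still controlled by $\bar c_n$ and the Lipschitz constant by a multiple of $c_n$. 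Theorem \ref{linear growth and Lipschitz conditions} (applied with $p=2$) then yields, for each $n$, a unique global mild solution $X_n$ on $[0,T]$ of the equation with coefficients $F_n,G_n$ and initial value $\xi$, together with the bound $\sup_{[0,T]}\mathbb E|X_n(t)|^2\leq\alpha(1+\mathbb E|\xi|^2)$.

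Next I would set $\tau_n=\inf\{t\in[0,T]:|X_n(t)|>n\}$ (with $\inf\emptyset=T$) and establish consistency. Since $F_n,G_n$ and $F_m,G_m$ coincide on the ball $\{|x|\leq n\}$ for $m>n$, Lemma \ref{lem2} applies to the two equations with these coefficients and the common initial value $\xi$; it gives that the exit times from level $n$ agree and that $X_n=X_m$ on $[0,\tau_n]$, whence $\tau_n\leq\tau_m$. The sequence $\{\tau_n\}$ is therefore nondecreasing; set $\tau=\lim_n\tau_n$ and define $X$ on $[0,\tau)$ by $X(t)=X_n(t)$ for $t\leq\tau_n$, which is unambiguous by consistency. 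Because $|X_n|\leq n$ on $[0,\tau_n]$, the coefficients $F_n,G_n$ in the mild identity for $X_n$ may be replaced there by $F,G$; stopping that identity at $\tau_n$ — using the elementary localization $\int_0^{t\wedge\tau_n}\Phi\,dw_s=\int_0^t\mathbf 1_{\{s\leq\tau_n\}}\Phi\,dw_s$ of the stochastic integral, in the spirit of Lemma \ref{lem1} — verifies conditions (i)--(iv) of Definition \ref{def0}, so that $\{X(t),t\in[0,\tau)\}$ is a local mild solution. Maximality follows from the observation that, by continuity, $|X(\tau_n)|=n$ for all large $n$ with $\tau_n<T$; since $\tau_n\uparrow\tau$ and $n\to\infty$, this forces $|X(t)|\to\infty$ as $t\uparrow\tau$ on $\{\tau<T\}$, which is the maximality requirement.

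For the moment estimate \eqref{XtMinTaunEstimate} I would not invoke \eqref{EEE} directly (it controls $\sup_t\mathbb E|\cdot|^2$, not the value at the random time $t\wedge\tau_n$), but instead rerun the Gronwall argument on the stopped process. Writing the stopped mild identity for $X(t\wedge\tau_n)=X_n(t\wedge\tau_n)$, splitting into the three terms $S(t\wedge\tau_n)\xi$, the Bochner integral and the stochastic integral, and using $M_T$, the Cauchy--Schwarz inequality, Proposition \ref{IntegralInequality}(i) and the bound $|F(s,X(s))|,|G(s,X(s))|\leq\bar c_n(1+|X(s)|)$ valid on $\{s\leq\tau_n\}$, one obtains $g(t):=\mathbb E|X(t\wedge\tau_n)|^2\leq 3M_T^2\,\mathbb E|\xi|^2+C\int_0^t(1+g(s))\,ds$ with $C=C(\bar c_n,M_T,T)$, and the Gronwall lemma gives \eqref{XtMinTaunEstimate}. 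Finally, uniqueness of the maximal local mild solution is obtained by comparing any other such solution $\{\bar X,t\in[0,\bar\tau)\}$ with the $X_n$: stopping $\bar X$ at its own exit time $\bar\tau_n$ from level $n$ shows it solves the same truncated equation up to $\bar\tau_n\wedge\tau_n$, so the $\phi$-truncation/Gronwall technique of Lemma \ref{lem2} forces $\bar\tau_n=\tau_n$ and $\bar X=X$ on the overlap; letting $n\to\infty$ yields $\bar\tau=\tau$ and indistinguishability. I expect the main obstacle to be the careful bookkeeping of the stopping-time manipulations — ensuring the stopped stochastic integrals and the stopped mild identity are handled rigorously (this is precisely where Lemmas \ref{lem1} and \ref{lem2} carry the load) — together with the blow-up argument establishing maximality.
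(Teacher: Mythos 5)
Your overall strategy---truncate the coefficients so that the global Theorem \ref{linear growth and Lipschitz conditions} applies, glue the resulting solutions together with Lemma \ref{lem2}, obtain maximality from $|X_n(\tau_n)|=n$ along $\tau_n\uparrow\tau$, rerun the Gronwall argument on the stopped mild identity to get \eqref{XtMinTaunEstimate}, and prove uniqueness by a stopped Gronwall comparison followed by a blow-up contradiction---is exactly the paper's. The only cosmetic difference is the form of the cutoff: you compose with the radial retraction $\pi_n$ (which in a general Banach space is Lipschitz with constant at most $2$, so the global Lipschitz constant becomes a multiple of $c_n$, as you anticipate), whereas the paper multiplies by the scalar factor $2-|x|/n$ on the annulus $n<|x|\leq 2n$ and sets the coefficients to zero outside; both produce coefficients that agree with $F,G$ on $\{|x|\leq n\}$, which is all Lemma \ref{lem2} requires.

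There is, however, one genuine gap: you feed the \emph{untruncated} initial value $\xi$ into Theorem \ref{linear growth and Lipschitz conditions}, which requires $\mathbb E|\xi|^2<\infty$. Theorem \ref{local existence theorem} imposes no integrability on $\xi$ (the standing assumption is only that $\xi$ is $\mathcal F_0$-measurable, and the estimate \eqref{XtMinTaunEstimate} is vacuously true when $\mathbb E|\xi|^2=\infty$), so your construction breaks down at the very first step for non-square-integrable data. The paper avoids this by also truncating the initial value, setting $\xi_{n_0}=\xi$ on $\{|\xi|\leq n\}$ and $\xi_{n_0}=0$ otherwise, so that $\mathbb E|\xi_{n_0}|^2\leq n^2<\infty$; consistency of the $X_n$ across truncation levels is still delivered by Lemma \ref{lem2}, whose hypothesis on the initial values (that $\xi_1=\xi_2$ a.s.\ wherever either has norm below $n$) is tailored precisely to this situation, and the bound $\mathbb E|\xi_{n_0}|^2\leq\mathbb E|\xi|^2$ then recovers \eqref{XtMinTaunEstimate}. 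With this additional truncation your argument coincides with the paper's; without it, you have only proved the theorem under the extra hypothesis $\mathbb E|\xi|^2<\infty$.
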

\begin{proof}
Let us first show existence of a maximal local mild solution by using the truncation method. For $n=1,2,\dots,$ we denote
\begin{equation*} 
\begin{aligned}
F_n(t,x)=&
\begin{cases}
\begin{aligned}
&F(t,x)       &      \hspace{1cm}  \text{if    }& |x|\leq n,\\
&F(t,x)(2-\frac{|x|}{n})   &    \text{if    } &n<|x|\leq 2n,\\
&0               &      \text{if    }& |x|> 2n,
\end{aligned}
\end{cases}\\
G_n(t,x)=&
\begin{cases}
\begin{aligned}
&G(t,x)                \hspace*{2cm}&  \text{     if    }& |x|\leq n,\\
&G(t,x)(2-\frac{|x|}{n})        & \text{   if    }& n<|x|\leq 2n,\\
&0                   & \text{     if    }& |x|> 2n,
\end{aligned}
\end{cases}\\
\end{aligned}
\end{equation*}
and
\begin{equation*} 
\begin{aligned}
\xi_{n_0}=&
\begin{cases}
\xi            \hspace*{1cm}\text{     if    } |\xi|\leq n,\\
0                  \hspace*{1cm}\text{     if    } |\xi|> n.
\end{cases}
\end{aligned}
\end{equation*}
It is easy to see that  the measurable functions $F_n$ and $G_n$ satisfy the linear growth and global Lipshitz conditions. On the account of  Theorem \ref{linear growth and Lipschitz conditions}, there exists a unique mild solution $X_n(t)$ on $[0,T]$ of the system
\begin{equation} \label{H2}
\begin{cases}
dX_n+AX_ndt=F_n(t,X_n)dt+G_n(t,X_n)dw_t,\\
X_n(0)=\xi_{n_0}.
\end{cases}
\end{equation}

Define the stopping times
\begin{equation}  \label{stoppingTimes}
\tau_n=\inf\{t\in[0,T]: |X_n(t)|>n\}, \hspace{2cm} n=1,2,\dots.
\end{equation}
By Lemma \ref{lem2}, we  observe that
 $$X_n(t)=X_m(t) \hspace{1cm} \text{ a.s. if } 0\leq t \leq \tau_n \text{ and }m>n.$$ 
 Hence, the sequence $\{\tau_n\}_n$ increases and has a limit $\tau=\lim_{n\to\infty} \tau_n\leq T$ a.s. We then  define $\{X(t), 0\leq t<\tau\}$ by
\begin{equation}  \label{DefintionOfX(t)}
X(t)=X_n(t), \hspace{2cm} t\in[\tau_{n-1},\tau_n], n\geq 1.
\end{equation}  
It is clear that $X(t)$ is continuous and $X\in \mathcal N^2(0,t\wedge \tau_n)$ for every $t\geq 0, n\geq 1$. In addition, on $\{\tau<T\}$ we have
$$\liminf_{t\to\tau}|X(t)|\geq \liminf_{n\to\infty}|X(\tau_n)|=\liminf_{n\to\infty} |X_n(\tau_n)|=\infty.$$

On the other hand, using Lemma \ref{lem1},  we obtain that  
\begin{align}
X(t\wedge \tau_n)=&X_n(t\wedge \tau_n)\notag\\
=&S(t\wedge \tau_n)\xi_{n_0}+\int_0^{t\wedge \tau_n} S(t\wedge \tau_n-s) F_n(s,X_n(s))ds\notag\\
&+\int_0^{t\wedge \tau_n} S(t\wedge \tau_n-s) G_n(s,X_n(s))dw_s\notag\\
=&S(t\wedge \tau_n)\xi_{n_0}+\int_0^{t\wedge \tau_n} S(t\wedge \tau_n-s) F(s,X(s))ds\label{XtMinTaunEquation}
\\
&+\int_0^{t\wedge \tau_n} S(t\wedge \tau_n-s) G(s,X(s))dw_s.   \notag\end{align}
Therefore, $\{X(t), t\in [0,\tau)\}$ is a maximal local mild solution of \eqref{E2}. 

Let us now verify the estimate \eqref{XtMinTaunEstimate}.
Using the same argument as in the proof of  the estimate  \eqref{EEE} in Theorem \ref{linear growth and Lipschitz conditions} to the stochastic integral equation    \eqref{XtMinTaunEquation},  we conclude that
$$\mathbb E |X(t\wedge \tau_n)|^2\leq \alpha(1+\mathbb E|\xi_{n_0}|^{2})\leq \alpha(1+\mathbb E|\xi|^{2}),\hspace{1cm} t\geq 0, n\geq 1,$$
where $\alpha=\alpha(\bar c_n, M_T, T)>0$ is some constant  depending only on $\bar c_n, M_T$ and $T.$ Clearly, by \eqref{stoppingTimes} and \eqref{DefintionOfX(t)},
$$
\tau_n=\inf\{t\in[0,T]: |X(t)|>n\}.
$$
The estimate \eqref{XtMinTaunEstimate} thus has been verified.

Let us finally verify  uniqueness of the solution. Let  $\{\bar X(t), t\in \mathcal [0,\bar \tau)\}$  be another maximal local mild solution of \eqref{E2}. Denote 
$$\bar \tau_n= \inf\{t\in [0, \bar\tau): |\bar X(t)|> n\} \quad \text{ and } \quad 
\tau_n^*=\tau_n\wedge \bar \tau_n.$$
 Then the sequence $\{\tau_n^*\}_n $ increases and converges to $\tau\wedge \bar \tau$ a.s. as $n\to \infty$. 
For  $t\geq 0$ and $n=1,2,\dots,$ we have 
\begin{align}
\mathbb E&|X(t\wedge \tau_n^*)-\bar X(t\wedge \tau_n^*)|^2\notag\\
\leq&2\mathbb E \Big|\int_0^{t\wedge \tau_n^*} S(t\wedge \tau_n^*-s)\{F(s,X(s))-F(s,\bar X(s))\}ds\Big|^2\notag\\
&+2\mathbb E \Big|\int_0^{t\wedge \tau_n^*} S(t\wedge \tau_n^*-s)\{G(s,X(s))-G(s,\bar X(s))\}dw_s\Big|^2\notag\\
\leq&2TM_T^2\mathbb E \int_0^{t\wedge \tau_n^*} |F(s,X(s))-F(s,\bar X(s))|^2ds\notag\\
&+2c(E)\mathbb E \int_0^{t\wedge \tau_n^*} |S(t\wedge \tau_n^*-s)\{G(s,X(s))-G(s,\bar X(s))\}|^2 ds\notag\\
\leq &2Tc_n^2M_T^2  \mathbb E\int_0^{t\wedge \tau_n^*} |X(s)-\bar X(s)|^2ds\notag\\
&+2c(E)M_T^2\mathbb E \int_0^{t\wedge \tau_n^*} |G(s,X(s))-G(s,\bar X(s))|^2 ds\notag\\
\leq &2(T+c(E)) c_n^2M_T^2\mathbb E \int_0^{t\wedge \tau_n^*}  |X(s)-\bar X(s)|^2ds\notag\\
= &2(T+c(E)) c_n^2M_T^2\mathbb E \int_0^t {\bf1}_{[0, \tau_n^*]}(s)  |X(s\wedge \tau_n^*)-\bar X(s\wedge \tau_n^*)|^2ds\notag\\
\leq &2(T+c(E)) c_n^2M_T^2\int_0^t \mathbb E   |X(s\wedge \tau_n^*)-\bar X(s\wedge \tau_n^*)|^2ds.  \label{H4}
\end{align}
The Gronwall  lemma then gives that $\mathbb E|X(t\wedge \tau_n^*)-\bar X(t\wedge \tau_n^*)|^2=0.$  Hence, $X(t\wedge \tau_n^*)=\bar X(t\wedge \tau_n^*)$ for every $n\geq1$ and  $t\geq 0.$ Letting $n\to \infty$, we conclude that  $X(t)=\bar X(t)$ for every   $t\in [ 0, \tau\wedge \bar \tau).$

On the other hand, if $\mathbb P\{\tau<\bar \tau\}<1$ then for almost sure  $\omega\in \{\tau<\bar \tau\}$, $\bar X(t,\omega)$ is continuous at $\tau(\omega)$. We then arrive at a contradiction
$$\infty>|\bar X(\tau(\omega),\omega)|=\lim_{n\to\infty} |\bar X(\tau_n(\omega),\omega)|=\lim_{n\to\infty} | X(\tau_n(\omega),\omega)|=\infty.$$
Therefore, $\mathbb P  \{\tau<\bar \tau\}=1$. Similarly, we have $\mathbb P  \{\tau>\bar \tau\}=1.$ We thus have shown that $\mathbb P  \{\tau=\bar \tau\}=1.$ The theorem is proved.
\end{proof}

Let us now study dependence of the maximal local mild solution on the initial data. It turns out that the maximal  local mild solution $\{X(t),t\in[0,\tau)\}$ of  \eqref{E2}, which is shown in Theorem \ref{local existence theorem}, depends continuously on the initial data in the sense specified  in the following theorem.
\begin{theorem} \label{dependence of local mild solution on the initial data}
Let the assumption on the functions $F$ and $G$  of Theorem \ref{local existence theorem} be satisfied. Let $\{X(t),t\in[0,\tau)\}$ and $\{\bar X(t),t\in[0,\bar\tau)\}$ be the maximal local solutions of  \eqref{E2} with the initial values $\xi$ and $\bar \xi$, respectively. Then there exist two positive constants $C_1=C_1( c_n, M_T, T)$ and $C_2=C_2( c_n, \bar c_n, M_T, T)$   satisfying the  estimates
\begin{align}
&\mathbb E|X(t\wedge \tau_n)-\bar X(t\wedge \tau_n)|^2 \leq C_1 \mathbb E|\xi-\bar\xi|^2 \label{Eq49}
\end{align}
and 
\begin{align}
&\mathbb E|X(t\wedge \tau_n)- X(s\wedge \tau_n)|^2\label{Eq50}\\
& \leq C_2 [\mathbb E|[S(t\wedge \tau_n-s\wedge \tau_n)-I]X(s\wedge \tau_n)|^2 +(1+ \mathbb E|\xi|^{2}) (t-s)]\notag
\end{align}
for every $0\leq s\leq t\leq T$,  
where $\{\tau_n\}_{n=0}^\infty$ is a sequence of stopping times defined by
$$
\tau_n=\inf\{t\in[0,T]: |X(t)|>n \text{ or } |\bar X(t)|>n\}.
$$
\end{theorem}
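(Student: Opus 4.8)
The plan is to derive both inequalities from the stopped mild-solution identity \eqref{XtMinTaunEquation}. By the construction in Theorem \ref{local existence theorem}, together with Lemma \ref{lem1} and optional stopping (the stochastic integral being a continuous martingale by Proposition \ref{IntegralInequality}(ii)), this identity holds at the common stopping time $\tau_n=\inf\{t\in[0,T]:|X(t)|>n\text{ or }|\bar X(t)|>n\}$ for both $X$ and $\bar X$. On $[0,\tau_n]$ both processes stay below $n$, so the local Lipschitz condition \eqref{The local Lipschitz condition} and the local growth condition (i) of Theorem \ref{local existence theorem} are available with constants $c_n$ and $\bar c_n$.

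For \eqref{Eq49}, I would subtract the stopped identities for $X$ and $\bar X$, obtaining
$$X(t\wedge\tau_n)-\bar X(t\wedge\tau_n)=S(t\wedge\tau_n)(\xi-\bar\xi)+\int_0^{t\wedge\tau_n}S(t\wedge\tau_n-r)[F(r,X(r))-F(r,\bar X(r))]dr+\int_0^{t\wedge\tau_n}S(t\wedge\tau_n-r)[G(r,X(r))-G(r,\bar X(r))]dw_r.$$
Taking $\mathbb E|\cdot|^2$, splitting by $|a+b+c|^2\le 3(|a|^2+|b|^2+|c|^2)$, and bounding the three pieces exactly as in the uniqueness estimate \eqref{H4}---the first by $3M_T^2\mathbb E|\xi-\bar\xi|^2$, the drift by Cauchy--Schwarz and \eqref{The local Lipschitz condition}, the diffusion by Proposition \ref{IntegralInequality}(i) and \eqref{The local Lipschitz condition}---produces a bound of the form $3M_T^2\mathbb E|\xi-\bar\xi|^2+3(T+c(E))c_n^2M_T^2\int_0^t\mathbb E|X(r\wedge\tau_n)-\bar X(r\wedge\tau_n)|^2dr$. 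The Gronwall lemma then yields \eqref{Eq49} with $C_1=3M_T^2\exp[3(T+c(E))c_n^2M_T^2T]$, depending only on $c_n,M_T,T$.

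For \eqref{Eq50}, the central step is the evolution (cocycle) identity for the stopped solution,
$$X(t\wedge\tau_n)=S(t\wedge\tau_n-s\wedge\tau_n)X(s\wedge\tau_n)+\int_{s\wedge\tau_n}^{t\wedge\tau_n}S(t\wedge\tau_n-r)F(r,X(r))dr+\int_{s\wedge\tau_n}^{t\wedge\tau_n}S(t\wedge\tau_n-r)G(r,X(r))dw_r,$$
which I would obtain from \eqref{XtMinTaunEquation} by inserting the semigroup law $S(t\wedge\tau_n-r)=S(t\wedge\tau_n-s\wedge\tau_n)S(s\wedge\tau_n-r)$ and again using Lemma \ref{lem1} to cut the stochastic integral at $s\wedge\tau_n$. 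Subtracting $X(s\wedge\tau_n)$ and applying $|a+b+c|^2\le3(|a|^2+|b|^2+|c|^2)$ isolates the term $3\mathbb E|[S(t\wedge\tau_n-s\wedge\tau_n)-I]X(s\wedge\tau_n)|^2$. The drift increment is controlled by Cauchy--Schwarz, the local growth condition and the a priori bound \eqref{XtMinTaunEstimate}, giving a contribution of order $M_T^2\bar c_n^2(t-s)^2(1+\mathbb E|\xi|^2)\le M_T^2\bar c_n^2T(t-s)(1+\mathbb E|\xi|^2)$; the diffusion increment is controlled by Proposition \ref{IntegralInequality}(i) with the same growth bound, giving a contribution of order $c(E)M_T^2\bar c_n^2(t-s)(1+\mathbb E|\xi|^2)$. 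Collecting these yields \eqref{Eq50} with $C_2=C_2(c_n,\bar c_n,M_T,T)$.

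I expect the main obstacle to be the rigorous justification of the stopped evolution identity: the stopping time $\tau_n$ occurs simultaneously as an integration limit and inside the semigroup argument, so one must commute the deterministic operator $S(t\wedge\tau_n-s\wedge\tau_n)$ past the random It\^o integral and verify the semigroup decomposition at the random times. This is where Lemma \ref{lem1} (moving indicators inside stochastic integrals) and the martingale content of Proposition \ref{IntegralInequality}(ii) are essential. Once the identity is secured, both estimates collapse to the same growth and Lipschitz bounds and the Gronwall machinery already deployed in Theorems \ref{linear growth and Lipschitz conditions} and \ref{local existence theorem}.
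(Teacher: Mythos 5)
Your proposal follows essentially the same route as the paper: \eqref{Eq49} is obtained by subtracting the stopped mild-solution identities and running the Gronwall argument of \eqref{H4}, and \eqref{Eq50} by inserting the semigroup law into \eqref{Eq52} to isolate $[S(t\wedge\tau_n-s\wedge\tau_n)-I]X(s\wedge\tau_n)$ and then bounding the drift and diffusion increments via the local growth condition, Proposition \ref{IntegralInequality} and the a priori estimate \eqref{XtMinTaunEstimate}. The decompositions, constants and supporting lemmas all match the paper's proof.
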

\begin{proof}
Let us first prove \eqref{Eq49}. The case $\mathbb E|\xi-\bar\xi|^2=\infty$ is obvious. Hence, we may assume that $\mathbb E|\xi-\bar\xi|^2<\infty.$ In view of the proof of Theorem \ref{local existence theorem} (see \eqref{XtMinTaunEquation}), we have 
\begin{align}
X(t\wedge \tau_n)=&S(t\wedge \tau_n)\xi+\int_0^{t\wedge \tau_n} S(t\wedge \tau_n-r) F(r,X(r))dr \label{Eq52}\\
&+\int_0^{t\wedge \tau_n} S(t\wedge \tau_n-r) G(r,X(r))dw_r,  \notag\end{align}
and 
\begin{align*}
\bar X(t\wedge \tau_n)=&S(t\wedge \tau_n)\bar\xi+\int_0^{t\wedge \tau_n} S(t\wedge \tau_n-r) F(r,\bar X(r))dr\\
&+\int_0^{t\wedge \tau_n} S(t\wedge \tau_n-r) G(r,\bar X(r))dw_r.  \notag\end{align*}
Similarly to \eqref{H4}, we obtain that
\begin{align*}
\mathbb E&|X(t\wedge \tau_n)-\bar X(t\wedge \tau_n)|^2\notag\\
 \leq &3\mathbb E|S(t\wedge \tau_n)(\xi-\bar\xi)|^2\notag\\
 &+3(T+c(E)) c_n^2M_T^2\int_0^t \mathbb E   |X(r\wedge \tau_n)-\bar X(r\wedge \tau_n)|^2dr  \notag\\
 \leq &3M_T^2\mathbb E|\xi-\bar\xi|^2\notag\\
 &+3(T+c(E)) c_n^2M_T^2\int_0^t \mathbb E   |X(r\wedge \tau_n)-\bar X(r\wedge \tau_n)|^2dr, 
 \hspace{1cm} t\in[0,T].
\end{align*}
The Gronwall lemma then provides \eqref{Eq49}.

Let us now verify \eqref{Eq50}. By the semigroup property, from \eqref{Eq52} we observe that
\begin{align*}
X&(t\wedge \tau_n)-X(s\wedge \tau_n)\notag\\
=&S(t\wedge \tau_n-s\wedge \tau_n)\Big[S(s\wedge \tau_n)\xi+\int_0^{s\wedge \tau_n} S(s\wedge \tau_n-r) F(r,X(r))dr\\
&+\int_0^{s\wedge \tau_n} S(s\wedge \tau_n-r) G(r,X(r))dw_r\Big]+\int_{s\wedge \tau_n}^{t\wedge \tau_n} S(t\wedge \tau_n-r) F(r,X(r))dr\\
&+\int_{s\wedge \tau_n}^{t\wedge \tau_n} S(t\wedge \tau_n-r) G(r,X(r))dw_r-X(s\wedge \tau_n)\notag\\
=&[S(t\wedge \tau_n-s\wedge \tau_n)-I]X(s\wedge \tau_n)+\int_{s\wedge \tau_n}^{t\wedge \tau_n} S(t\wedge \tau_n-r) F(r,X(r))dr\\
&+\int_{s\wedge \tau_n}^{t\wedge \tau_n} S(t\wedge \tau_n-r) G(r,X(r))dw_r.
\end{align*}
Using the local growth condition on $F$ and $G$, we then observe that 
\begin{align*}
\mathbb E&|X(t\wedge \tau_n)-X(s\wedge \tau_n)|^2\notag\\
\leq &3\mathbb E|[S(t\wedge \tau_n-s\wedge \tau_n)-I]X(s\wedge \tau_n)|^2\notag\\
&+3\mathbb E\Big|\int_{s\wedge \tau_n}^{t\wedge \tau_n} S(t\wedge \tau_n-r) F(r,X(r))dr|^2\notag\\
&+3\mathbb E\Big|\int_{s\wedge \tau_n}^{t\wedge \tau_n} S(t\wedge \tau_n-r) G(r,X(r))dw_r\Big|^2\notag\\
\leq &3\mathbb E|[S(t\wedge \tau_n-s\wedge \tau_n)-I]X(s\wedge \tau_n)|^2\notag\\
&+3\mathbb E(t\wedge \tau_n-s\wedge \tau_n)\int_{s\wedge \tau_n}^{t\wedge \tau_n} |S(t\wedge \tau_n-r)|^2 |F(r,X(r))|^2dr\notag\\
&+3c(E)\mathbb E\int_{s\wedge \tau_n}^{t\wedge \tau_n} |S(t\wedge \tau_n-r)|^2|G(r,X(r))|^2dr\notag\\
\leq &3\mathbb E|[S(t\wedge \tau_n-s\wedge \tau_n)-I]X(s\wedge \tau_n)|^2\notag\\
&+6c_n^2M_T^2T\mathbb E\int_{s\wedge \tau_n}^{t\wedge \tau_n}  [1+|X(r)|^2]dr\notag\\
&+6c_n^2M_T^2c(E)\mathbb E\int_{s\wedge \tau_n}^{t\wedge \tau_n}  [1+|X(r)|^2]dr\notag\\
= &3\mathbb E|[S(t\wedge \tau_n-s\wedge \tau_n)-I]X(s\wedge \tau_n)|^2\notag\\
&+6c_n^2M_T^2[T+c(E)]\int_s^t  [1+\mathbb E|X(r\wedge \tau_n)|^2]dr.\notag
\end{align*}
By virtue of  \eqref{XtMinTaunEstimate}, we obtain that 
\begin{align*}
\mathbb E&|X(t\wedge \tau_n)-X(s\wedge \tau_n)|^2\notag\\
\leq &3\mathbb E|[S(t\wedge \tau_n-s\wedge \tau_n)-I]X(s\wedge \tau_n)|^2\notag\\
&+6 c_n^2M_T^2[T+c(E)] (1+\alpha+\alpha \mathbb E|\xi|^{2}) (t-s),\notag
\end{align*}
 where $\alpha=\alpha(\bar c_n, M_T, T)$ is some positive constant. Thus, \eqref{Eq50} has been verified. 
\end{proof}
\subsection{Existence and regular dependence on initial data of global mild solutions under linear growth and local Lipschitz   conditions}
Let us  first show existence and uniqueness of mild solutions of \eqref{E2} under linear growth and local Lipschitz conditions on $F$ and $G$. 
\begin{theorem}[global existence]\label{thm1}
Suppose that  $F$ and $G$ satisfies the linear growth condition \eqref{The linear growth condition} and the local Lipschitz condition \eqref{The local Lipschitz condition} and that 
 $\mathbb E|\xi|^2< \infty$. Then 
\begin{itemize}
  \item [{\rm (i)}]  there exists a unique mild solution $X(t)$ of \eqref{E2} on $[0,T]$ such that
    \begin{equation}  \label{EX(t)2Estimate}
     \mathbb E |X(t)|^2 \leq \alpha_1(1+\mathbb E|\xi|^2), \hspace{1.5cm}  t\in [0,T],
     \end{equation}
     where $\alpha_1$ is some constant depending only on $c_1, M_T$ and $ T.$
\item [{\rm (ii)}]  For any $p> 2$ there exists a constant $\alpha_2>0$ depending only on $c_1, p, M_T$ and $ T$ such that 
\begin{equation}  \label{EEEglobal existence}
\mathbb E\sup_{0\leq t\leq T} |X(t)|^{p} \leq \alpha_2(1+\mathbb E|\xi|^{p}).
\end{equation}
\end{itemize}
\end{theorem}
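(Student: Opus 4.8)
The plan is to deduce the global existence in part (i) from the local existence theorem by ruling out explosion, and then to upgrade the ordinary moment bound to the maximal (supremum-inside) bound of part (ii) by the factorization method. For part (i), note that the linear growth condition \eqref{The linear growth condition} trivially implies the local growth condition of Theorem \ref{local existence theorem} with $\bar c_n=c_1$ for \emph{every} $n$, while the local Lipschitz condition \eqref{The local Lipschitz condition} is assumed. Hence Theorem \ref{local existence theorem} furnishes a unique maximal local mild solution $\{X(t),t\in[0,\tau)\}$ with stopping times $\tau_n=\inf\{t\in[0,T]:|X(t)|>n\}$ and the bound \eqref{XtMinTaunEstimate}. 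The decisive point is that the growth constant is now the fixed number $c_1$ rather than an $n$-dependent $\bar c_n$, so the constant in \eqref{XtMinTaunEstimate} may be taken to depend only on $c_1,M_T,T$; thus $\mathbb E|X(t\wedge\tau_n)|^2\le\alpha_1(1+\mathbb E|\xi|^2)$ uniformly in $n$.

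To show $\tau=T$ a.s.\ I would use Chebyshev's inequality. For $t<T$ and on $\{\tau_n\le t\}$ one has $t\wedge\tau_n=\tau_n<T$, so $|X(\tau_n)|=n$ by continuity, whence $n^2\,\mathbb P(\tau_n\le t)\le\mathbb E|X(t\wedge\tau_n)|^2\le\alpha_1(1+\mathbb E|\xi|^2)$. Since $\tau_n\uparrow\tau$ gives $\{\tau<t\}\subset\{\tau_n\le t\}$, letting $n\to\infty$ yields $\mathbb P(\tau<t)=0$ for every $t<T$, hence $\tau=T$ a.s. Because $\tau_n\uparrow T$, for each $t\in[0,T)$ one has $t<\tau_n$ for all large $n$, so identity \eqref{XtMinTaunEquation} reduces to \eqref{DefinitionGlobalMildSolutions} and $X$ solves the mild equation on $[0,T)$; continuity of the two convolutions extends $X$ to $t=T$, giving a global mild solution. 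The bound \eqref{EX(t)2Estimate} then follows from \eqref{XtMinTaunEstimate} by Fatou's lemma, since $X(t\wedge\tau_n)\to X(t)$ a.s. Uniqueness is inherited from that of the maximal local solution in Theorem \ref{local existence theorem} (two global mild solutions are maximal local solutions with $\tau=\bar\tau=T$), or may be reproved directly by the stopped Gronwall estimate \eqref{H4}.

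For part (ii), fix $p>2$ and assume $\mathbb E|\xi|^p<\infty$ (otherwise the claim is trivial). I would first record the supremum-outside bound $\sup_{[0,T]}\mathbb E|X(t)|^p\le\alpha(1+\mathbb E|\xi|^p)$: the solutions $X_n$ of the truncated systems \eqref{H2} have coefficients that retain the linear growth constant $c_1$ (the truncation multiplier lies in $[0,1]$) while being globally Lipschitz, so the estimate \eqref{EEE} of Theorem \ref{linear growth and Lipschitz conditions} applies to each $X_n$ with a constant $\alpha=\alpha(c_1,p,M_T,T)$ independent of $n$, and since $X(t)=X_n(t)$ for $t<\tau_n$ with $\tau_n\uparrow T$, Fatou transfers the bound to $X$; in particular $\int_0^T\mathbb E|X(r)|^p\,dr<\infty$. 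I would then split $X=S(\cdot)\xi+\mathcal Q_1(X)+\mathcal Q_2(X)$ and bound $\mathbb E\sup_{[0,T]}|X(t)|^p$ by $3^{p-1}$ times the sum of the three suprema. The term $\mathbb E\sup_t|S(t)\xi|^p\le M_T^p\mathbb E|\xi|^p$ is immediate, and since $\sup_t|\mathcal Q_1(X)(t)|\le M_Tc_1\int_0^T(1+|X(s)|)\,ds$, Hölder together with the supremum-outside bound gives $\mathbb E\sup_t|\mathcal Q_1(X)(t)|^p\le C(1+\mathbb E|\xi|^p)$.

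The genuine obstacle is the stochastic convolution $\mathcal Q_2(X)(t)=\int_0^t S(t-s)G(s,X(s))\,dw_s$, whose integrand depends on the upper limit $t$, so Proposition \ref{IntegralInequality}(iii) does not apply directly. Here I would use factorization: choose $\alpha\in(\tfrac1p,\tfrac12)$, a nonempty interval precisely because $p>2$, set $Y_\alpha(s)=\int_0^s(s-r)^{-\alpha}S(s-r)G(r,X(r))\,dw_r$, and write $\mathcal Q_2(X)(t)=\tfrac{\sin\pi\alpha}{\pi}\int_0^t(t-s)^{\alpha-1}S(t-s)Y_\alpha(s)\,ds$. For fixed $s$, Proposition \ref{IntegralInequality}(iii) applied to the frozen integrand bounds $\mathbb E|Y_\alpha(s)|^p$ by $C\,\mathbb E\bigl[\int_0^s(s-r)^{-2\alpha}|G(r,X(r))|^2\,dr\bigr]^{p/2}$; using $|S|\le M_T$, the linear growth, Jensen for the probability measure proportional to $(s-r)^{-2\alpha}\,dr$ (finite mass since $\alpha<\tfrac12$), and Fubini, this yields $\mathbb E\int_0^T|Y_\alpha(s)|^p\,ds\le C\int_0^T(1+\mathbb E|X(r)|^p)\,dr$. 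The deterministic factor $y\mapsto\int_0^t(t-s)^{\alpha-1}S(t-s)y(s)\,ds$ maps $L^p(0,T;E)$ continuously into $\mathcal C([0,T];E)$ because $\alpha>\tfrac1p$ places $(t-s)^{\alpha-1}$ in $L^{p'}$, so by Hölder $\sup_t|\mathcal Q_2(X)(t)|^p\le C\int_0^T|Y_\alpha(s)|^p\,ds$ pathwise. Taking expectations and invoking the supremum-outside bound gives $\mathbb E\sup_t|\mathcal Q_2(X)(t)|^p\le C(1+\mathbb E|\xi|^p)$, and assembling the three contributions yields \eqref{EEEglobal existence}. The main difficulty is thus the maximal estimate for the stochastic convolution, and it is exactly this step that forces the restriction $p>2$.
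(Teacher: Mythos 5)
Your proposal is correct. For part (i) it follows the paper's route essentially verbatim: both arguments rest on the observation that under the global linear growth condition the constant in \eqref{XtMinTaunEstimate} is uniform in $n$, and both rule out explosion by playing $n^2\,\mathbb P(\tau_n\le t_0)$ against that uniform bound (the paper phrases it as a contradiction over the decreasing sets $\Omega_k=\{\tau_k\le t_0\}$, you phrase it as Chebyshev; these are the same estimate), after which continuity of the two convolutions extends the solution to $t=T$. For part (ii), however, you take a genuinely different and more careful route. The paper bounds $\mathbb E\sup_{s\le t}|\mathcal Q_2(X)(s)|^p$ by invoking the Burkholder--Davis--Gundy inequality for ``the continuous martingale $\{\mathcal Q_2(X)(s)\}$''; but the stochastic convolution $\int_0^s S(s-u)G(u,X(u))\,dw_u$ is not a martingale in $s$ (its integrand depends on the upper limit through $S(s-u)$), so that step is not justified as written --- it is precisely the point you flag. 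Your factorization argument repairs it: every stochastic integral you estimate has a frozen upper limit, so Proposition \ref{IntegralInequality}(iii) applies legitimately, and the restriction $p>2$ appears for the transparent reason that $(1/p,1/2)$ must be nonempty. Your preliminary step --- transferring \eqref{EEE} from the truncated systems \eqref{H2} to $X$ by Fatou to get $\sup_t\mathbb E|X(t)|^p<\infty$ first --- also supplies the a priori finiteness that the paper's concluding Gronwall argument implicitly assumes, and lets you close the estimate \eqref{EEEglobal existence} by bounding each of the three terms directly rather than by Gronwall. The trade-off is length versus rigor: the paper's computation is shorter, but yours is the one that actually goes through in an M-type 2 Banach space with only the tools stated in Section 2.
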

\begin{proof}
\text{}
\begin{itemize}
\item Proof of  (i).
\end{itemize}
It is already known by Theorem \ref{local existence theorem} that  there exists a unique maximal local mild solution $\{X(t),t\in [0,\tau)\}$ to \eqref{E2} such that 
\begin{align*} 
\begin{cases}
\mathbb E |X(t\wedge \tau_k)|^2\leq \alpha_1(1+\mathbb E|\xi|^{2}), &\hspace{1cm}t\geq 0, k\geq 1, \\
X(\tau_{k})=k, &\hspace{1cm} k\geq 1,
\end{cases}
\end{align*}
where $\alpha_1>0$ is  some constant  depending only on $c_1, M_T$ and $ T.$ 

Let us first verify that $\tau=T$ a.s. Indeed, if this statement is false, then there would exist $t_0\in(0,T)$ and $\epsilon \in (0,1)$  such that $\mathbb P\{\tau<t_0\}>\epsilon.$ Hence, by denoting $\Omega_k=\{\tau_k\leq t_0\}$, there exists $k_0\in \mathbb N\setminus \{0\}$ such that 
$\mathbb P(\Omega_k)\geq \epsilon $ for all $ k\geq k_0.$ Since the sequence $\{\Omega_k\}_k$ is decreasing, we observe that  $\mathbb P (\cap_{k=k_0} \Omega_k)\geq\epsilon.$ From this, for every $k\geq k_0$  we have
\begin{align*}
\alpha_1(1+\mathbb E|\xi|^{2}) &\geq  \mathbb E |X(t_0\wedge \tau_k)|^2\\
&\geq  \mathbb E |{\bf 1}_{\cap_{k=k_0} \Omega_k}X(t_0\wedge \tau_k)|^2\\
&\geq  \mathbb E |{\bf 1}_{\cap_{k=k_0} \Omega_k}X(\tau_k)|^2\\
&=k^2  \mathbb P \{\cap_{k=k_0} \Omega_k\}=\epsilon k^2.
\end{align*}
Letting $k\to \infty$, we arrive at a contradiction: $\infty>\alpha_1(1+\mathbb E|\xi|^{2})=\infty.$
Thus, $\tau=T$ a.s. 

Let us now show that $X(t)$ is defined and is continuos at $t=T$ and that $X$ satisfies the estimate \eqref{EX(t)2Estimate}.
Since $\tau=T$ a.s., we have 
\begin{equation} \label{Eq-2}
X(t)=S(t) \xi+ \mathcal Q_1(X)(t) +\mathcal Q_2(X)(t), \hspace{1cm} t\in [0,T),
\end{equation}
where $\mathcal Q_1$ and $\mathcal Q_2$ are defined by \eqref{Q1Definition} and \eqref{Q2Definition}, respectively.
Using the same argument as in  the proof of the estimate \eqref{EEE} of Theorem \ref{linear growth and Lipschitz conditions} to the stochastic integral equation \eqref{Eq-2}, we observe that 
\begin{equation} \label{Eq-3}
\mathbb E|X(t)|^2 \leq \alpha_1 (1+\mathbb E |\xi|^2), \hspace{1cm} t\in [0,T),
\end{equation}
where $\alpha_1$ is some constant depending only on $c_1, M_T$ and $ T$. Clearly, the estimate \eqref{Eq-3} derives that $\mathcal Q_1(X)(t)$ and $\mathcal Q_2(X)(t)$  are defined at $t=T$. Furthermore,  Proposition \ref{IntegralInequality} provides the continuity of $\mathcal Q_2(X)$ at $t=T$. The continuity of $\mathcal Q_1(X)$ at $t=T$ can be seen easily from \eqref{Eq-3} and the equality
\begin{align*}
&\mathcal Q_1(X)(T)-\mathcal Q_1(X)(t)\\
=&\int_0^T S(T-t)S(t-s) F(s,X(s))ds-\int_0^tS(t-s)F(s,X(s))ds\\
=& [S(T-t)-I]\int_0^tS(t-s)F(s,X(s))ds +\int_t^TS(T-s)F(s,X(s))ds
\end{align*}
with any $ t\in [0,T)$. Therefore, by setting 
$$X(T)=S(T) \xi+ \mathcal Q_1(X)(T) +\mathcal Q_2(X)(T),$$
we conclude that  the obtained process $\{X(t), t\in [0,T]\}$  is a unique mild solution of \eqref{E2} on $[0,T]$. Furthermore, the estimate \eqref{EX(t)2Estimate} follows from  \eqref{Eq-3} and the continuity of $X(t)$ on $[0,T]$.
\begin{itemize}
\item Proof of (ii).
\end{itemize}
 The case $\mathbb E|\xi|^p=\infty$ is obvious. Therefore, we can assume that  $\mathbb E|\xi|^p<\infty.$ To simply the proof, we shall use a  notation $C$ to denote positive constants which are determined by the constants  $c_1, p, M_T$ and $T$. So, it may change from occurrence to occurrence. Since 
$$X(t)=S(t) \xi+ \mathcal Q_1(X)(t) +\mathcal Q_2(X)(t),$$
for every $s\in [0,T]$ we have 
\begin{equation*}
\begin{aligned}
|X(s)|^{p}
\leq &C\Big[|\xi|^{p}+\Big\{\int_0^s|S(s-u) F(u, X(u))|du\Big\}^{p}+|\mathcal Q_2(X)(s)|^{p}\Big]\\
\leq&C\Big[|\xi|^{p}+\Big\{\int_0^s| F(u, X(u))|du\Big\}^{p}+|\mathcal Q_2(X)(s)|^{p}\Big]\\
\leq&C\Big[|\xi|^{p}+\Big\{\int_0^s (1+| X(u)|)du\Big\}^{p}+ |\mathcal Q_2(X)(s)|^{p}\Big]\\
\leq&C\Big[|\xi|^{p}+\int_0^s (1+| X(u)|^{p})du+|\mathcal Q_2(X)(s)|^{p}\Big].
\end{aligned}
\end{equation*}
Hence,
\begin{equation*}
\begin{aligned}
\mathbb E&\sup_{0\leq s\leq t}|X(s)|^{p} \\
&\leq C\Big[\mathbb E |\xi|^{p}+\int_0^t (1+\mathbb E\sup_{0\leq u\leq s}| X(u)|^{p})ds+\mathbb E\sup_{0\leq s\leq t}|\mathcal Q_2(X)(s)|^{p}\Big].
\end{aligned}
\end{equation*}

On the other hand, applying  the Burkholder-Davis-Gundy inequality \cite[Theorem 3.28]{IS} to  the continuous  martingale $\{\mathcal Q_2(X)(s),s\in [0,T]\}$, we see that 
\begin{equation*}
\begin{aligned}
\mathbb E\sup_{0\leq s\leq t} |\mathcal Q_2(X)(s)|^{p}&\leq C \mathbb E \Big[\int_0^t|S(s-u)G(u,X(u))|^2du\Big ]^{\frac{p}{2}}\\
&\leq C \mathbb E \Big[\int_0^t|G(u,X(u))|^2du\Big ]^{\frac{p}{2}}\\
&\leq  C \mathbb E \Big[\int_0^t(1+|X(u)|^2)du\Big ]^{\frac{p}{2}}\\
&\leq  C\int_0^t[1+\mathbb E\sup_{0\leq u\leq s}| X(u)|^{p}]ds.
\end{aligned}
\end{equation*}
Therefore, we have shown that
$$\mathbb E\sup_{0\leq s\leq t}|X(s)|^{p}\leq C\Big [1+ \mathbb E |\xi|^{p}+\int_0^t \mathbb E\sup_{0\leq u\leq s}| X(u)|^{p}ds\Big].$$
By the Gronwall  lemma, we conclude that
$$\mathbb E\sup_{0\leq s\leq T}|X(s)|^{p}\leq  C(1+\mathbb E|\xi|^{p}).$$
The proof is complete.
\end{proof}
In the remain of the subsection, we will give the continuous dependence of the global mild solution on the initial data. 
\begin{theorem} \label{dependence of global mild solution on the initial data}
Let \eqref{The linear growth condition} and \eqref{The local Lipschitz condition}  be satisfied.  Let $X$ and $\bar X$ be the global solutions of  \eqref{E2} with the initial values $\xi$ and $\bar \xi$ satisfying the condition $\mathbb E|\xi|^2+\mathbb E|\bar \xi|^2< \infty$, respectively. Then there exist two positive constants $C_1=C_1( c_1, M_T, T)$ and $C_2=C_2( c_1, \bar c_n, M_T, T)$   such that 
\begin{align*}
&\mathbb E|X(t)-\bar X(t)|^2 \leq C_1 \mathbb E|\xi-\bar\xi|^2 
\end{align*}
and 
\begin{align*}
&\mathbb E|X(t)- X(s)|^2 \leq C_2 [\mathbb E|[S(t-s)-I]X(s)|^2 +(1+ \mathbb E|\xi|^{2}) (t-s)]\notag
\end{align*}
for every $0\leq s\leq t\leq T$. 
\end{theorem}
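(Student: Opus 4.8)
\emph{Overall strategy.} The plan is to adapt the two-part argument of Theorem~\ref{dependence of local mild solution on the initial data} to the global setting, exploiting that $X$ and $\bar X$ are now honest global mild solutions on all of $[0,T]$ (Theorem~\ref{thm1}) together with the a~priori bound \eqref{EX(t)2Estimate}. As there, I would localise by the stopping times $\tau_n=\inf\{t\in[0,T]:|X(t)|>n\text{ or }|\bar X(t)|>n\}$; since both trajectories are continuous, hence bounded, on the compact interval $[0,T]$, one has $\tau_n\uparrow T$ almost surely, and in fact $\tau_n=T$ eventually for almost every $\omega$.

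\emph{First inequality (the global analogue of \eqref{Eq49}).} Starting from the mild representation \eqref{DefinitionGlobalMildSolutions} for both solutions, I would subtract them and evaluate at $t\wedge\tau_n$. On $[0,t\wedge\tau_n]$ both processes stay in the ball of radius $n$, so the local Lipschitz condition \eqref{The local Lipschitz condition} is available with constant $c_n$. Bounding the drift term by the Cauchy--Schwarz inequality together with $|S(\cdot)|\le M_T$, and the stochastic term by Proposition~\ref{IntegralInequality}(i), exactly as in the derivation of \eqref{H4}, the quantity $g_n(t):=\mathbb E|X(t\wedge\tau_n)-\bar X(t\wedge\tau_n)|^2$ satisfies $g_n(t)\le 3M_T^2\,\mathbb E|\xi-\bar\xi|^2+3(T+c(E))c_n^2M_T^2\int_0^t g_n(r)\,dr$. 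The Gronwall lemma then controls $g_n(t)$ by a constant multiple of $\mathbb E|\xi-\bar\xi|^2$, and letting $n\to\infty$ (using continuity of the solutions, Fatou's lemma, and the integrability supplied by \eqref{EX(t)2Estimate}) gives the asserted bound.

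\emph{Second inequality (the global analogue of \eqref{Eq50}).} This part needs only the linear growth condition \eqref{The linear growth condition}. Using the semigroup property I would write $X(t)-X(s)=[S(t-s)-I]X(s)+\int_s^t S(t-r)F(r,X(r))\,dr+\int_s^t S(t-r)G(r,X(r))\,dw_r$, take second moments, and split via $(a+b+c)^2\le 3(a^2+b^2+c^2)$. The deterministic integral is handled by Cauchy--Schwarz, yielding the factor $(t-s)$, and the stochastic one by Proposition~\ref{IntegralInequality}(i); inserting \eqref{The linear growth condition} and then replacing $\mathbb E|X(r)|^2$ by $\alpha_1(1+\mathbb E|\xi|^2)$ via \eqref{EX(t)2Estimate} produces exactly the term $(1+\mathbb E|\xi|^2)(t-s)$, while the boundary contribution is $3\,\mathbb E|[S(t-s)-I]X(s)|^2$. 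Collecting constants yields the claimed inequality.

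\emph{Main obstacle.} The genuinely delicate point is the limit $n\to\infty$ in the first inequality: on the stopped interval only $c_n$ is available, and it enters the Gronwall factor through $\exp\!\big(3(T+c(E))c_n^2M_T^2T\big)$, which degenerates as $n\to\infty$. The crux is therefore to pass to the limit while keeping the right-hand side a fixed multiple of $\mathbb E|\xi-\bar\xi|^2$; I expect this to lean on the uniform moment control of Theorem~\ref{thm1} and the almost-sure stabilisation $\tau_n=T$, and, should a constant strictly independent of the local Lipschitz modulus be required, on a Bihari-type refinement of the Gronwall step rather than the verbatim local argument.
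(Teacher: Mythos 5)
Your route is the one the paper itself intends: the authors omit the proof of this theorem, remarking only that it is similar to that of Theorem~\ref{dependence of local mild solution on the initial data}, and your treatment of the second inequality is complete and matches that template --- it uses only the linear growth condition, Cauchy--Schwarz on the drift, Proposition~\ref{IntegralInequality}(i) on the stochastic term, and the moment estimate \eqref{EX(t)2Estimate}, exactly as in the derivation of \eqref{Eq50}; no localisation is even needed there.

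The first inequality, however, contains a genuine gap which you identify honestly but do not close. After Gronwall on the stopped interval you obtain
$\mathbb E|X(t\wedge\tau_n)-\bar X(t\wedge\tau_n)|^2\le 3M_T^2\exp\bigl(3(T+c(E))c_n^2M_T^2T\bigr)\,\mathbb E|\xi-\bar\xi|^2$,
and the prefactor diverges as $n\to\infty$ because the local Lipschitz constants $c_n$ are not assumed bounded. Fatou's lemma then yields only the trivial bound. None of the devices you propose repairs this: the uniform moment control of Theorem~\ref{thm1} bounds second moments of the solutions, not the Lipschitz modulus of $F,G$ along their trajectories (which are not uniformly bounded, since $\xi$ is only square-integrable); the almost-sure stabilisation $\tau_{n}=T$ occurs at a random, unbounded index $n(\omega)$, so it does not produce a deterministic $n$ at which to stop; and a Bihari-type argument requires a global modulus of continuity in $x$, which \eqref{The local Lipschitz condition} does not supply. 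As written, your argument therefore proves the first estimate only with an $n$-dependent constant on $\{\tau_n=T\}$, not the uniform constant $C_1=C_1(c_1,M_T,T)$ claimed. To be fair, the identical difficulty is latent in the paper's own proof-by-analogy; obtaining a constant independent of the family $\{c_n\}$ appears to require an additional hypothesis such as a global Lipschitz condition or almost surely bounded initial data.
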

As the proof of this theorem is similar to that of Theorem \ref{dependence of local mild solution on the initial data}, we may omit it.
\section{Linear evolution equations with additive noise} \label{section4}
This section handles linear evolution equations with additive noise: the functions $F$ and $G$ are considered to be dependent only on $t$, i.e. $F(t,X)=F(t)$ and $ G(t,X)=G(t).$ 
Let us rewrite the equation  \eqref{E2}  in the form
\begin{equation} \label{autonomous linear evolution equation}
\begin{cases}
dX+AXdt=F(t)dt+ G(t)dw_t,\\
X(0)=\xi.
\end{cases}
\end{equation}
We shall explore existence of strict and mild solutions and  regularity of solutions of \eqref{autonomous linear evolution equation}. 

Throughout this section, we suppose that 
\begin{itemize}
  \item  the spectrum $\sigma(A)$ of $A$ is  contained in an open sectorial domain $\Sigma_{\varpi}$: 
\begin{equation} \label{spectrumsectorialdomain} 
\sigma(A) \subset  \Sigma_{\varpi}=\{\lambda \in \mathbb C: |\arg \lambda|<\varpi\}, \quad \quad 0<\varpi<\frac{\pi}{2},
       \end{equation}
  \item there exists a constant $M_{\varpi}>0$ such that
\begin{equation} \label{resolventnorm}
          ||(\lambda-A)^{-1}|| \leq \frac{M_{\varpi}}{|\lambda|}, \quad\quad\quad \quad   \lambda \notin \Sigma_{\varpi}.
     \end{equation}
    \item  the non-random functions $F\colon[0,T]\to E$ and $G\colon[0,T]\to E$  satisfy one of the following two conditions:
       \begin{equation}  \label{FGSpace1}
       F\in \mathcal F^{\beta, \sigma}((0,T];E), G\in \mathcal F^{\beta+\frac{1}{2}, \sigma} ((0,T];E)  
              \end{equation}
            \hspace{2.5cm}  with  $0<\sigma<\beta\leq \frac{1}{2},$
       \begin{equation}  \label{FGSpace2}
       F\in \mathcal F^{\beta, \sigma} \, \text{ with } 0<\sigma<\beta\leq 1 \text{  and } G\in \mathcal B([0,T];E). 
              \end{equation} 
 \end{itemize} 
  
The following lemma presents some useful results. The proof can be found  in the monograph \cite{yagi}.
\begin{lemma}
The linear operator $A$  satisfies the following properties.
\begin{itemize}
\item  [\rm (i)] $(-A)$ generates an analytic semigroup $S(t)=e^{-tA}$.
\item  [\rm (ii)]
\begin{equation} \label{EstimateIofAnu}
|A^\nu S(t)| \leq \iota_\nu t^{-\nu}, \hspace{2cm}  t\in (0,T],  \nu \in [0,\infty),
\end{equation}
 where $ \iota_\nu=\sup_{0<t\leq T} t^\nu |A^\nu S(t)|<\infty$. In particular, 
\begin{equation} \label{EstimateIofS(t)Maximum}
|S(t)|\leq \iota_0, \hspace{2cm}  t\in[0,T].
\end{equation}
 \item  [\rm (iii)] There exists $\nu>0$ such that 
\begin{equation} \label{EstimateIofS(t)}
|S(t)|\leq \iota_0 e^{-\nu t}, \hspace{2cm}  t\in[0,T].
\end{equation}
\item  [\rm (iv)] For every $\theta\in (0,1]$ 
\begin{equation} \label{EstimateIofS(t)-IA-theta}
|[S(t)-I]A^{-\theta}|\leq \frac{\iota_{1-\theta}}{\theta} t^\theta, \hspace{2cm}  t\in[0,T].
\end{equation}
\item  [\rm (v)]  
\begin{equation} \label{Eq53}
AS(\cdot)U \in \mathcal F^{\beta,\sigma}((0,T];E) \hspace{2cm} \text{ for every }U\in \mathcal D(A^\beta).
\end{equation}
\end{itemize}
\end{lemma}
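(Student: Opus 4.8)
The plan is to regard parts (i)--(iii) as the standard consequences of the sectoriality hypotheses \eqref{spectrumsectorialdomain}--\eqref{resolventnorm} and to concentrate the real effort on (iv) and, above all, (v). For (i) I would build the semigroup through the Dunford--Riesz contour integral $S(t)=\frac{1}{2\pi i}\int_{\Gamma}e^{-t\lambda}(\lambda-A)^{-1}\,d\lambda$, where $\Gamma$ runs in the complement of $\Sigma_{\varpi}$ and opens toward the right half-plane; because $\varpi<\frac{\pi}{2}$, the factor $e^{-t\lambda}$ decays along $\Gamma$, so the integral converges, reproduces the semigroup law, and extends analytically in $t$ to a sector of $\mathbb{C}$. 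For (ii) I would differentiate this representation to get $A^{n}S(t)=\frac{1}{2\pi i}\int_{\Gamma}\lambda^{n}e^{-t\lambda}(\lambda-A)^{-1}\,d\lambda$, and after the rescaling $\lambda\mapsto\mu/t$ the bound \eqref{resolventnorm} yields $\|A^{n}S(t)\|\le Ct^{-n}$; for a general exponent I would write $\nu=n+\gamma$ with $\gamma\in[0,1)$ and factor $A^{\nu}S(t)=A^{\gamma}S(t/2)\,A^{n}S(t/2)$, so that the supremum $\iota_{\nu}=\sup_{0<t\le T}t^{\nu}\|A^{\nu}S(t)\|$ is finite, which is precisely \eqref{EstimateIofAnu}. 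For (iii) I would observe that the use of the negative powers $A^{-\theta}$ throughout the section forces $0\in\rho(A)$; the resolvent set then contains a ball about the origin, so $\sigma(A)$ is bounded away from $0$, and since it lies in $\Sigma_{\varpi}$ with $\varpi<\frac{\pi}{2}$ it lies in a half-plane $\{\operatorname{Re}\lambda\ge\nu\}$ for some $\nu>0$; moving $\Gamma$ into this half-plane produces the exponential factor $e^{-\nu t}$.

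Part (iv) I would obtain from the identity $[S(t)-I]A^{-\theta}x=\int_{0}^{t}\frac{d}{ds}S(s)A^{-\theta}x\,ds=-\int_{0}^{t}A^{1-\theta}S(s)x\,ds$, valid for $\theta\in(0,1]$. Taking norms and applying (ii) with exponent $1-\theta\in[0,1)$ gives $\|[S(t)-I]A^{-\theta}\|\le\int_{0}^{t}\iota_{1-\theta}s^{-(1-\theta)}\,ds=\frac{\iota_{1-\theta}}{\theta}t^{\theta}$, which is \eqref{EstimateIofS(t)-IA-theta}.

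The heart of the lemma is (v). Setting $V:=A^{\beta}U\in E$ and $f(t):=AS(t)U=A^{1-\beta}S(t)V$, I must check the three defining properties of $\mathcal{F}^{\beta,\sigma}((0,T];E)$. The underlying tool is the elementary fact that for $\gamma>0$ the family $s^{\gamma}A^{\gamma}S(s)$ is uniformly bounded by $\iota_{\gamma}$ and converges to $0$ on the dense set $\mathcal{D}(A^{\gamma})$ as $s\to0$, hence converges to $0$ in the strong operator topology. Applying this with $\gamma=1-\beta$ gives $t^{1-\beta}f(t)=(t^{1-\beta}A^{1-\beta}S(t))V\to0$, which settles property~1 (with limit $0$; when $\beta=1$ one instead notes $f(t)=S(t)AU\to AU$).

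For properties~2 and 3 the decisive step is the decomposition $f(t)-f(s)=[S(t-s)-I]A^{-\sigma}\,A^{1-\beta+\sigma}S(s)V$. Combining (iv) (with $\theta=\sigma$) and (ii) I get $\frac{s^{1-\beta+\sigma}|f(t)-f(s)|}{(t-s)^{\sigma}}\le\frac{\iota_{1-\sigma}}{\sigma}\,g(s)$, where $g(s):=s^{1-\beta+\sigma}\|A^{1-\beta+\sigma}S(s)V\|$. Uniform boundedness of $g$ (again by (ii)) yields the finite weighted H\"{o}lder seminorm required for property~2, and the strong-convergence fact applied with $\gamma=1-\beta+\sigma$ shows $g$ extends continuously to $[0,T]$ with $g(0)=0$, so $\sup_{0\le s\le t}g(s)\to0$ as $t\to0$, which gives property~3. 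I expect the main obstacle to be exactly this strong-convergence lemma $s^{\gamma}A^{\gamma}S(s)\to0$ and its clean use in both the limit at the origin and the vanishing of the weighted seminorm; once it is in hand, everything else is bookkeeping with \eqref{EstimateIofAnu} and \eqref{EstimateIofS(t)-IA-theta}.
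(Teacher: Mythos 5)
The paper gives no proof of this lemma at all --- it simply cites Yagi's monograph --- and your argument is the standard one found there: the Dunford contour integral and rescaling for (i)--(iii), the identity $[S(t)-I]A^{-\theta}x=-\int_0^t A^{1-\theta}S(s)x\,ds$ for (iv), and for (v) the decomposition $f(t)-f(s)=[S(t-s)-I]A^{-\sigma}\,A^{1-\beta+\sigma}S(s)A^{\beta}U$ combined with the uniform bound and strong convergence $s^{\gamma}A^{\gamma}S(s)\to 0$, all of which is correct. The one step you should make explicit is the fractional case $\gamma\in(0,1)$ of (ii) \emph{before} invoking the factorization $A^{n+\gamma}S(t)=A^{\gamma}S(t/2)\,A^{n}S(t/2)$ (e.g.\ via the moment inequality $|A^{\gamma}x|\le C|Ax|^{\gamma}|x|^{1-\gamma}$ applied to $x=S(t)y$, or via the contour representation of $A^{\gamma}S(t)$ itself), since as written that factorization presupposes the very bound it is meant to prove when $0<\nu<1$.
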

\subsection{Existence of strict solutions}
This subsection presents existence of strict solutions of  the autonomous linear evolution equation \eqref{autonomous linear evolution equation}.
\begin{theorem} \label{StrictSolutions}
 Assume that 
 \begin{equation} \label{Eq35}
 \begin{aligned}
 &\text{there exist } \delta\in (0,\frac{1}{2}) \text{  and } c_{\delta}>0 \text{  such that  }|AS(t)| <c_{\delta} t^{-\delta}\\
&  \text{for every } t\in (0,T].
\end{aligned}
\end{equation}
 Then there exists a unique strict solution of \eqref{autonomous linear evolution equation} in the space $\mathcal C((0,T];\mathcal D(A))$.
\end{theorem}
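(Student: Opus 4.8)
The plan is to prove existence and uniqueness of a strict solution by a direct construction: I would take the natural mild-solution candidate
\[
X(t)=S(t)\xi+\int_0^t S(t-s)F(s)\,ds+\int_0^t S(t-s)G(s)\,dw_s,
\]
and then upgrade it to a strict solution by verifying the four conditions of Definition \ref{Def2}. The key point is that the added regularity assumption \eqref{Eq35}, namely $|AS(t)|<c_\delta t^{-\delta}$ with $\delta<\tfrac12$, together with the H\"older-type regularity of $F$ and $G$ in the spaces $\mathcal F^{\beta,\sigma}$ and $\mathcal B([0,T];E)$, is exactly what makes the convolution terms land in $\mathcal D(A)$ and makes $AX$ integrable.

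First I would treat the deterministic convolution $Y_1(t)=\int_0^t S(t-s)F(s)\,ds$. The standard trick is to write $AY_1(t)=\int_0^t AS(t-s)[F(s)-F(t)]\,ds+[S(t)-I]F(t)$ (after adding and subtracting $F(t)$, which is legitimate since $\int_0^t AS(t-s)\,ds=I-S(t)$). The first integral converges absolutely because $|AS(t-s)|\le c_\delta(t-s)^{-\delta}$ while $|F(s)-F(t)|$ is controlled by the H\"older bound \eqref{FbetasigmaSpaceProperty} with weight $s^{\beta-\sigma-1}$; the product is integrable precisely when $\delta+\sigma<1$ and $\beta-\sigma>0$, which holds under \eqref{FGSpace1}--\eqref{FGSpace2}. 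Next, for the stochastic convolution $Y_2(t)=\int_0^t S(t-s)G(s)\,dw_s$, I would apply the closedness of $A$ together with the commutation result for stochastic integrals (part (ii) of the unlabeled closed-operator theorem) to write $AY_2(t)=\int_0^t AS(t-s)G(s)\,dw_s$, and then use the M-type 2 isometry-type bound from Proposition \ref{IntegralInequality}(i) to estimate $\mathbb E|AY_2(t)|^2\le c(E)\int_0^t|AS(t-s)|^2|G(s)|^2\,ds$. Here the constraint $\delta<\tfrac12$ is essential: it guarantees $|AS(t-s)|^2\le c_\delta^2(t-s)^{-2\delta}$ is square-integrable in $s$, so $\mathbb E\int_0^t|AX(s)|^2\,ds<\infty$, giving condition (iii) of Definition \ref{Def2}.

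Having established $X(t)\in\mathcal D(A)$ a.s.\ with $AX\in\mathcal N^2$, I would then verify the integral identity (iv) of the strict-solution definition. The clean way is to differentiate the mild formula, or equivalently to start from the variation-of-constants representation and apply $A$ under the integral (justified by the closed-operator theorem), producing
\[
X(t)=\xi-\int_0^t AX(s)\,ds+\int_0^t F(s)\,ds+\int_0^t G(s)\,dw_s.
\]
The verification reduces to a Fubini-type interchange in both the Bochner and the It\^o integral, for which the finiteness estimates just obtained supply the required integrability. Continuity of $X$ on $(0,T]$ into $\mathcal D(A)$ (i.e.\ membership in $\mathcal C((0,T];\mathcal D(A))$) follows from continuity of the two convolution maps in the graph norm, again using the $\mathcal F^{\beta,\sigma}$ bounds and property \eqref{EstimateIofS(t)-IA-theta}. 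Uniqueness is the easiest part: if $X$ and $\bar X$ are two strict solutions, their difference $Z=X-\bar X$ satisfies $dZ+AZ\,dt=0$ with $Z(0)=0$, and a Gronwall argument applied to $\mathbb E|Z(t)|^2$ (as in Step 2 of Theorem \ref{linear growth and Lipschitz conditions}) forces $Z\equiv0$.

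The main obstacle I anticipate is the borderline integrability in the stochastic-convolution term: because the exponent in $|AS(t-s)|^2$ is $2\delta$ rather than $\delta$, one loses a full power, and this is why the hypothesis must be $\delta<\tfrac12$ rather than merely $\delta<1$. Tracking this constraint carefully against the weights $s^{\beta-\sigma-1}$ coming from $G\in\mathcal F^{\beta+\frac12,\sigma}$, and checking that the two alternative hypotheses \eqref{FGSpace1} and \eqref{FGSpace2} both yield convergent exponent sums, is where the real care is needed; the rest is routine manipulation of analytic-semigroup estimates and the M-type 2 stochastic-integral inequality.
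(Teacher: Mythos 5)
Your overall strategy coincides with the paper's: start from the mild formula, split off the deterministic and stochastic convolutions, use the decomposition $\int_0^t AS(t-s)F(s)\,ds=\int_0^t AS(t-s)[F(s)-F(t)]\,ds+[I-S(t)]F(t)$ for the first and the closedness of $A$ together with Proposition \ref{IntegralInequality}(i) for the second, and finish with a (stochastic) Fubini interchange. One genuine difference: you invoke \eqref{Eq35} also in the deterministic step, whereas the paper deliberately avoids it there, running a Yosida-approximation argument ($A_n=A(1+\frac{A}{n})^{-1}$) with the sharp bound $|AS(t)|\le\iota_1 t^{-1}$ so that the same computation can be reused in Theorem \ref{regularity theorem autonomous linear evolution equation}, where \eqref{Eq35} is not assumed. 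Your shortcut is legitimate for the present theorem — indeed Remark \ref{remark1} explicitly notes that \eqref{Eq35} permits this reduction — it just buys less downstream. (Two cosmetic slips: the boundary term should be $[I-S(t)]F(t)$, not $[S(t)-I]F(t)$, and the relevant integrability condition for $\int_0^t(t-s)^{\sigma-\delta}s^{\beta-\sigma-1}ds$ is $\sigma-\delta>-1$ and $\beta>\sigma$, not ``$\delta+\sigma<1$''; both are harmless.)

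The genuine gap is in your uniqueness argument. If $X$ and $\bar X$ are two strict solutions, their difference satisfies $Z(t)=-\int_0^t AZ(s)\,ds$, and a Gronwall estimate on $\mathbb E|Z(t)|^2$ does not close: the right-hand side is controlled by $\int_0^t\mathbb E|AZ(s)|^2\,ds$, and since $A$ is unbounded there is no inequality $|AZ(s)|\le C|Z(s)|$ to feed back into Gronwall. The situation is not analogous to Step 2 of Theorem \ref{linear growth and Lipschitz conditions}, where the difference of two mild solutions is expressed through the bounded kernels $S(t-s)$ composed with Lipschitz nonlinearities. The standard repair is to show that every strict solution is the mild solution: for fixed $t$ set $v(s)=S(t-s)Z(s)$, use the strict-solution identity and the analyticity of $S$ to check that $s\mapsto v(s)$ is constant on $[0,t]$ (equivalently, apply $S(t-s)$ to the equation and integrate, using the closedness of $A$ and the integrability $\int_0^t\mathbb E|AZ(s)|^2\,ds<\infty$ from Definition \ref{Def2}(iii) to justify the interchange), whence $Z(t)=S(t)Z(0)=0$. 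Uniqueness of the mild solution itself is immediate since the variation-of-constants formula determines it.
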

\begin{proof}
Let us observe that  the integral $\int_0^t S(t-s)G(s)dw_s$ is well-defined and  is continuous. Indeed, it suffices to show that $\int_0^t |S(t-s)G(s)|^2ds$ is finite for $t\in (0,T]$.
If \eqref{FGSpace1}  takes place then  from  \eqref{FbetasigmaSpaceProperty} and \eqref{EstimateIofS(t)Maximum}, 
  we have
\begin{align*}
\int_0^t|S(t-s)G(s)|^2 ds& \leq  \int_0^t \iota_0^2 |G|_{\mathcal F^{\beta+\frac{1}{2}, \sigma}}^2 s^{2\beta-1}ds\\
&=\frac{\iota_0^2 |G|_{\mathcal F^{\beta+\frac{1}{2}, \sigma}}^2 t^{2\beta}}{2\beta}<\infty, \hspace{2cm}  t\in [0,T].
\end{align*}
Otherwise, if  \eqref{FGSpace2} takes place then from \eqref{EstimateIofS(t)Maximum}, we have
$$\int_0^t|S(t-s)G(s)|^2 ds \leq  \iota_0^2 t |G|_{\mathcal B((0,T]; E)}^2  <\infty, \quad \quad\quad t\in [0,T].$$

On the other hand, by \eqref{FbetasigmaSpaceProperty} and \eqref{EstimateIofS(t)}, we have
\begin{equation} \label{Eq0}
\int_0^t|S(t-s) F(s)|ds\leq \iota_0 |F|_{\mathcal F^{\beta,\sigma}} \int_0^t e^{-\nu (t-s)} s^{\beta-1}ds \leq \frac{\iota_0 |F|_{\mathcal F^{\beta,\sigma}} t^\beta }{\beta}.
\end{equation}
Hence, $\int_0^tS(t-s) F(s)ds$ is continuous on $[0,T]$.
We thus have shown that  \eqref{autonomous linear evolution equation} has a unique mild solution
$X(t)=I_1(t)+I_2(t),$ where
\begin{equation}\label{XI1I2}
\begin{aligned}
I_1(t)&=S(t)\xi +\int_0^tS(t-s) F(s)ds, \\
I_2(t)&= \int_0^t S(t-s) G(s)dw_s.
\end{aligned}
\end{equation}

We shall show that this mild solution is a strict solution in the space $\mathcal C((0,T];\mathcal D(A))$. For this purpose, we divide the proof into two steps.

{\bf Step 1}. Let us verify that $I_1\in  \mathcal C((0,T];\mathcal D(A)) $ and satisfies the integral equation
\begin{equation}  \label{I1equation}
I_1(t)+\int_0^t AI_1(s)ds=\xi+ \int_0^t F(s)ds, \hspace{1cm}   t\in (0,T].
\end{equation}
Let $A_n=A(1+\frac{A}{n})^{-1}, n\in \mathbb N\setminus \{0\}$  be the Yosida approximation of $A$. Then $A_n$ satisfies \eqref{spectrumsectorialdomain} and \eqref{resolventnorm} uniformly and generates an analytic semigroup $S_n(t)$ (see e.g. \cite{yagi}). Furthermore, for every $\nu \in [0,\infty)$  we have 
\begin{equation} \label{LimitOfAnnuSn}
\lim_{n\to\infty} A_n^\nu S_n(t)=A^\nu S(t)  \quad\quad\text{ (limit in } \mathcal L(E))
\end{equation}
 and there exists $\varsigma_\nu>0$ independent of $n$ such that for every $t\in (0,T]$
\begin{equation} \label{EstimateIofAnnu}
\begin{aligned}
\begin{cases}
|A_n^\nu S_n(t)| \leq \varsigma_\nu t^{-\nu} &\quad\quad\text { if   } \nu>0,\\
|A_n^\nu S_n(t)| \leq \varsigma_\nu e^{-\varsigma_\nu t} &\quad\quad\text { if   } \nu=0.
\end{cases}
\end{aligned}
\end{equation}
Consider a function
$$I_{1n}(t)=S_n(t)\xi +\int_0^tS_n(t-s) F(s)ds, \hspace{2cm} t\in [0,T].$$
Due to \eqref{LimitOfAnnuSn} and \eqref{EstimateIofAnnu}, $\lim_{n\to \infty} I_{1n}(t)=I_1(t)$ a.s. Since $A_n$ is bounded, we verify that
$$dI_{1n}=[-A_n I_{1n}+F(t)]dt, \hspace{2cm} t\in (0,T].$$
From this equation, for any $\epsilon \in (0,T)$ we obtain that
\begin{equation}\label{EquationOfI1n}
I_{1n}(t)=I_{1n}(\epsilon)+\int_\epsilon^t[F(s)-A_nI_{1n}(s)]ds, \quad\quad t\in [\epsilon,T].
\end{equation}

We shall  observe convergence of $A_nI_{1n}$. We have
\begin{align} 
A_n&I_{1n}(t) \notag\\
=&A_nS_n(t)\xi+\int_0^t A_nS_n(t-s)[F(s)-F(t)]ds + \int_0^t A_nS_n(t-s)ds F(t)\notag\\
=&A_nS_n(t)\xi+\int_0^t A_nS_n(t-s)[F(s)-F(t)]ds + [I-S_n(t)]F(t). \label{Eq36}
\end{align}
Using \eqref{FbetasigmaSpaceProperty} and \eqref{EstimateIofAnnu}, we observe that 
\begin{align} 
|A_n&I_{1n}(t)|\notag\\
\leq &\varsigma_1 t^{-1} |\xi|+\int_0^t  \varsigma_1  |F|_{\mathcal F^{\beta,\sigma}} (t-s)^{\sigma-1} s^{\beta-\sigma-1} ds+(1+\varsigma_0 e^{-\varsigma_0 t}) |F|_{\mathcal F^{\beta,\sigma}} t^{\beta-1}\notag\\
=&\varsigma_1 |\xi| t^{-1} + [1+\varsigma_1  {\bf B}(\beta-\sigma, \sigma)   +\varsigma_0 e^{-\varsigma_0 t}         ]  |F|_{\mathcal F^{\beta,\sigma}} t^{\beta-1}, \quad \quad t\in (0,T], \label{EstimateOfNormOfAnI1n}
\end{align}
where ${\bf B} (\cdot,\cdot)$ is the beta function. 
Furthermore, due to \eqref{LimitOfAnnuSn} and \eqref{Eq36}, 
$$
\lim_{n\to\infty}A_nI_{1n}(t)=W(t),
$$
where
$$W(t)=AS(t)\xi+\int_0^t AS(t-s)[F(s)-F(t)]ds + [I-S(t)]F(t).$$

Let us verify  that $W(t)$ is continuous on $(0,T]$. Indeed, let $t_0\in (0,T]$. Using \eqref{FbetasigmaSpaceProperty} and \eqref{EstimateIofAnu}, for every $t\geq t_0$ we have
\begin{align}
&|W(t)-W(t_0)|\notag\\
\leq & |AS(t_0)[S(t-t_0)-I]\xi| + |[I-S(t)]F(t)-[I-S(t_0)]F(t_0)|\notag \\
&+\Big| \int_{t_0}^t AS(t-s)[F(s)-F(t)]ds+\int_0^{t_0} AS(t-s)ds[F(t_0)-F(t)]\notag\\
&+\int_0^{t_0} S(t-t_0)AS(t_0-s)[F(s)-F(t_0)]ds\notag\\
&-\int_0^{t_0} AS(t_0-s)[F(s)-F(t_0)]ds\Big|\notag\\
\leq & \iota_1 t_0^{-1}|S(t-t_0)\xi-\xi| + |[I-S(t)]F(t)-[I-S(t_0)]F(t_0)|\notag \\
&+ \int_{t_0}^t |AS(t-s)| |F(s)-F(t)|ds+|[S(t-t_0)-S(t)][F(t_0)-F(t)]|\notag\\
&+\int_0^{t_0} |[S(t-t_0)-I]AS(t_0-s)[F(s)-F(t_0)]|ds\notag\\
\leq & \iota_1 t_0^{-1}|S(t-t_0)\xi-\xi| + |[I-S(t)]F(t)-[I-S(t_0)]F(t_0)| \notag\\
&+ \int_{t_0}^t \iota_1  |F|_{\mathcal F^{\beta, \sigma}} (t-s)^{\sigma-1} s^{\beta-\sigma-1}ds+|S(t-t_0)-S(t)| |F(t_0)-F(t)|\notag\\
&+|S(t-t_0)-I|\int_0^{t_0}  \iota_1|F|_{\mathcal F^{\beta, \sigma}} (t_0-s)^{\sigma-1} s^{\beta-\sigma-1}ds\notag\\
\leq  & \iota_1 t_0^{-1}|S(t-t_0)\xi-\xi| + |[I-S(t)]F(t)-[I-S(t_0)]F(t_0)|\notag \\
&+ \varsigma_1  |F|_{\mathcal F^{\beta, \sigma}}  t_0^{\beta-\sigma-1} \int_{t_0}^t(t-s)^{\sigma-1} ds+|S(t-t_0)-S(t)| |F(t_0)-F(t)|\notag\\
&+  \iota_1|F|_{\mathcal F^{\beta, \sigma}} {\bf B}(\beta-\sigma,\sigma) t_0^{\beta-1} |S(t-t_0)-I|\notag\\
\leq  & \iota_1 t_0^{-1}|S(t-t_0)\xi-\xi| + |[I-S(t)]F(t)-[I-S(t_0)]F(t_0)| \label{Eq26}\\
&+ \frac{\iota_1  |F|_{\mathcal F^{\beta, \sigma}} }{\sigma}  t_0^{\beta-\sigma-1} (t-t_0)^\sigma+|S(t-t_0)-S(t)| |F(t_0)-F(t)|\notag\\
&+  \iota_1|F|_{\mathcal F^{\beta, \sigma}} {\bf B}(\beta-\sigma,\sigma) t_0^{\beta-1} |S(t-t_0)-I|.\notag
\end{align}
Thus, $\lim_{t\searrow  t_0}W(t)=W(t_0).$ Similarly, we obtain that $\lim_{t\nearrow  t_0}W(t)=W(t_0).$ Hence, $W(t)$ is continuous at $t=t_0$ and then at every point in $(0,T]$.

By the above aguments, we have
$$I_1(t)=\lim_{n\to\infty} I_{1n}(t)=\lim_{n\to\infty} A_n^{-1} A_n I_{1n}(t)=A^{-1}W(t).$$
This shows that $I_1(t) \in \mathcal D(A)$ and $AI_1(t)=W(t)\in \mathcal C((0,T];E).$ 

Let us prove that $\int_0^t W(s)ds$ exists. Indeed, by virtue of \eqref{FbetasigmaSpaceProperty}, \eqref{EstimateIofAnu}  and \eqref{EstimateIofS(t)Maximum}, we have
\begin{align*}
\int_0^t |[I-S(r)]F(r)|dr &\leq (1+\iota_0) |F|_{\mathcal F^{\beta,\sigma}}\int_0^t r^{\beta-1}dr\\
&=\frac{(1+\iota_0) |F|_{\mathcal F^{\beta,\sigma}} r^{\beta}}{\beta}, \hspace{2cm} t\in [0,T],
\end{align*}
and
\begin{align*}
&\int_0^t \Big|\int_0^s AS(s-r)[F(r)-F(s)]dr\Big| ds\\
&\leq \int_0^t \int_0^s |AS(s-r)| |F(r)-F(s)|dr ds\\
&\leq \iota_1 |F|_{\mathcal F^{\beta, \sigma}} \int_0^t \int_0^s   (s-r)^{\sigma-1} r^{\beta-\sigma-1}dr ds\\
&=\frac{\iota_1 |F|_{\mathcal F^{\beta, \sigma}} {\bf B}(\beta-\sigma, \sigma)  t^\beta}{\beta}, \hspace{2cm} t\in [0,T].
\end{align*}
These  estimates show that $\int_0^t [I-S(r)]F(r)dr$ and $\int_0^t \int_0^s AS(s-r)[F(r)-F(s)]dr ds $ exist for $t\in [0,T].$ Therefore, the existence of $\int_0^t W(s)ds$ for $t\in [0,T]$ follows from the equality
\begin{align*}
\int_0^t W(s)ds=&\int_0^t AS(r)\xi dr+\int_0^t \int_0^s AS(s-r)[F(r)-F(s)]dr ds\\
& +\int_0^t [I-S(r)]F(r)dr\\
=&[I-S(t)]\xi+\int_0^t \int_0^s AS(s-r)[F(r)-F(s)]dr ds\\
& +\int_0^t [I-S(r)]F(r)dr.
\end{align*}

In view of \eqref{EstimateOfNormOfAnI1n}, by applying  the Lebesgue dominate convergence theorem to \eqref{EquationOfI1n}, for any $\epsilon \in (0,T)$ we obtain  that
 
\begin{equation} \label{Eq27}
I_1(t)=I_1(\epsilon)+\int_\epsilon^t[F(s)-AI_1(s)]ds, \quad\quad t\in [\epsilon,T].
\end{equation}
From \eqref{Eq0} and \eqref{XI1I2}, $\lim_{\epsilon\to 0} I_1(\epsilon) =\xi$. Letting $\epsilon \to 0$ in \eqref{Eq27},  we then observe the equation \eqref{I1equation}. 

{\bf Step 2}. Let us observe that $I_2\in \mathcal C([0,T]; \mathcal D(A)) $ and satisfies the equation
\begin{equation} \label{I2equation}
I_2(t)+ \int_0^t AI_2(s)ds=\int_0^t G(u)dw_u, \quad\quad t\in (0,T].
\end{equation}
If \eqref{FGSpace1}  takes place, then by using \eqref{FbetasigmaSpaceProperty} and \eqref{Eq35}, we have
\begin{align*}
\int_0^t&|AS(t-s)G(s)|^2 ds \leq   \int_0^t  c_\delta^2(t-s)^{-2\delta}|G|_{\mathcal F^{\beta+\frac{1}{2}, \sigma}}^2 s^{2\beta-1}ds\\
&=c_\delta^2 |G|_{\mathcal F^{\beta+\frac{1}{2}, \sigma}}^2 t^{2(\beta-\delta)} {\bf B}(2\beta,1-2\delta)<\infty, \hspace{1cm} t\in (0,T].
\end{align*}
Otherwise, if  \eqref{FGSpace2}   takes place then 
\begin{align*}
\int_0^t|AS(t-s)G(s)|^2 ds &\leq  c_\delta^2 \int_0^t (t-s)^{-2\delta} |G|_{\mathcal B((0,T]; E)}^2ds\\
&=\frac{ c_\delta^2 t^{1-2\delta}|G|_{\mathcal B((0,T]; E)}^2}{1-2\delta}  <\infty, \hspace{1cm}t\in (0,T].
\end{align*}
Hence, the integral $\int_0^t AS(t-s)G(s)dw_s, t\in (0,T]$ is well-defined and then is continuous. 
 Since $A$ is closed, we obtain that
$$AI_2(t)=\int_0^t AS(t-s)G(s)dw_s, \hspace{2cm} t\in (0,T].$$
Using the Fubini formula, we have
\begin{equation*}
\begin{aligned}
A \int_0^t I_2(s)ds&=\int_0^t \int_0^s AS(s-u) G(u)dw_uds\\
&=\int_0^t \int_u^t AS(s-u) G(u)dsdw_u\\
&=\int_0^t [G(u)-S(t-u)G(u)]dw_u\\
&=\int_0^t G(u)dw_u-\int_0^t S(t-u)G(u)dw_u\\
&=\int_0^t G(u)dw_u-I_2(t),  \hspace{2cm}  t\in (0,T].
\end{aligned}
\end{equation*}
This means that  $I_2$ satisfies \eqref{I2equation}.

From these two steps, we conclude that  $X(t)$ is a strict solution in the space $\mathcal C((0,T];\mathcal D(A))$.
\end{proof}
\begin{remark} \label{remark1}
In {\bf Step 1} of the proof of Theorem \ref{StrictSolutions}, the assumption \eqref{Eq35}  is not used. Using this assumption, we can reduce the proof of the statement of this step.  However, we do not present such a proof, because it is not useful for the study of regularity of mild solutions in the next theorems.  In fact, this assumption is only to guarantee  the existence of the integral $\int_0^t AS(t-s) G(s)dw_s, t\in (0,T].$
\end{remark}
\subsection{Regularity of mild solutions}
In this subsection, we will  study   regularity of mild solutions of \eqref{autonomous linear evolution equation} without the condition \eqref{Eq35} in  Theorem \ref{StrictSolutions}. The initial value is considered in the domain $\mathcal D(A^\beta).$ 
\begin{theorem} \label{regularity theorem autonomous linear evolution equation}
Suppose that   $\xi \in \mathcal D(A^\beta)$ a.s. Then  there exists a mild solution $X$ of \eqref{autonomous linear evolution equation} in the space
\begin{equation*} \label{regularity theorem autonomous linear evolution equationXSpace}
 X \in \mathcal C([0,T];\mathcal D(A^\beta)) \cap \mathcal C^\beta([0,T]; E).
 \end{equation*}
Furthermore,  $X$ satisfies the estimates
\begin{align} 
&\mathbb E |X(t)|^2 \leq \rho_1[\mathbb E|\xi|^2 + |F|_{\mathcal F^{\beta,\sigma}}^2+  |G|_{\mathcal F^{\beta+\frac{1}{2},\sigma}}^2], \label{estimateofstrictsolutionCase2}
\end{align}
when  \eqref{FGSpace1}  takes place, and 
\begin{align} 
&\mathbb E |X(t)|^2 \leq \rho_2[\mathbb E|\xi|^2+ |F|_{\mathcal F^{\beta,\sigma}}^2+ (1-e^{-\rho_2t}) |G|_{\mathcal B([0,t]; E)}^2 ],  \label{estimateofstrictsolution}
\end{align}
when \eqref{FGSpace2} takes place. Here, $\rho_1$ and $\rho_2$ are two positive constants depending only on $A, \beta$ and $\sigma$.
\end{theorem}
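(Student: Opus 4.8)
The plan is to build on the mild solution $X=I_1+I_2$ already constructed in the proof of Theorem \ref{StrictSolutions}: that construction used only \eqref{FbetasigmaSpaceProperty}, \eqref{EstimateIofS(t)Maximum} and \eqref{EstimateIofS(t)}, never the extra hypothesis \eqref{Eq35}, so the mild solution exists here as well, with $I_1(t)=S(t)\xi+\int_0^tS(t-s)F(s)ds$ and $I_2(t)=\int_0^tS(t-s)G(s)dw_s$. I would treat the deterministic convolution $I_1$ and the mean-zero Gaussian stochastic convolution $I_2$ separately, since $G$ is non-random.

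For $I_1$, the hypothesis $\xi\in\mathcal D(A^\beta)$ gives $A^\beta S(t)\xi=S(t)A^\beta\xi$, bounded by $\iota_0|A^\beta\xi|$ via \eqref{EstimateIofS(t)Maximum}. For the convolution part I would push $A^\beta$ inside (closedness of $A$) and estimate $|A^\beta S(t-s)F(s)|\le\iota_\beta(t-s)^{-\beta}|F|_{\mathcal F^{\beta,\sigma}}s^{\beta-1}$ through \eqref{EstimateIofAnu} and \eqref{FbetasigmaSpaceProperty}; the integral $\int_0^t(t-s)^{-\beta}s^{\beta-1}ds={\bf B}(\beta,1-\beta)$ is scale invariant, so $A^\beta I_1$ is uniformly bounded on $[0,T]$. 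Continuity of $A^\beta I_1$ on $(0,T]$ I would obtain exactly as the continuity of $W(t)$ in Step 1 of Theorem \ref{StrictSolutions}, splitting $F(s)=[F(s)-F(t)]+F(t)$, while continuity at $t=0$ (i.e.\ $A^\beta I_1(t)\to A^\beta\xi$) follows from the strong convergence $s^\beta A^\beta S(s)\to0$ as $s\to0$, which forces the convolution term to vanish. Hölder continuity of $I_1$ into $E$ with exponent $\beta$ then comes from the identity $I_1(t)-I_1(s)=\int_s^tS(t-r)F(r)dr+[S(t-s)-I]I_1(s)$, the subadditivity $t^\beta-s^\beta\le(t-s)^\beta$, and \eqref{EstimateIofS(t)-IA-theta} with $\theta=\beta$ applied to $A^\beta I_1(s)$.

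For $I_2$, I would first verify that $A^\beta I_2(t)=\int_0^tA^\beta S(t-s)G(s)dw_s$ is a genuine element of $\mathcal N^2$ by bounding $\int_0^t|A^\beta S(t-s)G(s)|^2ds$ with Proposition \ref{IntegralInequality}(i): under \eqref{FGSpace1} this is the convergent beta integral $\iota_\beta^2|G|_{\mathcal F^{\beta+\frac{1}{2},\sigma}}^2{\bf B}(2\beta,1-2\beta)$, while at the borderline (and under \eqref{FGSpace2}) I would first write $G(s)=[G(s)-G(t)]+G(t)$ and use the Hölder part of the $\mathcal F^{\beta+\frac{1}{2},\sigma}$-norm to absorb the non-integrable $(t-s)^{-2\beta}$ weight, exactly as in the $W(t)$-computation. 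The path regularity I would then extract from the increment decomposition $I_2(t)-I_2(s)=\int_s^tS(t-r)G(r)dw_r+[S(t-s)-I]I_2(s)$: applying Proposition \ref{IntegralInequality}(iii) with a large even exponent $p=2m$ to the first term and \eqref{EstimateIofS(t)-IA-theta} to $A^\beta I_2(s)$ for the second, one gets $\mathbb E|I_2(t)-I_2(s)|^{2m}\le C(t-s)^{2m\beta}$ under \eqref{FGSpace1} (and the Brownian-type bound $C(t-s)^m$ under \eqref{FGSpace2}), together with the analogous bound for $A^\beta I_2$. The Kolmogorov test (Theorem \ref{Kolmogorov test}, or its Gaussian sharpening Theorem \ref{Kolmogorov testGaussian}, since $I_2$ is Gaussian) then yields a modification with trajectories in $\mathcal C([0,T];\mathcal D(A^\beta))\cap\mathcal C^\beta([0,T];E)$, the attainable Hölder exponent of the stochastic convolution being any order below $\beta\wedge\frac{1}{2}$.

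Finally, the two $E$-norm bounds are routine: from $\mathbb E|X(t)|^2\le2\mathbb E|I_1(t)|^2+2\mathbb E|I_2(t)|^2$, the term $\mathbb E|I_1(t)|^2$ is controlled by $\iota_0^2\,\mathbb E|\xi|^2$ and the estimate \eqref{Eq0}, whereas $\mathbb E|I_2(t)|^2\le c(E)\int_0^t|S(t-s)G(s)|^2ds$ reproduces the two integral computations of Theorem \ref{StrictSolutions}; in case \eqref{FGSpace2} I would retain the exponential factor of \eqref{EstimateIofS(t)}, so that $\int_0^te^{-2\nu(t-s)}ds$ generates the $(1-e^{-\rho_2t})$ weight appearing in \eqref{estimateofstrictsolution}. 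I expect the main obstacle to be precisely the regularity of the stochastic convolution $I_2$: establishing $A^\beta I_2\in\mathcal N^2$ and the sharp Hölder increment bound at the critical exponents, most notably $\beta=\frac{1}{2}$ under \eqref{FGSpace1}, where the naive $(t-s)^{-2\beta}$ weight ceases to be integrable and the splitting $G(s)=[G(s)-G(t)]+G(t)$ is indispensable, and where the intrinsic $\frac{1}{2}$-ceiling of a stochastic convolution driven by $w_t$ must be reconciled with the claimed exponent.
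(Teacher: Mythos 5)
Your overall architecture coincides with the paper's: reuse the mild solution $X=I_1+I_2$ from Theorem \ref{StrictSolutions} (whose Step 1 indeed never uses \eqref{Eq35}), prove $A^\beta I_1,A^\beta I_2\in\mathcal C([0,T];E)$ via the closedness of $A^\beta$ and the Beta-function bounds, get the H\"older estimates, and finish with the $L^2$ bounds exactly as you describe (including the $e^{-2\nu(t-s)}$ factor producing $1-e^{-\rho_2 t}$). Where you genuinely diverge is in the increment decompositions. For $I_1$ the paper goes through the integrated equation $I_1(t)-I_1(s)=\int_s^t[F(u)-AI_1(u)]du$ (inherited from the Yosida-approximation work of Theorem \ref{StrictSolutions}) and uses $AS(\cdot)\xi\in\mathcal F^{\beta,\sigma}$ from \eqref{Eq53}; you instead write $I_1(t)-I_1(s)=[S(t-s)-I]I_1(s)+\int_s^tS(t-r)F(r)dr$ and hit the first term with \eqref{EstimateIofS(t)-IA-theta} applied to the uniformly bounded $A^\beta I_1(s)$. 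That is more elementary, avoids $AI_1$ entirely, and works (the H\"older constant is random through $|A^\beta\xi|$, which is harmless for pathwise H\"older continuity). For $I_2$ the paper splits the increment into two stochastic integrals and estimates $|S(t-r)-S(s-r)|=|\int_{s-r}^{t-r}AS(u)du|$ directly; your $[S(t-s)-I]I_2(s)$ route with Proposition \ref{IntegralInequality}(iii) or the Gaussian Kolmogorov test is equally valid and yields the same exponent range (any order below $\beta$).

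There is one step whose stated justification would fail: the continuity of $A^\beta I_1$ at $t=0$. The strong convergence $s^\beta A^\beta S(s)\to0$ does not force $\int_0^tA^\beta S(t-s)F(s)\,ds\to0$, because the governing bound $\iota_\beta|F|_{\mathcal F^{\beta,\sigma}}\int_0^t(t-s)^{-\beta}s^{\beta-1}ds=\iota_\beta|F|_{\mathcal F^{\beta,\sigma}}{\bf B}(\beta,1-\beta)$ is scale invariant and does not shrink as $t\searrow0$. The paper's actual argument is more delicate: split $F(s)=[F(s)-F(t)]+F(t)$, kill the first piece using the vanishing weighted H\"older modulus \eqref{Fbetasigma3}, rewrite the second as $t^{\beta-1}[I-S(t)]A^{\beta-1}\bigl(t^{1-\beta}F(t)\bigr)$ with $z=\lim_{t\searrow0}t^{1-\beta}F(t)$, and dispose of $t^{\beta-1}[I-S(t)]A^{\beta-1}z$ by approximating $z$ from $\mathcal D(A^\beta)$ and using \eqref{EstimateIofS(t)-IA-theta}. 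You have the splitting available (you invoke it for the interior continuity), but you would need to carry it, together with \eqref{Fbetasigma3} and the density argument, down to $t=0$. Separately, you are right that $\beta=\tfrac12$ under \eqref{FGSpace1} is the sore point for $A^\beta I_2$: your splitting $G(s)=[G(s)-G(t)]+G(t)$ handles the H\"older part, but the remaining integrand $A^{1/2}S(t-s)G(t)$ still has divergent $L^2(ds)$ norm, so the issue is not resolved by that device alone; note, however, that the paper's own bound \eqref{Eq5} via ${\bf B}(2\beta,1-2\beta)$ also degenerates there, so on this borderline your proposal is no worse off than the original.
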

\begin{proof}
The existence of the mild solution $X(t)=I_1(t)+I_2(t)$  of \eqref{autonomous linear evolution equation} is already shown in Theorem \ref{StrictSolutions},  where
$I_1(t)$ and $ I_2(t)$ are defined by \eqref{XI1I2}. First, let us assume that the condition \eqref{FGSpace1}  takes place. The proof for this case is divided into several steps.

{\bf Step 1}. Let us show that $I_1(t)\in \mathcal D(A^\beta) $ for $t\in [0,T]$ and that
$A^\beta I_1\in \mathcal C([0,T]; E).
$
 Since $S(t)$ is strongly continuous,  we have
$$\lim_{t\to s}|A^\beta S(t)\xi-A^\beta S(s)\xi|=\lim_{t\to s}|[S(t)-S(t_0)]A^\beta \xi|=0, \hspace{1cm} s\in [0,T].$$
Therefore, $A^\beta S(t)\xi$ is continuous on $[0,T]$. Because of   
$$A^\beta I_1(t)=A^\beta S(t)\xi+A^\beta\int_0^t S(t-s)F(s)ds,$$
it suffices to show that $A^\beta \int_0^t  S(t-s)F(s)|ds$ is well-defined and is continuos on $[0,T]$.

Let us verify the first assertion.  Using the inequalities \eqref{FbetasigmaSpaceProperty} and \eqref{EstimateIofAnu}, we have
\begin{align}
\int_0^t|A^\beta S(t-s)F(s)|ds&\leq \int_0^t|A^\beta S(t-s)| |F(s)|ds \notag\\
&\leq |F|_{\mathcal F^{\beta,\sigma}} \iota_\beta\int_0^t(t-s)^{-\beta} s^{\beta-1}ds\notag\\
&= |F|_{\mathcal F^{\beta,\sigma}} \iota_\beta\int_0^1u^{\beta-1}(1-u)^{-\beta}du\notag\\
&=\iota_\beta |F|_{\mathcal F^{\beta,\sigma}}  {\bf B}(\beta,1-\beta), \hspace{2cm} t\in[0,T].  \label{Eq4}
\end{align}
 Hence, $\int_0^tA^\beta S(t-s)F(s)ds$ is well-defined. Since $A^\beta$ is closed, we obtain that
$$A^\beta\int_0^t S(t-s)F(s)ds=\int_0^tA^\beta S(t-s)F(s)ds.$$

Let us now verify the second assertion, i.e. to verify the continuity of  $A^\beta\int_0^t S(t-s)F(s)ds$ on $[0,T]$. Fix $t_0\in[0,T]$. We consider two cases. 

Case 1: $t_0>0$. For every $t\geq t_0$ we have
\begin{align*}
&\left|\int_0^tA^\beta S(t-s)F(s)ds-\int_0^{t_0}A^\beta S(t_0-s)F(s)ds\right|\\
&\leq \left|\int_0^{t_0}A^\beta [S(t-s)-S(t_0-s)]F(s)ds\right|+\left|\int_{t_0}^tA^\beta S(t-s)F(s)ds\right|\\
&\leq \int_0^{t_0}|A^\beta S(t_0-s)| |F(s)|ds|S(t-t_0)-I| +\int_{t_0}^t|A^\beta S(t-s)| |F(s)|ds.
\end{align*}
Using \eqref{FbetasigmaSpaceProperty} and \eqref{EstimateIofAnu}, we observe that 
\begin{align*}
&\left|\int_0^tA^\beta S(t-s)F(s)ds-\int_0^{t_0}A^\beta S(t_0-s)F(s)ds\right|\\
&\leq  \int_0^{t_0} \iota_\beta (t_0-s)^{-\beta} |F|_{\mathcal F^{\beta,\sigma}}s^{\beta-1}ds|S(t-t_0)-I|+\int_{t_0}^t\iota_\beta (t-s)^{-\beta} \sup_{s\in[t_0,t]}|F(s)|ds\\
&=  |F|_{\mathcal F^{\beta,\sigma}} \iota_\beta {\bf B}(\beta,1-\beta) |S(t-t_0)-I| +   \frac{\iota_\beta\sup_{s\in[t_0,t]}|F(s)|(t-t_0)^{1-\beta}}{{1-\beta}} \\
&\to 0 \text { as } t\searrow t_0.
\end{align*}
This means that  $\int_0^tA^\beta S(t-s)F(s)ds$ is right-continuous at $t=t_0$. Similarly, we can show that it is left-continuous at $t=t_0$. Therefore, $A^\beta\int_0^t S(t-s)F(s)ds$ is continuous at $t=t_0$.

Case 2: $t_0=0$. By the property of the space $\mathcal F^{\beta,\sigma}((0,T];E),$  we may put $z=\lim_{t\searrow 0} t^{1-\beta}F(t)$.  We have
\begin{align*}
\Big|A^\beta & \int_0^t S(t-s)F(s)ds\Big|\notag\\
=&\Big|\int_0^t A^\beta S(t-s)[F(s)-F(t)]ds\Big|+\Big|\int_0^t A^\beta S(t-s)F(t)ds\Big|\notag\\
=&\Big|\int_0^t A^\beta S(t-s)[F(s)-F(t)]ds\Big|+\Big|[I-S(t)]A^{\beta-1}F(t)\Big|\notag\\
\leq &\int_0^t |A^\beta S(t-s)| |F(t)-F(s)|ds
+| t^{\beta-1}[I-S(t)]A^{\beta-1} [t^{1-\beta}F(t)-z]|\notag\\
&+| t^{\beta-1}[I-S(t)]A^{\beta-1} z|.\notag
\end{align*}
Using \eqref{Fbetasigma3}, \eqref{EstimateIofAnu} and \eqref{EstimateIofS(t)-IA-theta},  we obtain that 
\begin{align*}
\limsup_{t\searrow 0}&\left|A^\beta\int_0^t S(t-s)F(s)ds\right|\notag\\
\leq & \iota_\beta \limsup_{t\searrow 0}\int_0^t \iota_\beta (t-s)^{-\beta}|F(t)-F(s)|ds\notag\\
&+\frac{\iota_\beta}{1-\beta}\limsup_{t\searrow 0}|t^{1-\beta}F(t)-z|\notag\\
&+\limsup_{t\searrow 0}| t^{\beta-1}[I-S(t)]A^{\beta-1} z|\notag\\
= & \iota_\beta \limsup_{t\searrow 0}\int_0^t (t-s)^{\sigma-\beta}s^{-1+\beta-\sigma}  \frac{ s^{1-\beta+\sigma}|F(t)-F(s)|}{(t-s)^\sigma}ds\notag\\
&+\frac{\iota_\beta}{1-\beta}\limsup_{t\searrow 0}|t^{1-\beta}F(t)-z|\notag\\
&+\limsup_{t\searrow 0}| t^{\beta-1}[I-S(t)]A^{\beta-1} z|\notag\\
\leq &\iota_\beta {\bf B}(\beta-\sigma, 1-\beta+\sigma)\limsup_{t\searrow 0} \sup_{s\in[0,t)}\frac{ s^{1-\beta+\sigma}|F(t)-F(s)|}{(t-s)^\sigma}\notag\\
&+\limsup_{t\searrow 0}| t^{\beta-1}[I-S(t)]A^{\beta-1} z|\notag\\
=&\limsup_{t\searrow 0}| t^{\beta-1}[I-S(t)]A^{\beta-1} z|. \notag
\end{align*}
Since $\mathcal D(A^\beta)$ is dense in $E$, there exists a sequence $\{z_n\}_n$  in $\mathcal D(A^\beta)$ that converges to $z$ as $n\to \infty.$ Using \eqref{EstimateIofS(t)-IA-theta}, we observe that
\begin{align*}
&\limsup_{t\searrow 0}\left|A^\beta\int_0^t S(t-s)F(s)ds\right|\\
&\leq\limsup_{t\searrow 0}  |t^{\beta-1}[I-S(t)]A^{\beta-1} (z-z_n)|+\limsup_{t\searrow 0}  |t^{\beta-1}[I-S(t)]A^{-1}A^{\beta} z_n|\\
&\leq \frac{\iota_\beta }{1-\beta}|z-z_n|+\iota_0 \limsup_{t\searrow 0}  t^\beta |A^{\beta} z_n|\\
&=  \frac{\iota_\beta }{1-\beta}|z-z_n|, \hspace{2cm} n=1,2,\dots.
\end{align*}
Letting $n$ to $\infty$, we obtain that $\lim_{t\searrow 0}A^\beta\int_0^t S(t-s)F(s)ds=0.$ This means that  $A^\beta\int_0^t S(t-s)F(s)ds$ is right-continuous at $t=t_0$.

From these two cases, we conclude that $A^\beta\int_0^t S(t-s)F(s)ds$ is continuous at $t=t_0$ and then at every point in $[0,T]$. The second assertion  has been shown.

{\bf Step 2}. Let us verify  that
$
A^\beta X\in \mathcal C([0,T]; E).
$
In view of  \eqref {FbetasigmaSpaceProperty} and \eqref{EstimateIofAnu}, we have
\begin{align}
\int_0^t |A^\beta  S(t-s) G(s) |^2 ds &\leq \iota_\beta^2  |G|_{\mathcal F^{\beta+\frac{1}{2},\sigma}}^2 \int_0^t (t-s)^{-2\beta} s^{2\beta-1}ds \notag \\
&=\iota_\beta^2  |G|_{\mathcal F^{\beta+\frac{1}{2},\sigma}}^2 {\bf B}(2\beta, 1-2\beta)< \infty.  \label{Eq5}
\end{align}
Thus, $\int_0^t A^\beta  S(t-s) G(s)dw_s $ is well-defined. Since $A^\beta$ is closed, we obtain that
$$ A^\beta I_2(t) =\int_0^t A^\beta  S(t-s) G(s)dw_s. $$ 
Thanks to Proposition \ref{IntegralInequality}, $ A^\beta I_2$ is a continuous square integrable martingale on $[0,T]$. On the account of  {\bf Step 1}, we conclude that
$$A^\beta X=A^\beta I_1+A^\beta I_2 \in \mathcal C([0,T]; E).$$

{\bf Step 3}. Let us show that $I_1$   is $\beta$\,{-}\,H\"older continuous on $[0,T]$. 
From \eqref{EstimateIofAnu}, \eqref{EstimateIofS(t)Maximum}, \eqref{XI1I2} and \eqref{I1equation}, for every $0\leq s<t\leq T$  we observe that
\begin{align}
|I_1(t)-I_1(s)|=&\Big|\int_s^t F(u)du-\int_s^t AI_1(u) du\Big|  \notag \\ 
=&\Big|\int_s^t F(u)du-\int_s^t AS(u)\xi  du-\int_s^t \int_0^uAS(u-r)F(r)drdu\Big|  \notag \\ 
\leq& \Big|\int_s^t [F(u)-AS(u)\xi]  du\Big|+\int_s^t  \Big|\int_0^uAS(u-r)F(u)dr\Big|du  \notag  \\
&+\int_s^t  \Big|\int_0^uAS(u-r)[F(u)-F(r)]dr\Big|du   \notag\\ 
\leq & \Big|\int_s^t [F(u)-AS(u)\xi]  du\Big|+\int_s^t  |[I-S(u)]F(u)|du  \label{Eq2} \\
&+\int_s^t  \int_0^u|AS(u-r)| |F(u)-F(r)|drdu  \notag \\ 
\leq & \int_s^t [|F(u)-AS(u)\xi  |+(1+\iota_0)|F(u)|]du  \notag \\
&+\iota_1\int_s^t  \int_0^u(u-r)^{-1} |F(u)-F(r)|drdu  \notag \\ 
=&I_{11}(s,t)+I_{12}(s,t).  \label{Eq28} 
\end{align}
We shall give estimates for $I_{11}$ and $I_{12}$. 
Since $\xi\in \mathcal D(A^\beta)$ a.s., we have 
$AS(\cdot)\xi \in \mathcal F^{\beta,\sigma}((0,T];E)$ a.s. (see \eqref{Eq53}). Therefore,  
$$F(\cdot)-AS(\cdot)\xi \in \mathcal F^{\beta,\sigma}((0,T];E) \hspace{2cm} \text{ a.s.}$$ 
In view of \eqref {FbetasigmaSpaceProperty}, we see that 
\begin{align*}
I_{11}(s,t)&\leq \int_s^t [|F(\cdot)-AS(\cdot)\xi|_ {\mathcal F^{\beta,\sigma}}   u^{\beta-1}+|F|_ {\mathcal F^{\beta,\sigma}}(1+\iota_0) u^{\beta-1}]du\\
&=\frac{|F(\cdot)-AS(\cdot)\xi|_ {\mathcal F^{\beta,\sigma}} + |F|_ {\mathcal F^{\beta,\sigma}}(1+\iota_0)}{\beta}    (t^\beta-s^\beta)\\
&\leq \frac{|F(\cdot)-AS(\cdot)\xi|_ {\mathcal F^{\beta,\sigma}} + |F|_ {\mathcal F^{\beta,\sigma}}(1+\iota_0)}{\beta} (t-s)^\beta.
\end{align*}

Meanwhile, using \eqref {FbetasigmaSpaceProperty}, we have
\begin{align}
I_{12}(s,t)=&\iota_1\int_s^t  \int_0^u(u-r)^{\sigma-1}  r^{\beta-1-\sigma}\frac{r^{1-\beta+\sigma}|F(u)-F(r)|}{(u-r)^\sigma}drdu \notag\\
\leq & \iota_1|F|_{\mathcal F^{\beta,\sigma}}\int_s^t  \int_0^u(u-r)^{\sigma-1}  r^{\beta-\sigma-1}drdu\notag\\
= & \iota_1|F|_{\mathcal F^{\beta,\sigma}}\int_s^t  u^{\beta-1}\int_0^1(1-v)^{\sigma-1}  v^{\beta-\sigma-1}dvdu\notag\\
= & \iota_1|F|_{\mathcal F^{\beta,\sigma}} {\bf B}(\beta-\sigma,\sigma)\int_s^t  u^{\beta-1}du\notag\\
= & \frac{\iota_1|F|_{\mathcal F^{\beta,\sigma}} {\bf B}(\beta-\sigma,\sigma)}{\beta} (t^\beta-s^\beta)\notag\\
 \leq& \frac{\iota_1|F|_{\mathcal F^{\beta,\sigma}} {\bf B}(\beta-\sigma,\sigma)}{\beta} (t-s)^\beta. \label{Eq29}
\end{align}
Thus, $I_1(\cdot)$ is $\beta$\,{-}\,H\"older continuous on $[0,T]$. 

{\bf Step 4}. Let us show that $X $ is $\beta$-H\"older continuous on $[0,T]$. 
On the account of  {\bf Step 3},  it suffices to show that 
$I_2\in \mathcal C^\beta([0,T];E)$. Let $0\leq s<t\leq T$, then 
$$I_2(t)-I_2(s)=\int_s^t S(t-r)G(r)dw_r +\int_0^s [S(t-r)-S(s-r)]G(r)dw_r.$$
Since the integrals in the right hand side are independent and have  zero expectation,  we have
\begin{align*}
\mathbb E& |I_2(t)-I_2(s)|^2  \notag\\
=&\mathbb E \Big|\int_s^t S(t-r)G(r)dw_r\Big|^2 +\mathbb E\Big|\int_0^s [S(t-r)-S(s-r)]G(r)dw_r\Big|^2  \notag\\
\leq & c(E)\Big[\int_s^t  |S(t-r)|^2  | G(r)|^2 dr+ \int_0^s  |S(t-r)-S(s-r)|^2 |G(r)|^2dr\Big].  \notag
\end{align*}
Using \eqref {FbetasigmaSpaceProperty}, \eqref{EstimateIofAnu} and \eqref{EstimateIofS(t)Maximum}, we then observe that 
\begin{align*}
\mathbb E& |I_2(t)-I_2(s)|^2  \notag\\
\leq & c(E)|G|_{\mathcal F^{\beta+\frac{1}{2},\sigma}}^2 \Big [\iota_0^2 \int_s^t r^{2\beta-1}dr+ \int_0^s\Big |\int_{s-r}^{t-r} AS(u) du\Big|^2 r^{2\beta-1}dr \Big ]  \notag\\
\leq &c(E)|G|_{\mathcal F^{\beta+\frac{1}{2},\sigma}}^2 \Big [\frac {\iota_0^2 (t^{2\beta}-s^{2\beta})}{2\beta}+\iota_1^2 \int_0^s \Big(\int_{s-r}^{t-r} u^{-1}du\Big)^2 r^{2\beta-1}dr \Big ].  \notag
\end{align*}
Dividing $-1$ as $-1=-\beta+\beta-1$, we have
\begin{align}
\Big(\int_{s-r}^{t-r} u^{-1}du\Big)^2&\leq    \Big[\int_{s-r}^{t-r} (s-r)^{-\beta}u^{\beta-1}du\Big]^2\notag\\
&= (s-r)^{-2\beta}\frac{[(t-r)^\beta-(s-r)^\beta]^2}{\beta^2}\notag\\
&\leq (s-r)^{-2\beta}\frac{(t-s)^{2\beta}}{\beta^2}. \label{Eq37}
\end{align}
Hence, 
\begin{align}
\mathbb E& |I_2(t)-I_2(s)|^2  \notag\\
\leq &c(E)|G|_{\mathcal F^{\beta+\frac{1}{2},\sigma}}^2 \Big [\frac {\iota_0^2 (t^{2\beta}-s^{2\beta})}{2\beta}+\frac{\iota_1^2(t-s)^{2\beta}}{\beta^2}  \int_0^s (s-r)^{-2\beta} r^{2\beta-1}dr  \Big ]  \notag\\
\leq  &c(E)|G|_{\mathcal F^{\beta+\frac{1}{2},\sigma}}^2 \Big [\frac {\iota_0^2}{2\beta}+\frac{\iota_1^2 {\bf B}(2\beta, 1-2\beta) }{\beta^2} \Big ]   (t-s)^{2\beta}.\label{ExpextationOfI2tI2sSquare}
\end{align}
In addition, by the definition of the stochastic integral, $  I_2(t)$ is a Gaussian process. Theorem \ref{Kolmogorov testGaussian} then provides  that $I_2\in \mathcal C^\beta([0,T];E)$. 

{\bf Step 5}. Let us prove \eqref{estimateofstrictsolutionCase2}. 
Taking $s=0, t\in (0,T]$ in \eqref{Eq2},  it yields that
\begin{align*}
|I_1(t)&-I_1(0)|\\
\leq&  \Big|\int_0^t [F(u)-AS(u)\xi]  du\Big|+\int_0^t  |[I-S(u)]F(u)|du\\
&+\int_0^t  \int_0^u|AS(u-r)| |F(u)-F(r)|drdu\\
\leq&  \int_0^t  [1+|I-S(u)|] |F(u)|du+\Big|\int_0^t AS(u)\xi  du\\
&+c_1\int_0^t  \int_0^u(u-r)^{-1} |F(u)-F(r)|drdu.
\end{align*}
In view of  \eqref{FbetasigmaSpaceProperty},  \eqref{EstimateIofS(t)Maximum}, \eqref{Eq28} and \eqref{Eq29}, we  have
\begin{align*}
|I_1(t)&-I_1(0)|\\
\leq&  \int_0^t  (2+\iota_0) |F(u)|du+|[I-S(t)]\xi||+I_{12}(0,t)\\
\leq&  (2+\iota_0) \int_0^t   |F|_{\mathcal F^{\beta,\sigma}} u^{\beta-1}du+(1+\iota_0)|\xi|+\frac{\iota_1|F|_{\mathcal F^{\beta,\sigma}} {\bf B}(\beta-\sigma,\sigma)}{\beta} t^\beta\\
=&  \frac{[2+\iota_0+\iota_1 {\bf B}(\beta-\sigma,\sigma)]  t^\beta |F|_{\mathcal F^{\beta,\sigma}}}{\beta}   +(1+\iota_0)|\xi|.
\end{align*}
Then 
$$|I_1(t)| \leq (2+\iota_0)|\xi|+\frac{[2+\iota_0+\iota_1 {\bf B}(\beta-\sigma,\sigma) ] t^\beta  |F|_{\mathcal F^{\beta,\sigma}}}{\beta},  \quad\quad t\in (0,T].$$
Obviously, this inequality also holds true for $t=0$. As a consequence, there exists $c_1>0 $ depending only on $A, \beta$ and $\sigma$ such that
\begin{equation}\label{estimateofstrictsolutionI1}
\mathbb E|I_1(t)|^2\leq c_1 [\mathbb E|\xi|^2 + |F|_{\mathcal F^{\beta,\sigma}}^2], \hspace{1cm} t\in [0,T].
\end{equation}

On the other hand, using  \eqref{FbetasigmaSpaceProperty} and \eqref{EstimateIofS(t)}, we have
\begin{align}
\mathbb E|I_2(t)|^2  &\leq c(E) \int_0^t  |S(t-s)G(s)|^2 ds  \notag\\
&\leq c(E) \iota_0^2 |G|_{\mathcal F^{\beta+\frac{1}{2},\sigma}}^2 \int_0^t  e^{-2\nu(t-s)} s^{2\beta-1}ds  \notag\\
&\leq c(E)  c_{\nu,\beta} \iota_0^2 |G|_{\mathcal F^{\beta+\frac{1}{2},\sigma}}^2,  \hspace{2cm}  t\in [0,T], \label{estimateofstrictsolutionI2Case2}
\end{align}
where $c_{\nu,\beta} =\sup_{t\in [0,\infty)} \int_0^t  e^{-2\nu(t-s)} s^{2\beta-1}ds <\infty.$

Combining \eqref{estimateofstrictsolutionI1} and \eqref{estimateofstrictsolutionI2Case2}, we conclude that
\begin{align*}
\mathbb E|X(t)|^2 \leq & 2\mathbb E[ I_1(t)^2 + I_2(t)^2]\\
\leq &2 c_1[\mathbb E|\xi|^2 + |F|_{\mathcal F^{\beta,\sigma}}^2]+2c(E)  c_{\nu,\beta} \iota_0^2  |G|_{\mathcal F^{\beta+\frac{1}{2},\sigma}}^2,
\end{align*}
from which it follows  \eqref{estimateofstrictsolutionCase2}.

From these steps, the assertion of the theorem under the condition \eqref{FGSpace1} is proved.
 
Let us now assume that the condition \eqref{FGSpace2} takes place.  Similarly to the above case, we will verify that the assertions in {\bf Steps 1-4} still hold true. Indeed, the assertions in  {\bf Step 1} and {\bf Step 3} are obviously true, since they are independent of  the conditions  \eqref{FGSpace1} and \eqref{FGSpace2}.

To show the assertion in {\bf Step 2}, it suffices to verify that $\int_0^t A^\beta S(t-s) G(s)ds$ is well-defined. This follows from a fact that
\begin{align*}
\int_0^t |A^\beta  S(t-s) G(s) |^2 ds &\leq \iota_\beta^2|G|_{\mathcal B([0,t];E)}^2 \int_0^t (t-s)^{-2\beta}ds\\
&=\frac{\iota_\beta^2}{1-2\beta}|G|_{\mathcal B([0,t];E)}^2 t^{1-2\beta}<\infty,
\end{align*}
here we used  the property  \eqref{EstimateIofAnu}.

To obtain the assertion in {\bf Step 4}, it suffices to prove that $I_2$ is $\beta$\,{-}\,H\"older continuous on $[0,T]$. Indeed,  let  $0\leq s<t\leq T$.   Using \eqref{EstimateIofAnu} and \eqref{EstimateIofS(t)Maximum}, 
we have 
 \begin{align*}
\mathbb E& |I_2(t)-I_2(s)|^2  \notag\\
=&\mathbb E \Big|\int_s^t S(t-r)G(r)dw_r\Big|^2 +\mathbb E\Big|\int_0^s [S(t-r)-S(s-r)]G(r)dw_r\Big|^2  \notag\\
\leq & c(E)\Big [\int_s^t  |S(t-r)|^2  | G(r)|^2 dr+ \int_0^s  |S(t-r)-S(s-r)|^2 |G(r)|^2dr  \Big] \notag\\
\leq & c(E)|G|_{\mathcal B([0,T];E)}^2 \Big [\iota_0^2(t-s) + \int_0^s\Big |\int_{s-r}^{t-r} AS(u) du\Big|^2 dr \Big ]  \notag\\
\leq & c(E)|G|_{\mathcal B([0,T];E)}^2 \Big [\iota_0^2(t-s) +\iota_1^2 \int_0^s \Big(\int_{s-r}^{t-r} u^{-1}du\Big)^2 dr \Big ].
\end{align*}
Fix $\alpha \in (0,\frac{1}{2}). $ Dividing $-1$ as $-1=-\alpha+\alpha-1$, we have
$$\Big(\int_{s-r}^{t-r} u^{-1}du\Big)^2\leq (s-r)^{-2\alpha} \frac{(t-s)^{2\alpha}}{\alpha^2}, \hspace{1cm} (\text{see } \eqref{Eq37}).$$
Therefore, 
\begin{align*}
\mathbb E& |I_2(t)-I_2(s)|^2  \notag\\
\leq &c(E)|G|_{\mathcal B([0,T];E)}^2 \Big [\iota_0^2(t-s) +\frac{\iota_1^2}{\alpha^2}  (t-s)^{2\alpha} \int_0^s (s-r)^{-2\alpha} dr  \Big ]  \notag\\
\leq &c(E)|G|_{\mathcal B([0,T];E)}^2 \Big [\iota_0^2(t-s) +\frac{\iota_1^2 s^{1-2\alpha}}{\alpha^2(1-2\alpha)} (t-s)^{2\alpha} \Big]\notag\\
= &c(E)|G|_{\mathcal B([0,T];E)}^2 \Big [\iota_0^2(t-s)^{1-2\alpha}+\frac{\iota_1^2 s^{1-2\alpha}}{\alpha^2(1-2\alpha)}  \Big ](t-s)^{2\alpha}   \notag\\
\leq &c(E)|G|_{\mathcal B([0,T];E)}^2 \Big [\iota_0^2+\frac{\iota_1^2 }{\alpha^2(1-2\alpha)}  \Big ]T^{1-2\alpha} (t-s)^{2\alpha}. 
\end{align*}
Applying Theorem \ref{Kolmogorov testGaussian} for the Gaussian process $I_2(t)$, we observe that $I_2\in \mathcal C^\alpha([0,T];E)$ for any $\alpha \in (0,\frac{1}{2}).$ In particular, $I_2\in \mathcal C^\beta([0,T];E).$

Let us finally observe  \eqref{estimateofstrictsolution}. 
Obviously, the estimate \eqref{estimateofstrictsolutionI1} still holds true, since it  depends on  neither  \eqref{FGSpace1} nor  \eqref{FGSpace2}.

Meanwhile,  using \eqref{EstimateIofS(t)}, we have
\begin{align}
 |I_2(t)|^2  &\leq c(E) \int_0^t  |S(t-s)G(s)|^2 ds  \notag\\
&\leq c(E) \iota_0^2 |G|_{\mathcal B([0,t];E)}^2 \int_0^t  e^{-2\nu(t-s)}ds  \notag\\
&=\frac{c(E) \iota_0^2 |G|_{\mathcal B([0,t];E)}^2}{2\nu}  (1-e^{-2\nu t}), \hspace{1cm} t\in [0,T]. \label{estimateofstrictsolutionI2}
\end{align}
Combining \eqref{estimateofstrictsolutionI1} and \eqref{estimateofstrictsolutionI2},  we obtain that 
$$\mathbb E|X(t)|^2\leq 2 c_1[\mathbb E|\xi|^2 + |F|_{\mathcal F^{\beta,\sigma}}^2]+\frac{c(E) \iota_0^2 |G|_{\mathcal B([0,t];E)}^2}{\nu}  (1-e^{-2\nu t}), \quad t\in [0,T].$$
Thus, \eqref{estimateofstrictsolution} has been verifed.  
It completes the proof.
\end{proof}
The following corollary is a direct consequence of Theorems \ref{StrictSolutions}-\ref{regularity theorem autonomous linear evolution equation}.
\begin{corollary}
Assume that \eqref{Eq35} holds true and that $\xi \in \mathcal D(A^\beta)$ a.s.  Then  there exists a strict solution $X$ of \eqref{autonomous linear evolution equation} in the space: 
$$ X \in \mathcal C((0,T];\mathcal D(A)) \cap  \mathcal C([0,T];\mathcal D(A^\beta))\cap \mathcal C^\beta([0,T];E).$$
Furthermore,  $X$ satisfies the estimate \eqref{estimateofstrictsolutionCase2} when \eqref{FGSpace1} takes place, and satisfies  the estimate \eqref{estimateofstrictsolution} when  \eqref{FGSpace2} takes place.
\end{corollary}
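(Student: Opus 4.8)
The plan is to observe that both Theorem \ref{StrictSolutions} and Theorem \ref{regularity theorem autonomous linear evolution equation} are assertions about the \emph{same} explicitly represented process, namely the mild solution $X(t)=I_1(t)+I_2(t)$ with $I_1,I_2$ given by \eqref{XI1I2}, and then simply to intersect their regularity conclusions. Under either \eqref{FGSpace1} or \eqref{FGSpace2}, the estimates established at the outset of the proof of Theorem \ref{StrictSolutions} show that $\int_0^t S(t-s)F(s)\,ds$ and $\int_0^t S(t-s)G(s)\,dw_s$ are well defined and continuous, so the mild solution $X=I_1+I_2$ of \eqref{autonomous linear evolution equation} exists and is unique. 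In particular the process produced by the two theorems is one and the same object.

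First I would invoke the standing assumption \eqref{Eq35}. Theorem \ref{StrictSolutions} then applies and yields that this $X$ is the unique strict solution of \eqref{autonomous linear evolution equation} belonging to $\mathcal C((0,T];\mathcal D(A))$. Next, since in addition $\xi\in\mathcal D(A^\beta)$ a.s., Theorem \ref{regularity theorem autonomous linear evolution equation} applies to the \emph{same} $X$ and gives
$$X\in\mathcal C([0,T];\mathcal D(A^\beta))\cap\mathcal C^\beta([0,T];E),$$
together with the estimate \eqref{estimateofstrictsolutionCase2} under the hypothesis \eqref{FGSpace1}, respectively the estimate \eqref{estimateofstrictsolution} under the hypothesis \eqref{FGSpace2}. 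Combining the two conclusions, the single process $X$ lies in the intersection
$$\mathcal C((0,T];\mathcal D(A))\cap\mathcal C([0,T];\mathcal D(A^\beta))\cap\mathcal C^\beta([0,T];E),$$
is a strict solution, and satisfies the indicated estimate according to which of \eqref{FGSpace1}, \eqref{FGSpace2} is in force.

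The only point requiring any care, and it is a mild one, is to confirm that the two theorems refer to the identical solution rather than to two a priori different solutions with overlapping regularity; this is immediate because the mild solution is unique and has the closed form $X=I_1+I_2$, so no gluing, comparison, or indistinguishability argument is needed. I therefore expect no genuine obstacle: the corollary is a bookkeeping combination of the two preceding theorems, obtained by asserting \eqref{Eq35} and $\xi\in\mathcal D(A^\beta)$ simultaneously and reading off the union of their conclusions.
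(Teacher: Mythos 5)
Your proposal is correct and matches the paper exactly: the paper offers no separate argument, stating only that the corollary is a direct consequence of Theorems \ref{StrictSolutions} and \ref{regularity theorem autonomous linear evolution equation}, and your observation that both theorems describe the same (unique) mild solution $X=I_1+I_2$ is precisely the bookkeeping needed to intersect their conclusions.
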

\section{Semilinear evolution equations with additive noise} \label{section5}
In this section, we will study semilinear evolution equations with additive noise:  the function $F(t,x)$ is divided into two parts: one depends only on $x$ and the other depends only on $t$, i.e.  $F(t,x)=F_1(X)+F_2(t),$ and the function $G(t,x)=G(t)$ depends only on $t$.  Let us rewrite \eqref{E2} into the form of   semilinear evolution equations with additive noise.
\begin{equation} \label{semilinear evolution equation}
\begin{cases}
dX+AXdt=[F_1(X)+F_2(t)]dt+ G(t)dw_t, \hspace{1cm} t\in(0,T],\\
X(0)=\xi,
\end{cases}
\end{equation}
where $F_1$ is measurable from $(\Omega_T\times E, \mathcal P_T\times \mathcal B(E))$ to $(E,\mathcal B(E)),$ $F_2$ and $G$ are non-random measurable functions from $[0,T]$ to $E$.

Let fix constants $\eta, \beta, \sigma $ such that
\begin{equation*}
\begin{cases}
0<\eta<\frac{1}{2}, \\
 0\lor (2\eta-\frac{1}{2})<\beta<\eta, \\
 0<\sigma <\beta.
\end{cases}
\end{equation*}
We suppose further that
\begin{itemize}
\item [(H1)] $F_1$ defines on the domain $ \mathcal D(A^\eta)$ and satisfies a Lipschitz condition of the form
     \begin{equation} \label{AbetaLipschitzcondition}
        |F_1(x)-F_1(y)|\leq c_{F_1}  [ |A^\eta(x-y)|+|A^{\tilde\beta} (x-y)|], \quad\quad x,y\in \mathcal D(A^\eta)
      \end{equation}
with $\tilde\beta=\beta$, where $c_{F_1}$ is some positive constant.
\item [(H2)] $F_2 \in \mathcal F^{\beta,\sigma} ((0,T];E). $
\item [(H3)] $ G\in \mathcal F^{\beta+\frac{1}{2},\sigma} ((0,T];E).$
\end{itemize}
\subsection{Existence of local mild solutions}
\begin{theorem}\label{semilinear evolution equationTheorem1}
Let \eqref{spectrumsectorialdomain}, \eqref{resolventnorm}, {\rm (H1)}, {\rm (H2)} and {\rm (H3)}   be satisfied.
Let $\xi\in \mathcal D(A^\beta)$ such that $\mathbb E|A^\beta\xi|^2<\infty$.  Then \eqref{semilinear evolution equation} possesses a unique local mild solution $X$ in the function space:
\begin{equation}\label{semilinear evolution equationTheorem1Regularity}
X\in  \mathcal C((0,T_{F_1,F_2,G,\xi}];\mathcal D(A^\eta))\cap \mathcal C([0,T_{F_1,F_2,G,\xi}];\mathcal D(A^\beta)),
\end{equation}
where $T_{F_1,F_2,G,\xi}$ depends only on the squared norms $|F_2|_{\mathcal F^{\beta,\sigma}}^2, |G|_{\mathcal F^{\beta+\frac{1}{2},\sigma}}^2$ and  $\mathbb E |F_1(0)|^2$ and $ \mathbb E |A^\beta\xi|^2.$ Furthermore, $X$ satisfies the estimates
\begin{equation}\label{semilinear evolution equationExpectationAbetaXSquare}
\mathbb E|X(t)|^2+\mathbb E |A^\beta X(t)|^2 \leq C_{F_1,F_2,G,\xi},    \hspace{2cm} t\in [0,T_{F_1,F_2,G,\xi}],
\end{equation}
and 
\begin{equation}\label{semilinear evolution equationExpectationAetaXSquare}
\mathbb E |A^\eta X(t)|^2 \leq C_{F_1,F_2,G,\xi} \Big[t^{-2(\eta-\beta)}+t^{2(1+\beta-2\eta)} +t^{2(1-\eta)}\Big], \hspace{0.5cm} t\in (0,T_{F_1,F_2,G,\xi}]
\end{equation}
with some constant $C_{F_1,F_2,G,\xi}$ depending only on $|F_2|_{\mathcal F^{\beta,\sigma}}^2, |G|_{\mathcal F^{\beta+\frac{1}{2},\sigma}}^2, \mathbb E |F_1(0)|^2$ and $ \mathbb E |A^\beta\xi|^2.$ 
\end{theorem}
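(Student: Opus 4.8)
The plan is to obtain the solution as the unique fixed point of the mild-solution operator
$$\Phi(Y)(t)=S(t)\xi+\int_0^t S(t-s)[F_1(Y(s))+F_2(s)]\,ds+\int_0^t S(t-s)G(s)\,dw_s,$$
running the Banach contraction principle in a space adapted to the two scales $\mathcal D(A^\beta)$ and $\mathcal D(A^\eta)$ that appear in the statement. Concretely I would work on the space $\mathcal Z$ of predictable $\mathcal D(A^\eta)$-valued processes on $(0,T^*]$ with $A^\beta Y$ continuous into $L^2(\Omega;E)$ on $[0,T^*]$ and $A^\eta Y$ continuous into $L^2(\Omega;E)$ on $(0,T^*]$, normed by
$$\|Y\|_{\mathcal Z}=\sup_{0\le t\le T^*}\big(\mathbb E|A^\beta Y(t)|^2\big)^{1/2}+\sup_{0<t\le T^*}t^{\eta-\beta}\big(\mathbb E|A^\eta Y(t)|^2\big)^{1/2}.$$
The weight $t^{\eta-\beta}$ is forced by $A^\eta S(t)\xi=A^{\eta-\beta}S(t)A^\beta\xi$, which by \eqref{EstimateIofAnu} blows up exactly like $t^{-(\eta-\beta)}$; the standing constraints $0<\sigma<\beta<\eta<\tfrac12$ and $2\eta-\tfrac12<\beta$ are precisely what keep the Beta-function integrals below convergent and the singularities integrable (in particular $2\eta-\tfrac12<\beta$ yields $\eta-\beta<\tfrac12-\eta<\tfrac12$, so $t^{-2(\eta-\beta)}$ is integrable on $(0,T^*]$).

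The first substantive step is to prove $\Phi(\mathcal Z)\subset\mathcal Z$, which simultaneously delivers the bounds \eqref{semilinear evolution equationExpectationAbetaXSquare}--\eqref{semilinear evolution equationExpectationAetaXSquare}. I would estimate the four contributions separately. The initial term gives $|A^\eta S(t)\xi|\le\iota_{\eta-\beta}t^{-(\eta-\beta)}|A^\beta\xi|$ and $|A^\beta S(t)\xi|\le\iota_0|A^\beta\xi|$ by \eqref{EstimateIofAnu} and \eqref{EstimateIofS(t)Maximum}. The $F_2$- and $G$-convolutions reuse the computations of Theorem \ref{regularity theorem autonomous linear evolution equation}: by \eqref{FbetasigmaSpaceProperty} and \eqref{EstimateIofAnu},
$$\int_0^t|A^\eta S(t-s)F_2(s)|\,ds\le\iota_\eta|F_2|_{\mathcal F^{\beta,\sigma}}{\bf B}(1-\eta,\beta)\,t^{\beta-\eta},$$
while Proposition \ref{IntegralInequality} together with $|A^\eta S(t-s)|^2|G(s)|^2\le\iota_\eta^2|G|_{\mathcal F^{\beta+\frac12,\sigma}}^2(t-s)^{-2\eta}s^{2\beta-1}$ gives
$$\mathbb E\Big|A^\eta\!\int_0^t S(t-s)G(s)\,dw_s\Big|^2\le c(E)\iota_\eta^2|G|_{\mathcal F^{\beta+\frac12,\sigma}}^2{\bf B}(1-2\eta,2\beta)\,t^{-2(\eta-\beta)};$$
these three produce the $t^{-2(\eta-\beta)}$ term of \eqref{semilinear evolution equationExpectationAetaXSquare}. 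For the nonlinear convolution, \eqref{AbetaLipschitzcondition} yields $|F_1(Y(s))|\le|F_1(0)|+c_{F_1}[|A^\eta Y(s)|+|A^\beta Y(s)|]$, so by Minkowski's integral inequality the $|F_1(0)|$ part contributes the $t^{2(1-\eta)}$ term, while inserting $(\mathbb E|A^\eta Y(s)|^2)^{1/2}\le s^{-(\eta-\beta)}\|Y\|_{\mathcal Z}$ and using $\int_0^t(t-s)^{-\eta}s^{-(\eta-\beta)}\,ds={\bf B}(1-\eta,1-\eta+\beta)\,t^{1-2\eta+\beta}$ contributes the $t^{2(1+\beta-2\eta)}$ term. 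The same arguments with $\beta$ in place of $\eta$ bound the $\mathcal D(A^\beta)$-component and give \eqref{semilinear evolution equationExpectationAbetaXSquare}; continuity of $A^\beta\Phi(Y)$ on $[0,T^*]$ and of $A^\eta\Phi(Y)$ on $(0,T^*]$ follows exactly as in Steps 1, 2 and 4 of Theorem \ref{regularity theorem autonomous linear evolution equation}, the stochastic convolution being handled through its Gaussian increments and Theorem \ref{Kolmogorov testGaussian}.

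Next I would check that $\Phi$ is a contraction for $T^*$ small. Only the $F_1$-term survives in $\Phi(Y)-\Phi(Z)$, and by \eqref{AbetaLipschitzcondition},
$$|F_1(Y(s))-F_1(Z(s))|\le c_{F_1}\big[|A^\eta(Y-Z)(s)|+|A^\beta(Y-Z)(s)|\big].$$
Bounding $(\mathbb E|A^\eta(Y-Z)(s)|^2)^{1/2}\le s^{-(\eta-\beta)}\|Y-Z\|_{\mathcal Z}$ and $(\mathbb E|A^\beta(Y-Z)(s)|^2)^{1/2}\le\|Y-Z\|_{\mathcal Z}$, the singular kernels $(t-s)^{-\eta}$ and $(t-s)^{-\beta}$ combine with these weights through the same Beta integrals to give, in the combined norm, a prefactor of order $T^{*\gamma}$ with $\gamma=\min\{1-\eta,1-\beta\}>0$; hence $\|\Phi(Y)-\Phi(Z)\|_{\mathcal Z}\le C\,T^{*\gamma}\|Y-Z\|_{\mathcal Z}$, and choosing $T^*=T_{F_1,F_2,G,\xi}$ with $CT^{*\gamma}<1$ makes $\Phi$ a contraction whose unique fixed point $X\in\mathcal Z$ is the desired local mild solution with regularity \eqref{semilinear evolution equationTheorem1Regularity}; the bounds assembled above are then read off as \eqref{semilinear evolution equationExpectationAbetaXSquare}--\eqref{semilinear evolution equationExpectationAetaXSquare} by inserting the a~priori bound on $\|X\|_{\mathcal Z}$. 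Uniqueness among \emph{all} local mild solutions, not only those a priori in the contraction ball, follows by majorizing the difference of two solutions through the same singular kernels and applying the Volterra-type inequality of Lemma \ref{Integral inequality of Volterra type} in place of the classical Gronwall lemma.

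The main obstacle is the simultaneous bookkeeping of two distinct time singularities: the analytic-semigroup kernels $(t-s)^{-\eta}$, $(t-s)^{-\beta}$ coming from the fractional powers, and the intrinsic weights $s^{\beta-1}$, $s^{\beta-\sigma-1}$ of the spaces $\mathcal F^{\beta,\sigma}$ and $\mathcal F^{\beta+\frac12,\sigma}$, compounded by the self-referential weight $s^{-(\eta-\beta)}$ carried by $A^\eta Y$. The delicate point is that the weight $t^{\eta-\beta}$ in $\|\cdot\|_{\mathcal Z}$ must absorb the $\mathcal D(A^\eta)$-blow-up of all four terms while still leaving a strictly positive power of $T^*$ in front of the Lipschitz contribution; this is exactly where the admissible range $0\vee(2\eta-\tfrac12)<\beta<\eta$, $\sigma<\beta$ is used to guarantee convergence of each Beta integral and the inequalities $1-\eta>0$, $1-\beta>0$ and $1+\beta-2\eta>0$. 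Once these exponents are tracked correctly, the remaining steps are routine applications of \eqref{FbetasigmaSpaceProperty}, \eqref{EstimateIofAnu} and Proposition \ref{IntegralInequality}.
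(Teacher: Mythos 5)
Your construction is essentially the paper's: the same operator $\Phi$, the same two-scale weighted norm $\sup_t\mathbb E|A^\beta Y(t)|^2+\sup_t t^{2(\eta-\beta)}\mathbb E|A^\eta Y(t)|^2$ (up to taking square roots), the same Beta-function bookkeeping for the four contributions, and contraction for small time with prefactor $S^{2(1-\eta)}$. Two harmless deviations: you contract on the whole space rather than on the ball $\Upsilon(S)$ of radius $\kappa$ that the paper uses (legitimate, since (H1) is a global Lipschitz condition on $\mathcal D(A^\eta)$; the paper's ball mainly serves to fix the constant $\kappa$ entering \eqref{semilinear evolution equationExpectationAbetaXSquare} and \eqref{semilinear evolution equationExpectationAetaXSquare}), and for uniqueness you invoke the Volterra inequality of Lemma \ref{Integral inequality of Volterra type} where the paper iterates the contraction estimate over finitely many subintervals of length $\bar T$; both routes close the argument.

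The genuine gap is in the regularity claim \eqref{semilinear evolution equationTheorem1Regularity}. Your space $\mathcal Z$ only encodes continuity of $A^\beta Y$ and $A^\eta Y$ into $L^2(\Omega;E)$, whereas the theorem asserts pathwise continuity, i.e. $X\in\mathcal C((0,T_{F_1,F_2,G,\xi}];\mathcal D(A^\eta))\cap\mathcal C([0,T_{F_1,F_2,G,\xi}];\mathcal D(A^\beta))$, and you dispose of this by saying it "follows exactly as in Steps 1, 2 and 4 of Theorem \ref{regularity theorem autonomous linear evolution equation}." That cannot work for the nonlinear convolution $t\mapsto A^\theta\int_0^t S(t-s)F_1(Y(s))\,ds$: the continuity argument in the linear theorem rests on the pathwise weighted H\"older structure of a deterministic forcing in $\mathcal F^{\beta,\sigma}((0,T];E)$, and $F_1(Y(\cdot))$ is a random process controlled only in mean square, with no such pathwise regularity. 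The paper closes exactly this point by a separate computation: it bounds $\mathbb E|A^\eta[\Psi Y(t)-\Psi Y(s)]|^2$ and $\mathbb E|A^\beta[\Psi Y(t)-\Psi Y(s)]|^2$ by sums of terms $s^{-\mu}(t-s)^{2\rho}$ and $(t-s)^{\frac32+\beta-2\eta}$ with all time exponents strictly larger than $1$ (using an auxiliary $\rho\in(\frac12,1-\eta)$ and the splitting $[S(t-s)-I]\Psi Y(s)+\int_s^t\cdots$), and then applies the Kolmogorov test (Theorem \ref{Kolmogorov test}, with $\epsilon_1=2$, not the Gaussian version, since $\Psi Y$ is not Gaussian); continuity of $A^\beta\Phi Y$ at $t=0$ needs yet another argument, via the decay estimate \eqref{ExpectationOfAbetaSF1YSquare}. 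Without some substitute for this step your fixed point is only a mean-square continuous process, which does not match the stated solution class (nor the paper's definition of a mild solution as a continuous process).
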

\begin{proof}
We shall use the fixed point theorem for contractions to prove existence and uniqueness of a local solution.
For each $S\in (0,T]$, we set the underlying space:
\begin{align*}
\Xi (S)=&\{Y\in \mathcal C((0,S];\mathcal D(A^\eta)) \cap  \mathcal C([0,S];\mathcal D(A^\beta)) \text{  such that  }\\
& \sup_{0<t\leq S} t^{2(\eta-\beta)} \mathbb E|A^\eta Y(t)|^2+ \sup_{0\leq t\leq S}\mathbb E|A^\beta Y(t)|^2 <\infty \}.
\end{align*}
Then  up to indistinguishability, $\Xi (S)$  is a Banach space with norm
\begin{equation} \label{norminXi(S)}
|Y|_{\Xi (S)}=\Big[\sup_{0<t\leq S} t^{2(\eta-\beta)} \mathbb E|A^\eta Y(t)|^2+ \sup_{0\leq t\leq S}\mathbb E|A^\beta Y(t)|^2\Big]^{\frac{1}{2}}. 
\end{equation}

Let fix a constant $\kappa>0$  such that 
\begin{equation}  \label{Eq45}
\frac{\kappa^2}{2}> C_1\vee C_2,
\end{equation}
 where two constants $C_1$ and $C_2$ will be fixed below.
Consider a subset $\Upsilon(S) $ of $\Xi (S)$ which consists of all  function $Y\in \Xi (S)$  such that
\begin{equation}  \label{Upsilon(S)Definition}
\max\left\{\sup_{0<t\leq S} t^{2(\eta-\beta)} \mathbb E|A^\eta Y(t)|^2,
 \sup_{0\leq t\leq S}\mathbb E|A^\beta Y(t)|^2\right\} \leq \kappa^2.
\end{equation}
Obviously, $\Upsilon(S) $  is a nonempty closed subset of $\Xi (S)$.

For $Y\in \Upsilon(S)$, we define a function on $[0,S]$
\begin{equation}  \label{DefinitionOfFunctionPhi}
\Phi Y(t)=S(t) \xi+\int_0^t S(t-s)[F_1(Y(s))+F_2(s)]ds + \int_0^t S(t-s)G(s) dw_s.
\end{equation}
Our goal is then to verify that $\Phi$ is a contraction mapping from $\Upsilon(S)$ into itself, provided that $S$ is sufficiently small, and that the fixed point of $\Phi$ is the desired solution of \eqref{semilinear evolution equation}. For this purpose, we divide the proof into some steps.

{\bf Step 1}. Let us show that $ \Phi Y \in \Upsilon(S)$ for every $Y \in \Upsilon(S). $ Let  $Y\in \Xi (S)$. 
Due to \eqref{AbetaLipschitzcondition} and \eqref{Upsilon(S)Definition}, we observe that
\begin{align}
\mathbb E|F_1(Y(t))|^2 &\leq \mathbb E[c_{F_1}|A^\eta Y(t)|+c_{F_1}|A^\beta Y(t)|+|F_1(0)|]^2\notag \\
&\leq 3[c_{F_1}^2 \mathbb E |A^\eta Y(t)|^2+c_{F_1}^2 \mathbb E|A^\beta Y(t)|^2 +\mathbb E|F_1(0)|^2]\label{Eq10}\\
&\leq 3[c_{F_1}^2 \kappa^2 t^{2(\beta-\eta)} + c_{F_1}^2 \kappa^2  +\mathbb E|F_1(0)|^2], \hspace{1cm} t\in (0,S].  \label{Eq3}
\end{align}

First, we shall show that  $ \Phi Y$ satisfies \eqref{Upsilon(S)Definition}.  For $\theta \in [\beta, \frac{1}{2})$, from  \eqref{Upsilon(S)Definition} and \eqref{DefinitionOfFunctionPhi}, we have
\begin{align*}
&t^{2(\theta-\beta)}\mathbb E|A^\theta\{\Phi Y\}(t)|^2   \notag\\
 \leq & 3t^{2(\theta-\beta)}\mathbb E \Big[ |A^\theta S(t) \xi|^2+\Big|\int_0^tA^\theta S(t-s)[F_1(Y(s))+F_2(s)]ds\Big|^2 \notag\\
&+\Big |\int_0^tA^\theta S(t-s) G(s) dw_s\Big|^2 \Big]  \notag\\
\leq & 3t^{2(\theta-\beta)}  |A^{\theta-\beta} S(t)|^2\mathbb E|A^\beta \xi|^2\notag\\
&+6t^{2(\theta-\beta)}\mathbb E\Big|\int_0^tA^\theta S(t-s)F_1(Y(s))ds\Big|^2 \notag\\
&+6t^{2(\theta-\beta)}\mathbb E\Big|\int_0^t A^\theta S(t-s)F_2(s)ds\Big|^2 \notag\\
& +3c(E)t^{2(\theta-\beta)} \int_0^t|A^\theta S(t-s) G(s)|^2 ds. \notag
\end{align*}
On the account of \eqref{FbetasigmaSpaceProperty} and \eqref{EstimateIofAnu}, we observe that
\begin{align*}
&t^{2(\theta-\beta)}\mathbb E|A^\theta\{\Phi Y\}(t)|^2   \notag\\
\leq  &  3\iota_{\theta-\beta}^2 \mathbb E|A^\beta \xi|^2+6t^{2(\theta-\beta)} \iota_\theta^2 \mathbb E \Big[\int_0^t (t-s)^{-\theta}|F_1(Y(s))|ds\Big]^2\notag\\
&+6t^{2(\theta-\beta)} \iota_\theta^2 |F_2|_{\mathcal F^{\beta,\sigma} }^2 \Big[\int_0^t(t-s)^{-\theta}s^{\beta-1}ds\Big]^2 \notag\\
&+3c(E) \iota_\theta^2  |G|_{\mathcal F^{\beta+\frac{1}{2},\sigma}}^2  t^{2(\theta-\beta)}\int_0^t(t-s)^{-2\theta} s^{2\beta-1} ds  \notag\\
\leq &  3\iota_{\theta-\beta}^2 \mathbb E|A^\beta \xi|^2+6t^{1+2(\theta-\beta)} \iota_\theta^2  \int_0^t (t-s)^{-2\theta}\mathbb E |F_1(Y(s))|^2ds\notag\\
&+6 \iota_\theta^2 |F_2|_{\mathcal F^{\beta,\sigma} }^2 {\bf B}(\beta,1-\theta)^2 +3c(E) \iota_\theta^2  |G|_{\mathcal F^{\beta+\frac{1}{2},\sigma}}^2 {\bf B}(2\beta,1-2\theta).  \notag
\end{align*}
In view of \eqref{Eq3}, we obtain that
\begin{align}
&t^{2(\theta-\beta)}\mathbb E|A^\theta\{\Phi Y\}(t)|^2   \notag\\
\leq &  3\iota_{\theta-\beta}^2 \mathbb E|A^\beta \xi|^2\notag\\
&+18t^{1+2(\theta-\beta)} \iota_\theta^2  \int_0^t (t-s)^{-2\theta}[c_{F_1}^2 \kappa^2 s^{2(\beta-\eta)} + c_{F_1}^2 \kappa^2  +\mathbb E|F_1(0)|^2]ds\notag\\
&+6 \iota_\theta^2 |F_2|_{\mathcal F^{\beta,\sigma} }^2 {\bf B}(\beta, 1-\theta)^2+3c(E) \iota_\theta^2  |G|_{\mathcal F^{\beta+\frac{1}{2},\sigma}}^2 {\bf B}(2\beta, 1-2\theta)  \notag\\
= &  3\iota_{\theta-\beta}^2 \mathbb E|A^\beta \xi|^2+18 \iota_\theta^2 c_{F_1}^2 \kappa^2 t^{1+2(\theta-\eta)}\int_0^t (t-s)^{-2\theta} s^{2(\beta-\eta)} ds\notag\\
&+\frac{18 \iota_\theta^2 [ c_{F_1}^2 \kappa^2  +\mathbb E|F_1(0)|^2] }{1-2\theta} t^{2(1-\beta)}\notag\\
&+6 \iota_\theta^2 |F_2|_{\mathcal F^{\beta,\sigma} }^2 {\bf B}(\beta, 1-\theta)^2+3c(E) \iota_\theta^2  |G|_{\mathcal F^{\beta+\frac{1}{2},\sigma}}^2 {\bf B}(2\beta, 1-2\theta)  \notag\\
= &  3\iota_{\theta-\beta}^2 \mathbb E|A^\beta \xi|^2+6 \iota_\theta^2 |F_2|_{\mathcal F^{\beta,\sigma} }^2 {\bf B}(\beta, 1-\theta)^2\label{tthetabetaAthetaPhiYestimate}\\
&+3c(E) \iota_\theta^2  |G|_{\mathcal F^{\beta+\frac{1}{2},\sigma}}^2 {\bf B}(2\beta, 1-2\theta)\notag\\
&+ 18\iota_\theta^2 c_{F_1}^2 \kappa^2 {\bf B}( 1+2\beta-2\eta, 1-2\theta) t^{2(1+\beta-2\eta)}\notag\\
&+\frac{18 \iota_\theta^2 [ c_{F_1}^2 \kappa^2  +\mathbb E|F_1(0)|^2] }{1-2\theta} t^{2(1-\beta)}. \notag
\end{align}
We apply these estimates with  $\theta=\eta$ and $\theta=\beta$. Then it is observed that if $C_1$ and $C_2$ are fixed in such a way that
\begin{equation} \label{Eq42}
\begin{aligned}
C_1>&3\iota_{\eta-\beta}^2 \mathbb E|A^\beta \xi|^2+6 \iota_\eta^2 |F_2|_{\mathcal F^{\beta,\sigma} }^2 {\bf B}(\beta, 1-\eta)^2\\
&+3c(E) \iota_\eta^2 |G|_{\mathcal F^{\beta+\frac{1}{2},\sigma}}^2 {\bf B}(2\beta, 1-2\eta),\\
C_2>&3\iota_0^2 \mathbb E|A^\beta \xi|^2+6 \iota_\beta^2 |F_2|_{\mathcal F^{\beta,\sigma} }^2 {\bf B}(\beta, 1-\beta)^2\\
&+3c(E) \iota_\beta^2  |G|_{\mathcal F^{\beta+\frac{1}{2},\sigma}}^2 {\bf B}(2\beta, 1-2\beta),
\end{aligned}
\end{equation}
and if   $S$ is sufficiently small,    we have
\begin{align}
t^{2(\eta-\beta)}\mathbb E|A^\eta\{\Phi Y\}(t)|^2   \leq & C_1+ 18\iota_\eta^2 c_{F_1}^2 \kappa^2 {\bf B}( 1+2\beta-2\eta, 1-2\eta) t^{2(1+\beta-2\eta)}\notag\\
&+\frac{18 \iota_\eta^2 [ c_{F_1}^2 \kappa^2  +\mathbb E|F_1(0)|^2] }{1-2\eta} t^{2(1-\beta)}\notag\\
 \leq& \frac{\kappa^2}{2}+ 18\iota_\eta^2 c_{F_1}^2 \kappa^2 {\bf B}( 1+2\beta-2\eta, 1-2\eta) t^{2(1+\beta-2\eta)}\notag\\
 &+\frac{18 \iota_\eta^2 [ c_{F_1}^2 \kappa^2  +\mathbb E|F_1(0)|^2] }{1-2\eta} t^{2(1-\beta)}\notag\\
<&\kappa^2, \hspace{3cm} t\in (0,S],\label{Eq43}
\end{align}
and 
\begin{align}
\mathbb E|A^\beta\{\Phi Y\}(t)|^2  
  \leq& C_2+ 18\iota_\beta^2 c_{F_1}^2 \kappa^2 {\bf B}( 1+2\beta-2\eta, 1-2\beta) t^{2(1+\beta-2\eta)}\notag\\
  &+\frac{18 \iota_\beta^2 [ c_{F_1}^2 \kappa^2  +\mathbb E|F_1(0)|^2] }{1-2\beta} t^{2(1-\beta)}\notag\\
 \leq&\frac{\kappa^2}{2}+18\iota_\beta^2 c_{F_1}^2 \kappa^2 {\bf B}( 1+2\beta-2\eta, 1-2\beta) t^{2(1+\beta-2\eta)}\notag\\
  &+\frac{18 \iota_\beta^2 [ c_{F_1}^2 \kappa^2  +\mathbb E|F_1(0)|^2] }{1-2\beta} t^{2(1-\beta)}\notag\\
<&\kappa^2, \hspace{3cm} t\in (0,S]. \label{Eq44}
\end{align}
 We thus have shown that
 \begin{equation*} 
\max\left\{\sup_{0<t\leq S} t^{2(\eta-\beta)} \mathbb E|A^\eta \Phi Y(t)|^2,
 \sup_{0\leq t\leq S}\mathbb E|A^\beta \Phi Y(t)|^2\right\} \leq \kappa^2.
\end{equation*}
 This means that $ \Phi Y$ satisfies \eqref{Upsilon(S)Definition}.  
 
 Now, we  shall show that 
 $$\Phi (Y)\in  \mathcal C((0,S];\mathcal D(A^\eta))\cap \mathcal C([0,S];\mathcal D(A^\beta)).$$
We divide $\Phi Y$ into two parts: $\Phi Y(t)=\Psi Y(t)+ I_2(t),$
where 
$${\Psi Y}(t)=S(t) \xi+\int_0^t S(t-s)[F_1(Y(s))+F_2(s)]ds,$$
and $I_2(t)$ is defined by \eqref{XI1I2}.

As seen in {\bf Step 2} of Theorem \ref{regularity theorem autonomous linear evolution equation}, 
$$
\begin{cases}
A^\beta I_2(t)=A^\beta \int_0^t S(t-s) G(s) dw_s =\int_0^t A^\beta S(t-s)G(s) dw_s,\\
A^\beta I_2 \in \mathcal C([0,S];E).
\end{cases}
$$
Furthermore, by using \eqref {FbetasigmaSpaceProperty} and \eqref{EstimateIofAnu}, we have
\begin{align*}
&\int_0^t |A^\eta  S(t-s) G(s) |^2 ds \leq \iota_\eta^2  |G|_{\mathcal F^{\beta+\frac{1}{2},\sigma}}^2 \int_0^t (t-s)^{-2\eta} s^{2\beta-1}ds\\
&=\iota_\eta^2  |G|_{\mathcal F^{\beta+\frac{1}{2},\sigma}}^2 {\bf B}(2\beta, 1-2\eta) t^{2(\beta-\eta)}< \infty, \hspace{1cm} t\in (0,S].
\end{align*}
By the definition of stochastic integrals, $\int_0^t A^\eta  S(t-s) G(s)dw_s, \,t \in (0,S] $ is well-defined and belongs to the space  $ \mathcal C((0,S];E).$ We thus have verified that  
$$I_2\in  \mathcal C((0,S];\mathcal D(A^\eta))\cap \mathcal C([0,S];\mathcal D(A^\beta)).$$
Therefore, to finish {\bf Step 1}, it suffices to show that 
$$\Psi Y\in  \mathcal C((0,S];\mathcal D(A^\eta))\cap \mathcal C([0,S];\mathcal D(A^\beta)).$$
For $0<s<t\leq S$, by the semigroup property we observe that
\begin{align*}
\Psi Y(t)-\Psi Y(s)=&S(t-s)S(s)\xi+ S(t-s)\int_0^s S(s-r)[F_1(Y(r))+F_2(r)]dr\\
& -\Psi Y(s)+\int_s^t S(t-r)[F_1(Y(r))+F_2(r)]dr\\
=&[S(t-s)-I]\Psi Y(s)+\int_s^t S(t-r)[F_1(Y(r))+F_2(r)]dr.
\end{align*}
Let $\rho\in (\frac{1}{2}, 1-\eta)$. In view of \eqref{EstimateIofAnu} and \eqref{EstimateIofS(t)-IA-theta}, we have
\begin{align*}
&|A^\eta[\Psi Y(t)-\Psi Y(s)]|  \notag\\
\leq &|[S(t-s)-I]A^{-\rho}|  |A^{\eta+\rho}\Psi Y(s)|+\int_s^t |A^\eta S(t-r)| [|F_1(Y(r))|+|F_2(r)|]dr \notag\\
\leq &\frac{\iota_{1-\rho}(t-s)^\rho }{\rho} \Big |A^{\eta+\rho}\Big[S(s) \xi+\int_0^s S(s-r)[F_1(Y(r))+F_2(r)]dr\Big]\Big| \notag \\
&+\iota_\eta\int_s^t (t-r)^{-\eta} [|F_1(Y(r))|+|F_2(r)|]dr \notag\\
\leq &\frac{\iota_{1-\rho}(t-s)^\rho }{\rho}  |A^{\eta+\rho-\beta}S(s)| |A^\beta  \xi|\notag\\
&+\frac{\iota_{1-\rho}(t-s)^\rho }{\rho} \int_0^s |A^{\eta+\rho} S(s-r)| |F_1(Y(r))|dr \notag \\
&+\frac{\iota_{1-\rho}(t-s)^\rho }{\rho} \int_0^s |A^{\eta+\rho} S(s-r)| |F_2(r)|dr\notag\\
&+\iota_\eta\int_s^t (t-r)^{-\eta} |F_1(Y(r))|dr+\iota_\eta\int_s^t (t-r)^{-\eta} |F_2(r)|dr \notag\\
\leq &\frac{\iota_{1-\rho}\iota_{\eta+\rho-\beta}(t-s)^\rho }{\rho}  s^{-\eta-\rho+\beta} |A^\beta  \xi|\notag\\
&+\frac{\iota_{1-\rho}\iota_{\eta+\rho} (t-s)^\rho }{\rho}\int_0^s  (s-r)^{-\eta-\rho} |F_1(Y(r))|dr \notag\\
&+\frac{\iota_{1-\rho}\iota_{\eta+\rho} |F_2|_{\mathcal F^{\beta,\sigma}}(t-s)^\rho }{\rho} \int_0^s (s-r)^{-\eta-\rho}   r^{\beta-1} dr \notag \\
& +\iota_\eta\int_s^t (t-r)^{-\eta} |F_1(Y(r))|dr\notag \\
&+\iota_\eta|F_2|_{\mathcal F^{\beta,\sigma}}\int_s^t (t-r)^{-\eta} r^{\beta-1} dr \notag\\
= &\frac{\iota_{1-\rho} \iota_{\eta+\rho-\beta} }{\rho}|A^\beta  \xi| s^{\beta-\eta-\rho}(t-s)^\rho \notag\\
&+\frac{\iota_{1-\rho} \iota_{\eta+\rho}|F_2|_{\mathcal F^{\beta,\sigma}}{\bf B}(\beta,1-\eta-\rho)}{\rho}  s^{\beta-\eta-\rho}(t-s)^\rho \notag \\
&+\iota_\eta|F_2|_{\mathcal F^{\beta,\sigma}}\int_s^t (t-r)^{-\eta}r^{\beta-1}dr \notag\\
&+\frac{\iota_{1-\rho}\iota_{\eta+\rho} (t-s)^\rho }{\rho}\int_0^s  (s-r)^{-\eta-\rho} |F_1(Y(r))|dr \notag\\
& +\iota_\eta\int_s^t (t-r)^{-\eta} |F_1(Y(r))|dr.\notag 
\end{align*}
Dividing $\beta-1$ as $\beta-1=(\eta+\rho-1)+(\beta-\eta-\rho),$ we have
\begin{align*}
\int_s^t (t-r)^{-\eta}  r^{\beta-1} dr &\leq \int_s^t (t-r)^{-\eta}  (r-s)^{\eta+\rho-1} s^{\beta-\eta-\rho} dr\\
&= {\bf B}(\eta+\rho,1-\eta) s^{\beta-\eta-\rho}(t-s)^\rho.
\end{align*}
Hence, 
\begin{align*}
&|A^\eta[\Psi Y(t)-\Psi Y(s)]|  \notag\\
\leq  &\frac{\iota_{1-\rho}\iota_{\eta+\rho-\beta} }{\rho} |A^\beta  \xi| s^{\beta-\eta-\rho}(t-s)^\rho \notag\\
&+\Big[\frac{\iota_{1-\rho} \iota_{\eta+\rho} {\bf B}(\beta,1-\eta-\rho) }{\rho} +\iota_\eta{\bf B}(\eta+\rho,1-\eta)\Big]|F_2|_{\mathcal F^{\beta,\sigma}}s^{\beta-\eta-\rho}(t-s)^\rho \notag\\
&+\frac{\iota_{1-\rho}\iota_{\eta+\rho}}{\rho} (t-s)^\rho \int_0^s  (s-r)^{-\eta-\rho} |F_1(Y(r))|dr  \notag\\
&+\iota_\eta\int_s^t (t-r)^{-\eta} |F_1(Y(r))|dr.\notag 
\end{align*}
Taking expectation of the square of the both hand sides of the above inequality, we see that 
\begin{align*}
\mathbb E&|A^\eta[\Psi Y(t)-\Psi Y(s)]|^2 \notag\\
\leq &\frac{4\iota_{1-\rho}^2\iota_{\eta+\rho-\beta}^2}{\rho^2}  \mathbb E|A^\beta  \xi|^2 s^{2(\beta-\eta-\rho)}(t-s)^{2\rho} \notag\\
& +4\Big[\frac{\iota_{1-\rho}\iota_{\eta+\rho} {\bf B}(\beta,1-\eta-\rho) }{\rho} +\iota_\eta{\bf B}(\eta+\rho,1-\eta)\Big]^2\notag\\
&\times |F_2|_{\mathcal F^{\beta,\sigma}}^2s^{2(\beta-\eta-\rho)}(t-s)^{2\rho} \notag\\
&+\frac{4\iota_{1-\rho}^2\iota_{\eta+\rho}^2}{\rho^2} (t-s)^{2\rho}\mathbb E\Big[ \int_0^s  (s-r)^{-\eta-\rho} |F_1(Y(r))|dr\Big]^2  \notag\\
&+4\iota_\eta^2\mathbb E\Big[\int_s^t (t-r)^{-\eta} |F_1(Y(r))|dr\Big]^2.\notag
\end{align*}
Since 
\begin{align*}
&\Big[ \int_0^s  (s-r)^{-\eta-\rho} |F_1(Y(r))|dr\Big]^2  \\
&=\Big[ \int_0^s  (s-r)^{\frac{-\eta-\rho}{2}} (s-r)^{\frac{-\eta-\rho}{2}}|F_1(Y(r))|dr\Big]^2  \\
&\leq \int_0^s  (s-r)^{-\eta-\rho}dr \int_0^s (s-r)^{-\eta-\rho}|F_1(Y(r))|^2dr\\
&=\frac{s^{1-\eta-\rho}}{1-\eta-\rho} \int_0^s (s-r)^{-\eta-\rho}|F_1(Y(r))|^2dr,
\end{align*}
we have
\begin{align}
\mathbb E&|A^\eta[\Psi Y(t)-\Psi Y(s)]|^2 \label{Eq25}\\
\leq &\frac{4\iota_{1-\rho}^2  \iota_{\eta+\rho-\beta}^2}{\rho^2} \mathbb E|A^\beta  \xi|^2 s^{2(\beta-\eta-\rho)}(t-s)^{2\rho} \notag\\
& +4\Big[\frac{\iota_{1-\rho}\iota_{\eta+\rho} {\bf B}(\beta,1-\eta-\rho)}{\rho}  +\iota_\eta{\bf B}(\eta+\rho,1-\eta)\Big]^2\notag\\
&\times |F_2|_{\mathcal F^{\beta,\sigma}}^2s^{2(\beta-\eta-\rho)}(t-s)^{2\rho} \notag\\
&+\frac{4\iota_{1-\rho}^2\iota_{\eta+\rho}^2}{\rho^2(1-\eta-\rho)} (t-s)^{2\rho} s^{1-\eta-\rho}     \int_0^s  (s-r)^{-\eta-\rho} \mathbb E|F_1(Y(r))|^2dr  \notag\\
&+4\iota_\eta^2(t-s) \int_s^t (t-r)^{-2\eta}\mathbb E |F_1(Y(r))|^2dr.\notag
\end{align}
Using \eqref{Eq3}, we have 
\begin{align}
& \int_0^s  (s-r)^{-\eta-\rho} \mathbb E|F_1(Y(r))|^2dr \notag\\
 \leq &
3c_{F_1}^2 \kappa^2\int_0^s  (s-r)^{-\eta-\rho}  r^{2(\beta-\eta)} dr  +3[ c_{F_1}^2 \kappa^2  +\mathbb E|F_1(0)|^2]\int_0^s  (s-r)^{-\eta-\rho} dr  \notag\\
 = &
3c_{F_1}^2 \kappa^2 {\bf B}(1+2\beta-2\eta,1-\eta-\rho) s^{1+2\beta-3\eta-\rho}  \label{Eq38}\\
&+\frac{3[ c_{F_1}^2 \kappa^2  +\mathbb E|F_1(0)|^2] s^{1-\eta-\rho}}{1-\eta-\rho}, \notag
\end{align}
and
\begin{align}
&\int_s^t (t-r)^{-2\eta}\mathbb E |F_1(Y(r))|^2dr\notag\\
&\leq 3\int_s^t (t-r)^{-2\eta}[c_{F_1}^2 \kappa^2 r^{2(\beta-\eta)} + c_{F_1}^2 \kappa^2  +\mathbb E|F_1(0)|^2]dr\notag\\
&=3 c_{F_1}^2 \kappa^2 \int_s^t (t-r)^{-2\eta}r^{2(\beta-\eta)} dr+\frac{3[ c_{F_1}^2 \kappa^2  +\mathbb E|F_1(0)|^2]}{1-2\eta} (t-s)^{1-2\eta}.\label{Eq39}
\end{align}
Dividing $2(\beta-\eta)$ as $2(\beta-\eta)=(\beta-\frac{1}{2})+(\frac{1}{2}+\beta-2\eta),$ we have
\begin{align}
\int_s^t (t-r)^{-2\eta} r^{2(\beta-\eta)} dr&\leq \int_s^t (t-r)^{-2\eta} (r-s)^{\beta-\frac{1}{2}} t^{\frac{1}{2}+\beta-2\eta}dr\notag\\
&={\bf B}(\frac{1}{2}+\beta,1-2\eta) t^{\frac{1}{2}+\beta-2\eta} (t-s)^{\frac{1}{2}+\beta-2\eta}.  \label{Eq40}
\end{align}
By virtue of  \eqref{Eq38},  \eqref{Eq39} and  \eqref{Eq40}, it follows from  \eqref{Eq25} that 
\begin{align*}
\mathbb E&|A^\eta[\Psi Y(t)-\Psi Y(s)]|^2 \notag\\
\leq  &\frac{4\iota_{1-\rho}^2  \iota_{\eta+\rho-\beta}^2}{\rho^2} \mathbb E|A^\beta  \xi|^2 s^{2(\beta-\eta-\rho)}(t-s)^{2\rho} \notag\\
& +4\Big[\frac{\iota_{1-\rho}\iota_{\eta+\rho} {\bf B}(\beta,1-\eta-\rho)}{\rho}  +\iota_\eta{\bf B}(\eta+\rho,1-\eta)\Big]^2\notag\\
&\times |F_2|_{\mathcal F^{\beta,\sigma}}^2s^{2(\beta-\eta-\rho)}(t-s)^{2\rho}  \notag\\
&+\frac{12\iota_{1-\rho}^2\iota_{\eta+\rho}^2c_{F_1}^2 \kappa^2 {\bf B}(1+2\beta-2\eta,1-\eta-\rho) }{\rho^2(1-\eta-\rho)} (t-s)^{2\rho} s^{2(1+\beta-2\eta-\rho)}  \notag\\
&+\frac{12\iota_{1-\rho}^2\iota_{\eta+\rho}^2[ c_{F_1}^2 \kappa^2  +\mathbb E|F_1(0)|^2] }{\rho^2(1-\eta-\rho)^2} (t-s)^{2\rho} s^{2(1-\eta-\rho)}  \notag\\
&+12 \iota_\eta^2c_{F_1}^2 \kappa^2 {\bf B}(\frac{1}{2}+\beta,1-2\eta) t^{\frac{1}{2}+\beta-2\eta} (t-s)^{\frac{3}{2}+\beta-2\eta}\notag\\
&+\frac{12\iota_\eta^2[ c_{F_1}^2 \kappa^2  +\mathbb E|F_1(0)|^2]}{1-2\eta} (t-s)^{2(1-\eta)}.
\end{align*}
Since this estimate holds for any $\rho\in (\frac{1}{2}, 1-\eta),$ and since  $1<\frac{3}{2}+\beta-2\eta<2(1-\eta)$, the Kolmogorov test then provides that $A^\eta \Psi Y $ is H\"older continuous on $(0,S]$ with an arbitrary exponent smaller than $\frac{1+2\beta}{4}-\eta.$ As a consequence,  $\Psi Y\in \mathcal C((0,S];\mathcal D(A^\eta)).$

Similarly, we also have
\begin{align*}
\mathbb E&|A^\beta[\Psi Y(t)-\Psi Y(s)]|^2 \notag\\
\leq  &\frac{4\iota_{1-\rho}^2  \iota_\rho^2}{\rho^2} \mathbb E|A^\beta  \xi|^2 s^{-2\rho}(t-s)^{2\rho} \notag\\
& +4\Big[\frac{\iota_{1-\rho}\iota_{\beta+\rho} {\bf B}(\beta,1-\beta-\rho)}{\rho}  +\iota_\beta{\bf B}(\beta+\rho,1-\beta)\Big]^2\notag\\
&\times |F_2|_{\mathcal F^{\beta,\sigma}}^2s^{-2\rho}(t-s)^{2\rho}  \notag\\
&+\frac{12\iota_{1-\rho}^2\iota_{\beta+\rho}^2c_{F_1}^2 \kappa^2 {\bf B}(1,1-\beta-\rho) }{\rho^2(1-\beta-\rho)} (t-s)^{2\rho} s^{2(1-\eta-\rho)}  \notag\\
&+\frac{12\iota_{1-\rho}^2\iota_{\beta+\rho}^2[ c_{F_1}^2 \kappa^2  +\mathbb E|F_1(0)|^2] }{\rho^2(1-\beta-\rho)^2} (t-s)^{2\rho} s^{2(1-\beta-\rho)}  \notag\\
&+12 \iota_\beta^2c_{F_1}^2 \kappa^2 {\bf B}(\frac{1}{2}+\beta,1-2\beta) t^{\frac{1}{2}-\beta} (t-s)^{\frac{3}{2}-\beta}\notag\\
&+\frac{12\iota_\beta^2[ c_{F_1}^2 \kappa^2  +\mathbb E|F_1(0)|^2]}{1-2\beta} (t-s)^{2(1-\beta)}.
\end{align*}
The Kolmogorov test again provides that  $\Psi Y\in \mathcal C((0,S];\mathcal D(A^\beta)).$

It remains to show that $A^\beta \Psi Y$ is continuous at $t=0$. Indeed, we already know by Theorem \ref{regularity theorem autonomous linear evolution equation} that 
$$A^\beta\Big[S(t) \xi+\int_0^t S(t-s)F_2(s)ds + \int_0^t S(t-s)G(s) dw_s\Big]$$
is continuous at $t=0.$  Meanwhile,  using \eqref{EstimateIofAnu} and \eqref{Eq3}, we have
\begin{align} 
\mathbb E&\Big|A^\beta \int_0^t S(t-s)F_1(Y(s))ds\Big|^2 \notag\\
 & \leq\mathbb E\Big[\int_0^t |A^\beta S(t-s)| |F_1(Y(s))|ds\Big]^2\notag\\
 &\leq \iota_\beta^2 \mathbb E  \Big[\int_0^t (t-s)^{-\beta}|F_1(Y(s))| ds\Big]^2\notag\\
 &\leq \iota_\beta^2 t   \int_0^t (t-s)^{-2\beta}\mathbb E|F_1(Y(s))|^2 ds\notag\\
 &\leq 3\iota_\beta^2 t   \int_0^t (t-s)^{-2\beta}[c_{F_1}^2 \kappa^2 s^{2(\beta-\eta)} + c_{F_1}^2 \kappa^2  +\mathbb E|F_1(0)|^2] ds\notag\\
 &= 3\iota_\beta^2 \Big[c_{F_1}^2 \kappa^2 {\bf B}(1+2\beta-2\eta,1-2\beta)t^{2(1-\eta)} +\frac{c_{F_1}^2 \kappa^2  +\mathbb E|F_1(0)|^2 }{1-2\beta} t^{2(1-\beta)}\Big]  \label{ExpectationOfAbetaSF1YSquare}\\ 
 & \to 0 \hspace{2cm} \text{ as } t\searrow 0. \notag
\end{align}
Therefore, there exists a decreasing sequence $\{t_n\}$ converging to $0$ such that 
$$\lim_{n\to\infty} A^\beta \int_0^{t_n} S(t_n-s)F_1(Y(s))ds=0.$$
By the continuity of $A^\beta \int_0^{t_n} S(t_n-s)F_1(Y(s))ds$ on $(0,S]$, we conclude that 
$$\lim_{t\to 0} A^\beta \int_0^t S(t-s)F_1(Y(s))ds=0,$$
i.e. $A^\beta \int_0^t S(t-s)F_1(Y(s))ds$ is continuous at $t=0$.
We thus have shown that  $A^\beta\Psi Y$
is continuous at $t=0.$ 

{\bf Step 2}. Let us show that $\Phi$ is a contraction mapping of $\Xi (S)$, provided $S>0$ is sufficiently small. Let $Y_1,Y_2\in \Xi (S)$ and $\theta\in [0, \frac{1}{2}).$ From \eqref{DefinitionOfFunctionPhi}, we see that
\begin{align*}
&t^{2(\theta-\beta)}\mathbb E|A^\theta[\Phi Y_1(t)-\Phi Y_2(t)]|^2  \notag \\
=&t^{2(\theta-\beta)} \mathbb E\Big|\int_0^t A^\theta S(t-s)[F_1(Y_1(s))-F_1(Y_2(s))]ds\Big|^2 \notag\\
\leq&t^{2(\theta-\beta)} \mathbb E\Big[\int_0^t |A^\theta S(t-s)| |F_1(Y_1(s))-F_1(Y_2(s))|ds\Big]^2. \notag
\end{align*}
By virtue  of  \eqref{EstimateIofAnu},  \eqref{AbetaLipschitzcondition} and  \eqref{norminXi(S)}, we have
\begin{align*}
&t^{2(\theta-\beta)}\mathbb E|A^\theta[\Phi Y_1(t)-\Phi Y_2(t)]|^2  \notag \\
\leq& c_{F_1}^2\iota_\theta^2 t^{2(\theta-\beta)}\notag \\
&\times \mathbb E\Big[\int_0^t (t-s)^{-\theta}  \{ |A^\eta(Y_1(s)-Y_2(s))|+  |A^\beta (Y_1(s)-Y_2(s))|\}ds\Big]^2 \notag\\
\leq& c_{F_1}^2\iota_\theta^2 t^{1+2(\theta-\beta)}\notag \\
&\times \mathbb E\int_0^t (t-s)^{-2\theta}  \{ |A^\eta(Y_1(s)-Y_2(s))|+  |A^\beta (Y_1(s)-Y_2(s))|\}^2ds \notag\\
\leq& 2c_{F_1}^2\iota_\theta^2 t^{1+2(\theta-\beta)} \int_0^t (t-s)^{-2\theta}  \mathbb E |A^\eta(Y_1(s)-Y_2(s))|^2ds \notag\\
&+2c_{F_1}^2\iota_\theta^2 t^{1+2(\theta-\beta)} \int_0^t (t-s)^{-2\theta}  \mathbb E |A^\beta (Y_1(s)-Y_2(s))|^2ds \notag\\
\leq&2c_{F_1}^2\iota_\theta^2 t^{1+2(\theta-\beta)} \int_0^t  (t-s)^{-2\theta} s^{2(\beta-\eta)} |Y_1-Y_2|_{{\Xi (S)}}^2ds \notag\\
&+2c_{F_1}^2\iota_\theta^2 t^{1+2(\theta-\beta)} \int_0^t  (t-s)^{-2\theta}  |Y_1-Y_2|_{{\Xi (S)}}^2ds \notag\\
=&2c_{F_1}^2\iota_\theta^2  \Big[{\bf B}(1+2\beta-2\eta,1-2\theta) t^{2(1-\eta)}+\frac{t^{2(1-\beta)}}{1-2\theta}\Big]   |Y_1-Y_2|_{{\Xi (S)}}^2 \notag\\
\leq &2c_{F_1}^2  \Big[\iota_\theta^2{\bf B}(1+2\beta-2\eta,1-2\theta)+\frac{\iota_\theta^2 S^{2(\eta-\beta)}}{1-2\theta}\Big]   S^{2(1-\eta)} |Y_1-Y_2|_{{\Xi (S)}}^2. \notag
\end{align*}
Applying these estimates with $\theta=\eta$ and $\theta=\beta$, we conclude that
\begin{align}
&|\Phi Y_1-\Phi Y_2|_{{\Xi (S)}}^2 \notag\\
=&\Big[\sup_{0<t\leq S} t^{2(\eta-\beta)} \mathbb E|A^\eta [\Phi Y_1(t)-\Phi Y_2(t)]|^2+ \sup_{0\leq t\leq S}\mathbb E|A^\beta  [\Phi Y_1(t)-\Phi Y_2(t)]|^2\Big]^{\frac{1}{2}}\notag\\
\leq &2c_{F_1}^2  \Big[\iota_\eta^2{\bf B}(1+2\beta-2\eta,1-2\eta)+\iota_\beta^2{\bf B}(1+2\beta-2\eta,1-2\beta)\label{Eq46}\\
&\hspace*{0.5cm}+\frac{\iota_\eta^2S^{2(\eta-\beta)}}{1-2\eta}+\frac{\iota_\beta^2S^{2(\eta-\beta)}}{1-2\beta}\Big]   S^{2(1-\eta)} |Y_1-Y_2|_{{\Xi (S)}}^2. \notag 
\end{align}
This shows that $\Phi$ is contraction in $\Xi (S),$ provided $S>0$ is sufficiently small.

{\bf Step 3}.  Let us show existence of a local mild solution.
Let $S>0$ be sufficiently small in such a way that $\Phi$ maps $\Upsilon(S)$ into itself and is contraction with respect to the norm of $\Xi (S).$ Due to {\bf Step 1} and {\bf Step 2}, $S=T_{F_1,F_2,G,\xi}$ can be determined by $\mathbb E|F_1(0)|^2, |F_2|_{\mathcal F^{\beta,\sigma}}^2, |G|_{\mathcal F^{\beta+\frac{1}{2},\sigma}}^2$ and $\mathbb E |A^\beta\xi|^2.$ Thanks to the fixed point theorem, there exists a unique function $X\in \Upsilon(T_{F_1,F_2,G,\xi})$ such that $X=\Phi X$. 
This means that $X$ is a mild solution of  \eqref{semilinear evolution equation} in the function space:
$$
X\in  \mathcal C((0,T_{F_1,F_2,G,\xi}];\mathcal D(A^\eta))\cap \mathcal C([0,T_{F_1,F_2,G,\xi}];\mathcal D(A^\beta)).
$$

{\bf Step 4}. Let us verify the estimate \eqref{semilinear evolution equationExpectationAbetaXSquare}.
We have
\begin{align}
X(t)=&\Big[S(t) \xi+\int_0^t S(t-s)F_2(s)ds + \int_0^t S(t-s)G(s) dw_s \Big] \label{Eq8}\\
&+\int_0^t S(t-s)F_1(X(s))ds \notag \\
=&X_1(t)+X_2(t), \hspace{2cm} t\in [0,T_{F_1,F_2,G,\xi}]. \notag
\end{align}
On the account of  Theorem \ref{regularity theorem autonomous linear evolution equation}, we have an estimate for the first term
\begin{align*} 
\mathbb E |X_1(t)|^2 \leq  \rho_1[\mathbb E|\xi|^2 + |F|_{\mathcal F^{\beta,\sigma}}^2+  |G|_{\mathcal F^{\beta+\frac{1}{2},\sigma}}^2],
\end{align*}
where $\rho_1$ is a positive constant depending only on $A, \beta$ and $\sigma$.

For the second term, in view of  \eqref{EstimateIofS(t)Maximum} and  \eqref{Eq3}, we observe that 
\begin{align*} 
\mathbb E| X_2(t)|^2 
&\leq\mathbb E\Big[\int_0^t | S(t-s)| |F_1(X(s))|ds\Big]^2\notag\\
 &\leq t  \int_0^t \iota_0^2\mathbb E|F_1(X(s))|^2 ds\notag\\
 &\leq  3t   \int_0^t \iota_0^2[c_{F_1}^2 \kappa^2 s^{2(\beta-\eta)} + c_{F_1}^2 \kappa^2  +\mathbb E|F_1(0)|^2] ds\notag\\
&=\frac{3c_{F_1}^2 \kappa^2\iota_0^2}{1+2(\beta-\eta)} t^{2(1+\beta-\eta)}+3[c_{F_1}^2 \kappa^2  +\mathbb E|F_1(0)|^2]\iota_0^2 t^2 
 \end{align*}
for every $t\in [0,T_{F_1,F_2,G,\xi}]$. Hence, 
\begin{align} 
\mathbb E| X(t)|^2 \leq &2 \mathbb E| X_1(t)|^2 +2 \mathbb E| X_2(t)|^2  \notag\\
\leq &2\rho_1[\mathbb E|\xi|^2 +\mathbb E |F|_{\mathcal F^{\beta,\sigma}}^2+  |G|_{\mathcal F^{\beta+\frac{1}{2},\sigma}}^2]\notag\\
&+\frac{6c_{F_1}^2 \kappa^2\iota_0^2}{1+2(\beta-\eta)} t^{2(1+\beta-\eta)}+6[c_{F_1}^2 \kappa^2  +\mathbb E|F_1(0)|^2]\iota_0^2 t^2\notag\\
\leq &2\rho_1[\mathbb E|\xi|^2 +\mathbb E |F|_{\mathcal F^{\beta,\sigma}}^2+  |G|_{\mathcal F^{\beta+\frac{1}{2},\sigma}}^2]\label{Eq6}\\
&+\frac{6c_{F_1}^2 \kappa^2\iota_0^2}{1+2(\beta-\eta)} T^{2(1+\beta-\eta)}+6[c_{F_1}^2 \kappa^2  +\mathbb E|F_1(0)|^2]\iota_0^2 T^2  \notag
 \end{align}
for every $t\in [0,T_{F_1,F_2,G,\xi}]$. 

On the other hand, in virtue of  \eqref{EstimateIofS(t)Maximum}, \eqref{Eq4}, \eqref{Eq5}, \eqref{ExpectationOfAbetaSF1YSquare} and \eqref{Eq8}, for every $t\in [0,T_{F_1,F_2,G,\xi}]$ we observe  that
\begin{align}
\mathbb E&| A^\beta X(t)|^2\notag\\
\leq & 4\mathbb E |S(t)A^\beta \xi|^2+4\mathbb E\Big|\int_0^t A^\beta S(t-s)F_2(s)ds\Big|^2 \notag\\
&+4\mathbb E\Big|\int_0^t A^\beta S(t-s)F_1(X(s))ds\Big|^2+ 4\mathbb E\Big|\int_0^t A^\beta S(t-s)G(s) dw_s\Big|^2\notag\\
\leq & 4\iota_0^2 \mathbb E |A^\beta \xi|^2+4\iota_\beta^2 |F_2|_{\mathcal F^{\beta,\sigma}}^2 {\bf B}(\beta,1-\beta)^2\notag\\
&+ 12\iota_\beta^2 \Big[c_{F_1}^2 \kappa^2 {\bf B}(1+2\beta-2\eta,1-2\beta)t^{2(1-\eta)} +\frac{c_{F_1}^2 \kappa^2  +\mathbb E|F_1(0)|^2 }{1-2\beta} t^{2(1-\beta)}\Big]\notag \\
&+ 4c(E)\int_0^t |A^\beta  S(t-s) G(s) |^2 ds\notag\\
\leq & 4\iota_0^2 \mathbb E |A^\beta \xi|^2+4\iota_\beta^2 |F_2|_{\mathcal F^{\beta,\sigma}}^2 {\bf B}(\beta,1-\beta)^2\notag\\
&+ 12\iota_\beta^2 \Big[c_{F_1}^2 \kappa^2 {\bf B}(1+2\beta-2\eta,1-2\beta)t^{2(1-\eta)} +\frac{c_{F_1}^2 \kappa^2  +\mathbb E|F_1(0)|^2 }{1-2\beta} t^{2(1-\beta)}\Big]\notag \\
&+ 4c(E)\iota_\beta^2  |G|_{\mathcal F^{\beta+\frac{1}{2},\sigma}}^2 {\bf B}(2\beta, 1-2\beta)\notag\\
\leq & 4\iota_0^2 \mathbb E |A^\beta \xi|^2+4\iota_\beta^2 |F_2|_{\mathcal F^{\beta,\sigma}}^2 {\bf B}(\beta,1-\beta)^2+ 4c(E)\iota_\beta^2  |G|_{\mathcal F^{\beta+\frac{1}{2},\sigma}}^2 {\bf B}(2\beta, 1-2\beta)\label{Eq7}\\
&+ 12\iota_\beta^2 \Big[c_{F_1}^2 \kappa^2 {\bf B}(1+2\beta-2\eta,1-2\beta)T^{2(1-\eta)} +\frac{c_{F_1}^2 \kappa^2  +\mathbb E|F_1(0)|^2 }{1-2\beta} T^{2(1-\beta)}\Big]. \notag
\end{align}
Combining  \eqref{Eq6} and \eqref{Eq7}, we obtain  \eqref{semilinear evolution equationExpectationAbetaXSquare}.

{\bf Step 5}. Let us verify the estimate  \eqref{semilinear evolution equationExpectationAetaXSquare}. From \eqref{EstimateIofAnu} and  \eqref{Eq8}, 
 we observe that
\begin{align*}
\mathbb E&| A^\eta X(t)|^2\notag\\
\leq & 4\mathbb E |A^\eta S(t) \xi|^2+4\Big[\int_0^t |A^\eta S(t-s)| |F_2(s)|ds\Big]^2 \notag\\
&+4\mathbb E\Big[\int_0^t |A^\eta S(t-s)| |F_1(X(s))|ds\Big]^2+ 4c(E) \int_0^t |A^\eta S(t-s)G(s)|^2 ds\notag\\
=& 4\mathbb E |A^{\eta-\beta} S(t) A^\beta \xi|^2+4J_1+4J_2+4J_3\notag\\
\leq & 4\iota_{\eta-\beta}^2   \mathbb E |A^\beta \xi|^2 t^{-2(\eta-\beta)}+4J_1+4J_2+4J_3, \hspace{2cm} t\in (0,T_{F_1,F_2,G,\xi}].\notag
\end{align*}

We shall give estimates for $J_1, J_2$ and $J_3$.
For $J_1$, similarly to \eqref{Eq4}, we have
$$J_1\leq \iota_\eta^2 |F|_{\mathcal F^{\beta,\sigma}}^2  {\bf B}(\beta,1-\eta)^2 t^{2(\beta-\eta)}.$$
For $J_2$, similarly to \eqref{ExpectationOfAbetaSF1YSquare}, we conclude that
$$J_2\leq 3\iota_\eta^2 \Big[c_{F_1}^2 \kappa^2 {\bf B}(1+2\beta-2\eta,1-2\eta)t^{2(1+\beta-2\eta)} +\frac{c_{F_1}^2 \kappa^2  +\mathbb E|F_1(0)|^2 }{1-2\eta} t^{2(1-\eta)}\Big].$$
For $J_3$, similarly to \eqref{Eq5}, we obtain that
$$J_3\leq c(E) \iota_\eta^2  |G|_{\mathcal F^{\beta+\frac{1}{2},\sigma}}^2 {\bf B}(2\beta, 1-2\eta)  t^{2(\beta-\eta)}.$$
Thus, \eqref{semilinear evolution equationExpectationAetaXSquare} is verified.

{\bf Step 6}.  Let us finally show uniqueness of the local mild solution. Let $\bar X$ be any other local mild solution to  \eqref{semilinear evolution equation} on the interval $[0,T_{F_1,F_2,G,\xi}]$ which belongs to the space $\mathcal C((0,T_{F_1,F_2,G,\xi}];\mathcal D(A^\eta))\cap \mathcal C([0,T_{F_1,F_2,G,\xi}];\mathcal D(A^\beta))$.

The formula
$$\bar X(t)=S(t) \xi+\int_0^t S(t-s)[F_1(\bar X(s))+F_2(s)]ds + \int_0^t S(t-s)G(s) dw_s $$
jointed with \eqref{Eq8} yields that
$$X(t)-\bar X(t)=\int_0^t S(t-s)[F_1(X(s))-F_1(\bar X(s))] ds, \quad\quad t\in [0,T_{F_1,F_2,G,\xi}].$$
We can then repeat the same arguments as in {\bf Step 2} to deduce that
\begin{align}
&|X-\bar X|_{\Xi (\bar T)}^2\label{Eq9}\\
&\leq 2c_{F_1}^2  \Big[\iota_\eta^2{\bf B}(1+2\beta-2\eta,1-2\eta)+\iota_\beta^2{\bf B}(1+2\beta-2\eta,1-2\beta)+\frac{\iota_\eta^2{\bar T}^{2(\eta-\beta)}}{1-2\eta}\notag\\
&\hspace{0.8cm}+\frac{\iota_\beta^2{\bar T}^{2(\eta-\beta)}}{1-2\beta}\Big]   {\bar T}^{2(1-\eta)} |X-\bar X|_{\Xi (\bar T)}^2 \hspace{1cm} \text{ for any } \bar T\in(0,T_{F_1,F_2,G,\xi}].\notag
\end{align}
Let $\bar T$ be a positive constant such that
\begin{align*}
 &2c_{F_1}^2  \Big[\iota_\eta^2{\bf B}(1+2\beta-2\eta,1-2\eta)+\iota_\beta^2{\bf B}(1+2\beta-2\eta,1-2\beta)\notag\\
&+\frac{\iota_\eta^2\bar T^{2(\eta-\beta)}}{1-2\eta}+\frac{\iota_\beta^2\bar T^{2(\eta-\beta)}}{1-2\beta}\Big]   \bar T^{2(1-\eta)}<1.
\end{align*}
It then follows from  \eqref{Eq9} that $X(t)=\bar X(t)$ a.s. for every $t\in [0,\bar T].$
 We repeat the same procedure with  initial time $\bar T$ and  initial value $X(\bar T)=\bar X(\bar T)$ to derive that $X(\bar T +t)=\bar X(\bar T +t)$ a.s. for every $t\in [0,\bar T].$ This means that $X(t)=\bar X(t)$ a.s. on a larger interval $[0,2\bar T].$ We continue this procedure by finite times, the extended interval can cover the given interval $[0,T_{F_1,F_2,G,\xi}].$ Therefore,  for every $t\in [0,T_{F_1,F_2,G,\xi}],$ $X(t)=\bar X(t)$ a.s.
\end{proof}
\begin{corollary}[global existence]  \label{semilinear evolution equationTheorem1corollary}
Assume that in Theorem \ref{semilinear evolution equationTheorem1}  the constant $ C_{F_1,F_2,G,\xi}$ is independent of  $T_{F_1,F_2,G,\xi}$ for every $\xi\in \mathcal D(A^\beta)$ such that $\mathbb E|A^\beta \xi|^2<\infty$. Then \eqref{semilinear evolution equation}  possesses a unique mild solution on the interval $[0,T].$
\end{corollary}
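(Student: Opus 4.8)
The plan is to globalize the local mild solution of Theorem \ref{semilinear evolution equationTheorem1} by a continuation argument, the essential point being that the hypothesis prevents the length of each continuation step from shrinking to zero. First I would isolate the following quantitative fact contained in the proof of Theorem \ref{semilinear evolution equationTheorem1}: the existence time $T_{F_1,F_2,G,\xi}$ is produced through the smallness requirements \eqref{Eq43}, \eqref{Eq44} and \eqref{Eq46} on $S$, which involve only the constant $\kappa$ and the data norms $|F_2|_{\mathcal F^{\beta,\sigma}}^2$, $|G|_{\mathcal F^{\beta+\frac{1}{2},\sigma}}^2$, $\mathbb E|F_1(0)|^2$; and $\kappa$ is in turn fixed through \eqref{Eq45} and \eqref{Eq42} using these same data together with $\mathbb E|A^\beta\xi|^2$. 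Hence $T_{F_1,F_2,G,\xi}$ admits a lower bound $\delta>0$ depending only on the data norms and on an upper bound for $\mathbb E|A^\beta\xi|^2$.

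Second, I would run the iteration with time-shifts. Put $t_0=0$ and let $X$ solve \eqref{semilinear evolution equation} on $[0,t_1]$ with $t_1=T_{F_1,F_2,G,\xi}$. The a priori estimate \eqref{semilinear evolution equationExpectationAbetaXSquare} gives $\mathbb E|A^\beta X(t_1)|^2\le C_{F_1,F_2,G,\xi}$, and by hypothesis $C_{F_1,F_2,G,\xi}$ does not depend on the existence time, so this is a fixed constant $C$. To continue, I would shift the origin to $t_1$ by setting $\tilde F_2(\cdot)=F_2(\cdot+t_1)$ and $\tilde G(\cdot)=G(\cdot+t_1)$; a direct check from \eqref{FbetasigmaSpaceProperty} shows $|\tilde F_2|_{\mathcal F^{\beta,\sigma}}\le c|F_2|_{\mathcal F^{\beta,\sigma}}$ and $|\tilde G|_{\mathcal F^{\beta+\frac12,\sigma}}\le c|G|_{\mathcal F^{\beta+\frac12,\sigma}}$ with an absolute constant $c$. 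Applying Theorem \ref{semilinear evolution equationTheorem1} to the shifted problem with $\mathcal F_{t_1}$-measurable initial datum $X(t_1)\in\mathcal D(A^\beta)$ (whose second moment is bounded by $C$) yields a solution on $[t_1,t_2]$ with $t_2-t_1\ge\delta$, where $\delta>0$ is the lower bound from the first paragraph evaluated with $\mathbb E|A^\beta X(t_1)|^2\le C$. Iterating, I obtain $t_0<t_1<t_2<\cdots$ with $t_{k+1}-t_k\ge\delta$, the pieces matching on overlaps by the uniqueness in Theorem \ref{semilinear evolution equationTheorem1}.

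Third, since every step has length at least $\delta$, after at most $\lceil T/\delta\rceil$ steps the intervals exhaust $[0,T]$; concatenating the local pieces produces a mild solution on all of $[0,T]$ lying in $\mathcal C((0,T];\mathcal D(A^\eta))\cap\mathcal C([0,T];\mathcal D(A^\beta))$ on each subinterval, and uniqueness on $[0,T]$ follows by applying the local uniqueness successively on $[t_k,t_{k+1}]$.

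The hard part will be the first paragraph, namely verifying that the existence time furnished by Theorem \ref{semilinear evolution equationTheorem1} depends on the initial datum only through an upper bound for $\mathbb E|A^\beta\xi|^2$, so that one common $\delta$ serves every step. This requires reading off from \eqref{Eq42}, \eqref{Eq45} and the smallness conditions \eqref{Eq43}--\eqref{Eq46} that replacing $\mathbb E|A^\beta\xi|^2$ by the uniform bound $C$ yields admissible choices of $\kappa$, $C_1$, $C_2$ and $S$; together with the shift estimates for $\tilde F_2,\tilde G$, this is exactly what forbids the step lengths from collapsing, and it is here that the hypothesis on $C_{F_1,F_2,G,\xi}$---i.e. the nonblow-up of $\mathbb E|A^\beta X(t)|^2$---is decisive.
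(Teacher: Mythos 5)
Your proposal is correct and follows essentially the same route as the paper: a continuation argument in which the a priori bound $\mathbb E|A^\beta X(t)|^2\le C_{F_1,F_2,G,\xi}$ (uniform by hypothesis) controls the $\mathcal D(A^\beta)$-norm of each new initial datum, and the fact that the local existence time depends only on the data norms then gives a uniform lower bound on the step lengths, so finitely many steps cover $[0,T]$. The only cosmetic difference is that the paper restarts at the midpoint of each existence interval and extends $F_2,G$ by constants beyond $T$, whereas you restart at the endpoint with time-shifted data; both handle the same technical points.
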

\begin{proof}
Let extend the functions $F_2$ and $G$ to functions $\bar F_2$ and $\bar G$ defined on the whole interval $[0,\infty)$ by putting $\bar F_2(t)\equiv F_2(T)$ and $\bar G(t)\equiv G(T)$ for $T<t<\infty$. It is obvious that
$$|\bar F_2|_{\mathcal F^{\beta,\sigma}((a,b];E)}\leq |F_2|_{\mathcal F^{\beta,\sigma}((a,b];E)}\, \text{ and }\, |\bar G|_{\mathcal F^{\beta+\frac{1}{2},\sigma}((a,b];E)}\leq |G|_{\mathcal F^{\beta+\frac{1}{2},\sigma}((a,b];E)}$$
for any interval $(a,b]\subset [0,\infty).$ 

Let $\bar \xi=X(\frac{T_{F_1,F_2,G,\xi}}{2}).$ We consider the problem
\begin{equation}  \label{Eq24}
\begin{cases}
dY+AYdt=[F_1(Y)+F_2(t)]dt+G(t)dw_t, \quad\quad \frac{T_{F_1,F_2,G,\xi}}{2}<t<\infty,\\
Y(\frac{T_{F_1,F_2,G,\xi}}{2})=\bar \xi.
\end{cases}
\end{equation}
Thanks to Theorem \ref{semilinear evolution equationTheorem1}, \eqref{Eq24} has  a local mild solution $Y(t)$ for every 
$T_{F_1,F_2,G,\xi}\leq t\leq \frac{3T_{F_1,F_2,G,\xi}}{2}. $
By the uniqueness of solution,    $X(t)=Y(t)$  a.s. for $t\in  [\frac{T_{F_1,F_2,G,\xi}}{2},T_{F_1,F_2,G,\xi}].$ This means that we have constructed a local mild solution to \eqref{semilinear evolution equation} on the interval $[0,\frac{3T_{F_1,F_2,G,\xi}}{2}]. $ The independence  of  $T_{F_1,F_2,G,\xi}$ with respect to  $ C_{F_1,F_2,G,\xi}$  allows us to continue this procedure unlimitedly. Each time the local solution is extended over the fixed length $\frac{T_{F_1,F_2,G,\xi}}{2}$ of interval.  So, by finite times, the extended interval can cover the interval $[0,T]$.
\end{proof}
\subsection{Regularity for more regular initial data}
This subsection shows regularity of local mild solutions for more regular initial values.  For any $F_2\in \mathcal F^{\gamma, \sigma}((0,T];E),$ $ \max\{\beta,\frac{1}{2}-\eta\}<\gamma< \frac{1}{2},$ and any initial value $\xi\in \mathcal D(A^\gamma)$ satisfying   $\mathbb E|A^\gamma\xi|^2<\infty$, we can verify a stronger regularity than \eqref{semilinear evolution equationTheorem1Regularity} for the local mild solution of  \eqref{semilinear evolution equation}.
\begin{theorem}\label{semilinear evolution equationMoreRegular}
Let \eqref{spectrumsectorialdomain}, \eqref{resolventnorm}, {\rm (H1)} and {\rm (H3)}   be satisfied.
 Let $F_2\in \mathcal F^{\gamma, \sigma}((0,T];E)$,  $ \max\{\beta,\frac{1}{2}-\eta\}<\gamma< \frac{1}{2}$, and  $\xi\in \mathcal D(A^\gamma)$ such that $\mathbb E|A^\gamma\xi|^2<\infty$.  Then \eqref{semilinear evolution equation} possesses a unique mild solution $X$ in the function space:
\begin{equation*}
X\in  \mathcal C((0,T_{F_1,F_2,G,\xi}];\mathcal D(A^\eta))\cap \mathcal C([0,T_{F_1,F_2,G,\xi}];\mathcal D(A^\gamma)),
\end{equation*}
where $T_{F_1,F_2,G,\xi}$ depends only on the squared norms $|F_2|_{\mathcal F^{\beta,\sigma}}^2, |G|_{\mathcal F^{\beta+\frac{1}{2},\sigma}}^2$ and $\mathbb E |F_1(0)|^2$ and $ \mathbb E |A^\gamma\xi|^2.$ In addition, $X$ satisfies the estimate
\begin{align}
\mathbb E|X(t)|^2+t^{2(\gamma-\beta)}\mathbb E |A^\gamma X(t)|^2 \leq C_{F_1,F_2,G,\xi},    \quad\quad t\in [0,T_{F_1,F_2,G,\xi}]\label{semilinear evolution equationExpectationAbetaXSquareMoreRegular}
\end{align}
with some constant $C_{F_1,F_2,G,\xi}$ depending only on $|F_2|_{\mathcal F^{\beta,\sigma}}^2, |G|_{\mathcal F^{\beta+\frac{1}{2},\sigma}}^2, \mathbb E |F_1(0)|^2$ and $ \mathbb E |A^\gamma\xi|^2.$ 
\end{theorem}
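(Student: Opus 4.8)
The plan is to obtain the solution and its uniqueness essentially for free from Theorem~\ref{semilinear evolution equationTheorem1}, and then to \emph{bootstrap} the $\mathcal D(A^\beta)$--regularity up to $\mathcal D(A^\gamma)$. First I would observe that, since $\beta<\gamma<\tfrac12$, the embedding $\mathcal F^{\gamma,\sigma}((0,T];E)\subset\mathcal F^{\beta,\sigma}((0,T];E)$ of \eqref{FbetaFgammasigmaSpaceProperty} gives $F_2\in\mathcal F^{\beta,\sigma}$, so hypothesis {\rm (H2)} holds, while $\xi\in\mathcal D(A^\gamma)\subset\mathcal D(A^\beta)$ with $\mathbb E|A^\beta\xi|^2=\mathbb E|A^{\beta-\gamma}A^\gamma\xi|^2\le|A^{\beta-\gamma}|^2\,\mathbb E|A^\gamma\xi|^2<\infty$, the operator $A^{\beta-\gamma}$ being bounded. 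Hence all hypotheses of Theorem~\ref{semilinear evolution equationTheorem1} are in force, and that theorem already furnishes a unique local mild solution $X\in\mathcal C((0,T_{F_1,F_2,G,\xi}];\mathcal D(A^\eta))\cap\mathcal C([0,T_{F_1,F_2,G,\xi}];\mathcal D(A^\beta))$ together with the a priori bounds \eqref{semilinear evolution equationExpectationAbetaXSquare} and \eqref{semilinear evolution equationExpectationAetaXSquare}. It therefore remains only to establish the $\mathcal D(A^\gamma)$--estimate \eqref{semilinear evolution equationExpectationAbetaXSquareMoreRegular} and the corresponding continuity.

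For the estimate I would apply $A^\gamma$ to the mild representation \eqref{Eq8} and split it into the four familiar terms. The initial term is harmless: $A^\gamma S(t)\xi=S(t)A^\gamma\xi$ and \eqref{EstimateIofS(t)Maximum} give $\mathbb E|A^\gamma S(t)\xi|^2\le\iota_0^2\,\mathbb E|A^\gamma\xi|^2$. For the $F_2$--term I would use $F_2\in\mathcal F^{\gamma,\sigma}$, so that by \eqref{FbetasigmaSpaceProperty} and \eqref{EstimateIofAnu} the convolution $\int_0^tA^\gamma S(t-s)F_2(s)\,ds$ is controlled by $\iota_\gamma\,|F_2|_{\mathcal F^{\gamma,\sigma}}\,{\bf B}(\gamma,1-\gamma)$, a bounded quantity. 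The $F_1$--term is handled through the Lipschitz bound \eqref{AbetaLipschitzcondition}, which via \eqref{Eq10} reduces $\mathbb E|F_1(X(s))|^2$ to $\mathbb E|A^\eta X(s)|^2+\mathbb E|A^\beta X(s)|^2+\mathbb E|F_1(0)|^2$; inserting the inherited bounds \eqref{semilinear evolution equationExpectationAetaXSquare}--\eqref{semilinear evolution equationExpectationAbetaXSquare} shows $\mathbb E|F_1(X(s))|^2\le C\,(s^{-2(\eta-\beta)}+1)$, and a Cauchy--Schwarz step together with $\int_0^t(t-s)^{-2\gamma}s^{-2(\eta-\beta)}\,ds={\bf B}(1-2(\eta-\beta),1-2\gamma)\,t^{1-2\gamma-2(\eta-\beta)}$ yields a contribution of order $t^{2-2\gamma-2(\eta-\beta)}$, which is bounded because $2(\eta-\beta)<1$ (from $\beta>2\eta-\tfrac12$) and $\gamma+\eta-\beta<1$. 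The crucial fourth term is the stochastic convolution: by Proposition~\ref{IntegralInequality}(i), \eqref{FbetasigmaSpaceProperty} and \eqref{EstimateIofAnu},
\begin{equation*}
\mathbb E\Big|\int_0^t A^\gamma S(t-s)G(s)\,dw_s\Big|^2\le c(E)\,\iota_\gamma^2\,|G|_{\mathcal F^{\beta+\frac12,\sigma}}^2\,{\bf B}(2\beta,1-2\gamma)\,t^{2(\beta-\gamma)},
\end{equation*}
the integral converging precisely because $\gamma<\tfrac12$ forces $1-2\gamma>0$. Multiplying the sum of the four bounds by the weight $t^{2(\gamma-\beta)}$ absorbs the sole singular factor $t^{2(\beta-\gamma)}$ and delivers \eqref{semilinear evolution equationExpectationAbetaXSquareMoreRegular}, the bound on $\mathbb E|X(t)|^2$ being already contained in \eqref{semilinear evolution equationExpectationAbetaXSquare}.

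For the continuity I would treat the deterministic and stochastic parts separately. The three deterministic pieces $S(t)\xi$, $\int_0^tS(t-s)F_2(s)\,ds$ and $\int_0^tS(t-s)F_1(X(s))\,ds$ are shown continuous into $\mathcal D(A^\gamma)$ by repeating the arguments of {\bf Step 1} of Theorem~\ref{regularity theorem autonomous linear evolution equation} and of {\bf Step 1} of Theorem~\ref{semilinear evolution equationTheorem1} with $\beta$ replaced by $\gamma$ (the relevant Beta integrals still converging since $\gamma<\tfrac12$), reusing the increment estimate \eqref{Eq25} with $\theta=\gamma$; the stochastic convolution $A^\gamma I_2$, being a centred Gaussian process whose increments satisfy $\mathbb E|A^\gamma I_2(t)-A^\gamma I_2(s)|^2\le c\,|t-s|^{2\alpha}$ for a suitable $\alpha$, is H\"older continuous on every compact subinterval of $(0,T_{F_1,F_2,G,\xi}]$ by the Gaussian Kolmogorov test (Theorem~\ref{Kolmogorov testGaussian}). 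Uniqueness is already guaranteed by Theorem~\ref{semilinear evolution equationTheorem1}.

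The main obstacle is the stochastic convolution. Because $G$ is only assumed in $\mathcal F^{\beta+\frac12,\sigma}$ rather than $\mathcal F^{\gamma+\frac12,\sigma}$, the term $A^\gamma\int_0^tS(t-s)G(s)\,dw_s$ genuinely fails to remain bounded in $L^2$ as $t\searrow0$ --- its second moment grows like $t^{-2(\gamma-\beta)}$ --- which is exactly why the estimate \eqref{semilinear evolution equationExpectationAbetaXSquareMoreRegular} must carry the weight $t^{2(\gamma-\beta)}$ and why the $\mathcal D(A^\gamma)$--regularity is controlled only away from the origin. Verifying that this is the \emph{only} obstruction (i.e. that the three remaining terms are in fact bounded and continuous up to $t=0$, and that the weighted increment estimates hold uniformly in the auxiliary exponent $\rho\in(\tfrac12,1-\eta)$, mirroring \eqref{Eq25}) is the delicate bookkeeping the proof must carry out; the admissibility of all the exponents appearing there rests on the standing constraints $0<\eta<\tfrac12$, $\max\{\beta,\tfrac12-\eta\}<\gamma<\tfrac12$ and $\sigma<\beta$.
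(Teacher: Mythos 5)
Your overall architecture is the same as the paper's: inherit existence, uniqueness and the $\mathcal D(A^\beta)$--bounds from Theorem~\ref{semilinear evolution equationTheorem1}, then upgrade to $\mathcal D(A^\gamma)$ by applying $A^\gamma$ to the mild formula \eqref{Eq8} and estimating the four terms with \eqref{FbetasigmaSpaceProperty}, \eqref{EstimateIofAnu} and Proposition~\ref{IntegralInequality}. The one genuine deviation is in how you control $\mathbb E|F_1(X(s))|^2$. The paper's {\bf Step 1} re-derives a decay estimate $\mathbb E|A^\eta X(t)|^2\le C t^{-2\varrho}$ with $\varrho=\max\{1-\eta-\gamma,\eta-\beta\}$ by setting up the Volterra-type integral inequality \eqref{Eq48} and solving it with the small-interval argument plus Lemma~\ref{Integral inequality of Volterra type}. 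You instead feed in the already available bound \eqref{semilinear evolution equationExpectationAetaXSquare}, which on a bounded interval gives $\mathbb E|A^\eta X(s)|^2\le C s^{-2(\eta-\beta)}$ and hence $\mathbb E|F_1(X(s))|^2\le C(s^{-2(\eta-\beta)}+1)$. Since $\eta-\beta\le\varrho<\tfrac12$, your input is at least as strong as the paper's, your Beta-function computation $\int_0^t(t-s)^{-2\gamma}s^{-2(\eta-\beta)}\,ds={\bf B}(1-2(\eta-\beta),1-2\gamma)\,t^{1-2\gamma-2(\eta-\beta)}$ is valid under the standing constraints, and the resulting exponent $2-2\gamma-2(\eta-\beta)$ is positive; so this shortcut is legitimate and removes the paper's entire bootstrapping step. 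Your identification of the stochastic convolution as the sole source of the singular factor $t^{2(\beta-\gamma)}$, and the absorption of that factor by the weight $t^{2(\gamma-\beta)}$, coincides with the paper's {\bf Steps 2--3}.

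The one place where your proposal falls short of the stated conclusion is continuity at $t=0$. The theorem asserts $X\in\mathcal C([0,T_{F_1,F_2,G,\xi}];\mathcal D(A^\gamma))$ on the \emph{closed} interval, whereas you only claim H\"older continuity of $A^\gamma I_2$ ``on every compact subinterval of $(0,T_{F_1,F_2,G,\xi}]$'' and explicitly describe the $\mathcal D(A^\gamma)$--regularity as controlled only away from the origin. As written, your argument therefore delivers $\mathcal C((0,T_{F_1,F_2,G,\xi}];\mathcal D(A^\gamma))$ plus the weighted moment bound, not membership in $\mathcal C([0,T_{F_1,F_2,G,\xi}];\mathcal D(A^\gamma))$. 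The paper's {\bf Step 4} does address $t=0$, but only for the deterministic part: it repeats, with \eqref{Eq18} in place of \eqref{Eq3}, the argument used for the continuity of $A^\beta\Psi Y$ at $t=0$ in {\bf Step 1} of Theorem~\ref{semilinear evolution equationTheorem1}; if you want to match the stated conclusion you need at least that much, namely the continuity at $0$ of $A^\gamma[S(t)\xi+\int_0^tS(t-s)(F_1(X(s))+F_2(s))\,ds]$, which your ingredients do supply (the $F_1$--piece tends to $0$ in $L^2$ by the computation analogous to \eqref{ExpectationOfAbetaSF1YSquare}, and $F_2\in\mathcal F^{\gamma,\sigma}$ lets you rerun Case~2 of {\bf Step 1} of Theorem~\ref{regularity theorem autonomous linear evolution equation} with $\gamma$ in place of $\beta$). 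The behaviour of $A^\gamma I_2$ at $t=0$ remains the delicate point --- your own observation that its second moment is only bounded by $Ct^{2(\beta-\gamma)}$ shows it cannot be dispatched by the $L^2$ argument --- so you should either supply a separate argument there or state explicitly that this term is the obstruction; the paper's proof is itself silent on it.
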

\begin{proof}
Since the embedding of $\mathcal D(A^\gamma)$  in $\mathcal D(A^\beta)$ is continuous, we have  $\xi\in \mathcal D(A^\beta)$ and $\mathbb E|A^\beta\xi|^2<\infty.$ In addition, due to \eqref{FbetaFgammasigmaSpaceProperty}, 
 we also have 
 $F_2\in \mathcal F^{\beta, \sigma}((0,T];E).$ Therefore, by Theorem \ref{semilinear evolution equationTheorem1},  \eqref{semilinear evolution equation} possesses a unique mild solution $X$ in the function space \eqref{semilinear evolution equationTheorem1Regularity}
which satisfies the estimate
\eqref{semilinear evolution equationExpectationAbetaXSquare}. 

It now remains only to show that $X\in  \mathcal C([0,T_{F_1,F_2,G,\xi}];\mathcal D(A^\gamma))$ and that $X$ satisfies \eqref{semilinear evolution equationExpectationAbetaXSquareMoreRegular}. For this purpose, we shall divide the proof into three steps. Throughout the proof, $C_{F_1,F_2,G,\xi}$ denotes a universal constant which is determined in each occurrence by $F_1,F_2,G$ and $\xi$.

{\bf Step 1}. Let us verify that 
\begin{equation}  \label{Eq11}
\mathbb E|A^\eta X(t)|^2\leq C_{F_1,F_2,G,\xi} t^{-2\varrho}, \hspace{1cm} t\in (0,T_{F_1,F_2,G,\xi}],
\end{equation}
where $\varrho=\max\{1-\eta-\gamma,\eta-\beta\}\in (0,\frac{1}{2})$.
From \eqref{Eq8}, we have
\begin{align*}
A^\eta X(t)=&A^{\eta-\gamma} S(t) A^\gamma \xi+\int_0^t A^\eta S(t-s)[F_1(X(s))+F_2(s)]ds  \notag\\
& + \int_0^t A^\eta S(t-s)G(s) dw_s. \notag
\end{align*}
Using \eqref{FbetasigmaSpaceProperty} and  \eqref{EstimateIofAnu}, we then obtain that 
\begin{align*}
\mathbb E &|A^\eta X(t)|^2 \notag\\
\leq & 4 |A^{\eta-\gamma} S(t)|^2\mathbb E |A^\gamma \xi|^2
+4\mathbb E\Big|\int_0^t |A^\eta S(t-s)| |F_1(X(s))|ds\Big|^2\notag\\
&+4\mathbb E\Big[\int_0^t |A^\eta S(t-s)||F_2(s)|ds\Big]^2
 + 4 \mathbb E \Big|\int_0^t A^\eta S(t-s)G(s) dw_s\Big|^2 \notag\\
\leq & C_\xi t^{\min\{-2(\eta-\gamma),0\}} 
+4\iota_\eta^2 \mathbb E\Big[\int_0^t (t-s)^{-\eta} |F_1(X(s))|ds\Big]^2\notag\\
& +4\iota_\eta^2 |F_2|_{\mathcal F^{\gamma,\sigma}}^2\Big[\int_0^t (t-s)^{\eta-1} s^{\gamma-1}ds\Big]^2\notag\\
&+ 4c(E) \int_0^t |A^\eta S(t-s)|^2|G(s)|^2 ds \notag\\
\leq & C_\xi t^{\min\{-2(\eta-\gamma),0\}} +4\iota_\eta^2 t \int_0^t (t-s)^{-2\eta} \mathbb E|F_1(X(s))|^2ds\notag\\
& +4\iota_\eta^2 |F_2|_{\mathcal F^{\gamma,\sigma}}^2 {\bf B}(\gamma,\eta)^2 t^{2(\eta+\gamma-1)} + 4c(E) \iota_\eta^2 |G|_{\mathcal F^{\beta+\frac{1}{2},\sigma}}^2 \int_0^t (t-s)^{-2\eta} s^{2\beta-1} ds. \notag
\end{align*}
By \eqref{semilinear evolution equationExpectationAbetaXSquare} and \eqref{Eq10},  we have
\begin{align*}
4\iota_\eta^2 &t \int_0^t (t-s)^{-2\eta} \mathbb E|F_1(X(s))|^2ds\notag\\
\leq &12\iota_\eta^2 t \int_0^t (t-s)^{-2\eta} [c_{F_1}^2 \mathbb E |A^\eta X(s)|^2+c_{F_1}^2 \mathbb E|A^\beta X(s)|^2 +\mathbb E|F_1(0)|^2]ds\notag\\
\leq &12\iota_\eta^2c_{F_1}^2 t \int_0^t (t-s)^{-2\eta} \mathbb E |A^\eta X(s)|^2ds\notag\\
&+\frac{12\iota_\eta^2 [c_{F_1}^2 C_{F_1,F_2,G,\xi}+\mathbb E|F_1(0)|^2] t^{2(1-\eta)}}{1-2\eta}.\notag
\end{align*}
Therefore, 
\begin{align}
\mathbb E &|A^\eta X(t)|^2 \notag\\
\leq & C_\xi t^{\min\{-2(\eta-\gamma),0\}} +12\iota_\eta^2c_{F_1}^2 t \int_0^t (t-s)^{-2\eta} \mathbb E |A^\eta X(s)|^2ds\notag\\
&+\frac{12\iota_\eta^2 [c_{F_1}^2 C_{F_1,F_2,G,\xi}+\mathbb E|F_1(0)|^2] t^{2(1-\eta)}}{1-2\eta}\notag\\
& +4\iota_\eta^2 |F_2|_{\mathcal F^{\gamma,\sigma}}^2 {\bf B}(\gamma,\eta)^2 t^{2(\eta+\gamma-1)} + 4c(E) \iota_\eta^2 |G|_{\mathcal F^{\beta+\frac{1}{2},\sigma}}^2 \int_0^t (t-s)^{-2\eta} s^{2\beta-1} ds\notag\\
\leq  & C_\xi t^{\min\{-2(\eta-\gamma),0\}} +C_{F_1,F_2,G,\xi} t^{-2\varrho}\notag\\
&+12\iota_\eta^2c_{F_1}^2 t\int_0^t (t-s)^{-2\eta} \mathbb E |A^\eta X(s)|^2ds,  \notag\\
\leq  & C_{F_1,F_2,G,\xi} t^{-2\varrho} 
+12\iota_\eta^2c_{F_1}^2 t\int_0^t (t-s)^{-2\eta} \mathbb E |A^\eta X(s)|^2ds, \hspace{0.5cm} t\in (0,T_{F_1,F_2,G,\xi}],\label{Eq12}
\end{align}
here we  used the estimates
\begin{equation*}
\begin{cases}
\begin{aligned}
&t^{2(\eta+\gamma-1)}\leq Ct^{-2\varrho},  \hspace{2cm} &t\in (0,T_{F_1,F_2,G,\xi}],\\
&t^{2(\beta-\eta)} \leq Ct^{-2\varrho},  \hspace{2cm} &t\in (0,T_{F_1,F_2,G,\xi}],\\
&t^{\min\{-2(\eta-\gamma),0\}} \leq Ct^{-2\varrho}, \hspace{2cm} &t\in (0,T_{F_1,F_2,G,\xi}]\,\,
\end{aligned}
\end{cases}
\end{equation*}
with some constant $C$ depending only on $T_{F_1,F_2,G,\xi}.$ 
Then the function $q(t)= t^{2\varrho} \mathbb E |A^\eta X(t)|^2$ satisfies
\begin{equation} \label{Eq48}
q(t)\leq  C_{F_1,F_2,G,\xi}+12\iota_\eta^2c_{F_1}^2 t^{1+2\varrho}\int_0^t (t-s)^{-2\eta} s^{-2\varrho}q(s)ds.
\end{equation}

Let us solve  the integral inequality as follows. Let $\epsilon>0$ denote a small parameter. For $0\leq t\leq \epsilon$ we have
\begin{align*}
q(t)\leq&  C_{F_1,F_2,G,\xi}+12\iota_\eta^2c_{F_1}^2  t^{1+2\varrho}\int_0^t (t-s)^{-2\eta} s^{-2\varrho}ds \sup_{s\in[0,\epsilon]} q(s)\\
=&C_{F_1,F_2,G,\xi}+12\iota_\eta^2c_{F_1}^2  t^{2(1-\eta)}{\bf B}(1-2\varrho,1-2\eta) \sup_{s\in[0,\epsilon]} q(s)\\
\leq&C_{F_1,F_2,G,\xi}+12\iota_\eta^2c_{F_1}^2  \epsilon^{2(1-\eta)}{\bf B}(1-2\varrho,1-2\eta) \sup_{s\in[0,\epsilon]} q(s).
\end{align*}
Hence,
$$[1-12\iota_\eta^2c_{F_1}^2  \epsilon^{2(1-\eta)}{\bf B}(1-2\varrho,1-2\eta)]\sup_{s\in[0,\epsilon]} q(s) \leq C_{F_1,F_2,G,\xi}.$$
If $\epsilon$ is taken sufficiently small so that $12\iota_\eta^2c_{F_1}^2  \epsilon^{2(1-\eta)}{\bf B}(1-2\varrho,1-2\eta)\leq \frac{1}{2}$, then we obtain that
\begin{equation}  \label{Eq13}
\sup_{s\in[0,\epsilon]} q(s) \leq C_{F_1,F_2,G,\xi}.
\end{equation}
Meanwhile, for $\epsilon<t\leq T_{F_1,F_2,G,\xi}$ we have
\begin{align*}
q(t)\leq&  C_{F_1,F_2,G,\xi}+12\iota_\eta^2c_{F_1}^2  t^{1+2\varrho}\int_0^\epsilon (t-s)^{-2\eta} s^{-2\varrho}ds \sup_{s\in[0,\epsilon]} q(s)\\
&+12\iota_\eta^2c_{F_1}^2  t^{1+2\varrho}\int_\epsilon^t (t-s)^{-2\eta} \max\{ \epsilon^{-2\varrho},  T^{-2\varrho}\} q(s)ds \\
\leq&  C_{F_1,F_2,G,\xi}+12\iota_\eta^2c_{F_1}^2\max\{ \epsilon^{-2\varrho},  T^{-2\varrho}\}T^{1+2\varrho}\int_\epsilon^t (t-s)^{-2\eta}  q(s)ds.
\end{align*}
Lemma \ref{Integral inequality of Volterra type} then provides that 
\begin{equation}  \label{Eq14}
\sup_{t\in [\epsilon, T_{F_1,F_2,G,\xi}]} q(t)\leq C_{F_1,F_2,G,\xi}.
\end{equation}

Thus, \eqref{Eq11} follows from \eqref{Eq13} and \eqref{Eq14}.

{\bf Step 2}. Let us verify that $X(t)\in \mathcal D(A^\gamma)$ for every $t\in [0,T_{F_1,F_2,G,\xi}]$. 
In virtue of    \eqref{Eq8}, it suffices to show that the integrals 
$\int_0^t A^\gamma S(t-s)F_2(s)ds,$ $ \int_0^t A^\gamma  S(t-s)G(s) dw_s$ and $ \int_0^t A^\gamma S(t-s)F_1(X(s))ds  $ are well-defined.

The first integral exists. Indeed,  using \eqref{FbetasigmaSpaceProperty} and \eqref{EstimateIofAnu}, we have
\begin{align}
\int_0^t |A^\gamma S(t-s)F_2(s)|ds\leq & \iota_\gamma |F_2|_{\mathcal F^{\gamma, \sigma}}\int_0^t (t-s)^{-\gamma}  s^{\gamma-1} ds \notag\\
=&\iota_\gamma |F_2|_{\mathcal F^{\gamma, \sigma}} {\bf B}(\gamma, 1-\gamma) 
<\infty, \hspace{0.5cm} t\in [0,T_{F_1,F_2,G,\xi}].  \label{Eq15}
\end{align}

To show existence of the second integral, we have to verify that $\int_0^t |A^\gamma  S(t-s)G(s)|^2 ds<\infty.$  Indeed, by   \eqref{FbetasigmaSpaceProperty} and \eqref{EstimateIofAnu}, we see that
\begin{align}
\int_0^t |A^\gamma  &S(t-s)G(s)|^2 ds \leq  \int_0^t |A^\gamma  S(t-s)|^2|G(s)|^2 ds  \notag \\
\leq &\iota_\gamma^2 |G|_{\mathcal F^{\beta+\frac{1}{2}, \sigma}}^2\int_0^t (t-s)^{-2\gamma}  s^{2\beta-1} ds \notag\\
=&\iota_\gamma^2 |G|_{\mathcal F^{\beta+\frac{1}{2}, \sigma}}^2 {\bf B}(2\beta,1-2\gamma) t^{2(\beta-\gamma)} 
<\infty, \quad \quad t\in (0,T_{F_1,F_2,G,\xi}].  \label{Eq16}
\end{align}

We shall finish this step by showing  that the last integral is well-defined.
From  \eqref{EstimateIofAnu}, \eqref{semilinear evolution equationExpectationAbetaXSquare}, \eqref{Eq10} and \eqref{Eq11}, we observe that
\begin{align}
\mathbb E\int_0^t &|A^\gamma S(t-s)F_1(X(s))|^2ds  \notag\\
\leq & \int_0^t |A^\gamma S(t-s)|^2 \mathbb E|F_1(X(s))|^2ds \notag \\
\leq & 3\iota_\gamma^2 \int_0^t (t-s)^{-2\gamma} [c_{F_1}^2 \mathbb E |A^\eta X(s)|^2+c_{F_1}^2 \mathbb E|A^\beta X(s)|^2 +\mathbb E|F_1(0)|^2]ds\notag\\
\leq & 3\iota_\gamma^2C_{F_1,F_2,G,\xi} \int_0^t (t-s)^{-2\gamma}  [s^{-2\varrho}+1]ds\notag\\
=&3\iota_\gamma^2C_{F_1,F_2,G,\xi} \Big[{\bf B}(1-2\varrho, 1-2\gamma) t^{1-2\varrho-2\gamma}+\frac{t^{1-2\gamma}}{1-2\gamma}\Big]\label{Eq17}\\
<&\infty, \hspace{3cm} t\in (0,T_{F_1,F_2,G,\xi}].\notag
\end{align}
Consequently,  $\int_0^t |A^\gamma S(t-s)F_1(X(s))|^2ds<\infty$ a.s. Therefore, $$\int_0^t |A^\gamma S(t-s)F_1(X(s))|ds<\infty \quad\text{ a.s.} $$

{\bf Step 3}. Let us show the estimate  \eqref{semilinear evolution equationExpectationAbetaXSquareMoreRegular}. From \eqref{Eq8},   we observe that 
\begin{align*}
\mathbb E &|A^\gamma X(t)|^2 \\
\leq& 4  \mathbb E|A^\gamma S(t)\xi|^2 + 4\mathbb E\Big[\int_0^t |A^\gamma S(t-s)F_1(X(s))|ds\Big]^2\notag\\
&+ 4\Big[\int_0^t |A^\gamma S(t-s)F_2(s)|ds\Big]^2+4\mathbb E \Big|\int_0^t A^\gamma  S(t-s)G(s) dw_s\Big|^2\notag\\
\leq& 4  \mathbb E|A^\gamma S(t)\xi|^2 + 4t\mathbb E\int_0^t |A^\gamma S(t-s)F_1(X(s))|^2ds\notag\\
&+ 4\Big[\int_0^t |A^\gamma S(t-s)F_2(s)|ds\Big]^2+4c(E) \int_0^t |A^\gamma  S(t-s)G(s) |^2ds.\notag
\end{align*}
Using   \eqref{EstimateIofS(t)},  \eqref{Eq15}, \eqref{Eq16} and \eqref{Eq17}, we have
\begin{align*}
\mathbb E &|A^\gamma X(t)|^2 \\
\leq& 4\iota_0^2  e^{-2\nu t} \mathbb E|A^\gamma\xi|^2
+ C_{F_1,F_2,G,\xi} \Big[{\bf B}(1-2\varrho, 1-2\gamma) t^{2(1-\varrho-\gamma)}+\frac{t^{2(1-\gamma)}}{1-2\gamma}\Big]\notag\\
&+ 4\iota_\gamma^2 |F_2|_{\mathcal F^{\gamma, \sigma}}^2 {\bf B}(\gamma, 1-\gamma)^2+4c(E) \iota_\gamma^2 |G|_{\mathcal F^{\beta+\frac{1}{2}, \sigma}}^2 {\bf B}(2\beta,1-2\gamma) t^{2(\beta-\gamma)}\notag\\
\leq &C_{F_1,F_2,G,\xi}t^{-2(\gamma-\beta)},  \hspace{3cm}t\in (0,T_{F_1,F_2,G,\xi}],
\end{align*}
here we used the estimates
\begin{equation*}
\begin{cases}
\begin{aligned}
&4\iota_0^2  \mathbb E|A^\gamma\xi|^2  e^{-2\nu t}\leq Ct^{-2(\gamma-\beta)},  &\hspace{2cm} t\in (0,T_{F_1,F_2,G,\xi}],\\
&t^{2(1-\varrho-\gamma)}\leq Ct^{-2(\gamma-\beta)},  &\hspace{2cm} t\in (0,T_{F_1,F_2,G,\xi}],\\
&t^{2(1-\gamma)} <Ct^{-2(\gamma-\beta)}, & \hspace{2cm} t\in (0,T_{F_1,F_2,G,\xi}],\\
&4\iota_\gamma^2 |F_2|_{\mathcal F^{\gamma, \sigma}}^2 {\bf B}(\gamma, 1-\gamma)^2<Ct^{-2(\gamma-\beta)}, & \hspace{2cm} t\in (0,T_{F_1,F_2,G,\xi}]\,\,
\end{aligned}
\end{cases}
\end{equation*}
with some constant $C$ depending only on $T_{F_1,F_2,G,\xi}.$ 
Thus,
$$t^{2(\gamma-\beta)}\mathbb E |A^\gamma X(t)|^2 \leq C_{F_1,F_2,G,\xi},\hspace{2cm}t\in [0,T_{F_1,F_2,G,\xi}].$$
This, together with \eqref{semilinear evolution equationExpectationAbetaXSquare}, then derives \eqref{semilinear evolution equationExpectationAbetaXSquareMoreRegular}.

{\bf Step 4}. Let us verify that $A^\gamma X\in \mathcal C([0,T_{F_1,F_2,G,\xi}];E)$.
Similarly to \eqref{Eq10}, for $t\in (0,T_{F_1,F_2,G,\xi}]$ we have 
$$\mathbb E|F_1(X(t))|^2 \leq 3[c_{F_1}^2 \mathbb E |A^\eta X(t)|^2+c_{F_1}^2 \mathbb E|A^\beta X(t)|^2 +\mathbb E|F_1(0)|^2].$$
Due to  \eqref{semilinear evolution equationExpectationAbetaXSquare} and \eqref{Eq11}, we observe that
\begin{align}
\mathbb E|F_1(X(t))|^2& \leq C_{F_1,F_2,G,\xi} ( t^{-2\varrho}+1)\notag\\
&\leq C_{F_1,F_2,G,\xi} t^{-2\varrho}, \hspace{2cm} t\in (0,T_{F_1,F_2,G,\xi}]. \label{Eq18}
\end{align}

First, we shall show that  $A^\gamma X\in \mathcal C((0,T_{F_1,F_2,G,\xi}];E)$.
By repeating the same argument as in verifying \eqref{Eq25} in {\bf Step 1} of the proof of Theorem \ref{semilinear evolution equationTheorem1}, for every $\rho\in (\frac{1}{2},1-\gamma)$ we see that 
\begin{align*}
\mathbb E&|A^\gamma[X(t)-X(s)]|^2 \notag\\
\leq &\frac{4\iota_{1-\rho}^2\iota_{\gamma+\rho-\beta}^2}{\rho^2}  \mathbb E|A^\beta  \xi|^2 s^{2(\beta-\gamma-\rho)}(t-s)^{2\rho} \notag\\
&+4\Big[\frac{\iota_{1-\rho} \iota_{\gamma+\rho} {\bf B}(\beta,1-\gamma-\rho) }{\rho} +\iota_\gamma{\bf B}(\gamma+\rho,1-\gamma)\Big]^2\notag\\
&\times |F_2|_{\mathcal F^{\beta,\sigma}}^2s^{2(\beta-\gamma-\rho)}(t-s)^{2\rho} \notag\\
&+\frac{4\iota_{1-\rho}^2\iota_{\gamma+\rho}^2(1-\gamma-\rho)}{\rho^2} (t-s)^{2\rho}  s^{1-\gamma-\rho}               \int_0^s  (s-r)^{-\gamma-\rho} \mathbb E|F_1(X(r))|^2dr  \notag\\
&+4\iota_\gamma^2(t-s) \int_s^t (t-r)^{-2\gamma}\mathbb E |F_1(X(r))|^2dr.\notag 
\end{align*}
Using \eqref{Eq18}, we obtain that 
\begin{align}
\mathbb E&|A^\gamma[X(t)-X(s)]|^2 \notag\\
\leq &\frac{4\iota_{1-\rho}^2\iota_{\gamma+\rho-\beta}^2}{\rho^2}  \mathbb E|A^\beta  \xi|^2 s^{2(\beta-\gamma-\rho)}(t-s)^{2\rho} \notag\\
&+4\Big[\frac{\iota_{1-\rho} \iota_{\gamma+\rho} {\bf B}(\beta,1-\gamma-\rho) }{\rho} +\iota_\gamma{\bf B}(\gamma+\rho,1-\gamma)\Big]^2\notag\\
&\times |F_2|_{\mathcal F^{\beta,\sigma}}^2s^{2(\beta-\gamma-\rho)}(t-s)^{2\rho} \notag\\
&+C_{F_1,F_2,G,\xi} (t-s)^{2\rho}  s^{1-\gamma-\rho}          \int_0^s  (s-r)^{-\gamma-\rho}  r^{-2\varrho} dr  \notag\\
&+C_{F_1,F_2,G,\xi}(t-s) \int_s^t (t-r)^{-2\gamma} r^{-2\varrho} dr\notag \\
\leq &C_{F_1,F_2,G,\xi}s^{2(\beta-\gamma-\rho)}(t-s)^{2\rho} \notag\\
&+C_{F_1,F_2,G,\xi}  s^{2(1-\gamma-\rho-\varrho)}  (t-s)^{2\rho} \notag\\
&+C_{F_1,F_2,G,\xi} (t-s) \int_s^t (t-r)^{-2\gamma} r^{-2\varrho} dr.\notag 
\end{align}
Let us estimate the latter integral. Fix $\epsilon \in (0, \min\{1-2\gamma, 2\varrho\})$. Since
$$r^{-2\varrho} =r^{-\epsilon} r^{\epsilon-2\varrho}<(r-s)^{-\epsilon} s^{\epsilon-2\varrho}, \hspace{2cm} r\in (s,t),$$
we have
\begin{align*}
\int_s^t (t-r)^{-2\gamma} r^{-2\varrho} dr\leq &s^{\epsilon-2\varrho}\int_s^t (t-r)^{-2\gamma}(r-s)^{-\epsilon}  dr\\
=&{\bf B}(1-\epsilon, 1-2\gamma) s^{\epsilon-2\varrho}(t-s)^{1-2\gamma-\epsilon}.
\end{align*}
Hence,
\begin{align*}
\mathbb E&|A^\gamma[X(t)-X(s)]|^2 \notag\\
\leq &C_{F_1,F_2,G,\xi}[s^{2(\beta-\gamma-\rho)}(t-s)^{2\rho}+  s^{2(1-\gamma-\rho-\varrho)}  (t-s)^{2\rho} + s^{\epsilon-2\varrho}(t-s)^{2-2\gamma-\epsilon}].\notag 
\end{align*}
Since $2\rho> 1$ and $2-2\gamma-\epsilon>1$, the Kolmogorov test then provides that $A^\gamma X\in \mathcal C((0,T_{F_1,F_2,G,\xi}];E)$.

Now, we shall verify that $A^\gamma X$ is continuos at $t=0$. By using \eqref{Eq18} (instead of \eqref{Eq3}), we will repeat the same argument as in showing the continuity of $A^\beta \Psi Y$ at $t=0$ in {\bf Step 1} of the proof of Theorem \ref{semilinear evolution equationTheorem1}. We then obtain the continuity of $A^\gamma X$ at $t=0$. Thus, it is concluded that $A^\gamma X\in \mathcal C([0,T_{F_1,F_2,G,\xi}];E)$.
\end{proof}
\subsection{Regular dependence of solutions on initial data}
Let $\mathcal B_1$ and $\mathcal B_2$ be bounded balls
\begin{align}
\mathcal B_1=\{f\in \mathcal F^{\beta,\sigma}((0,T];E):  |f|_{\mathcal F^{\beta,\sigma}}\leq R_1\}, \quad 0<R_1<\infty,  \label{mathcal B1}\\
\mathcal B_2=\{f\in \mathcal F^{\beta+\frac{1}{2},\sigma}((0,T];E):  |f|_{\mathcal F^{\beta+\frac{1}{2},\sigma}}\leq R_2\}, \quad 0<R_2<\infty \label{mathcal B2}
\end{align}
of the spaces $\mathcal F^{\beta,\sigma}((0,T];E)$ and $\mathcal F^{\beta+\frac{1}{2},\sigma}((0,T];E)$, respectively.
And let $B_A$ be a set of random variable 
\begin{equation}  \label{BABall}
B_A=\{\xi: \xi\in \mathcal D(A^\beta) \text{ a.s. and }    \mathbb E |A^\beta \xi|^2\leq R_3^2\}, \quad 0<R_3<\infty.
\end{equation}
 According to Theorem \ref{semilinear evolution equationTheorem1}, for every $F_2\in \mathcal B_1, G\in \mathcal B_2$ and $\xi\in B_A$, there exists a local solution of \eqref{semilinear evolution equation} on some interval $[0,T_{F_1,F_2,G,\xi}]$. Furthermore, by virtue of  {\bf Step 1} and {\bf Step 2} of the proof of Theorem 
\ref{semilinear evolution equationTheorem1}, 
\begin{equation} \label{Eq41}
\begin{aligned}
&\text{ there is a time } T_{\mathcal B_1, \mathcal B_2, B_A}>0 \text{  such that } \\
&[0,T_{\mathcal B_1, \mathcal B_2, B_A}]\subset [0,T_{F_1,F_2,G,\xi}]    \text{ for all  } (F_2,G,\xi)\in \mathcal B_1\times \mathcal B_2\times B_A. 
\end{aligned}
\end{equation}
Indeed, in view of \eqref{Eq43}, \eqref{Eq44} and \eqref{Eq46}, $T_{F_1,F_2,G,\xi}$ can be chosen to be any time $S$ satisfying the conditions
\begin{align*}
18\iota_\beta^2 &c_{F_1}^2 \kappa^2 {\bf B}( 1+2\beta-2\eta, 1-2\beta) S^{2(1+\beta-2\eta)}\notag\\
&+\frac{18 \iota_\beta^2 [ c_{F_1}^2 \kappa^2  +\mathbb E|F_1(0)|^2] }{1-2\beta} S^{2(1-\beta)} \leq \frac{\kappa^2}{2},\\
18\iota_\beta^2 &c_{F_1}^2 \kappa^2 {\bf B}( 1+2\beta-2\eta, 1-2\beta) S^{2(1+\beta-2\eta)}\notag\\
  &+\frac{18 \iota_\beta^2 [ c_{F_1}^2 \kappa^2  +\mathbb E|F_1(0)|^2] }{1-2\beta} S^{2(1-\beta)}\leq \frac{\kappa^2}{2},
\end{align*}
and
\begin{align*}
&2c_{F_1}^2  \Big[\iota_\eta^2{\bf B}(1+2\beta-2\eta,1-2\eta)+\iota_\beta^2{\bf B}(1+2\beta-2\eta,1-2\beta)\notag\\
&\hspace*{0.7cm}+\frac{\iota_\eta^2S^{2(\eta-\beta)}}{1-2\eta}+\frac{\iota_\beta^2S^{2(\eta-\beta)}}{1-2\beta}\Big]   S^{2(1-\eta)} <1,
\end{align*}
where $\kappa$ is defined by \eqref{Eq45} and \eqref{Eq42}.  Consequently, we can choose  $T_{F_1,F_2,G,\xi}$ such that it depends continuously on the norms $|F_2|_{\mathcal F^{\beta,\sigma}}, |G|_{\mathcal F^{\beta+\frac{1}{2},\sigma}}$ and $\mathbb E |A^\beta \xi|^2$.  This implies \eqref{Eq41}.
 
 We shall show continuous dependence of solutions on $(F_2,G,\xi)$ in the sense specified in the following theorem.
\begin{theorem}\label{continuityofsolutionsininitialdata}
Let \eqref{spectrumsectorialdomain}, \eqref{resolventnorm}, {\rm (H1)}, {\rm (H2)} and {\rm (H3)}   be satisfied.
Let $X$ and $\bar X$ be the solutions of  \eqref{semilinear evolution equation} for the data $(F_2,G,\xi)$ and $(\bar F_2,\bar G,\bar \xi)$ in $\mathcal B_1\times \mathcal B_2\times B_A$, respectively. Then there exists a constant $C_{\mathcal B_1, \mathcal B_2, B_A}$ depending only on $\mathcal B_1, \mathcal B_2$ and $ B_A$ such that
\begin{align}
&t^{2\eta}\mathbb E  |A^\eta[X(s)-\bar X(s)]|^2+t^{2\eta}\mathbb E|A^{\beta} [X(s)-\bar X(s)]|^2+ \mathbb E|X(t)-\bar X(t)|^2 \label{Eq22}\\
\leq &C_{\mathcal B_1, \mathcal B_2, B_A}[\mathbb E |\xi-\bar \xi|^2+ t^{2\beta}   |F_2-\bar F_2|_{\mathcal F^{\beta,\sigma}}^2 + t^{2\beta} |G-\bar G|_{\mathcal F^{\beta+\frac{1}{2},\sigma}}^2],\notag
\end{align}
and 
\begin{align}
&t^{2(\eta-\beta)}[\mathbb E  |A^\eta[X(s)-\bar X(s)]|^2+\mathbb E|A^{\beta} [X(s)-\bar X(s)]|^2] \label{Eq23}\\
& \leq C_{\mathcal B_1, \mathcal B_2, B_A}[\mathbb E |A^\beta(\xi-\bar \xi)|^2+    |F_2-\bar F_2|_{\mathcal F^{\beta,\sigma}}^2+ |G-\bar G|_{\mathcal F^{\beta+\frac{1}{2},\sigma}}^2] \notag
\end{align}
for every $ t\in (0, T_{\mathcal B_1, \mathcal B_2, B_A}].$
\end{theorem}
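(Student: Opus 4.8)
The plan is to subtract the two mild formulations and then run, for each of the two estimates separately, the same weighted a priori inequality that produced the contraction in \textbf{Step 2} of Theorem \ref{semilinear evolution equationTheorem1}. Writing $Z=X-\bar X$, the mild representation \eqref{Eq8} applied to each solution gives, for $t\in(0,T_{\mathcal B_1,\mathcal B_2,B_A}]$,
\begin{align*}
Z(t)=&S(t)(\xi-\bar\xi)+\int_0^t S(t-s)[F_1(X(s))-F_1(\bar X(s))]\,ds\\
&+\int_0^t S(t-s)[F_2(s)-\bar F_2(s)]\,ds+\int_0^t S(t-s)[G(s)-\bar G(s)]\,dw_s.
\end{align*}
I would then apply $A^\theta$ with $\theta\in\{0,\eta,\beta\}$, take $\mathbb E|\cdot|^2$, and split into four summands via $|a+b+c+d|^2\le 4(|a|^2+|b|^2+|c|^2+|d|^2)$.

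The four contributions are controlled by tools already assembled. For the initial datum I use the smoothing bound \eqref{EstimateIofAnu}: one has $\mathbb E|A^\theta S(t)(\xi-\bar\xi)|^2\le\iota_\theta^2 t^{-2\theta}\mathbb E|\xi-\bar\xi|^2$ for \eqref{Eq22}, while $A^\theta S(t)(\xi-\bar\xi)=A^{\theta-\beta}S(t)A^\beta(\xi-\bar\xi)$ with $|A^{\theta-\beta}S(t)|\le\iota_{\theta-\beta}t^{-(\theta-\beta)}$ for \eqref{Eq23}. The deterministic $F_2$-term is handled by \eqref{FbetasigmaSpaceProperty} and the Beta integral $\int_0^t(t-s)^{-\theta}s^{\beta-1}ds={\bf B}(\beta,1-\theta)t^{\beta-\theta}$, producing a factor $t^{2(\beta-\theta)}|F_2-\bar F_2|_{\mathcal F^{\beta,\sigma}}^2$; the stochastic $G$-term is handled by Proposition \ref{IntegralInequality}(i) together with $\int_0^t(t-s)^{-2\theta}s^{2\beta-1}ds={\bf B}(2\beta,1-2\theta)t^{2(\beta-\theta)}$ (finite because $\theta<\tfrac12$), producing $t^{2(\beta-\theta)}|G-\bar G|_{\mathcal F^{\beta+\frac12,\sigma}}^2$. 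The decisive summand is the nonlinear one: the Lipschitz hypothesis \eqref{AbetaLipschitzcondition} followed by Cauchy--Schwarz yields
\begin{align*}
&\mathbb E\Big|\int_0^t A^\theta S(t-s)[F_1(X(s))-F_1(\bar X(s))]\,ds\Big|^2\\
&\le 2c_{F_1}^2\iota_\theta^2\, t\int_0^t(t-s)^{-2\theta}\big[\mathbb E|A^\eta Z(s)|^2+\mathbb E|A^\beta Z(s)|^2\big]\,ds,
\end{align*}
which refers back to the very quantities being bounded.

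To close, I introduce the weighted supremum norms matching each target: for \eqref{Eq23} the $\Xi(S)$-norm $\Lambda(t)=\sup_{0<s\le t}s^{2(\eta-\beta)}\mathbb E|A^\eta Z(s)|^2+\sup_{0\le s\le t}\mathbb E|A^\beta Z(s)|^2$, and for \eqref{Eq22} the norm $\Theta(t)=\sup_{0<s\le t}\{s^{2\eta}\mathbb E|A^\eta Z(s)|^2+s^{2\eta}\mathbb E|A^\beta Z(s)|^2+\mathbb E|Z(s)|^2\}$. Bounding $\mathbb E|A^\eta Z(s)|^2+\mathbb E|A^\beta Z(s)|^2$ by $s^{-2(\eta-\beta)}\Lambda(s)$, respectively $s^{-2\eta}\Theta(s)$, inside the nonlinear integral and using the monotonicity of $\Lambda,\Theta$ gives, after summing the estimates over $\theta=\eta,\beta$ (and $\theta=0$ for \eqref{Eq22}), a self-referential inequality of the form
\begin{align*}
\Lambda(t)\le C_{\mathcal B_1,\mathcal B_2,B_A}\,D+C\big(t^{2-2\eta}+t^{2-2\beta}\big)\Lambda(t),
\end{align*}
with $D=\mathbb E|A^\beta(\xi-\bar\xi)|^2+|F_2-\bar F_2|_{\mathcal F^{\beta,\sigma}}^2+|G-\bar G|_{\mathcal F^{\beta+\frac12,\sigma}}^2$, and an analogous inequality for $\Theta(t)$ with right-hand data $\mathbb E|\xi-\bar\xi|^2+t^{2\beta}|F_2-\bar F_2|_{\mathcal F^{\beta,\sigma}}^2+t^{2\beta}|G-\bar G|_{\mathcal F^{\beta+\frac12,\sigma}}^2$. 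Since the self-coefficient carries strictly positive powers of $t$, shrinking $T_{\mathcal B_1,\mathcal B_2,B_A}$ if necessary (exactly as the smallness \eqref{Eq46} was arranged) makes it $\le\tfrac12$, so it is absorbed into the left-hand side; this gives $\Lambda(t)\le 2C_{\mathcal B_1,\mathcal B_2,B_A}D$ and the corresponding bound for $\Theta(t)$. Using $t^{2(\eta-\beta)}\le T_{\mathcal B_1,\mathcal B_2,B_A}^{2(\eta-\beta)}$ on the interval then turns the $\Lambda$-bound into \eqref{Eq23}, while the $\Theta$-bound is exactly \eqref{Eq22}.

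The main obstacle is precisely this absorption of the nonlinear self-term: one must check that the integrals $\int_0^t(t-s)^{-2\theta}s^{-2(\eta-\beta)}ds$ and $\int_0^t(t-s)^{-2\theta}s^{-2\eta}ds$ converge — which is exactly what the standing constraints $\eta<\tfrac12$ and $2\eta-\tfrac12<\beta$ guarantee — and that each resulting power of $t$ in the self-coefficient is strictly positive so that the inequality can be inverted on a short interval. Everything else is the routine Beta-function bookkeeping already performed in Theorems \ref{regularity theorem autonomous linear evolution equation} and \ref{semilinear evolution equationTheorem1}.
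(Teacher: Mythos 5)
Your proposal reproduces the paper's argument essentially verbatim up to the last step: subtract the two mild formulations, apply $A^\theta$ with $\theta\in\{0,\beta,\eta\}$, control the initial-datum, $F_2$-, $G$- and nonlinear contributions via \eqref{EstimateIofAnu}, \eqref{FbetasigmaSpaceProperty}, Proposition \ref{IntegralInequality} and \eqref{AbetaLipschitzcondition}, and arrive at a weighted integral inequality for $X-\bar X$ (the paper's \eqref{Eq19} and \eqref{Eq47}). The divergence is in how that inequality is closed, and it matters. The paper keeps the singular kernel $(t-s)^{-2\theta}s^{-2\eta}$ and resolves the inequality in two stages --- absorption on a small initial interval $[0,\epsilon]$, then the Volterra-type Gronwall inequality (Lemma \ref{Integral inequality of Volterra type}) on $[\epsilon,T_{\mathcal B_1,\mathcal B_2,B_A}]$, exactly as for \eqref{Eq48} --- which yields the bounds on the \emph{entire} interval fixed in \eqref{Eq41}; the unweighted estimate \eqref{Eq21} for $\mathbb E|X-\bar X|^2$ is then read off from the $\theta=0$ line a posteriori rather than bootstrapped. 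You instead absorb the self-term in one shot and concede that $T_{\mathcal B_1,\mathcal B_2,B_A}$ may have to be shrunk. For \eqref{Eq23} this concession is avoidable: your self-coefficient for $\Lambda$ is exactly the contraction constant of \eqref{Eq46}, already $<1$ by the choice of $T_{F_1,F_2,G,\xi}$, so $\Lambda(t)\le(1-L)^{-1}D$ holds on the full interval. For \eqref{Eq22}, however, the $t^{2\eta}$-weighted norm produces the larger Beta factor ${\bf B}(1-2\eta,1-2\theta)>{\bf B}(1+2\beta-2\eta,1-2\theta)$, so the self-coefficient of $\Theta$ is not controlled by \eqref{Eq46} and need not be $<1$ on all of $(0,T_{\mathcal B_1,\mathcal B_2,B_A}]$; as written you obtain \eqref{Eq22} only on a possibly shorter interval than the statement asserts, and note that you cannot instead deduce \eqref{Eq22} from \eqref{Eq23}, since \eqref{Eq22} carries $\mathbb E|\xi-\bar\xi|^2$ rather than $\mathbb E|A^\beta(\xi-\bar\xi)|^2$. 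To recover the stated interval, either run the paper's two-stage absorption-plus-Lemma-\ref{Integral inequality of Volterra type} scheme on your inequality for $\Theta$, or iterate/restart from an interior time. Everything else --- the Beta-function bookkeeping, the convergence conditions $\eta<\tfrac12$ and $\eta-\beta<\tfrac12$, and the a priori finiteness of the weighted norms (which follows from $X,\bar X\in\Upsilon(S)$ and deserves an explicit mention) --- is sound.
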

\begin{proof}
This theorem is proved by analogous arguments as in the proof of Theorem \ref{semilinear evolution equationTheorem1}. 

Indeed, let us first verify  \eqref{Eq22}. If $\mathbb E |\xi-\bar \xi|^2=\infty$, then the conclusion is obvious. Therefore, we may assume that 
$\mathbb E |\xi-\bar \xi|^2<\infty.$ 

First, we shall  give an estimate for 
$$t^{2\eta}\mathbb E [ |A^\eta[X(s)-\bar X(s)]|^2+|A^{\beta} [X(s)-\bar X(s)]|^2].$$
For $\theta\in [0,\frac{1}{2})$ and $0<t\leq T_{\mathcal B_1, \mathcal B_2, B_A}$, by using \eqref{FbetasigmaSpaceProperty}, \eqref{EstimateIofAnu} and \eqref{AbetaLipschitzcondition}, we observe that
\begin{align*}
&t^\theta |A^\theta[X(t)-\bar X(t)]|\\
= & \Big|t^\theta A^\theta S(t)(\xi-\bar \xi)+\int_0^t t^\theta A^\theta S(t-s)[F_1(X(s))-F_1(\bar X(s))]ds\notag\\
&+\int_0^t t^\theta A^\theta S(t-s) [F_2(s)-\bar F_2(s)]ds+\int_0^t t^\theta A^\theta S(t-s) [G(s)-\bar G(s)]dw_s\Big| \notag\\
\leq & \iota_\theta |\xi-\bar \xi|\notag\\
&+\iota_\theta c_{F_1}\int_0^t  t^\theta (t-s)^{-\theta}  [ |A^\eta[X(s)-\bar X(s)]|+|A^{\beta} [X(s)-\bar X(s)]|] ds\notag\\
&+\iota_\theta |F_2-\bar F_2|_{\mathcal F^{\beta,\sigma}} \int_0^t t^\theta (t-s)^{-\theta}s^{\beta-1}ds\notag\\
&+\Big|\int_0^t t^\theta A^\theta S(t-s) [G(s)-\bar G(s)]dw_s\Big| \notag\\
= & \iota_\theta |\xi-\bar \xi|+\iota_\theta |F_2-\bar F_2|_{\mathcal F^{\beta,\sigma}}  {\bf B}(\beta,1-\theta)t^\beta\notag\\
& +\iota_\theta c_{F_1}\int_0^t  t^\theta (t-s)^{-\theta}  [ |A^\eta[X(s)-\bar X(s)]|+|A^{\beta} [X(s)-\bar X(s)]|] ds\notag\\
&+\Big|\int_0^t t^\theta A^\theta S(t-s) [G(s)-\bar G(s)]dw_s\Big|. \notag
\end{align*}
Thus,
\begin{align}
&\mathbb E|t^\theta A^\theta[X(t)-\bar X(t)]|^2\notag\\
\leq & 4\iota_\theta^2 \mathbb E |\xi-\bar \xi|^2+4\iota_\theta^2 |F_2-\bar F_2|_{\mathcal F^{\beta,\sigma}}^2   {\bf B}(\beta,1-\theta)^2 t^{2\beta}\notag\\
& +4\iota_\theta^2 c_{F_1}^2 t^{2\theta} \mathbb E \Big[ \int_0^t  (t-s)^{-\theta}  [ |A^\eta[X(s)-\bar X(s)]|+|A^{\beta} [X(s)-\bar X(s)]|] ds\Big]^2\notag\\
&+4\mathbb E \Big|\int_0^t t^\theta A^\theta S(t-s) [G(s)-\bar G(s)]dw_s\Big|^2 \notag\\
\leq & 4\iota_\theta^2 \mathbb E |\xi-\bar \xi|^2+4\iota_\theta^2 |F_2-\bar F_2|_{\mathcal F^{\beta,\sigma}}^2   {\bf B}(\beta,1-\theta)^2 t^{2\beta} \notag\\
&+4\iota_\theta^2 c_{F_1}^2 t^{2\theta+1} \int_0^t   (t-s)^{-2\theta}  \mathbb E [ |A^\eta[X(s)-\bar X(s)]|+|A^{\beta} [X(s)-\bar X(s)]|]^2 ds\notag\\
&+4c(E)\iota_\theta^2 |G-\bar G|_{\mathcal F^{\beta+\frac{1}{2},\sigma}}^2\int_0^t t^{2\theta} (t-s)^{-2\theta} s^{2\beta -1}ds \notag\\
\leq  & 4\iota_\theta^2 \mathbb E |\xi-\bar \xi|^2+4\iota_\theta^2   {\bf B}(\beta,1-\theta)^2 t^{2\beta} |F_2-\bar F_2|_{\mathcal F^{\beta,\sigma}}^2\label{Eq19}\\
&+4c(E)\iota_\theta^2 {\bf B}(2\beta, 1-2\theta)t^{2\beta} |G-\bar G|_{\mathcal F^{\beta+\frac{1}{2},\sigma}}^2\notag\\
&+8\iota_\theta^2 c_{F_1}^2 t^{2\theta+1}  \int_0^t   (t-s)^{-2\theta}  \mathbb E [ |A^\eta[X(s)-\bar X(s)]|^2+|A^{\beta} [X(s)-\bar X(s)]|^2] ds.\notag
\end{align}
Applying these estimates  with $\theta=\beta$ and $\theta=\eta$, we have
\begin{align*}
&\mathbb E| A^\beta[X(t)-\bar X(t)]|^2\notag\\
\leq  & 4\iota_\beta^2 \mathbb E |\xi-\bar \xi|^2t^{-2\beta}+4\iota_\beta^2   {\bf B}(\beta,1-\beta)^2  |F_2-\bar F_2|_{\mathcal F^{\beta,\sigma}}^2\notag\\
&+4c(E)\iota_\beta^2 {\bf B}(2\beta, 1-2\beta) |G-\bar G|_{\mathcal F^{\beta+\frac{1}{2},\sigma}}^2\notag\\
&+8\iota_\beta^2 c_{F_1}^2 t  \int_0^t   (t-s)^{-2\beta}  \mathbb E [ |A^\eta[X(s)-\bar X(s)]|^2+|A^{\beta} [X(s)-\bar X(s)]|^2] ds,\notag
\end{align*}
and
\begin{align*}
&t^{2\eta} \mathbb E|A^\eta[X(t)-\bar X(t)]|^2\notag\\
\leq  & 4\iota_\eta^2 \mathbb E |\xi-\bar \xi|^2+4\iota_\eta^2   {\bf B}(\beta,1-\eta)^2 t^{2\beta} |F_2-\bar F_2|_{\mathcal F^{\beta,\sigma}}^2\notag\\
&+4c(E)\iota_\eta^2 {\bf B}(2\beta, 1-2\eta)t^{2\beta} |G-\bar G|_{\mathcal F^{\beta+\frac{1}{2},\sigma}}^2\notag\\
&+8\iota_\eta^2 c_{F_1}^2 t^{2\eta+1}  \int_0^t   (t-s)^{-2\eta}  \mathbb E [ |A^\eta[X(s)-\bar X(s)]|^2+|A^{\beta} [X(s)-\bar X(s)]|^2] ds.\notag
\end{align*}
By putting 
$$q(t)=t^{2\eta}\mathbb E [ |A^\eta[X(s)-\bar X(s)]|^2+|A^{\beta} [X(s)-\bar X(s)]|^2], $$
we then obtain an integral inequality 
\begin{align}
q(t)\leq  & 4(\iota_\beta^2 t^{2(\eta-\beta)}+\iota_\eta^2 )\mathbb E |\xi-\bar \xi|^2\notag\\
&+4[\iota_\beta^2   {\bf B}(\beta,1-\beta)^2t^{2\eta}+\iota_\eta^2   {\bf B}(\beta,1-\eta)^2 t^{2\beta} ]  |F_2-\bar F_2|_{\mathcal F^{\beta,\sigma}}^2\notag\\
&+4c(E)[\iota_\beta^2 {\bf B}(2\beta, 1-2\beta)t^{2\eta}+\iota_\eta^2 {\bf B}(2\beta, 1-2\eta)t^{2\beta}] |G-\bar G|_{\mathcal F^{\beta+\frac{1}{2},\sigma}}^2 \notag\\
&+8 c_{F_1}^2 t^{2\eta+1}\int_0^t [ \iota_\beta^2 (t-s)^{-2\beta}+\iota_\eta^2  (t-s)^{-2\eta}]  s^{-2\eta}q(s) ds\notag\\
\leq  & C_{\mathcal B_1, \mathcal B_2, B_A}[\mathbb E |\xi-\bar \xi|^2+ t^{2\beta}   |F_2-\bar F_2|_{\mathcal F^{\beta,\sigma}}^2+ t^{2\beta} |G-\bar G|_{\mathcal F^{\beta+\frac{1}{2},\sigma}}^2] \label{Eq47}\\
&+8 c_{F_1}^2 t^{2\eta+1}\int_0^t [ \iota_\beta^2 (t-s)^{-2\beta}+\iota_\eta^2  (t-s)^{-2\eta}]  s^{-2\eta}q(s) ds\notag
\end{align}
for $0<t\leq T_{\mathcal B_1, \mathcal B_2, B_A}.$ 
We use the same techniques as in solving the integral inequality \eqref{Eq48} to solve  \eqref{Eq47}. Arguing first in a small interval $[0,\epsilon]$ and then in the other interval $[\epsilon, T_{\mathcal B_1, \mathcal B_2, B_A}],$ we obtain that
\begin{align}
t^{2\eta}&\mathbb E [ |A^\eta[X(s)-\bar X(s)]|^2+|A^{\beta} [X(s)-\bar X(s)]|^2]
=q(t)\notag\\
&\leq C_{\mathcal B_1, \mathcal B_2, B_A}[\mathbb E |\xi-\bar \xi|^2+ t^{2\beta}   |F_2-\bar F_2|_{\mathcal F^{\beta,\sigma}}^2+ t^{2\beta} |G-\bar G|_{\mathcal F^{\beta+\frac{1}{2},\sigma}}^2] \label{Eq20}
\end{align}
for every $t\in (0, T_{\mathcal B_1, \mathcal B_2, B_A}].$

Now, we shall give an estimate for $\mathbb E|X(t)-\bar X(t)|^2$. Taking $\theta=0$ in \eqref{Eq19}, we have
\begin{align*}
\mathbb E&|X(t)-\bar X(t)|^2 
\leq    4\iota_0 \mathbb E |\xi-\bar \xi|^2+ 4\iota_0  {\bf B}(\beta,1)^2 t^{2\beta} |F_2-\bar F_2|_{\mathcal F^{\beta,\sigma}}^2\notag\\
&+4 c(E) {\bf B}(2\beta, 1)t^{2\beta} |G-\bar G|_{\mathcal F^{\beta+\frac{1}{2},\sigma}}^2 +8\iota_0^2  c_{F_1}^2 t\int_0^t   s^{-2\eta}q(s)ds.\notag
\end{align*}
Using \eqref{Eq20}, we observe that
 \begin{align*}
 t\int_0^t &  s^{-2\eta}q(s)ds\\
 \leq &C_{\mathcal B_1, \mathcal B_2, B_A} t\int_0^t   s^{-2\eta}[\mathbb E |\xi-\bar \xi|^2+ s^{2\beta}   |F_2-\bar F_2|_{\mathcal F^{\beta,\sigma}}^2+ s^{2\beta} |G-\bar G|_{\mathcal F^{\beta+\frac{1}{2},\sigma}}^2]ds\notag\\
  \leq &\frac{C_{\mathcal B_1, \mathcal B_2, B_A} t^{2(1-\eta)}\mathbb E |\xi-\bar \xi|^2 }{1-2\eta}\notag\\
  &+\frac{C_{\mathcal B_1, \mathcal B_2, B_A} t^{2(1+\beta-\eta)}
[|F_2-\bar F_2|_{\mathcal F^{\beta,\sigma}}^2+  |G-\bar G|_{\mathcal F^{\beta+\frac{1}{2},\sigma}}^2] }{1+2\beta-2\eta}.
  \end{align*}
Therefore, 
 \begin{align}
 &\mathbb E|X(t)-\bar X(t)|^2  \label{Eq21}\\
&\leq C_{\mathcal B_1, \mathcal B_2, B_A}[\mathbb E |\xi-\bar \xi|^2+ t^{2\beta}   |F_2-\bar F_2|_{\mathcal F^{\beta,\sigma}}^2+ t^{2\beta} |G-\bar G|_{\mathcal F^{\beta+\frac{1}{2},\sigma}}^2]\notag
\end{align}
for $t\in (0, T_{\mathcal B_1, \mathcal B_2, B_A}]. $ 
By \eqref{Eq20} and \eqref{Eq21}, \eqref{Eq22} has been verified.

Let us now show  \eqref{Eq23}. By substituting the estimate
$$|A^\theta S(t) (\xi-\bar \xi)|\leq \iota_{\beta-\theta} t^{\beta-\theta} |A^\beta  (\xi-\bar \xi)|$$
with $\theta=\beta$ and $\theta=\eta$ for 
$|A^\theta S(t) (\xi-\bar \xi)|\leq \iota_\theta t^{-\theta} |\xi-\bar \xi|,$ we obtain a similar result to \eqref{Eq19}:
\begin{align*}
&\mathbb E|t^\theta A^\theta[X(t)-\bar X(t)]|^2\notag\\
\leq  & 4 \iota_{\beta-\theta}^2 t^{2(\beta-\theta)} \mathbb E |A^\beta  (\xi-\bar \xi)|^2
+4\iota_\theta^2   {\bf B}(\beta,1-\theta)^2 t^{2\beta} |F_2-\bar F_2|_{\mathcal F^{\beta,\sigma}}^2\notag\\
&+4c(E)\iota_\theta^2 {\bf B}(2\beta, 1-2\theta)t^{2\beta} |G-\bar G|_{\mathcal F^{\beta+\frac{1}{2},\sigma}}^2\notag\\
&+8\iota_\theta^2 c_{F_1}^2 t^{2\theta+1}  \int_0^t   (t-s)^{-2\theta}  \mathbb E [ |A^\eta[X(s)-\bar X(s)]|^2+|A^{\beta} [X(s)-\bar X(s)]|^2] ds.\notag
\end{align*}
Using the same arguments as in verifying \eqref{Eq20}, we conclude that  \eqref{Eq23} holds true. 
 It completes the proof.
\end{proof}
\subsection{Case $\tilde\beta=0$}
This subsection investigates the critical case of the Lipschitz condition \eqref{AbetaLipschitzcondition} when $\tilde\beta=0.$  We assume that
\begin{itemize}
\item [(H1')] $F_1$ defines on the domain $ \mathcal D(A^\eta)$ and \eqref{AbetaLipschitzcondition} is valid with $\tilde\beta=0,$ i.e.  
\begin{equation} \label{AbetaLipschitzconditionBeta0}
        |F_1(x)-F_1(y)|\leq c_{F_1}  [ |A^\eta(x-y)|+|x-y|], \quad\quad x,y\in \mathcal D(A^\eta).
      \end{equation}
\end{itemize}
We can then generalize some results of Theorem \ref{semilinear evolution equationTheorem1}.
\begin{theorem}\label{semilinear evolution equationTheorem2}
Let \eqref{spectrumsectorialdomain}, \eqref{resolventnorm}, {\rm (H1')}, {\rm (H2)} and {\rm (H3)}   be satisfied.
Assume that   $\mathbb E|\xi|^2 < \infty$. Then \eqref{semilinear evolution equation} possesses a unique mild solution $X$ in the function space:
$$
X\in  \mathcal C((0,T_{F_1,F_2,G,\xi}];\mathcal D(A^\eta)),
$$
where $T_{F_1,F_2,G,\xi}$ depends only on the squared norms $|F_2|_{\mathcal F^{\beta,\sigma}}^2, |G|_{\mathcal F^{\beta+\frac{1}{2},\sigma}}^2$ and  $\mathbb E |F_1(0)|^2$. In addition, $X$ satisfies the estimate
\begin{align}
\mathbb E|X(t)|^2 \leq C_{F_1,F_2,G,\xi},    \quad\quad t\in [0,T_{F_1,F_2,G,\xi}]\label{semilinear evolution equationExpectationAbetaXSquareBeta0}
\end{align}
with some constant $C_{F_1,F_2,G,\xi}$ depending only on $|F_2|_{\mathcal F^{\beta,\sigma}}^2, |G|_{\mathcal F^{\beta+\frac{1}{2},\sigma}}^2, \mathbb E |F_1(0)|^2$  and  $\mathbb E|\xi|^2.$ 
\end{theorem}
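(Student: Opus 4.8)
The plan is to repeat the Banach fixed-point construction of Theorem \ref{semilinear evolution equationTheorem1}, carried out this time in the weighted space adapted to an initial datum lying only in $E$. Concretely, for $S\in(0,T]$ I would set
\[
\Xi(S)=\Big\{Y\in \mathcal C((0,S];\mathcal D(A^\eta))\cap \mathcal C([0,S];E):\ \sup_{0<t\le S}t^{2\eta}\mathbb E|A^\eta Y(t)|^2+\sup_{0\le t\le S}\mathbb E|Y(t)|^2<\infty\Big\},
\]
with norm $|Y|_{\Xi(S)}=\big[\sup_{0<t\le S}t^{2\eta}\mathbb E|A^\eta Y(t)|^2+\sup_{0\le t\le S}\mathbb E|Y(t)|^2\big]^{1/2}$; this is exactly the space of Theorem \ref{semilinear evolution equationTheorem1} with the exponent $\beta$ replaced by $0$, so that $A^\beta$ becomes the identity and the weight $t^{2(\eta-\beta)}$ becomes $t^{2\eta}$. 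On the closed ball $\Upsilon(S)=\{Y\in\Xi(S):\max\{\sup_t t^{2\eta}\mathbb E|A^\eta Y(t)|^2,\ \sup_t\mathbb E|Y(t)|^2\}\le\kappa^2\}$ I would define $\Phi Y$ by the same formula \eqref{DefinitionOfFunctionPhi} and show that, for $S$ sufficiently small, $\Phi$ maps $\Upsilon(S)$ into itself and is a contraction on $\Xi(S)$.

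The one analytic ingredient is the consequence of {\rm (H1')}: on $\Upsilon(S)$ one has, in analogy with \eqref{Eq3},
\[
\mathbb E|F_1(Y(s))|^2\le 3\big[c_{F_1}^2\kappa^2 s^{-2\eta}+c_{F_1}^2\kappa^2+\mathbb E|F_1(0)|^2\big],\qquad s\in(0,S],
\]
the singular weight now being $s^{-2\eta}$ in place of the milder $s^{2(\beta-\eta)}$ of Theorem \ref{semilinear evolution equationTheorem1}. Inserting this into $A^\theta\Phi Y$ for $\theta=\eta$ and $\theta=0$, and using $|A^\theta S(t)|\le\iota_\theta t^{-\theta}$, the bounds \eqref{FbetasigmaSpaceProperty} for $F_2,G$ and Proposition \ref{IntegralInequality} for the stochastic term, each estimate decomposes into the single genuinely constant term $\iota_\eta^2\mathbb E|\xi|^2$ (resp. $\iota_0^2\mathbb E|\xi|^2$) coming from $A^\theta S(t)\xi$ — whose weight $t^{2\theta}$ cancels the $t^{-2\theta}$ blow-up of the semigroup — together with terms carrying a strictly positive power of $t$: a factor $t^{2\beta}$ from $F_2$ and $G$, and factors $t^{2-2\eta}$ and $t^{2}$ from the $F_1$ part. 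Crucially, because the weight is now $t^{2\eta}$ rather than $t^{2(\eta-\beta)}$, the $F_2$– and $G$–contributions acquire the positive power $t^{2\beta}$ instead of being $t$–independent constants, so the \emph{only} constant term in either estimate is the $\xi$–term. I would therefore take $\kappa$ large enough that $\kappa^2/2$ dominates this $\xi$–constant, and then shrink $S$ so that the remaining positive-power terms — whose binding coefficients depend only on $c_{F_1}$, $\mathbb E|F_1(0)|^2$, $|F_2|_{\mathcal F^{\beta,\sigma}}^2$ and $|G|_{\mathcal F^{\beta+\frac{1}{2},\sigma}}^2$ — fall below $\kappa^2/2$; since enlarging $\kappa$ only relaxes these conditions, the resulting $S=T_{F_1,F_2,G,\xi}$ can be chosen independent of $\mathbb E|\xi|^2$, as claimed. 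The contraction estimate is identical to Step 2 of Theorem \ref{semilinear evolution equationTheorem1}: in $\Phi Y_1-\Phi Y_2$ the $\xi$, $F_2$ and $G$ terms cancel, only the Lipschitz difference of $F_1$ survives, and the same integrals reappear multiplied by a positive power of $S$.

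The step I expect to be delicate — and what makes this the critical case — is the convergence of the most singular integral generated by the $F_1$–term in the $\theta=\eta$ estimate, namely $\int_0^t(t-s)^{-2\eta}s^{-2\eta}\,ds={\bf B}(1-2\eta,1-2\eta)\,t^{1-4\eta}$, which is finite \emph{precisely} because $\eta<\frac{1}{2}$; here the loss of the $|A^\beta(x-y)|$ cushion in {\rm (H1')} forces the heavier $s^{-2\eta}$ singularity, and there is no margin to spare. Granting this, the fixed point $X=\Phi X\in\Upsilon(S)$ is the sought mild solution. Its regularity $X\in\mathcal C((0,T_{F_1,F_2,G,\xi}];\mathcal D(A^\eta))$ follows from a Kolmogorov-test Hölder estimate for $A^\eta X$, exactly as in Step 1 of Theorem \ref{semilinear evolution equationTheorem1}, while continuity in $E$ up to $t=0$ (with value $\xi$) comes from the strong continuity of $S(t)$, the vanishing positive-power bounds on the deterministic integrals, and the continuity of the stochastic integral in Proposition \ref{IntegralInequality}. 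The a priori bound \eqref{semilinear evolution equationExpectationAbetaXSquareBeta0} is read off directly from $X\in\Upsilon(S)$, which gives $\mathbb E|X(t)|^2\le\kappa^2=:C_{F_1,F_2,G,\xi}$, and uniqueness on $[0,T_{F_1,F_2,G,\xi}]$ is obtained by the finite interval-patching argument of Step 6 of Theorem \ref{semilinear evolution equationTheorem1}.
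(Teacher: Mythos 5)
Your proposal is correct and follows essentially the same route as the paper, which itself only sketches this proof by setting up the weighted space $\Xi(S)$ with weight $t^{2\eta}$ and deferring to the fixed-point argument of Theorem \ref{semilinear evolution equationTheorem1}. You supply the details the paper leaves implicit — in particular that the critical integral $\int_0^t(t-s)^{-2\eta}s^{-2\eta}\,ds$ converges precisely because $\eta<\tfrac12$, and that the only $\kappa$-determining constant is the $\xi$-term so that $S$ can be chosen independently of $\mathbb E|\xi|^2$ — and these fillings are accurate.
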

\begin{proof}
The proof is analogous to that of Theorem \ref{semilinear evolution equationTheorem1}. For each $S\in (0,T]$, we set the Banach space:
\begin{align*}
\Xi (S)=&\{Y\in \mathcal C((0,S];\mathcal D(A^\eta)) \cap  \mathcal C([0,S];E) \text{  such that  }\\
& \sup_{0<t\leq S} t^{2\eta} \mathbb E|A^\eta Y(t)|^2+ \sup_{0\leq t\leq S}\mathbb E|Y(t)|^2 <\infty \}
\end{align*}
with norm 
$$|Y|_{\Xi (S)}=\Big[\sup_{0<t\leq S} t^{2\eta} \mathbb E|A^\eta Y(t)|^2+ \sup_{0\leq t\leq S}\mathbb E|Y(t)|^2\Big]^{\frac{1}{2}}. $$
Consider a nonempty closed subset $\Upsilon(S) $ of $\Xi (S)$ which consists of all function $Y\in \Xi (S)$ such that
\begin{equation}  \label{Upsilon(S)DefinitionBeta0}
\max\{\sup_{0<t\leq S} t^{2\eta} \mathbb E|Y(t)|^2,
 \sup_{0\leq t\leq S}\mathbb E|A^\beta Y(t)|^2\} \leq \kappa^2
\end{equation}
with $\kappa>0 $  which will be fixed appropriately. 

Similarly to the proof  of Theorem \ref{semilinear evolution equationTheorem1}, if we choose $\kappa>0 $ dependent only on $|F_2|_{\mathcal F^{\beta,\sigma}}^2, |G|_{\mathcal F^{\beta+\frac{1}{2},\sigma}}^2, \mathbb E |F_1(0)|^2$ and $ \mathbb E |A^\beta\xi|^2,$  and if $S$ is sufficiently small, then the mapping $\Phi$ defined by \eqref{DefinitionOfFunctionPhi} maps the set $\Upsilon(S) $ into itself and is contraction with respect to the norm of $\Xi (S)$.
Consequently, $\Phi$ possesses a unique fixed point $X\in \Upsilon(S)$, i.e. for every $t\in [0,S]$, $X(t)=\Phi X(t)$ a.s. This means that $X$ is a local mild solution of \eqref{semilinear evolution equation}. Following  {\bf Step 4} and {\bf Step 6} in the proof of Theorem \ref{semilinear evolution equationTheorem1}, the estimate \eqref{semilinear evolution equationExpectationAbetaXSquareBeta0} and uniqueness of the solution  are verified.
\end{proof}

By the same arguments as in Corollary \ref{semilinear evolution equationTheorem1corollary}, global mild solutions of \eqref{semilinear evolution equation} can be constructed.
\begin{corollary}[global existence]
Assume that in Theorem \ref{semilinear evolution equationTheorem2} the constant  $ C_{F_1,F_2,G,\xi}$ is independent of  $T_{F_1,F_2,G,\xi}$ for every initial value $\xi\in E$. Then \eqref{semilinear evolution equation}  possesses a unique  mild solution on the interval $[0,T].$
\end{corollary}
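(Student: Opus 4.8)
The plan is to reproduce the continuation scheme used in the proof of Corollary \ref{semilinear evolution equationTheorem1corollary}, the only changes being that Theorem \ref{semilinear evolution equationTheorem1} is replaced throughout by Theorem \ref{semilinear evolution equationTheorem2}, and that the restarting initial datum now only needs to lie in $E$ with finite second moment rather than in $\mathcal D(A^\beta)$. First I would extend the coefficients $F_2$ and $G$ from $(0,T]$ to the whole half-line by the constant prescription $\bar F_2(t)\equiv F_2(T)$ and $\bar G(t)\equiv G(T)$ for $t>T$; exactly as in Corollary \ref{semilinear evolution equationTheorem1corollary}, one checks that the $\mathcal F^{\beta,\sigma}$ and $\mathcal F^{\beta+\frac12,\sigma}$ norms of the extensions over any subinterval $(a,b]\subset[0,\infty)$ do not exceed those of $F_2$ and $G$, so hypotheses {\rm (H2)} and {\rm (H3)} persist after extension.

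With the coefficients extended, I would run a finite induction. Having produced the solution on $[0,T_{F_1,F_2,G,\xi}]$ via Theorem \ref{semilinear evolution equationTheorem2}, I set $\bar\xi=X(\tfrac{T_{F_1,F_2,G,\xi}}{2})$ and consider the Cauchy problem for \eqref{semilinear evolution equation} with this new initial time and initial value. The estimate \eqref{semilinear evolution equationExpectationAbetaXSquareBeta0} guarantees $\mathbb E|\bar\xi|^2\le C_{F_1,F_2,G,\xi}<\infty$, so Theorem \ref{semilinear evolution equationTheorem2} applies and yields a local mild solution $Y$ on $[\tfrac{T_{F_1,F_2,G,\xi}}{2},\tfrac{3T_{F_1,F_2,G,\xi}}{2}]$. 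The uniqueness assertion of Theorem \ref{semilinear evolution equationTheorem2} forces $Y=X$ on the overlap $[\tfrac{T_{F_1,F_2,G,\xi}}{2},T_{F_1,F_2,G,\xi}]$, so $Y$ genuinely prolongs $X$ and the solution is extended by the fixed length $\tfrac{T_{F_1,F_2,G,\xi}}{2}$. Iterating this a finite number of times covers $[0,T]$, and uniqueness on $[0,T]$ follows by patching the local uniqueness statements.

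The crux — and the only place where the standing hypothesis on $C_{F_1,F_2,G,\xi}$ enters — is ensuring that the length of each prolongation step does not degenerate as the induction proceeds. In the critical regime $\tilde\beta=0$ the local existence time furnished by Theorem \ref{semilinear evolution equationTheorem2} depends only on $|F_2|_{\mathcal F^{\beta,\sigma}}^2$, $|G|_{\mathcal F^{\beta+\frac12,\sigma}}^2$ and $\mathbb E|F_1(0)|^2$; since these data are unchanged by the constant extension and by restarting, the step length $\tfrac{T_{F_1,F_2,G,\xi}}{2}$ stays fixed. What must still be controlled is the admissibility of each restart, namely the finiteness of $\mathbb E|\bar\xi|^2$ at every stage, and it is precisely the assumed independence of $C_{F_1,F_2,G,\xi}$ from $T_{F_1,F_2,G,\xi}$ that turns \eqref{semilinear evolution equationExpectationAbetaXSquareBeta0} into a uniform a priori bound valid on arbitrarily long intervals, preventing the second moment of the solution, and hence of the successive restart points, from blowing up before $[0,T]$ is exhausted. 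I expect this uniform-in-time bookkeeping to be the main, and essentially the only nontrivial, obstacle, since all the analytic estimates required have already been established in Theorem \ref{semilinear evolution equationTheorem2}.
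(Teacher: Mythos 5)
Your proposal is correct and follows essentially the same route as the paper, which simply invokes the continuation argument of Corollary \ref{semilinear evolution equationTheorem1corollary}: extend $F_2$ and $G$ by constants, restart at $\tfrac{T_{F_1,F_2,G,\xi}}{2}$ using Theorem \ref{semilinear evolution equationTheorem2}, identify the solutions on the overlap by uniqueness, and iterate finitely many times, the assumed independence of $C_{F_1,F_2,G,\xi}$ from $T_{F_1,F_2,G,\xi}$ being exactly what keeps the restart data admissible and the step length fixed. Your additional remark that in the critical case the local existence time does not depend on $\mathbb E|\xi|^2$ is a correct and useful observation, but it does not change the argument.
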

The next theorem shows regularity of local mild solutions for more regular initial values. The proof of the theorem is similar to that of Theorem \ref{semilinear evolution equationMoreRegular}, so we omit it.
\begin{theorem}
Let \eqref{spectrumsectorialdomain}, \eqref{resolventnorm}, {\rm (H1')}, {\rm (H2)} and {\rm (H3)}   be satisfied.
Let $F_2\in \mathcal F^{\gamma, \sigma}((0,T];E)$,  $ \max\{\beta,\frac{1}{2}-\eta\}<\gamma< \frac{1}{2}$, and  $\xi\in \mathcal D(A^\gamma)$ such that $\mathbb E|A^\gamma\xi|^2<\infty$.  Then \eqref{semilinear evolution equation} possesses a unique mild solution $X$ in the function space:
\begin{equation*}
X\in  \mathcal C((0,T_{F_1,F_2,G,\xi}];\mathcal D(A^\eta))\cap \mathcal C([0,T_{F_1,F_2,G,\xi}];\mathcal D(A^\gamma)),
\end{equation*}
where $T_{F_1,F_2,G,\xi}$ depends only on the squared norms $|F_2|_{\mathcal F^{\beta,\sigma}}^2, |G|_{\mathcal F^{\beta+\frac{1}{2},\sigma}}^2$ and $\mathbb E |F_1(0)|^2$ and $ \mathbb E |A^\gamma\xi|^2.$ In addition, $X$ satisfies the estimate
\begin{align*}
\mathbb E|X(t)|^2+t^{2\gamma}\mathbb E |A^\gamma X(t)|^2 \leq C_{F_1,F_2,G,\xi},    \quad\quad t\in [0,T_{F_1,F_2,G,\xi}], \label{semilinear evolution equationExpectationAbetaXSquareMoreRegularBeta0}
\end{align*}
with some constant $C_{F_1,F_2,G,\xi}$ depending only on $|F_2|_{\mathcal F^{\beta,\sigma}}^2, |G|_{\mathcal F^{\beta+\frac{1}{2},\sigma}}^2, \mathbb E |F_1(0)|^2$ and $ \mathbb E |A^\gamma\xi|^2.$ 
\end{theorem}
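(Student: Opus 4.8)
The plan is to obtain the solution from Theorem \ref{semilinear evolution equationTheorem2} applied to the less regular data and then to bootstrap its spatial regularity from $\mathcal D(A^\eta)$ up to $\mathcal D(A^\gamma)$, exactly along the lines of Theorem \ref{semilinear evolution equationMoreRegular}. Since $\mathcal D(A^\gamma)\hookrightarrow E$ gives $\mathbb E|\xi|^2<\infty$, and since $F_2\in\mathcal F^{\gamma,\sigma}((0,T];E)\subset\mathcal F^{\beta,\sigma}((0,T];E)$ by \eqref{FbetaFgammasigmaSpaceProperty}, the hypotheses {\rm (H1'), (H2), (H3)} of Theorem \ref{semilinear evolution equationTheorem2} hold; hence there is a unique mild solution $X\in\mathcal C((0,T_{F_1,F_2,G,\xi}];\mathcal D(A^\eta))$ with $\mathbb E|X(t)|^2\le C_{F_1,F_2,G,\xi}$ and, from its membership in the invariant ball $\Upsilon(S)$ of that proof, $\mathbb E|A^\eta X(t)|^2\le C_{F_1,F_2,G,\xi}\,t^{-2\eta}$. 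Uniqueness in the smaller space $\mathcal C((0,T_{F_1,F_2,G,\xi}];\mathcal D(A^\eta))\cap\mathcal C([0,T_{F_1,F_2,G,\xi}];\mathcal D(A^\gamma))$ is inherited, because any solution there is a fortiori a solution in $\mathcal C((0,T_{F_1,F_2,G,\xi}];\mathcal D(A^\eta))$. It therefore remains only to prove $X\in\mathcal C([0,T_{F_1,F_2,G,\xi}];\mathcal D(A^\gamma))$ together with $t^{2\gamma}\mathbb E|A^\gamma X(t)|^2\le C_{F_1,F_2,G,\xi}$.

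First I would record the growth of the nonlinearity along the solution: by {\rm (H1')} (see \eqref{AbetaLipschitzconditionBeta0}) and the base estimates, $\mathbb E|F_1(X(s))|^2\le 3[c_{F_1}^2\mathbb E|A^\eta X(s)|^2+c_{F_1}^2\mathbb E|X(s)|^2+\mathbb E|F_1(0)|^2]\le C_{F_1,F_2,G,\xi}\,s^{-2\eta}$, the key point being that $\tilde\beta=0$ replaces the $A^\beta$-term of {\rm (H1)} by the already bounded $E$-term. Applying $A^\gamma$ to the variation-of-constants formula \eqref{Eq8} and using \eqref{FbetasigmaSpaceProperty}, \eqref{EstimateIofAnu} and \eqref{EstimateIofS(t)Maximum}, I would estimate the four contributions separately: the initial term is bounded since $A^\gamma S(t)\xi=S(t)A^\gamma\xi$ and $|S(t)|\le\iota_0$; the $F_2$-term is bounded because $\int_0^t(t-s)^{-\gamma}s^{\gamma-1}ds={\bf B}(\gamma,1-\gamma)$; the $G$-term contributes $c(E)\iota_\gamma^2|G|_{\mathcal F^{\beta+\frac12,\sigma}}^2{\bf B}(2\beta,1-2\gamma)\,t^{2(\beta-\gamma)}$, finite since $\gamma<\tfrac12$; and the $F_1$-term, after Cauchy--Schwarz, is dominated by $\iota_\gamma^2 t\int_0^t(t-s)^{-2\gamma}s^{-2\eta}ds=C_{F_1,F_2,G,\xi}\,t^{2(1-\gamma-\eta)}$, which is bounded because $\gamma+\eta<1$. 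This simultaneously shows $\mathbb E\int_0^t|A^\gamma S(t-s)F_1(X(s))|^2ds<\infty$, whence $X(t)\in\mathcal D(A^\gamma)$ a.s., and yields $\mathbb E|A^\gamma X(t)|^2\le C_{F_1,F_2,G,\xi}\,t^{-2(\gamma-\beta)}$; multiplying by $t^{2\gamma}$ gives $t^{2\gamma}\mathbb E|A^\gamma X(t)|^2\le C_{F_1,F_2,G,\xi}\,t^{2\beta}\le C_{F_1,F_2,G,\xi}$, which together with $\mathbb E|X(t)|^2\le C_{F_1,F_2,G,\xi}$ is the asserted estimate. Unlike Theorem \ref{semilinear evolution equationMoreRegular}, no preliminary Volterra bootstrap of the $A^\eta$-bound (its Step~1) is needed here: the base bound $s^{-2\eta}$ already makes every $F_1$-integral converge with room to spare, precisely because $2\gamma,2\eta<1$.

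For the continuity $A^\gamma X\in\mathcal C((0,T_{F_1,F_2,G,\xi}];E)$ I would reuse the semigroup splitting behind \eqref{Eq25}: for $0<s<t$, write $A^\gamma[X(t)-X(s)]=[S(t-s)-I]A^{-\rho}A^{\gamma+\rho}X(s)+\int_s^t A^\gamma S(t-r)[F_1(X(r))+F_2(r)]dr+\int_s^t A^\gamma S(t-r)G(r)dw_r$ with a fixed $\rho\in(\tfrac12,1-\gamma)$, bounding the first term by \eqref{EstimateIofS(t)-IA-theta}. Inserting $\mathbb E|F_1(X(r))|^2\le C_{F_1,F_2,G,\xi}\,r^{-2\eta}$ and, in the near-diagonal integral, the elementary inequality $r^{-2\eta}\le(r-s)^{-\epsilon}s^{\epsilon-2\eta}$ for a small $\epsilon\in(0,\min\{1-2\gamma,2\eta\})$, one reaches a bound of the form $\mathbb E|A^\gamma[X(t)-X(s)]|^2\le C_{\delta_0}\,[(t-s)^{2\rho}+(t-s)^{2-2\gamma-\epsilon}]$ uniformly on each $[\delta_0,T_{F_1,F_2,G,\xi}]$. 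Both exponents exceed $1$ (using $\rho>\tfrac12$ and $\epsilon<1-2\gamma$), so the Kolmogorov test, Theorem \ref{Kolmogorov test}, yields Hölder continuity on every such subinterval and hence continuity on $(0,T_{F_1,F_2,G,\xi}]$. Continuity at $t=0$ splits off the linear part, continuous at $0$ by Theorem \ref{regularity theorem autonomous linear evolution equation} with $\gamma$ in place of $\beta$, from the nonlinear part $A^\gamma\int_0^t S(t-s)F_1(X(s))ds$, whose second moment is $\le C_{F_1,F_2,G,\xi}\,t^{2(1-\gamma-\eta)}\to0$ as $t\searrow0$.

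The main obstacle is the exponent bookkeeping in the continuity step: the auxiliary parameters $\rho$ and $\epsilon$ must be chosen so that every power of $(t-s)$ produced strictly exceeds $1$ — which is exactly where the standing hypotheses $\gamma<\tfrac12$ (securing $\rho>\tfrac12$) and $\gamma+\eta>\tfrac12$ (controlling the remaining positive powers) are used — while the accompanying negative powers of $s$ stay locally bounded on $(0,T_{F_1,F_2,G,\xi}]$, which forces the Kolmogorov argument to be run on each $[\delta_0,T_{F_1,F_2,G,\xi}]$ rather than globally. Since $X$ is genuinely non-Gaussian because of the $F_1$-integral, only the second-moment Kolmogorov test is available and there is no slack beyond the power-one threshold, so the admissible window for $\epsilon$ is the delicate point of the proof.
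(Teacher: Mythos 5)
Your overall architecture matches the paper's intended route (the paper omits this proof and points to Theorem \ref{semilinear evolution equationMoreRegular} as the template): start from Theorem \ref{semilinear evolution equationTheorem2}, use the invariant-ball bound $\mathbb E|A^\eta X(s)|^2\leq C s^{-2\eta}$ to control $\mathbb E|F_1(X(s))|^2$, and then read off $\mathcal D(A^\gamma)$-regularity from the variation-of-constants formula. Your moment estimates are correct, and your observation that the Volterra bootstrap of Step 1 of Theorem \ref{semilinear evolution equationMoreRegular} can be dispensed with here (because $2\eta<1$ and $2\gamma<1$ already make every $F_1$-integral converge) is a legitimate simplification.

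There is, however, a genuine gap in your continuity step, in how the stochastic convolution is treated. You write $A^\gamma[X(t)-X(s)]=[S(t-s)-I]A^{-\rho}A^{\gamma+\rho}X(s)+\cdots$ with $\rho\in(\frac12,1-\gamma)$ and apply this to the \emph{full} solution $X(s)$. But $X(s)$ contains $I_2(s)=\int_0^s S(s-r)G(r)\,dw_r$, and $A^{\theta}I_2(s)$ is only defined for $\theta<\frac12$: the defining integral $\int_0^s (s-r)^{-2\theta}r^{2\beta-1}\,dr$ diverges for $2\theta\geq 1$, and here $\theta=\gamma+\rho>\frac12+\gamma$. So the term $A^{\gamma+\rho}X(s)$ does not exist. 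Moreover, even the remaining stochastic increment $\int_s^t A^\gamma S(t-r)G(r)\,dw_r$ in your decomposition has second moment only of order $s^{2\beta-1}(t-s)^{1-2\gamma}$ with $1-2\gamma<1$, so the second-moment Kolmogorov test (Theorem \ref{Kolmogorov test} with $\epsilon_1=2$) — which you correctly note is all that is available for the non-Gaussian part — cannot produce continuity for this term; your closing remark that ``only the second-moment Kolmogorov test is available'' is precisely what makes your own decomposition fail. The repair is the one built into the paper's template: split $X=\Psi+I_2$, apply the $[S(t-s)-I]A^{-\rho}$ trick with $\rho>\frac12$ only to the deterministic part $\Psi(s)=S(s)\xi+\int_0^s S(s-r)[F_1(X(r))+F_2(r)]\,dr$ (for which $A^{\gamma+\rho}\Psi(s)$ is controlled exactly as in \eqref{Eq25}), and treat $A^\gamma I_2$ separately as a Gaussian process via Theorem \ref{Kolmogorov testGaussian}, which only requires a positive Hölder exponent in the second moment, as in Step 4 of Theorem \ref{regularity theorem autonomous linear evolution equation} and Step 1 of Theorem \ref{semilinear evolution equationTheorem1}. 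With that correction the rest of your argument goes through.
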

Let us finally verify continuous  dependence of solutions on initial data.
\begin{theorem}\label{continuityofsolutionsininitialdata2}
Let \eqref{spectrumsectorialdomain}, \eqref{resolventnorm}, {\rm (H1')}, {\rm (H2)} and {\rm (H3)}   be satisfied.  Let $X$ and $\bar X$ be the solutions of  \eqref{semilinear evolution equation} for the data $(F_2,G,\xi)$ and $(\bar F_2,\bar G,\bar \xi)$ in $\mathcal B_1\times \mathcal B_2\times B_A$, respectively, where $\mathcal B_1$, $ \mathcal B_2$ and $ B_A$ are defined by \eqref{mathcal B1}, \eqref{mathcal B2} and  \eqref{BABall}.
Then
\begin{align}
&t^{2\eta}\mathbb E  |A^\eta[X(s)-\bar X(s)]|^2+t^{2\eta}\mathbb E|X(s)-\bar X(s)|^2 \notag\\
& \leq C_{\mathcal B_1, \mathcal B_2, B_A}[\mathbb E |\xi-\bar \xi|^2+ t^{2\beta}   |F_2-\bar F_2|_{\mathcal F^{\beta,\sigma}}^2+ t^{2\beta} |G-\bar G|_{\mathcal F^{\beta+\frac{1}{2},\sigma}}^2],    \notag
\end{align}
and 
\begin{align}
&t^{2(\eta-\beta)}[\mathbb E  |A^\eta[X(s)-\bar X(s)]|^2+\mathbb E|X(s)-\bar X(s)|^2] \notag\\
& \leq C_{\mathcal B_1, \mathcal B_2, B_A}[\mathbb E |A^\beta(\xi-\bar \xi)|^2+    |F_2-\bar F_2|_{\mathcal F^{\beta,\sigma}}^2+ |G-\bar G|_{\mathcal F^{\beta+\frac{1}{2},\sigma}}^2]  \notag
\end{align}
for $t\in (0, T_{\mathcal B_1, \mathcal B_2, B_A}].$
\end{theorem}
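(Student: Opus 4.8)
The plan is to run the proof in close parallel with that of Theorem \ref{continuityofsolutionsininitialdata}, but now using the working space and fixed-point data of Theorem \ref{semilinear evolution equationTheorem2} (valid in the critical case $\tilde\beta=0$) and the Lipschitz bound \eqref{AbetaLipschitzconditionBeta0} in place of \eqref{AbetaLipschitzcondition}. Since $\mathbb E|\xi-\bar\xi|^2=\infty$ renders the first estimate trivial, I would assume $\mathbb E|\xi-\bar\xi|^2<\infty$ throughout. Subtracting the two mild-solution identities gives
$$X(t)-\bar X(t)=S(t)(\xi-\bar\xi)+\int_0^t S(t-s)[F_1(X(s))-F_1(\bar X(s))]\,ds+\int_0^t S(t-s)[F_2(s)-\bar F_2(s)]\,ds+\int_0^t S(t-s)[G(s)-\bar G(s)]\,dw_s,$$
and \eqref{AbetaLipschitzconditionBeta0} controls the nonlinear increment by $c_{F_1}[\,|A^\eta(X-\bar X)|+|X-\bar X|\,]$. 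The essential feature of the critical case is that the lower-order norm lives on all of $E$ rather than on $\mathcal D(A^\beta)$, so the natural Gronwall functional is $q(t)=t^{2\eta}\,\mathbb E\big[|A^\eta(X(t)-\bar X(t))|^2+|X(t)-\bar X(t)|^2\big]$.

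First I would apply $A^\theta$ for the two exponents $\theta=\eta$ and $\theta=0$ and estimate $\mathbb E|t^\theta A^\theta[X(t)-\bar X(t)]|^2$ exactly as in \eqref{Eq19}, using the smoothing bounds \eqref{EstimateIofAnu}, the weight property \eqref{FbetasigmaSpaceProperty}, the Hölder inequality, \eqref{AbetaLipschitzconditionBeta0}, and Proposition \ref{IntegralInequality}(i) for the stochastic term (the bound \eqref{EstimateIofS(t)Maximum} covering the $\theta=0$ case). The differences $F_2-\bar F_2$ and $G-\bar G$ produce the factors $t^{2\beta}|F_2-\bar F_2|_{\mathcal F^{\beta,\sigma}}^2$ and $t^{2\beta}|G-\bar G|_{\mathcal F^{\beta+\frac{1}{2},\sigma}}^2$ through beta integrals $\int_0^t(t-s)^{-2\theta}s^{2\beta-1}ds$, while the nonlinear term yields a self-referential integral with kernel $(t-s)^{-2\theta}s^{-2\eta}$. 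Adding the $\theta=\eta$ estimate to $t^{2\eta}$ times the $\theta=0$ estimate, and using $\mathbb E[|A^\eta(X-\bar X)|^2+|X-\bar X|^2]=s^{-2\eta}q(s)$, leads to a Volterra inequality of exactly the shape \eqref{Eq47}, namely $q(t)\le C_{\mathcal B_1,\mathcal B_2,B_A}\big[\mathbb E|\xi-\bar\xi|^2+t^{2\beta}|F_2-\bar F_2|_{\mathcal F^{\beta,\sigma}}^2+t^{2\beta}|G-\bar G|_{\mathcal F^{\beta+\frac{1}{2},\sigma}}^2\big]+C\,t^{2\eta+1}\int_0^t[(t-s)^{-2\eta}+1]\,s^{-2\eta}q(s)\,ds$.

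Next I would solve this inequality by the two-interval device already used for \eqref{Eq48} and \eqref{Eq20}: on a short interval $[0,\epsilon]$ the kernel is absorbed because $t^{2\eta+1}\int_0^t(t-s)^{-2\eta}s^{-2\eta}ds={\bf B}(1-2\eta,1-2\eta)\,t^{2(1-\eta)}$ is small there (this is where $\eta<\tfrac12$ is used), giving a uniform bound for $\sup_{[0,\epsilon]}q$; on $[\epsilon,T_{\mathcal B_1,\mathcal B_2,B_A}]$ Lemma \ref{Integral inequality of Volterra type} then closes the argument. This produces the first inequality of the theorem. For the second inequality I would change only the treatment of the initial datum, replacing $|A^\theta S(t)(\xi-\bar\xi)|\le\iota_\theta t^{-\theta}|\xi-\bar\xi|$ by $|A^\eta S(t)(\xi-\bar\xi)|\le\iota_{\eta-\beta}t^{\beta-\eta}|A^\beta(\xi-\bar\xi)|$ (valid since $\beta<\eta$) and $|S(t)(\xi-\bar\xi)|\le\iota_0|A^{-\beta}|\,|A^\beta(\xi-\bar\xi)|$; both initial contributions then carry a common factor $t^{2\beta}$, matching the $t^{2\beta}$ already present on the $F_2$- and $G$-terms. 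Rerunning the identical Volterra argument yields $q(t)\le C\,t^{2\beta}\big[\mathbb E|A^\beta(\xi-\bar\xi)|^2+|F_2-\bar F_2|_{\mathcal F^{\beta,\sigma}}^2+|G-\bar G|_{\mathcal F^{\beta+\frac{1}{2},\sigma}}^2\big]$, and dividing by $t^{2\beta}$ (i.e. passing from the weight $t^{2\eta}$ to $t^{2(\eta-\beta)}$) gives the analog of \eqref{Eq23}.

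The main difficulty is bookkeeping rather than any new idea: one must verify that every exponent coming from the beta integrals is admissible (all require only $\eta<\tfrac12$ and $\beta>0$, which hold by the standing assumptions) and that the coupled kernel $(t-s)^{-2\eta}s^{-2\eta}$ stays weakly singular so that Lemma \ref{Integral inequality of Volterra type} applies. The one point of genuine care peculiar to $\tilde\beta=0$ is that the lower-order norm is the $E$-norm, so the two components $|A^\eta(X-\bar X)|$ and $|X-\bar X|$ must be carried together inside the single functional $q(t)$; consequently no separate unweighted estimate for $\mathbb E|X(t)-\bar X(t)|^2$ is available or needed, which is precisely why both terms on the left of each claimed inequality share the same weight.
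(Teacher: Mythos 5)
Your proposal is correct and follows exactly the route the paper intends: the paper omits this proof, stating only that it is ``quite analogous'' to that of Theorem \ref{continuityofsolutionsininitialdata}, and your argument is precisely that analogue --- subtracting the mild-solution formulas, estimating $\mathbb E|t^\theta A^\theta[X(t)-\bar X(t)]|^2$ for $\theta=\eta$ and $\theta=0$ (in place of $\theta=\beta$) via \eqref{AbetaLipschitzconditionBeta0}, forming the functional $q(t)=t^{2\eta}\mathbb E[|A^\eta(X-\bar X)|^2+|X-\bar X|^2]$, and closing with the same two-interval Volterra argument as for \eqref{Eq47}--\eqref{Eq48}, with the substitution $|A^\theta S(t)(\xi-\bar\xi)|\le \iota_{\theta-\beta}t^{\beta-\theta}|A^\beta(\xi-\bar\xi)|$ yielding the second estimate. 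The exponent bookkeeping you flag all checks out under the standing hypotheses $0<\beta<\eta<\tfrac12$.
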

As the proof of this theorem is quite analogous to that of Theorem  \ref{continuityofsolutionsininitialdata}, we may omit it.

\end{document}